\def\@tocline#1#2#3#4#5#6#7{\relax
  \ifnum #1>\c@tocdepth 
  \else
    \par \addpenalty\@secpenalty\addvspace{#2}%
    \begingroup \hyphenpenalty\@M
    \@ifempty{#4}{%
      \@tempdima\csname r@tocindent\number#1\endcsname\relax
    }{%
      \@tempdima#4\relax
    }%
    \parindent\z@ \leftskip#3\relax \advance\leftskip\@tempdima\relax
    \rightskip\@pnumwidth plus4em \parfillskip-\@pnumwidth
    #5\leavevmode\hskip-\@tempdima
      \ifcase #1
      \or\or \hskip 2em \or \hskip 2em \else \hskip 3em \fi%
      #6\nobreak\relax
    \dotfill\hbox to\@pnumwidth{\@tocpagenum{#7}}\par
    \nobreak
    \endgroup
  \fi}
\newcommand{\A}{\mathbf{A}}
\newcommand{\C}{\mathbf{C}}
\newcommand{\E}{\mathbf{E}}
\newcommand{\G}{\mathbf{G}}
\renewcommand{\L}{\mathbb{L}}
\newcommand{\N}{\mathbb{N}}
\renewcommand{\P}{\mathbf{P}}
\newcommand{\Q}{\mathbb{Q}}
\newcommand{\Z}{\mathbb{Z}}
\newcommand{\F}{\mathbb{F}}
\newcommand{\sF}{\mathcal{F}}
\newcommand{\sP}{\mathcal{P}}
\newcommand{\Sph}{\mathbb{S}}
\newcommand{\bT}{\mathbb{T}}
\newcommand{\ul}[1]{{\underline{#1}}}
\newcommand{\Map}{\operatorname{map}}
\newcommand{\uMap}{\operatorname{\underline{map}}}
\newcommand{\Hom}{\operatorname{Hom}}
\newcommand{\Pic}{\operatorname{Pic}}
\newcommand{\Spec}{\operatorname{Spec}}
\newcommand{\Spf}{\operatorname{Spf}}
\newcommand{\Comp}{\operatorname{Comp}}
\newcommand{\Shv}{\operatorname{\mathbf{Shv}}}
\newcommand{\pro}[1]{\text{\rm pro}_{#1}\text{\rm--}}
\newcommand{\eff}{{\operatorname{eff}}}
\newcommand{\op}{{\operatorname{op}}}
\newcommand{\Zar}{{\operatorname{Zar}}}
\newcommand{\Nis}{{\operatorname{Nis}}}
\newcommand{\et}{{\operatorname{\acute{e}t}}}
\newcommand{\id}{{\operatorname{Id}}}
\newcommand{\Sym}{{\operatorname{Sym}}}
\newcommand{\Tot}{{\operatorname{Tot}}}
\renewcommand{\lim}{\operatornamewithlimits{\varprojlim}}
\newcommand{\colim}{\operatornamewithlimits{\varinjlim}}
\newcommand{\ol}{\overline}
\renewcommand{\phi}{\varphi}
\renewcommand{\epsilon}{\varepsilon}
\newcommand{\cotimes}{\operatorname{\widehat{\otimes}}}
\newcommand{\CI}{\operatorname{\mathbf{CI}}}
\newcommand{\Bl}{{\mathbf{Bl}}}
\newcommand{\bcube}{{\ol{\square}}}
\newcommand{\M}{\mathbf{M}}
\newcommand{\perf}{\mathrm{perf}}
\newcounter{spec}
{\end{list}}%
\newtheorem{lemma}{Lemma}[section]
\newtheorem{thm}[lemma]{Theorem}
\newtheorem{prop}[lemma]{Proposition}
\newtheorem{cor}[lemma]{Corollary}
\theoremstyle{definition}
\newtheorem{defn}[lemma]{Definition}
\newtheorem{constr}[lemma]{Construction}
\newtheorem{definition}[lemma]{Definition}
\theoremstyle{remark}
\newtheorem{rmk}[lemma]{Remark}
\newtheorem{example}[lemma]{Example}
\numberwithin{equation}{section}
\numberwithin{equation}{lemma}
\colorlet{LightRubineRed}{RubineRed!70!}
\DeclareSymbolFontAlphabet{\mathbb}{AMSb} 
\DeclareSymbolFontAlphabet{\mathbbl}{bbold} 
\newcommand{\Prism}{{\mathlarger{\mathbbl{\Delta}}}}
\DeclareSymbolFontAlphabet{\mathbbl}{bbold} 
\newcommand{\cPrism}{{\widehat{\mathlarger{\mathbbl{\Delta}}}}}
\def\lSm{\mathbf{lSm}}
\def\SmlSm{\mathbf{SmlSm}}
\def\Sm{\mathbf{Sm}}
\newcounter{elno}
\begin{document}

\def\THH{\operatorname{THH}}
\def\TC{\operatorname{TC}}
\def\TCmin{\operatorname{TC}^-}
\def\TP{\operatorname{TP}}
\def\HH{\operatorname{HH}}
\def\HC{\operatorname{HC}}
\def\HCmin{\operatorname{HC}^-}
\def\HP{\operatorname{HP}}

\def\Fil{\operatorname{Fil}}
\def\Gr{\operatorname{Gr}}
\def\gr{\operatorname{gr}}

\def\QSyn{\operatorname{QSyn}}
\def\QRSPerfd{\operatorname{QRSPerfd}}
\def\lQSyn{\operatorname{lQSyn}}
\def\lQRSPerfd{\operatorname{lQRSPerfd}}

\def\syn{\mathrm{syn}}
\def\Fsyn{\mathrm{Fsyn}}
\def\Fet{\mathrm{F\acute{e}t}}

\def\LogRec{\operatorname{\mathbf{LogRec}}}
\def\Ch{\operatorname{\mathrm{Ch}}}

\def\ltr{\mathrm{ltr}}

\def\kX{\mathfrak{X}}
\def\kY{\mathfrak{Y}}
\def\kZ{\mathfrak{Z}}

\def\otCIsp{\otimes_{\CI}^{sp}}
\def\otCINissp{\otimes_{\CI}^{\Nis,sp}}

\def\tL{\tilde{L}}
\def\tX{\tilde{X}}
\def\tY{\tilde{Y}}
\def\tF{\widetilde{F}}
\def\tG{\widetilde{G}}

\def\Sh{\operatorname{\mathbf{Shv}}}
\def\PSh{\operatorname{\mathbf{PSh}}}
\def\Shltr{\operatorname{\mathbf{Shv}_{dNis}^{ltr}}}
\def\Shlog{\operatorname{\mathbf{Shv}_{dNis}^{log}}}
\def\Shvlog{\operatorname{\mathbf{Shv}^{log}}}
\def\Sm{\operatorname{\mathrm{Sm}}}
\def\SmlSm{\operatorname{\mathrm{SmlSm}}}
\def\lSm{\operatorname{\mathrm{lSm}}}
\def\FlSm{\operatorname{\mathrm{FlSm}}}
\def\FlQSm{\operatorname{\mathrm{FlQSm}}}
\def\FlQSyn{\operatorname{\mathrm{FlQSyn}}}
\def\lCor{\operatorname{\mathrm{lCor}}}
\def\SmlCor{\operatorname{\mathrm{SmlCor}}}
\def\PShltr{\operatorname{\mathbf{PSh}^{ltr}}}
\def\PShlog{\operatorname{\mathbf{PSh}^{log}}}
\def\logCI{\mathbf{logCI}} 

\def\Mod{\operatorname{Mod}}
\newcommand{\DM}[1][]{\operatorname{\mathcal{DM}_{#1}}}
\newcommand{\DMeff}[1][]{\operatorname{\mathcal{DM}^{\eff}_{#1}}}
\newcommand{\DA}[1][]{\operatorname{\mathcal{DA}_{#1}}}
\newcommand{\DAeff}[1][]{\operatorname{\mathcal{DA}^{\eff}_{#1}}}
\newcommand{\FDA}[1][]{\operatorname{\mathcal{DA}_{#1}}}
\newcommand{\FDAeff}[1][]{\operatorname{\mathcal{DA}^{\eff}_{#1}}}
\newcommand{\SH}[1][]{\operatorname{\mathcal{SH}_{#1}}}
\newcommand{\logSH}[1][]{\operatorname{\mathbf{log}\mathcal{SH}_{#1}}}
\newcommand{\logDA}[1][]{\operatorname{\mathbf{log}\mathcal{DA}_{#1}}}
\newcommand{\logDAeff}[1][]{\operatorname{\mathbf{log}\mathcal{DA}^{\eff}_{#1}}}
\newcommand{\logDM}[1][]{\operatorname{\mathbf{log}\mathcal{DM}_{#1}}}
\newcommand{\logDMeff}[1][]{\operatorname{\mathbf{log}\mathcal{DM}^{\eff}_{#1}}}
\newcommand{\logFDA}[1][]{\operatorname{\mathbf{log}\mathcal{FDA}_{#1}}}
\newcommand{\logFDAeff}[1][]{\operatorname{\mathbf{log}\mathcal{FDA}^{\eff}_{#1}}}
\newcommand{\WOmega}{\operatorname{\mathrm{W}\Omega}}
\def\Log{\operatorname{\mathcal{L}\textit{og}}}
\def\Rsc{\operatorname{\mathcal{R}\textit{sc}}}
\def\Pro{\mathrm{Pro}\textrm{-}}
\def\pro{\mathrm{pro}\textrm{-}}
\def\dg{\mathrm{dg}}
\def\plim{\mathrm{``lim"}}
\def\ker{\mathrm{ker}}
\def\coker{\mathrm{coker}}
\def\PrL{\mathcal{P}\mathrm{r^L}}
\def\PrLo{\mathcal{P}\mathrm{r^{L,\otimes}}}
\def\Spt{\mathcal{S}\mathrm{pt}}
\def\PSpt{\mathrm{Pre}\mathcal{S}\mathrm{pt}}
\def\Fun{\mathrm{Fun}}
\def\Sym{\mathrm{Sym}}
\def\CAlg{\mathrm{CAlg}}
\def\Poly{\mathrm{Poly}}
\def\Cat{\mathrm{Cat}}

\def\Alb{\operatorname{Alb}}
\def\bAlb{\mathbf{Alb}}
\def\Gal{\operatorname{Gal}}

\def\hofib{\mathrm{hofib}}
\def\fib{\mathrm{fib}}
\def\triv{\mathrm{triv}}
\def\ABl{\mathcal{A}\textit{Bl}}
\def\divsm#1{{#1_\mathrm{div}^{\mathrm{Sm}}}}

\def\cA{\mathcal{A}}
\def\cB{\mathcal{B}}
\def\cC{\mathcal{C}}
\def\cD{\mathcal{D}}
\def\cE{\mathcal{E}}
\def\cF{\mathcal{F}}
\def\cG{\mathcal{G}}
\def\cH{\mathcal{H}}
\def\cI{\mathcal{I}}
\def\cS{\mathcal{S}}
\def\cM{\mathcal{M}}
\def\cO{\mathcal{O}}
\def\cP{\mathcal{P}}

\def\rN{\mathrm{N}}

\def\tcA{\widetilde{\mathcal{A}}}
\def\tcB{\widetilde{\mathcal{B}}}
\def\tcC{\widetilde{\mathcal{C}}}
\def\tcD{\widetilde{\mathcal{D}}}
\def\tcE{\widetilde{\mathcal{E}}}
\def\tcF{\widetilde{\mathcal{F}}}

\def\one{\mathbbm{1}}

\def\XP{X \backslash \sP}
\def\M0a{{}^t\cM_0^a}
\newcommand{\Ind}{{\operatorname{Ind}}}

\def\Xkbar{\overline{X}_{\overline{k}}}
\def\dx{{\rm d}x}

\newcommand{\dNis}{{\operatorname{dNis}}}
\newcommand{\loget}{{\operatorname{l\acute{e}t}}}
\newcommand{\ket}{{\operatorname{k\acute{e}t}}}
\newcommand{\ABNis}{{\operatorname{AB-Nis}}}
\newcommand{\sNis}{{\operatorname{sNis}}}
\newcommand{\sZar}{{\operatorname{sZar}}}
\newcommand{\set}{{\operatorname{s\acute{e}t}}}
\newcommand{\sqsyn}{{\operatorname{sqsyn}}}
\newcommand{\cofib}{\mathrm{Cofib}}

\newcommand{\Gmlog}{\G_m^{\log}}
\newcommand{\Gmlogred}{\overline{\G_m^{\log}}}

\newcommand{\varcolim}{\mathop{\mathrm{colim}}}
\newcommand{\varlim}{\mathop{\mathrm{lim}}}
\newcommand{\tensor}{\otimes}
\newcommand{\Spc}{\mathrm{Spc}}

\newcommand{\eq}[2]{\begin{equation}\label{#1}#2 \end{equation}}
\newcommand{\eqalign}[2]{\begin{equation}\label{#1}\begin{aligned}#2 \end{aligned}\end{equation}}

\def\varplim#1{\text{``}\varlim_{#1}\text{''}}
\def\det{\mathrm{d\acute{e}t}}

\author{Federico Binda}
\address{Dipartimento di Matematica ``Federigo Enriques'',  Universit\`a degli Studi di Milano\\ Via Cesare Saldini 50, 20133 Milano, Italy}
\email[F. Binda]{federico.binda@unimi.it}

\author{Tommy Lundemo}
\address{Mathematisch Instituut, Universiteit Utrecht, Budapestlaan 6, 3584 CD Utrecht, Netherlands}
\email[T. Lundemo]{t.lundemo@uu.nl}

\author{Alberto Merici}
\address{Institut f\"ur Mathematik, Universit\"at Heidelberg\\ MATHEMATIKON, Im Neuenheimer Feld 205, 69120  Heidelberg, Germany}
\email[A. Merici]{merici@mathi.uni-heidelberg.de}

\author{Doosung Park}
\address{Department of Mathematics and Informatics, University of Wuppertal, Germany}
\email[D. Park]{dpark@uni-wuppertal.de}

\thanks{F.B. was partially supported by the PRIN 2022 ``The arithmetic of motives and L-functions''. T.L.\ is funded by the NWO-grant VI.Veni.242.129; A.M. was supported by Horizon Europe's Marie
Sk{\l}odowska-Curie Action PF 101103309 ``MIPAC'' and is funded by the Deutsche Forschungsgemeinschaft (DFG, German Research 
Foundation)
TRR 326 \textit{Geometry and Arithmetic of Uniformized Structures}, 
project number 444845124. D.P.~and T.L.~were partially supported by the research training group GRK 2240 ``Algebro-Geometric methods in Algebra, Arithmetic and Topology.''}
\title[Log prisms and motivic sheaves]{Logarithmic prismatic cohomology, motivic sheaves, and comparison theorems}

\begin{abstract}
    We prove that (logarithmic) prismatic and (logarithmic) syntomic cohomology are representable in the category of logarithmic motives. As an application, we obtain Gysin maps for prismatic and syntomic cohomology, and we explicitly identify their cofibers. We also prove a smooth blow-up formula and we compute prismatic and syntomic cohomology of Grassmannians.
    In the second part of the paper, we develop a descent technique inspired by the work of Nizio\l~ on log $K$-theory. Using the resulting \emph{saturated descent}, we prove de Rham and crystalline comparison theorems for log prismatic cohomology, and the existence of Gysin maps for $A_{\inf}$-cohomology. 
\end{abstract}
\maketitle

\tableofcontents

\section{Introduction}
Let us fix a prime $p$. Prismatic cohomology is a cohomology theory for $p$-adic formal schemes introduced by Bhatt and Scholze \cite{BSPrism}, building on the work of Bhatt, Morrow, and Scholze \cite{BMS1}, \cite{BMS2}. Classical $p$-adic cohomology theories, such as \'etale, de Rham, and crystalline cohomology, are all specializations of prismatic cohomology.

There are several approaches to prismatic cohomology. The construction in \cite{BSPrism} uses a mixed-characteristic refinement of the crystalline site, based on the notion of \emph{prism}; a sort of ``deperfection'' of (mixed characteristic) perfectoid rings. Given a prism $(A,I)$, the prismatic cohomology of a $p$-adic smooth formal scheme $\kX$ over $A/I$ is then defined as $R\Gamma_{\Prism} (\kX /A) := R\Gamma( (\kX/A)_{\Prism}, \cO_{\Prism})$, where $(\kX/A)_{\Prism}$ is the prismatic site of $\kX$ relative to $(A,I)$, and $\cO_{\Prism}$ is the structure sheaf. It comes equipped with a  decreasing filtration $\Fil_\rN^{\geq *}$, the \emph{Nygaard filtration}, and a Frobenius endomorphism $\phi$.  In general, prismatic cohomology is not complete with respect to the Nygaard filtration. The Nygaard completion of prismatic cohomology can be described in purely homotopy-theoretic terms, using trace invariants such as topological Hochschild homology. This is the approach of \cite{BMS2}, and it is compared with that of Bhatt--Scholze in \cite[Theorem 13.1]{BSPrism}. This point of view has lent itself to numerous striking results in homotopy theory: let us highlight the computations in algebraic $K$-theory of Antieau--Krause--Nikolaus \cite{AntieauKrauseNikolaus} and the generalization of prismatic cohomology to commutative ring spectra of Hahn--Raksit--Wilson \cite{HahnRaksitWilson}.

The  theory of prismatic cohomology in the sense of \cite{BSPrism} admits a natural extension to formal schemes $\kX$ that are not necessarily smooth over $A/I$, but merely smooth in the sense of \emph{logarithmic geometry}. A logarithmic version of the prismatic site has been defined by Koshikawa \cite{Koshikawa} and further developed by Koshikawa and Yao \cite{Koshikawa-Yao}, generalizing the comparison theorems of \cite{BSPrism} to the semi-stable setting.

In \cite{BLPO-BMS2}, the first, second, and fourth named authors, together with {\O}stv{\ae}r, defined a variant of logarithmic prismatic cohomology based on the approach of \cite{BMS2}. The role of topological Hochschild homology is played by Rognes' \emph{logarithmic} topological Hochschild homology \cite{rognes}. The resulting theory $R\Gamma_{\cPrism}(-)$ is called the (absolute) Nygaard-complete logarithmic prismatic cohomology. It is a (strict) \'etale sheaf on the category of quasisyntomic log formal schemes and comes equipped with a Nygaard filtration and a Frobenius endomorphism $\phi$. We may then define log syntomic cohomology analogously to the non-log variant of \cite{BMS2}, namely as the equalizer
\[
\begin{tikzcd}
R\Gamma_{\rm Fsyn}(-,\Z_p(i)):=\mathrm{eq}\Bigl(\Fil_\rN^{\geq i}R\Gamma_{\cPrism}(-)\{i\} \ar[r,shift left = 1,"\mathrm{can}"]\ar[r,shift right = .5,"\phi"'] &R\Gamma_{\cPrism}(-)\{i\}\Bigr)
\end{tikzcd}\] of the Frobenius and the canonical map.

This paper is dedicated to further study of log syntomic cohomology and Nygaard-complete log prismatic cohomology. We exhibit these as theories defined on a suitable category of mixed motives and prove comparison theorems analogous to those of \cite{BMS2}.   Our results are independent of \cite{Koshikawa}, \cite{Koshikawa-Yao}, but we remark  that, as in \cite{BMS2}, it is possible to construct   (depending on the choice of a perfectoid base $R$) a non-Nygaard completed version of log prismatic cohomology out of the completed version, by left Kan extension from the log smooth case (see \cite[Section 5]{LogBeilinson} for a comparison with the site-theoretic definition). Our results also apply to the resulting theory $\cPrism^{\rm nc}_{-/R}$.
Let us also point out that one main difference from \cite{Koshikawa}, \cite{Koshikawa-Yao} at the level
 of statements is that the log structure on the coefficient prism is in general trivial in
 this paper, with the exception of the Breuil-Kisin case (see Section \ref{sec:BK_cohomology}).

\

We will now give an overview of the main results of this work and discuss the methods used to achieve them. 

\subsection{Motivic realizations} Let us fix a base (formal) quasisyntomic $p$-adic scheme $S$. Crystalline cohomology, Hodge cohomology, prismatic cohomology, and syntomic cohomology are all examples of cohomology theories that fail to be ${\A}^1$-homotopy invariant (for prismatic cohomology this indeed follows from the crystalline comparison theorem, and for syntomic cohomology follows from e.g. \cite[Theorem 1.12(5)]{BMS2}, together with the fact that $\TC$ is not $\A^1$-invariant). 
In particular, the functors 
\[
R\Gamma_{\cPrism}(-) \text{ and } R\Gamma_{\Fsyn}(-, \Z_p)\in \Fun(\QSyn_S, \cD(\Z_p))\] 
do not factor through the $\infty$-category $\mathcal{FDA}^{\rm eff}(S;\Z_p)$ of (effective, formal, \'etale) motivic sheaves of Ayoub, Morel, Voevodsky and others, defined as the subcategory of $\A^1$-local objects of the category of \'etale sheaves of complexes of $\cD(\Z_p)$-modules on smooth formal $S$-schemes. This category is a powerful tool for studying \emph{rational} $p$-adic cohomology theories.  For example, it can be used to deduce finiteness results \cite{VezzMonsky}, to define categories of coefficients \cite{DegNiz}, Gysin triangles \cite{Deg08}, and to study in large generality monodromy operators and weight filtrations \cite{BKV}, \cite{BGV}. 

By enlarging the framework to the category of logarithmic schemes, we gain an extra level of flexibility that not only admits more general objects (such as formal models with semi-stable reduction) but also allows a different motivic formalism to be applied. Namely, we can work in the category of logarithmic motivic sheaves introduced in \cite{BPO}, \cite{BPOCras}, \cite{BPO-SH}. Its construction mimics Voevodsky's definition; one starts with the category of strict \'etale or Nisnevich sheaves of $\Lambda$-modules $\Shv_{\set}(\SmlSm_S, \cD(\Lambda))$ or $\Shv_{\sNis}(\SmlSm_S, \cD(\Lambda))$  on the category of fs log smooth log schemes separated of finite type over a base $S$, for a ring of coefficients $\Lambda$. One then considers the localization with respect to the projections $(\P^n_S, \P^{n-1}_S)\times_S X \to X$ for every $X\in \SmlSm_S$ and $n\geq 1$. 
The effect of inverting this class of maps is striking, as it recovers several results that were previously only available in the $\A^1$-local setting, notably the construction of Gysin sequences and a Thom space isomorphism for Chern-oriented theories. By passing to $\P^1$-spectra one obtains a non-effective version, $\logDA(S, \Lambda)$, which is our preferred framework in this text. A verbatim translation of the above construction allows for $p$-adic formal log schemes as input. We denote the resulting category  $\logFDA(S, \Lambda)$ or $\logFDA(S, \cD(\Lambda))$.

In the first part of the paper, our main goal is to prove the representability of the aforementioned cohomology theories in the log motivic categories of formal schemes. Theorem \ref{thm:spectra-prism} gives oriented motivic spectra
\[
\E^{\cPrism}  \in \logFDA(S, \cD(\Z_p)), \quad \E^{\Fil \cPrism} \in \logFDA(S, \widehat{\cD\cF}(\Z_p))
\]
for any quasi-compact and quasi-separated $p$-adic formal base scheme $S$ (with trivial log structure). If $S\in \mathrm{QSyn}_R$ for a perfectoid ring $R$, we also obtain oriented motivic spectra $\E^{\cPrism^{\rm nc}}$ and $\E^{\Fil \cPrism^{\rm nc}}$ from the non-Nygaard complete versions. Both $\E^{\cPrism}$ and $\E^{\Fil \cPrism}$ are $E_\infty$-rings, and there are functorial ring maps induced by the divided Frobenius and the canonical morphism in prismatic cohomology. Forming their equalizer, we then obtain a motivic ring spectrum $\E^{\Fsyn}$, representing log syntomic cohomology. The orientation is induced by the existence of Chern classes, due to the work of Bhatt--Lurie \cite{BhattLurie}. 
Similarly, building on our previous work \cite{BLPO-BMS2}, \cite{BLPO-HKR}, \cite{mericicrys}, we can construct motivic ring spectra $\E^{\rm Hdg}$, $\E^{\rm dR}$, representing Hodge and de Rham cohomology for log smooth formal $S$-schemes, as well as $\E^{\rm dRW}$, $\E^{\rm crys}$ representing de Rham-Witt and log crystalline cohomology over a perfect field $k$ of characteristic $p$ (with trivial log structure). See \S \ref{sec:deRham_Hodge_Crys_spectra}.

This has as an immediate application the following 
result (see Theorem \ref{thm:gysin-prism}):
\begin{thm} 
Let $S\in \QSyn$. Let $\kZ\to \kX$ be a morphism of $p$-adic smooth formal schemes over $S$ such that it is locally the $p$-completion of a pure codimension $d$ closed immersion of smooth schemes over $S$.
Let $\Bl_\kZ(\kX)$ denote the blow-up of $\kX$ in $\kZ$ and $E$ be the exceptional divisor, so that $(\Bl_\kZ(\kX),E)$ is log smooth over $S$. Then for all $j$ there are Gysin maps in $\cD(\Z_p)$, functorial in $(\kX,\kZ)$, 
\begin{align*}
\mathrm{gys}_{\kZ/\kX}^{\cPrism}\colon &R\Gamma_{\cPrism}(\kZ)\{j-d\}[-2d]\to R\Gamma_{\cPrism}(\kX)\{j\}\\ 
\mathrm{gys}_{\kZ/\kX}^{\Fil  \cPrism}\colon  &\Fil_\rN^{\geq j-d}R\Gamma_{\cPrism}(\kZ)\{j-d\}[-2d]\to \Fil_\rN^{\geq j} R\Gamma_{\cPrism}(\kX)\{j\}\\
\mathrm{gys}_{\kZ/\kX}^{\Fsyn}\colon &R\Gamma_{\Fsyn}(\kZ,\Z_p(j-d))[-2d]\to R\Gamma_{\Fsyn}(\kX,\Z_p(j))
\end{align*}
whose homotopy cofibers are respectively given as 
\begin{align*}
    &R\Gamma_{\cPrism}(\Bl_\kZ(\kX),E)\{j\}\\
    &\Fil_\rN^{\geq j} R\Gamma_{\cPrism}(\Bl_\kZ(\kX),E)\{j\}\\
    & R\Gamma_{\Fsyn}((\Bl_\kZ(\kX),E),\Z_p(j)).
\end{align*}
If $S\in \mathrm{QSyn}_R$ with $R$ perfectoid, we have similar Gysin maps for the non-completed prismatic cohomology of \cite{BSPrism}, relative to the perfect prism $(A_{\inf}(R),\ker(\theta))$:
\begin{align*}
\mathrm{gys}_{\kZ/\kX}^{\Prism}\colon &R\Gamma_{\Prism}(\kZ/A_{\inf}(R))\{j-d\}[-2d]\to R\Gamma_{\Prism}(\kX/A_{\inf}(R))\{j\}\\ 
\mathrm{gys}_{\kZ/\kX}^{\Fil\Prism}\colon  &\Fil_\rN^{\geq j-d}R\Gamma_{\Prism}(\kZ/A_{\inf}(R))\{j-d\}[-2d]\to \Fil_\rN^{\geq j} R\Gamma_{\Prism}(\kX/A_{\inf}(R))\{j\}.
\end{align*}
The homotopy cofibers are respectively given as 
\begin{align*}
    &R\Gamma_{\cPrism^{\rm nc}}((\Bl_\kZ(\kX),E)/R)\{j\}\\
    &\Fil_\rN^{\geq j} R\Gamma_{\cPrism^{\rm nc}}((\Bl_\kZ(\kX),E)/R)\{j\}
\end{align*}
\end{thm}
In particular, not only  we can define a push-forward for (non log) prismatic or syntomic cohomology along the inclusion of a smooth subscheme $\kZ\subset \kX$, but we can also identify the cofiber of this map in terms of an explicit \emph{logarithmic} cohomology theory. Coherently with the philosophy sketched in \cite{BLPO-HKR}, we might call the corresponding bifiber sequence the \emph{residue sequence} for prismatic (resp.\ syntomic) cohomology.

The Gysin map described above appears after a trivialization of the motivic Thom space of the normal bundle of $\kZ$ in $\kX$, which is possible in light of the orientation on prismatic cohomology. This is an analog of the homotopy purity theorem of Morel and Voevodsky \cite[Theorem 2.23]{MV}, proved in \cite[\S 7.4]{BPO}, \cite{BPO-SH}. In the $\A^1$-local setting, the logarithmic term would be replaced by the open complement $\mathfrak{U}=\kX-\kZ$: we interpret the pair $(\Bl_\kZ(\kX), E)$ as a relative compactification of $\mathfrak{U}$, which preserves the homotopy type. Note that more general Gysin maps (where also $\kX$ and $\kZ$ are allowed to have a non-trivial log structure) can be constructed, using \cite[Theorem 7.5.4]{BPO}. See Remark \ref{rmk:generalGysin} for more details.

Another byproduct of the representability result of prismatic and syntomic cohomology is a computation of the prismatic and syntomic cohomology of the Grassmannian $\mathrm{Gr}(r,n)$ for every $r,n$. See Theorem \ref{thm:coho_Grassmannian} for the precise  notation.

\begin{thm} Let $S\in \QSyn$. Let $\kX$ be a $p$-adic log smooth formal scheme over $S$.
Then there are isomorphisms of bigraded rings, functorial in $X$:
    \begin{align*}
 \phi_{r,n}^{\cPrism} \colon  H^*_{\cPrism}(\kX)\{\bullet\} \otimes_{\mathbb{Z}} Z_{r,n} &\xrightarrow{\sim} H^*_{\cPrism}(\mathrm{Gr}(r, n) \times \kX)\{\bullet\}\\
 \phi_{r,n}^{\Fil\cPrism}\colon \Fil^{\geq \bullet}  H^*_{\cPrism}(\kX)\{\bullet\} \otimes_\mathbb{Z} Z_{r,n} &\xrightarrow{\sim} \Fil^{\geq \bullet}H^*_{\cPrism}(\mathrm{Gr}(r, n)\times \kX)\{\bullet\} \\
\phi_{r,n}^{\Fsyn}\colon H^*_{\Fsyn}(\kX,\Z_p(\bullet))\otimes_\Z Z_{r,n} &\xrightarrow{\sim} H^*_{\Fsyn}(\mathrm{Gr}(r, n)\times \kX,\Z_p(\bullet)),
\end{align*}
where $Z_{r,n}$ is an explicit ring in \eqref{Znd}.
If $S\in \QSyn_R$ with $R$ perfectoid, a similar statement holds for the non-completed prismatic cohomology relative to $R$. 
\end{thm}

Yet another application of our general motivic formalism, we obtain blow-up formulas for syntomic and prismatic cohomology, see Theorem \ref{thm:blow-up_main}.
\begin{thm}  Let $S\in \QSyn$.
Let $\kZ\to \kX$ be a morphism of $p$-adic smooth formal schemes over $S$ such that it is locally the $p$-completion of a pure codimension $d$ closed immersion of smooth schemes over $S$.
Let $\kX' = \Bl_\kZ(\kX)$. Then there are equivalences, functorial in $(\kX,\kZ)$,
    \begin{align*}
 R\Gamma_{\cPrism}(\kX)\{j\} \oplus \bigoplus_{0<i< d} R\Gamma_{\cPrism}(\kZ)\{j-i\} [-2i] &\xrightarrow{\sim} R\Gamma_{\cPrism}(\kX')\{j\}\\
  \Fil^{\geq j} R\Gamma_{\cPrism}(\kX)\{j\} \oplus  \bigoplus_{0<i< d} \Fil^{\geq j-i} R\Gamma_{\cPrism}(\kZ)\{j-i\}[-2i]&\xrightarrow{\sim} \Fil^{\geq j} R\Gamma_{\cPrism} \{j\}(\kX') \\
R\Gamma_{\Fsyn}(\kX, \Z_p(j)) \oplus \bigoplus_{0<i< d} R\Gamma_{\Fsyn}(\kZ, \Z_p(j-i)) [-2i] &\xrightarrow{\sim} R\Gamma_{\Fsyn}(\kX', \Z_p(j))
    \end{align*}
If $S\in \QSyn_R$ with $R$ perfectoid, a similar statement holds for the non-completed prismatic cohomology relative to $R$. 
\end{thm}
The previous result was first proved in \cite[Corollary 9.4.2]{BhattLurie} for syntomic cohomology, with the extra assumption (in the non-formal setting) that $Z$ is the image of a section $s\colon Y\to X$ of a smooth morphism of schemes $f\colon X\to Y$.

\

Let us briefly sketch how one establishes the representability of prismatic cohomology. The first step is to prove that it is $(\P^n,\P^{n-1})$-invariant for every $n\geq1$. Using quasisyntomic descent, we can reduce to the case where the base is a perfectoid ring $R$: in this case, the HKR-filtration \cite[Theorem 5.15]{BLPO-HKR} induces as in \cite{BMS2} a finite \emph{conjugate} filtration on the graded pieces with respect to the Nygaard filtration $\gr^i_N$, with graded quotients given by exterior powers of the Gabber's logarithmic cotangent complex. It is explained in \cite[\S 8.3]{BLPO-HKR} that this is  $(\P^n,\P^{n-1})$-invariant. 
The second step is $\P^1$-stability: we prove this by showing that the graded commutative monoid $(R\Gamma_{\cPrism}(-)\{i\})_{i\in \Z}$ admits a Chern orientation, and this will follow from the construction of the prismatic first Chern class of \cite{BhattLurie} in the non-log situation. 

\subsection{Saturated descent and comparison theorems}
In the second part of the paper, we concentrate on more explicit descriptions of log prismatic cohomology. Namely, we prove that it can be computed via descent of (non-logarithmic) prismatic cohomology in many cases of interest. The main application of this is a comparison result with log crystalline cohomology: 

\begin{thm}[Theorems \ref{thm:crys_comp_triv} and \ref{thm:crys_comp_logpoint}]\label{thm:intro-crys-comp}
Let $k$ be a perfect field of characteristic $p$. For all log smooth $X\in \lSm_{(k,\triv)}$ and $Y\in \lSm_{(k,\N)}$ with $\ul{Y}$   reduced, we have equivalences of filtered $E_\infty$-rings that depends functorially on $X$ and $Y$:
\[
R\Gamma_{\cPrism}(X/k)\simeq R\Gamma_{\rm crys}(X/W(k))\quad 
R\Gamma_{\cPrism}((\ul{Y},\partial Y\oplus_\N\N_{\rm perf})/k)\simeq R\Gamma_{\rm crys}(Y/W(k,\N)).
\]
\end{thm}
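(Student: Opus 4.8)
The plan is to deduce both comparison equivalences from a single mechanism: \emph{saturated descent}, which expresses log prismatic cohomology of a log smooth $k$-scheme as the limit over a suitable descent diagram of non-logarithmic prismatic cohomologies, combined with the crystalline comparison theorem of \cite{BSPrism} in the non-logarithmic case. First I would recall that by Theorem \ref{thm:intro-crys-comp}'s upstream input (the saturated descent technique developed in the second part of the paper), for $X\in\lSm_{(k,\triv)}$ one has $R\Gamma_{\cPrism}(X/k)\simeq R\Gamma_{\cPrism}(\ul{X}/k)$ — since the log structure is trivial, saturated descent degenerates and log prismatic cohomology agrees with the prismatic cohomology of the underlying scheme. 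Then the classical crystalline comparison \cite[Theorem 5.2, Theorem 1.8(4)]{BSPrism} for the perfect prism $(W(k),(p))$ gives $R\Gamma_{\cPrism}(\ul{X}/k)\simeq R\Gamma_{\rm crys}(\ul{X}/W(k))$ as filtered $E_\infty$-rings, where the filtration on the left is the Nygaard filtration and on the right is the Hodge (or rather the filtration induced via the comparison); functoriality in $X$ is inherited from functoriality of the site-theoretic constructions. This settles the first equivalence.

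For the second equivalence, with $Y\in\lSm_{(k,\N)}$ and $\ul{Y}$ generically reduced, the idea is to use saturated descent to reduce the log point $(k,\N)$ to the non-logarithmic situation after a Kummer-type cover. Concretely, I would cover $(k,\N)$ by the pro-system $(k,\tfrac1m\N)_m$, or rather pass to the perfection $\N_{\rm perf}$ as in the statement, so that the saturated descent diagram for $Y$ has terms that are (log) prismatic cohomologies of schemes whose log structure has become "invisible" to prismatic cohomology in the appropriate sense — this is exactly where the hypothesis that $\ul{Y}$ is generically reduced enters, guaranteeing that the relevant normalization/saturation steps behave well and that the descent spectral sequence degenerates to the expected answer. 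On the crystalline side, one uses the parallel log crystalline descent (log crystalline cohomology of $Y/W(k,\N)$ computed via the Kummer cover), together with the non-log crystalline comparison applied termwise, and checks that the two descent diagrams are identified compatibly with Frobenius and the Nygaard/Hodge filtrations. Assembling the termwise equivalences into an equivalence of the limits, and verifying the filtered $E_\infty$-ring structure is respected, yields $R\Gamma_{\cPrism}((\ul{Y},\partial Y\oplus_\N\N_{\rm perf})/k)\simeq R\Gamma_{\rm crys}(Y/W(k,\N))$.

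The main obstacle I expect is the bookkeeping in the saturated descent step for the log point: one must show that the descent diagram computing $R\Gamma_{\cPrism}$ of the log scheme really does have non-logarithmic (or base-changed perfectoid) terms to which \cite{BSPrism} applies, and that this holds with enough functoriality and multiplicativity to transport the $E_\infty$ and filtered structures. In particular, identifying the saturated descent diagram on the prismatic side with the corresponding one on the log crystalline side — so that the two limits agree — requires a careful comparison of the combinatorics of the Kummer covers of $\N$ (i.e.\ the saturated \v{C}ech-type nerve) on both theories, and the generic reducedness hypothesis is the technical lever that makes the saturation operations compatible. A secondary, more routine point is tracking the Breuil--Kisin twists $\{i\}$ and the Nygaard filtration through the comparison: once the underlying equivalences are established, matching the filtrations amounts to invoking the known filtered refinements of the crystalline comparison termwise and passing to the limit, since filtered limits are computed levelwise.
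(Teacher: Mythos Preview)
There is a genuine gap in your treatment of the first equivalence. You claim that for $X\in\lSm_{(k,\triv)}$ saturated descent ``degenerates'' and $R\Gamma_{\cPrism}(X/k)\simeq R\Gamma_{\cPrism}(\ul{X}/k)$. This is false: the notation $\lSm_{(k,\triv)}$ only says that the \emph{base} $(k,\triv)$ has trivial log structure, not that $X$ does. A typical object here is $(\A^1_k,\{0\})$, whose log prismatic cohomology certainly differs from that of $\A^1_k$. So the first case is not a triviality reducing immediately to \cite{BSPrism}; saturated descent is doing real work. In the paper's argument one works \'etale locally with a chart $(A,M)$, $M$ sharp saturated and $k[M]\to A$ flat, and applies the saturated \v Cech nerve of $M\to M_{\rm perf}$: since each $\check{C}^{\rm sat}(M_{\rm perf})^m$ is semiperfect, Proposition \ref{prop:semipf-invariance-cotangent} kills the log contribution to the cotangent complex, and only then does one land in the non-log world. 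The non-log comparison invoked is \cite[Theorem 8.17]{BMS2} (not the site-theoretic one in \cite{BSPrism}), and crucially the paper routes through the Nygaard-completed derived log de Rham--Witt complex $\widehat{L\WOmega}$ as an intermediary: one proves saturated descent for both $\cPrism$ and $\widehat{L\WOmega}$, matches them termwise via the non-log comparison, and then identifies $\widehat{L\WOmega}$ with $\WOmega$ and hence with $R\Gamma_{\rm crys}$ in the log smooth case (Lemma \ref{lem:LWOmega-vs-WOmega} and \cite{Yao-logDRW}). Your proposal never mentions this de Rham--Witt bridge.

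For the second equivalence your outline is closer in spirit, but you misidentify the role of generic reducedness. It is not used to make ``normalization/saturation steps behave well'' in the descent diagram; rather, by \cite[Theorem II.4.11]{tsuji}, generic reducedness of $\ul{Y}$ is exactly the condition guaranteeing that the structure map $Y\to\Spec(k,\N)$ is \emph{saturated}, which is what one needs to obtain good local charts (sharp, saturated, with flat chart map) via \cite[Theorems III.2.5.5, IV.3.3.3]{ogu}. The descent itself then proceeds by first replacing $(k,\N)$ by $(k,\N_{\rm perf})$---which leaves $A$ unchanged but enlarges the monoid to $M\oplus_\N\N_{\rm perf}$---and then running the saturated \v Cech nerve of $M\oplus_\N\N_{\rm perf}\to M_{\rm perf}$; this is the content of Theorem \ref{thm:main-sat-descent-dRW2}. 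Your description in terms of a pro-system $(k,\tfrac1m\N)_m$ is not how the argument goes.
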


Note that the inclusion $k^\times\to k$ (resp.\ the map $k^\times\oplus \N \to k$ given by $(x,0)\mapsto x$ and $(x,n)\mapsto 0$ for $x\in k^\times$ and $n\geq 1$) is the log structure on the point $\Spec(k,\mathrm{triv})=\Spec(k)$ (resp.\ standard log point $\Spec(k,\N)$).

We will reduce both statements to the ordinary, non-logarithmic crystalline comparison of \cite{BMS2}. The central tool for this is \emph{saturated descent}, a technique first considered in \cite{Niziol}. For this, we form the \v Cech nerve in the category of saturated monoids and get for a monoid map $M\to N$ a descent datum\[
\begin{tikzcd}
(A,M)\ar[r] &(A\otimes_{\Z[M]}\Z[N],N)\ar[r,shift left=1]\ar[r,shift right=.5]  &(A\otimes_{\Z[M]}\Z[N\oplus_M^{\rm sat}N],N\oplus_M^{\rm sat}N)\ar[r,shift left = .5]\ar[r,shift left=1.5]\ar[r,shift right=.5]   &\ldots
\end{tikzcd}
\]
We prove that this datum induces the following descent property:
\begin{thm}[Theorem \ref{thm:main-sat-descent-prism} and Example \ref{ex:log-smooth-descent}]
    Let $B$ be a $p$-complete ring with bounded $p^\infty$-torsion. For $p$-adic formal log smooth scheme $\kX$ over $B$,
    the log prismatic cohomology $R\Gamma_{\cPrism}(\kX)$ can be computed \'etale locally as limit of (non-log) prismatic cohomology\[
    \varlim_{\Delta}
     (\cPrism_{A\cotimes_{B\langle M\rangle}B\langle M_{\rm perf}\oplus_M^{\rm sat}\ldots \oplus_M^{\rm sat}M_{\rm perf}\rangle})
    \] 
    where $(A,M)$ is a strict chart on $\kX$. 
\end{thm}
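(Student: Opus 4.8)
The plan is to reduce the statement to a \emph{quasisyntomic} descent argument combined with the known behaviour of log prismatic cohomology on log smooth affines, following the strategy of \cite{BLPO-BMS2} but replacing the ``perfection'' of the log structure by the \v Cech nerve of a chart in saturated monoids. First I would fix a strict \'etale chart $(A, M)$ on $X \in \lSm_B$, so that $X^\wedge_p$ is \'etale locally $\Spf(A^\wedge_p)$ with the log structure associated to $M$, and $A$ is (formally) log smooth over $B\langle M\rangle$. Since $R\Gamma_{\cPrism}(-)$ is a strict \'etale sheaf on quasisyntomic log formal schemes, it suffices to establish the asserted equivalence after passing to such a chart; thus the geometric content is entirely local and I may assume $X = \Spf(A^\wedge_p)$ with the chart $M$.

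The key step is to interpret the displayed cosimplicial object as arising from a quasisyntomic cover in the log category. Write $N_\bullet = M_{\rm perf}\oplus_M^{\rm sat}\cdots\oplus_M^{\rm sat}M_{\rm perf}$ for the \v Cech nerve (in saturated monoids) of $M \to M_{\rm perf}$, and $A_\bullet = A\cotimes_{B\langle M\rangle}B\langle N_\bullet\rangle$. One checks that the map $(\Spf(A^\wedge_p), M) \to (\Spf((A_0)^\wedge_p), N_0) = (\Spf((A\cotimes_{B\langle M\rangle}B\langle M_{\rm perf}\rangle)^\wedge_p), M_{\rm perf})$ is a quasisyntomic cover of log formal schemes: it is faithfully flat because $\Z[M] \to \Z[M_{\rm perf}]$ is, and the relative cotangent complex (in the sense of Gabber's log cotangent complex) is controlled by $L_{M_{\rm perf}/M}$, which is concentrated in degree zero and flat since $M_{\rm perf}/M$ is a filtered colimit of finite free extensions at the level of groups; hence the source is quasisyntomic over the target exactly as in the non-log perfectoid covers used in \cite{BMS2}, \cite{BLPO-BMS2}. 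Moreover each $(\Spf((A_n)^\wedge_p), N_n)$ is again of the same shape (a base change of a log smooth affine along a perfection-type monoid map), so its log prismatic cohomology is computed by the same formalism, and $N_\bullet$ is precisely the \v Cech nerve so the cosimplicial diagram is the \v Cech nerve of this cover.

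With the cover identified, the theorem follows from quasisyntomic descent for $R\Gamma_{\cPrism}$: it is a sheaf for the quasisyntomic topology on quasisyntomic log formal schemes (this is part of the construction of \cite{BLPO-BMS2}, being the log analog of \cite[\S 7]{BMS2}), so $R\Gamma_{\cPrism}(X^\wedge_p) \simeq \varlim_{\Delta} R\Gamma_{\cPrism}((\Spf((A_n)^\wedge_p), N_n))$, which is the right-hand side. For Example \ref{ex:log-smooth-descent}, the extra input is that on each term $(\Spf((A_n)^\wedge_p), N_n)$ one may further identify log prismatic cohomology with the stated non-log prismatic cohomology $\cPrism_{A_n}$ — because $N_n$ is uniquely $p$-divisible (being built from $M_{\rm perf}$ by saturated pushouts, which preserve $p$-divisibility of the associated group), so the log structure becomes ``trivial from the prismatic viewpoint,'' exactly the mechanism by which log THH of a log ring with perfect/uniquely-$p$-divisible log structure agrees with ordinary THH in \cite{rognes}, \cite{BLPO-BMS2}.

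The main obstacle I expect is the verification that the first map of the cosimplicial diagram is genuinely a \emph{quasisyntomic} cover in the log sense, and more precisely that quasisyntomicity is stable under the relevant base changes so that every term of the \v Cech nerve lies in the site on which $R\Gamma_{\cPrism}$ is defined and is a sheaf. This requires care with the log cotangent complex of $B\langle M\rangle \to B\langle M_{\rm perf}\rangle$ and its iterated saturated pushouts: one must show it has $p$-complete Tor-amplitude in $[-1,0]$ (flatness up to $p$-completion), which hinges on the fact that saturated amalgamated sums of perfections of finitely generated saturated monoids behave well $p$-adically — this is the technical heart, and it is where the hypothesis ``bounded $p^\infty$-torsion'' on $B$ and the fine structure of $M$ enter. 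Once this flatness/quasisyntomicity statement is in place, descent is formal and the identification of terms reduces to the already-established log smooth case.
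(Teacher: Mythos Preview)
Your proposal contains a genuine gap at its central step. You claim that the cosimplicial object with monoid part $N_\bullet = \check{C}^{\rm sat}_M(M_{\rm perf})^\bullet$ is the \v Cech nerve of the map $(A,M)\to (A\cotimes_{B\langle M\rangle}B\langle M_{\rm perf}\rangle, M_{\rm perf})$ in the log quasisyntomic site, so that quasisyntomic descent applies directly. This is not correct: the \v Cech nerve in the category of (integral) pre-log rings underlying the log quasisyntomic site has monoid part given by the \emph{ordinary} iterated pushout $M_{\rm perf}\oplus_M\cdots\oplus_M M_{\rm perf}$, not the saturated one. The saturation map $M_{\rm perf}\oplus_M M_{\rm perf}\to M_{\rm perf}\oplus_M^{\rm sat}M_{\rm perf}$ is not an isomorphism in general, and there is no reason for prismatic cohomology to be invariant under it. So the displayed limit is \emph{not} the output of a sheaf condition for any Grothendieck topology you have identified, and the reduction to ``descent is formal'' fails.

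The paper's argument proceeds quite differently and this difference is the actual content of the section. One first uses quasisyntomic descent along an \emph{unrelated} cover $(A,M)\to (S,Q)$ with $(S,Q)\in \lQRSPerfd$, reducing via the secondary filtration on $\gr_N^i$ to proving the analogous saturated-descent statement for the wedge powers of the log cotangent complex. The latter is then established by direct computation (Lemmas \ref{lem:vanish-cotangent-Kummer} and \ref{lem:sat-descent-cotangent}, Theorem \ref{thm:sat-descent-nonderived}): the key input is the identification $N\oplus_M^{\rm sat}\cdots\oplus_M^{\rm sat}N\cong N\oplus (N^{\rm gp}/M^{\rm gp})^{d-1}$ of Lemma \ref{lem:illusie-trick}, which makes the cosimplicial cotangent complex explicitly contractible relative to $(A,M)$. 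Only \emph{after} this is in hand does one invoke Proposition \ref{prop:semipf-invariance-prismatic} to strip the (semiperfect) log structure from each term. In short, ``saturated descent'' is a theorem proved by hand for the cotangent complex and then propagated through the Nygaard filtration; it is not an instance of descent for a site.
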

We stress that the rings $A\cotimes_{B\langle M\rangle}B\langle N\oplus_M^{\rm sat}\ldots \oplus_M^{\rm sat}N\rangle$ are not obtained by tensor products in any category of rings: rather, these \emph{saturated} tensor products actively extract data from the log structure. 
By the same method, one obtains similar results for the log crystalline cohomology (see Theorems \ref{thm:main-sat-descent-dRW} and  \ref{thm:main-sat-descent-dRW2}), which combined with the crystalline comparison of \cite{BMS2} gives Theorem \ref{thm:intro-crys-comp}. 

This descent datum is considered also in \cite[\S 6]{Koshikawa-Yao} applied to the Kummer \'etale cohomology with $p$ inverted, and in \emph{loc.\ cit.} it is used to deduce a Kummer \'etale comparison isomorphism \cite[Theorem 6.1 and Proposition 6.13]{Koshikawa-Yao}. 

\subsection{A log version of Breuil-Kisin cohomology}
In the last part of the paper, we use the above results to construct a logarithmic version of the Breuil--Kisin cohomology of \cite[Theorem 1.2]{BMS2}
(see \cite[Example 1.6]{Koshikawa} for a construction based on site-theoretic log prismatic cohomology).
For $K$ a discrete valuation field with ring of integers $\cO_K$, uniformizer $\varpi$ and residue field $k$, let $\mathfrak{S} = W(k)\llbracket z\rrbracket$. There is a surjective map $\tilde \theta: \mathfrak{S} \to \cO_K$ determined by the inclusion $W(k)\subseteq \cO_K$ and $z \mapsto \varpi$. The kernel of this map is generated by an Eisenstein polynomial $E=E(z) \in \mathfrak{S}$ for $\varpi$. Let $\phi$ be the endomorphism of $\mathfrak{S}$ determined by the Frobenius on $W(k)$ and $z\mapsto z^p$. The following makes reference to the notion of \emph{log Cartier type} introduced in \cite[Definition 4.8]{katolog}. 

\begin{thm}[Theorems \ref{thm:BK-horizontal} and \ref{thm:BK-semistable}]\label{thm:intro-BK}
Let $\lSm_{\cO_K}$ (resp. $\lSm_{(\cO_K,\varpi)}^{\rm cart}$) be the category of fs log schemes of finite type that are log smooth over $\cO_K$ (resp. log smooth over $(\cO_K,\varpi)$ of Cartier type). There are cohomology theories
\begin{align*}
R\Gamma_{\cPrism}(-/\mathfrak{S})&\colon \lSm_{(\cO_K,\triv)}^{\rm op}\to \cD(\mathfrak{S})\\
R\Gamma_{\cPrism}(-/(\mathfrak{S},\varpi))&\colon \lSm_{(\cO_K,\varpi)}^{\rm cart, op}\to \cD(\mathfrak{S})
\end{align*}
with values in $(p,z)$-complete $E_\infty$-algebras together with a $\phi$-linear Frobenius endomorphism having the following features:
\begin{enumerate}
\item (Breuil--Kisin)  The Frobenius endomorphism $\phi$ induces equivalences
\begin{align*}
R\Gamma_{\cPrism}(-/\mathfrak{S})\otimes_{\mathfrak{S},\phi}\mathfrak{S}[1/E]&\simeq R\Gamma_{\cPrism}(-/\mathfrak{S})[1/E]\\
R\Gamma_{\cPrism}(-/(\mathfrak{S},\varpi))\otimes_{\mathfrak{S},\phi}\mathfrak{S}[1/E]&\simeq R\Gamma_{\cPrism}(-/\mathfrak{S})[1/E].
\end{align*} 
\item (de Rham comparison) After scalar extension along $\theta=\tilde \theta \circ \phi$, one recovers log de Rham cohomology:
\begin{align*}
R\Gamma_{\cPrism}(-/\mathfrak{S})\otimes_{\mathfrak{S},\theta}^L\cO_K &\simeq R\Gamma_{\rm dR}(-/\cO_K)\\
R\Gamma_{\cPrism}(-/(\mathfrak{S},\varpi))\otimes_{\mathfrak{S},\theta}^L\cO_K &\simeq R\Gamma_{\rm dR}(-/(\cO_K,\varpi))
\end{align*}
\item (crystalline comparison) After scalar extension along the map $\mathfrak{S}\to W(k)$ which is the Frobenius on $W(k)$ and sends $z$ to $0$, one recovers log crystalline cohomology of the special fiber:
\begin{align*}
R\Gamma_{\cPrism}(-/\mathfrak{S})\otimes_{\mathfrak{S}}^L W(k)&\simeq R\Gamma_{\rm crys}((-)_k/W(k))\\
R\Gamma_{\cPrism}(-/(\mathfrak{S},\varpi))\otimes_{\mathfrak{S}}^L W(k)&\simeq R\Gamma_{\rm crys}((-)_{(k,\N)}/(W(k,\N)))
\end{align*}
\end{enumerate}
\end{thm}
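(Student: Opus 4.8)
The plan is to \emph{define} the two Breuil--Kisin-type theories by saturated descent from the non-logarithmic Breuil--Kisin cohomology of \cite{BMS2}, and then to deduce each of the properties (1)--(3) by applying saturated descent simultaneously to source and target and invoking the corresponding non-logarithmic comparison. Throughout I use that, since $(\mathfrak{S},(E))$ is the Breuil--Kisin prism with $\mathfrak{S}/(E)=\cO_K$, the non-log theory $R\Gamma_{\mathfrak{S}}(-)$ is a quasisyntomic sheaf of $(p,E)$-complete $E_\infty$-$\mathfrak{S}$-algebras on quasisyntomic $\cO_K$-algebras, equipped with its $\phi_{\mathfrak{S}}$-semilinear Frobenius, and that $(p,E)$- and $(p,z)$-completeness coincide because $E$ is Eisenstein (so $E\equiv z^e$ mod $p$).

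For the construction, take $X\in\lSm_{(\cO_K,\triv)}$ with a strict \'etale-local chart $(\Spf A,M)$. The analysis underlying Theorem \ref{thm:main-sat-descent-prism} and Example \ref{ex:log-smooth-descent} shows that the rings
\[
A\cotimes_{\cO_K\langle M\rangle}\cO_K\langle M_{\rm perf}\oplus_M^{\rm sat}\cdots\oplus_M^{\rm sat}M_{\rm perf}\rangle
\]
occurring in the saturated \v Cech nerve are $p$-complete with bounded $p^\infty$-torsion and quasisyntomic over $\cO_K$, so I would set $R\Gamma_{\cPrism}(X/\mathfrak{S})$ to be the totalization over $\Delta$ of $R\Gamma_{\mathfrak{S}}$ on the $p$-completions of these rings, glued along a strict \'etale hypercover of $X$. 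For $Y\in\lSm_{(\cO_K,\varpi)}^{\rm cart}$ the recipe is the same, starting from the standard chart $\N\to\cO_K$, $1\mapsto\varpi$; here the saturated base changes along $\N\to\N_{\rm perf}$ turn $\mathfrak{S}=W(k)\llbracket z\rrbracket$ into relative perfections of perfectoid-type rings such as $W(k)\llbracket z^{1/p^\infty}\rrbracket^{\wedge}$ and make the vertical log structure perfect, so each term of the nerve falls into the perfectoid-relative log framework already developed in the first part of the paper. Functoriality, the $(p,z)$-complete $E_\infty$-$\mathfrak{S}$-algebra structure, and the $\phi$-linear Frobenius all descend termwise: each Frobenius is $\phi_{\mathfrak{S}}$-semilinear and compatible with the cosimplicial structure maps because Frobenius is an isomorphism on $M_{\rm perf}$ and on the saturated amalgams, hence passes to the totalization. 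Chart- and hypercover-independence is precisely the $\mathfrak{S}$-linear form of the saturated descent theorem.

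The three comparisons then go as follows. For (1), inverting $E$ is a filtered colimit; granting that the terms of the totalization have, \'etale-locally on $X$, uniformly bounded $p$-complete Tor-amplitude over $\mathfrak{S}$ (so that the totalization is uniformly bounded and $(-)\otimes_{\mathfrak{S}}\mathfrak{S}[1/E]$ commutes with it), property (1) reduces to the Breuil--Kisin property of each term, i.e.\ to $\phi\colon R\Gamma_{\mathfrak{S}}(-)\otimes_{\mathfrak{S},\phi}\mathfrak{S}[1/E]\isom R\Gamma_{\mathfrak{S}}(-)[1/E]$ from \cite{BMS2} (equivalently the $E$-inverted comparison of \cite{BSPrism}). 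For (2), I would combine the saturated descent description of \emph{log de Rham cohomology}, namely that $R\Gamma_{\rm dR}(X/\cO_K)$ resp.\ $R\Gamma_{\rm dR}(Y/(\cO_K,\varpi))$ is the totalization of non-log de Rham cohomology of the same saturated base changes (Theorems \ref{thm:main-sat-descent-dRW} and \ref{thm:main-sat-descent-dRW2}), with the termwise non-log de Rham comparison $R\Gamma_{\mathfrak{S}}(-)\otimes^L_{\mathfrak{S},\theta}\cO_K\simeq R\Gamma_{\rm dR}(-/\cO_K)$ of \cite{BMS2}, after checking that $(-)\otimes^L_{\mathfrak{S},\theta}\cO_K$ commutes with the totalization and that the two totalizations are identified compatibly with the base-change maps. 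Property (3) is proved identically with de Rham replaced by crystalline: the saturated descent computation of the log crystalline cohomology of the special fibre (the $W(k)$-relative form of Theorem \ref{thm:intro-crys-comp}, i.e.\ Theorems \ref{thm:main-sat-descent-dRW} and \ref{thm:main-sat-descent-dRW2}), feeding the standard log point $(k,\N)$ into the second case, reduces (3) to the non-log crystalline comparison $R\Gamma_{\mathfrak{S}}(-)\otimes^L_{\mathfrak{S}}W(k)\simeq R\Gamma_{\rm crys}((-)_k/W(k))$ of \cite{BMS2}.

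The hard part will be twofold. First, establishing saturated descent in the \emph{relative-over-$\mathfrak{S}$} setting (rather than the absolute one) with enough functoriality to glue along \'etale hypercovers and be chart-independent; in the semistable case this also forces one to show that the Cartier type hypothesis is stable (or controllable) under the saturated base changes, so that the special fibre acquires the correct log-point log structure and the log crystalline comparison applies — this is where that hypothesis is really consumed. Second, the repeated interchange of the totalization $\varlim_\Delta$ with the non-exact operations $(-)[1/E]$, $(-)\otimes^L_{\mathfrak{S},\theta}\cO_K$, and $(-)\otimes^L_{\mathfrak{S}}W(k)$, which requires uniform (in the \v Cech degree, \'etale-locally on $X$) bounds on the cohomological amplitude of Breuil--Kisin cohomology of the quasisyntomic algebras in play. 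Once these two points are in place, the rest is formal bookkeeping of the descent diagrams.
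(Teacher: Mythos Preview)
Your approach is genuinely different from the paper's, and while it is plausible in the horizontal case, it has a real gap in the semistable case.

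\textbf{What the paper does.} The paper does \emph{not} define the Breuil--Kisin theories by saturated descent. Instead, it follows \cite[\S 11]{BMS2} directly: one first builds a Frobenius-twisted version $\cPrism_{-/\mathfrak{S}^{(-1)}}$ via relative log $\THH$ over the log cyclotomic bases $(\Sph[z],\triv)$ and $(\Sph[z],\langle z\rangle)$ (Corollary~\ref{cor:frob-twisted}), and then performs a Frobenius descent (Proposition~\ref{prop:frobdescent}) to obtain the correct $\mathfrak{S}$-linear theory. The key input for Frobenius descent is a \emph{log Segal conjecture} (Propositions~\ref{prop:log-segal} and~\ref{prop:log-segal2}), which shows that the cyclotomic Frobenius on $\THH((A,M)/(\Sph,\N))$ is an equivalence in high degrees. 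Saturated descent enters only indirectly, through the crystalline comparison Theorems~\ref{thm:crys_comp_triv} and~\ref{thm:crys_comp_logpoint} used inside Corollary~\ref{cor:frob-twisted}(3).

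\textbf{The gap in the semistable case.} Your proposal for $(\cO_K,\langle\varpi\rangle)$ requires perfecting the chart $\N\to\cO_K$, $1\mapsto\varpi$. But this map is not the zero map (unlike the log point $(k,\N)$ of Example~\ref{ex:condition-clubsuit}(3)): extending it along $\N\to\N_{\rm perf}$ forces you to adjoin $p$-power roots of $\varpi$, i.e.\ to pass from $\cO_K$ to $\cO_{K_\infty}$. You note this yourself when you write that $\mathfrak{S}$ becomes $W(k)\llbracket z^{1/p^\infty}\rrbracket^\wedge$. At that point each term of the nerve computes prismatic cohomology relative to a perfectoid base, with values in $A_{\inf}(\cO_{K_\infty})$, not in $\mathfrak{S}$. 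Getting back down to $\mathfrak{S}$-coefficients is precisely the Frobenius descent step, and this is not formal: it is exactly what Proposition~\ref{prop:frobdescent} provides, and its proof needs the log Segal conjecture (Proposition~\ref{prop:log-segal2}) as input. Your proposal contains no mechanism for this descent, so as written it does not produce a $\mathfrak{S}$-valued theory in the semistable case.

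\textbf{The horizontal case.} Here your approach is more reasonable: the terms of the saturated nerve are genuinely quasisyntomic over $\cO_K$ (Lemma~\ref{lem:cotimes-qsyn} with $P=\triv$), so non-log Breuil--Kisin cohomology from \cite{BMS2} applies to each. However, you still need a saturated-descent statement for $R\Gamma_{\mathfrak{S}}$ itself to make the definition chart-independent; the paper only proves this for the absolute theory (Theorem~\ref{thm:main-sat-descent-prism}) and for the relative theory over a perfectoid base (Lemma~\ref{lem:sat-descent-prismatic-general}), not over $\mathfrak{S}$. One can likely reduce modulo $E$ to de~Rham cohomology over $\cO_K$ and invoke $(p,E)$-completeness, but this is an additional argument you would have to supply.
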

The strategy of the construction of these cohomology theories follows \cite[\S 11]{BMS2}: they are defined in terms of relative log $\THH$ over the log cyclotomic bases $(\Sph[\N])$ and $(\Sph[\N],\N)$, together with a Frobenius descent: this last part, in turns, follows from a logarithmic version of the Segal conjecture (see Proposition \ref{prop:log-segal} and \ref{prop:log-segal2}).

Similarly to the absolute case, the cohomology theory $R\Gamma_{\cPrism}(-/\mathfrak{S})$ is again representable in the log motivic category of formal schemes over $\cO_K$, building an oriented ring spectrum (Theorem \ref{thm:BK-motivic}):\[
\E^{\rm BK}\in \CAlg(\logFDA(\cO_K,\mathfrak{S})).
\] 
This has the following interesting application (see Corollary \ref{cor:gysin-Ainf}):
let $\kZ\to \kX$ be a morphism of $p$-adic proper smooth formal schemes over $\cO_K$ such that it is locally the $p$-completion of a pure codimension $d$ closed immersion of smooth schemes over $\cO_K$.
Let $C$ be the completion of an algebraic closure of the $p$-adic completion of $K(p^{1/p^{\infty}})$. Then there is a functorial Gysin map\[
R\Gamma_{A_{\inf}}(\kZ_{\cO_C})\{j-d\}[-2d]\to R\Gamma_{A_{\inf}}(\kX_{\cO_C})\{j\},
\]
where $R\Gamma_{A_{\inf}}$ is the $A_{\inf}$-cohomology of \cite{BMS1}, and whose homotopy cofiber is\[
R\Gamma_{\cPrism}((\Bl_\kZ(\kX),E)/\mathfrak{S})\{j\}\otimes_{\mathfrak{S}}^LA_{\inf}.
\]
Similarly, we have the computation of the cohomology of Grassmannians and the smooth blow-up formula. We remark that the Gysin or residue sequences considered here are compatible with the corresponding sequences for de Rham, Hodge, Hodge-Tate, and crystalline cohomology, via the comparison isomorphisms.

\subsection{Future perspectives} In a follow up paper \cite{LogBeilinson} we have considered the problem of understanding under which conditions the Nygaard-complete variant of log prismatic cohomology considered here indeed comes to life as the Nygaard-completion of the site-theoretic definition pursued by Koshikawa--Yao. Such a comparison result should, among other things, open the door to pursue log variants of the global motivic filtration of Bhatt--Lurie \cite[\S 6.4]{BhattLurie}. We consider this, in turn, to be a stepping stone to understanding the even filtration of Hahn--Raksit--Wilson in the setting of log ring spectra, and its compatibility with the residue sequences considered here. 

In light of Theorem \ref{thm:main-sat-descent-prism}, it is tempting to \emph{define} log $K$-theory in terms of saturated descent. We intend to pursue this definition in future work and relate it to trace invariants based on Rognes' log topological Hochschild homology. A similar definition of log $K$-theory is introduced by Nizio{\l} \cite[\S 4]{Niziol}.

\subsection*{Notation}  We will use freely the notation from the table in \cite[\S 1.1]{BPO-SH}: in particular every (log) smooth morphism will always be considered separated and of finite type, unless specified.
Recall also that the class of such log smooth morphisms is denoted by $\mathrm{lSm}$.
 
The base of every category of (log) motives will always be assumed to be quasi-compact and quasi-separated, but not necessarily noetherian. 

For $\Lambda$ a commutative ring and $\cC$ a $\Lambda$-linear stable $\infty$-category, we let $\Map_{\cC}$ be the mapping spectrum enriched in $\cD(\Lambda)$, and $\uMap_{\cC}$ the internal mapping spectrum of $\cC$. If there is no confusion, we will suppress $\cC$.

We will use the following convention regarding monoid rings: For any commutative monoid $M$ and a fixed prime number $p$,  we let ${\Z}_p \langle M \rangle$ denote the $p$-completion of the monoid ring ${\Z}_p[M]$. For a different $p$-complete base ring $R$,
we shall write $R\langle M \rangle$ for the $p$-complete tensor product $R \widehat{\otimes}_{{\Z}_p} {\Z}_p \langle M \rangle$.

If $\cC$ is an $\infty$-category endowed with a topology $\tau$ and $\cD$ is an $\infty$-category with limits and colimits, we denote by $\Shv_\tau(\cC, \cD)\subset \PSh(\cC,\cD)$ the full sub $\infty$-category of $\cD$-valued $\tau$-sheaves, and by $\Shv^\wedge_\tau(\cC,\cD) \subset \Shv_\tau(\cC, \cD)$ its full subcategory of $\tau$-hypersheaves. We will write $L_\tau$ to denote the $\tau$-sheafification functor as well as the $\tau$-hypersheafification functor (depending on the context).

We will always consider Gabber's log cotangent complex (\cite[\S 8]{Olsson}) and the corresponding version of log derived de Rham cohomology: we refer to  \cite[\S 6]{Bhatt2012padicDD} for the construction and \cite[\S 3]{BLPO-HKR} and \cite[\S 2]{BLPO-BMS2} for a quick recall. We will freely refer to \cite[\S 2 and 3]{BLPO-BMS2} for the results and constructions on logarithmic (topological) Hochschild homology, as introduced by Rognes \cite{rognes}.

To ease the notation, for $R$ a perfectoid ring, we will write $\Prism_{/R}$ and $\cPrism_{/R}$ instead of $\Prism_{/A_{\inf}(R)}$ and $\cPrism_{/A_{\inf}(R)}$.

\subsection*{Acknowledgements}
The authors would like to thank Ben Antieau, Teruhisa Koshikawa, John Rognes, Shuji Saito, and Paul Arne {\O}stv{\ae}r for many valuable discussions and comments. The authors thank the anonymous referees for their meticulous analysis of the paper, providing helpful comments which filled some gaps in the arguments and led to an improved presentation.

\section{Log motives and abstract representability results}\label{sec:representability}

Throughout this section, we fix a $p$-complete ring $R$ with bounded $p^\infty$-torsion (that is, the $p$-primary torsion is killed by a fixed power of $p$) and trivial log structure. This assumption is necessary to apply the motivic formalism developed in \cite{BPO}, \cite{BPO-SH}, but will be dropped in later sections where more general comparison results are considered.

\subsection{Motivic sheaves}
We begin by recalling some facts about $p$-adic formal (log) schemes in the non (necessarily) locally noetherian setting. Log structures on formal schemes will always be defined \'etale locally. For locally noetherian  fs log schemes, this does not interfere with the setting of \cite{BPO} in light of Nizio\l's theorem \cite[Theorem 5.6]{NiziolToric}.

\begin{defn}\label{def:formal-scheme} (see \cite[\S A.1]{Koshikawa})
    A \emph{bounded $p$-adic formal log scheme} over $R$ is a pair consisting of a bounded $p$-adic formal scheme $\ul{\kX}$ (i.e.\ with bounded $p^\infty$-torsion) and a map of \'etale sheaves of monoids $\alpha\colon M_{\kX}\to \cO_{\ul{\kX}}$ that induces an isomorphism $\alpha^{-1}\cO_{\kX}^\times \simeq \cO_{\kX}^{\times}$. For $(A,M)$ a $p$-complete pre-log $R$-algebra such that $A$ has bounded $p^\infty$-torsion, we let $\Spf(A,M)$ be the $p$-adic formal log scheme $\Spf(A)$ with log structure given by the logification of the map $M\to \cO_{\Spf(A)}$, where $M$ is the constant sheaf on the \'etale site of $\Spf(A)$. 
A bounded $p$-adic formal log scheme is \emph{quasi-coherent} \cite[\S 1.1]{BeilinsonPeriod} if \'etale-locally it admits a chart, i.e. \'etale-locally on $\ul{\kX}$ it is isomorphic to $\Spf(A,M)$ for $(A,M)$ a $p$-complete pre-log $R$-algebra such that $A$ has bounded $p^\infty$ torsion.  We say that $\kX$ is \emph{coherent} if it is quasi-coherent and if the charts $(A, M)$ can be chosen with $\overline{M} = M/M^*$ a finitely generated monoid.  
\end{defn}

Recall that for a commutative ring $B$, an object $M\in \cD(B)$ is called $p$-completely flat if $M\otimes^{L}_B B/pB\in \cD(B/p B)$ is concentrated in degree $0$ and a flat $B/p B$-module. Similarly, a $B$-module $N$ is $p$-completely flat if $N[0]\in \cD(B)$ is $p$-completely flat. 
\begin{defn}
 We let $\FlQSm_R$ (formally log quasismooth) denote the category of quasi-coherent bounded $p$-adic formal log schemes $\kX$ over $R$ that, strict \'etale locally, are isomorphic to $\Spf(A,M)$ with $(A,M)$ a pre-log $R$-algebra such that $A$ is $p$-complete with bounded $p^\infty$-torsion such that $R\langle M \rangle\to A$ is $p$-completely flat and  the cotangent complex $\L_{(A,M)/R}$ is $p$-completely flat.
 We equip $\FlQSm_R$ with the strict \'etale topology, i.e. $\kY\to \kX$ is a strict \'etale cover if  it is strict and $(|\ul{\kY}|,\cO_{\ul{\kY}/p^n})\to (|\ul{\kX}|,\cO_{\ul{\kX}/p^n})$ is an \'etale cover for all $n$.
\end{defn}

\begin{rmk}
If $R$ has bounded $p^{\infty}$ torsion and $A$ is $p$-completely flat over $R$, then $A$ has bounded $p^{\infty}$-torsion by \cite[Corollary 4.8]{BMS2}. Note that the log cotangent complex is logification-invariant by \cite[Lemma 3.11]{BLPO-HKR} and satisfies strict \'etale descent, so if $(A,M)$ is a quasismooth pre-log $R$-algebra in the sense of \cite[Definition 4.5]{BLPO-BMS2} such that every point has a chart $M'\to A$ that satisfies $R\langle M'\rangle \to A$ is $p$-completely flat, then \'etale locally on $A$ we have $\L_{(A,(M')^a)}\simeq \L_{(A,M)}$. This is $p$-completely flat, so $\Spf(A,M)$ is  formally log quasismooth over $R$.
\end{rmk}

\begin{rmk}
For $X$ a \emph{fine} log scheme with bounded $p^\infty$-torsion, we consider its formal $p$-completion as the bounded $p$-adic formal log scheme $X_p^\wedge$ with $\ul{X_p^\wedge}=\ul{X}\times \Spf(\Z_p)$ with pullback log structure. If $X$ is log smooth over $R$, $X_p^\wedge$ is formally log smooth over $R$:  indeed by \cite[Theorem IV.3.3.1]{ogu}, \'etale-locally on $\ul{X}$ there is a chart $\Spec(A,M)$ such that $R[M] \to A$ is smooth. Since $R[M]$ is a free $R$-module, $R[M]$ has bounded $p^\infty$-torsion, in particular by \cite[Lemma 4.4 and 4.7]{BMS2}, the (classic) $p$-completion $R\langle M\rangle =(R[M])_p^\wedge\to A_p^\wedge$ is $p$-completely flat;
 moreover, the cofiber sequence \cite[(3.4)]{BLPO-HKR} \[
A_p^{\wedge}\otimes_A^L\L_{(A,M)/R}\to \L_{(A_p^\wedge,M)/R}\to \L_{(A_p^\wedge,M)/(A,M)}
\] 
implies that after derived $p$-completion $\L_{(A_p^\wedge,M)/R}\simeq A_p^{\wedge}\otimes_A^L\L_{(A,M)/R}$, which is $p$-completely flat as $\L_{(A,M)/R}\simeq \Omega^1_{(A,M)/R}$ by \cite[Propositions 4.5 and 4.6]{BLPO-HKR}, which is a projective (hence flat) $R$-module. The assignment $X\mapsto X_p^\wedge$ is clearly functorial and for any $\cC\in \PrLo$ it induces an adjoint pair: 
\[
\begin{tikzcd}
\Sh_{\set}(\lSm_R,\cC)\ar[r,shift left=1.5]& \Sh_{\set}(\FlQSm_R,\cC)\ar[l]
\end{tikzcd}\]
{For $\SmlSm_R\subseteq \lSm_R$ the category of log smooth schemes over $R$ such that $\ul{X}$ is smooth (see \cite[A.5.10]{BPO}): the $p$-completion induces again}\[
\begin{tikzcd}
\Sh_{\set}(\SmlSm_R,\cC)\ar[r,shift left=1.5]& \Sh_{\set}(\FlQSm_R,\cC)\ar[l]
\end{tikzcd}\]
\end{rmk}
\begin{defn}
    Let $\Lambda$ be a commutative ring. Let $(\P^n_{\Spf(R)},\P^{n-1}_{\Spf(R)})\in \FlQSm_R$ be the formal completion of the log scheme $(\P^n,\P^{n-1})$. Then we let $\logFDAeff(R,\Lambda)$ be the localization of $\Sh_{\set}(\FlQSm_R,\cD(\Lambda))$ with respect to the class maps given by the projections $(\P^n_{\Spf(R)},\P^{n-1}_{\Spf(R)})\times \kX\to \kX$
    for $\kX\in \FlQSm_R$ and $n\geq 1$. By construction, there is a canonical monoidal adjunction:
    \begin{equation}\label{eq:DAtoFDA}
        \begin{tikzcd}
    \logDAeff(R,\Lambda)\ar[r,shift left=1.5,"\Comp_!"]&\logFDAeff(R,\Lambda)\ar[l,"\Comp^*"]
    \end{tikzcd}
    \end{equation}
    where $\logDAeff(R,\Lambda)$ is the $\infty$-category of algebraic (strict) \'etale log motivic sheaves constructed in the same way starting from $\Sh_{\set}(\SmlSm_R,\cC)$ (see \cite[Proposition 5.4.2]{BPO} and \cite[Theorem 3.5.5 and 3.5.6]{BPO-SH} for all the equivalent models).
\end{defn}

\subsection{Generalities on spectra} We briefly review the process of formally inverting the tensor product with a fixed object of a symmetric monoidal stable $\infty$-category. This material is not new, and probably well-known to the experts, but we collect the relevant results for the reader's convenience. See \cite{robalo} for a detailed discussion.

\begin{defn} Let $\CAlg(\Cat_\infty) \to \Cat_\infty$ be the forgetful functor from symmetric monoidal $\infty$-categories. By \cite[Example 3.1.3.14]{HA} it admits a left adjoint, denoted $\Sym^\infty(-)$. We
let $\mathrm{Sym}^\infty (*)$ be the free symmetric monoidal $\infty$-category generated by one object $*$. This was denoted $free^\otimes(\Delta[0])$ in \cite[\S 2.1]{robalo}.  Note that the underlying $\infty$-category of $\Sym^\infty(*)$ agrees with $B\Sigma_\mathbb{N} = \coprod_n B\Sigma_n$ (see also \cite[Construction 1.3.1]{AnnalaIwasaUnivers}).
\end{defn}

\begin{defn}Let $\cC^\otimes \in \PrLo$ be a  stable and presentably symmetric monoidal $\infty$-category, and let $\cC^\Sigma =  \Fun^{\rm L}(\Sym^\infty(*), \cC)^\otimes$ be the category of symmetric sequences in $\cC$ as in  \cite[Definition 6.3.0.2]{HA}  (see \cite[Definition 6.3]{CiDeg-LocalStableHomAlg} for the $1$-categorical version), equipped with the Day convolution product \cite[\S 2.2.6]{HA}. 

We have $ \Fun^{\rm L}(\Sym^\infty(*), \cC)_{\langle1\rangle} = \prod_n \Fun^{\rm L}(B\Sigma_n, \cC)$. 
Informally, its objects are given by collections $(A_n)_{n\in \mathbb{N}}$, where each $A_n$ is an object of $\cC$ equipped with an action of $\Sigma_n$. 
\end{defn}

\begin{constr}We define an endofunctor $(-)\{-1\}$ of $\cC^\Sigma$ as follows. Given $X \in \cC^\Sigma$, we set  $X\{-1\}_0 = 0$ and  $X\{-1\}_n = \Sigma_n\times_{\Sigma_{n-1}}X_{n-1}$, where $\Sigma_n\times_{\Sigma_{n-1}}X_{n-1}$ is the equalizer 
\[
\bigoplus_{\gamma \in \Sigma_n} X_{n-1} \xrightarrow{(\tau_\sigma  -  \tau'_\sigma)_{\sigma \in \Sigma_{n-1}} } \bigoplus_{\gamma \in \Sigma_n} X_{n-1}
\]
and $\tau_\sigma$ (resp. $\tau_\sigma'$) denotes the endomorphism of $\bigoplus_{\gamma \in \Sigma_n} X_{n-1}$ given by the action of $\Sigma_{n-1} \subset \Sigma_n$ permuting the components (resp. given by the diagonal action). See \cite[\S 6.2]{CiDeg-LocalStableHomAlg}.

Let $\mathbbm{1}\{-1\}\in \cC^{\Sigma}$ be given as  $(\mathbbm{1}\{-1\})_n = \mathbbm{1}$ if $n=1$ and $0$ otherwise, where $\mathbbm{1}$ is the unit of $\cC^{\otimes}$. By definition of the convolution product, we observe that $M\{-1\} = M \otimes \mathbbm{1}\{-1\}.$ 
\end{constr}

\begin{rmk}
For every $n\in \N$, the canonical projection induces an evaluation functor 
\[ \cC^\Sigma \to \cC, \quad (A_i)_{i\in \mathbb{N}}  \mapsto A_n \]
This functor has a left adjoint $\cC \to \cC^\Sigma$, given by sending $X$ to $X\{n\} = (-)\{1\}^{\circ n} X$, where by abuse of notation we write $X$ for the image of $X$ via
\[ \cC = \Fun^L(*, \cC)  \to \prod_n \Fun^L(B\Sigma_n ,\cC) \]
given by left Kan extension along the maps $* \to B\Sigma_n$. 
\end{rmk}

Recall from \cite[Definition 2.16]{robalo} that an object $T$ in a symmetric monoidal $\infty$-category $\cC^\otimes$ is called \emph{symmetric} if the cyclic permutation $\sigma_3$ on $T\otimes T \otimes T$ is equivalent to the identity on $\cC$.

\begin{defn} Let $T$ be a symmetric object of $\cC^\otimes$, and let $S_T = \Sym^\infty(T\{-1\})\in \CAlg(\cC^\Sigma)$ be the free commutative algebra on $T\{-1\}$.  
Following \cite[Definition 6.2]{HoveySpectra} or \cite[\S 6.6]{CiDeg-LocalStableHomAlg}, we define the $\infty$-category $\PSpt_T^\Sigma(\cC)$ of symmetric $T$-pre-spectra in $\cC$ as the symmetric monoidal $\infty$-category $\Mod_{S_T}(\cC^\Sigma)$. 
\end{defn}

For $M\in \Mod_{S_T}(\cC^\Sigma)$, we have the structure map $S_T \otimes M\to M$. Precomposing with the canonical morphism $T\{-1\} \to S_T$, we obtain a map
\[ \sigma_M\colon  T\{-1\} \otimes M \simeq (T \otimes M)\{-1\} \to M\]
where the tensor product is the convolution product in $\cC^\Sigma$. By adjunction, we obtain a map 
\begin{equation}\label{eq:adjoint_assembly_map} \gamma_M \colon M\to \underline{\Map}_{\cC^\Sigma} ( T\{-1\}, M) =: \Omega_T M.\end{equation}
In components, it is given by a collection of morphisms $M_n \to \ul\Map_\cC ( T, M_{n+1})$ for $n \geq 0$. 

\begin{defn}[{\cite[Definitions 7.6 and 7.7]{HoveySpectra}, \cite[\S 6.23]{CiDeg-LocalStableHomAlg}}]
 The symmetric monoidal $\infty$-category of symmetric $T$-spectra  $\Spt_T^\Sigma(\cC)$ in $\cC$ is the Bousfield localization of the $\infty$-category $\PSpt_T^\Sigma(\cC)$ of symmetric $T$-pre-spectra with respect to the collection of morphisms $\gamma_M$ of \eqref{eq:adjoint_assembly_map} for $M\in \cC^\Sigma$. 
\end{defn}

\begin{prop}\label{prop:def-spt}
    The $\infty$-category $\Spt_T^\Sigma(\cC)$ is a presentable stable symmetric monoidal $\infty$-category, and it is naturally equivalent to the formal inversion $\cC[T^{-1}]$ of the object $T$ constructed in \cite[Proposition 2.9]{robalo}.
\end{prop}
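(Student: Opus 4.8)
The plan is to verify that the explicit model $\Spt_T^\Sigma(\cC)$ built from symmetric $T$-pre-spectra satisfies the universal property that characterizes Robalo's formal inversion $\cC[T^{-1}]$ in $\PrLo$, namely: it is the initial presentably symmetric monoidal $\infty$-category under $\cC$ in which the image of $T$ becomes $\otimes$-invertible. Since the universal property determines $\cC[T^{-1}]$ up to equivalence, establishing it also yields presentability, stability, and the symmetric monoidal structure for free (these are part of Robalo's conclusions in \cite[Proposition 2.9]{robalo}). Concretely, I would produce the comparison by matching our Bousfield localization against the one described in \cite[\S 6]{CiDeg-LocalStableHomAlg} and in \cite[\S 7]{HoveySpectra}, where exactly this equivalence is proven in the model-categorical (resp.\ triangulated) setting; the $\infty$-categorical statement then follows by passing to underlying $\infty$-categories, as explained in \cite{robalo}.

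First I would recall that $\PSpt_T^\Sigma(\cC) = \Mod_{S_T}(\cC^\Sigma)$ is presentable and presentably symmetric monoidal, being modules over a commutative algebra object in the presentably symmetric monoidal $\infty$-category $\cC^\Sigma$ of symmetric sequences (Day convolution on $\Fun^{\mathrm L}(\Sym^\infty(*),\cC)$); this uses \cite[\S 2.2.6, \S 4.5]{HA}. Next, $\Spt_T^\Sigma(\cC)$ is by definition the Bousfield localization at the class $\{\gamma_M : M \in \cC^\Sigma\}$; I would check this class is generated by a small set (it suffices to take $M$ running over a set of generators of $\cC^\Sigma$, using that $\otimes$ and $\ul{\Map}$ preserve the relevant (co)limits), so the localization exists within $\PrL$ and the localized category is again presentably symmetric monoidal provided the local objects form a $\otimes$-ideal — which holds because each $\gamma_M$ is obtained from the single map $T\{-1\}\to S_T$ by tensoring, so the $S_T$-module structure makes the local-equivalence class closed under $\otimes$. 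Stability then follows since $\cC^\Sigma$ is stable and $S_T$-modules in a stable category are stable, and Bousfield localization of a stable presentable $\infty$-category at a $\otimes$-ideal is stable.

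The key point is that in $\Spt_T^\Sigma(\cC)$ the object $T$ (i.e.\ the image of $T$ under $\cC \to \cC^\Sigma \to \PSpt_T^\Sigma(\cC) \to \Spt_T^\Sigma(\cC)$) becomes invertible, with inverse represented by $\mathbbm{1}\{-1\}$ — this is the content of inverting the shift, and it is forced precisely by localizing at the maps $\gamma_M$, which are the unit maps $M \to \Omega_T M$ of the $(-\otimes T\{-1\}) \dashv \Omega_T$ adjunction. One then verifies that $\Spt_T^\Sigma(\cC)$ together with the canonical functor $\Sigma_T^\infty\colon \cC \to \Spt_T^\Sigma(\cC)$ is initial among presentably symmetric monoidal $\infty$-categories under $\cC$ that invert $T$: given any such $(\cD, F)$, the fact that $T$ is symmetric (in the sense of \cite[Definition 2.16]{robalo}) guarantees that $F$ extends to a symmetric monoidal left adjoint out of the model of $T$-spectra, essentially uniquely — this is Robalo's theorem, and the role of symmetry is exactly to kill the $\Sigma$-action obstruction so that the permutation coherences on $T^{\otimes n}$ do not interfere with the universal extension. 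Matching this universal property with that of \cite[Proposition 2.9]{robalo} gives the asserted equivalence $\Spt_T^\Sigma(\cC) \simeq \cC[T^{-1}]$.

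The main obstacle I anticipate is the bookkeeping needed to identify our localizing set $\{\gamma_M\}$ with the localizing set used by Robalo (or by Cisinski--Déglise / Hovey), and in particular to check that they have the same local objects: Robalo's construction inverts $T$ by a filtered colimit / stabilization procedure, whereas here we impose the spectrum condition via the $\gamma_M$'s, and reconciling the two requires knowing that $T\{-1\}$ is an invertible symmetric sequence after localization and that the two resulting reflective subcategories coincide. This is where the hypothesis that $T$ is \emph{symmetric} is indispensable — without it the symmetric-spectra model and the formal inversion need not agree (the classical phenomenon that $\Sigma^\infty$ of a non-symmetric object may fail to be invertible in symmetric spectra) — so I would make sure to invoke symmetry at precisely the step comparing the permutation actions, citing \cite[\S 6.6--6.23]{CiDeg-LocalStableHomAlg} and \cite[\S 7--8]{HoveySpectra} for the detailed argument and \cite[Proposition 2.9, Corollary 2.22]{robalo} for the $\infty$-categorical translation.
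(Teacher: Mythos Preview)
Your proposal is correct and follows essentially the same approach as the paper, since both ultimately rest on Robalo's comparison of the symmetric spectra model with the formal inversion. The paper, however, is far more concise: it simply invokes \cite[Theorem 2.26]{robalo}, which is precisely the statement that Hovey's category of symmetric $T$-spectra agrees with $\cC[T^{-1}]$ under the symmetry hypothesis on $T$, so all of the bookkeeping you sketch (presentability of $\Mod_{S_T}(\cC^\Sigma)$, the $\otimes$-ideal property of the localizing class, invertibility of $T$ via $\mathbbm{1}\{-1\}$, and the universal property) is already packaged there and need not be reproved.
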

    \begin{proof}
       Since $T$ is a symmetric object of $\cC$, this follows from Robalo's comparison of Hovey's category of symmetric spectra, which is the content of \cite[Theorem 2.26]{robalo}. 
    \end{proof}

\subsection{Log motivic spectra}

 Let $S$ be a quasi-compact and quasi-separated scheme. Recall that the Tate object $\Lambda(1)\in \logDAeff(S,\Lambda)$ is given by the splitting $\Lambda(\P^1)\simeq \Lambda\oplus \Lambda(1)[2]$. By \cite[Proposition 3.2.7]{BPO-SH}, the object $\Lambda(1)$ is symmetric in $\logDAeff(S,\Lambda)$. We define the symmetric monoidal stable $\infty$-category of log motivic spectra by\[
\logDA(S,\Lambda):= \Spt_{\Lambda(1)}^{\Sigma}\logDAeff(S,\Lambda),
\]
cf.\ Proposition \ref{prop:def-spt}. For $X\in \PSh(\SmlSm_S,\cD(\Lambda))$, we let $X(n):=X\otimes \Lambda(1)^{\otimes n}$. An object of $\logDA(S,\Lambda)$ is then a sequence $(X_i,\sigma_i)_{i\in \N}$ where $X_n$ are $(\set,(\P^\bullet,\P^{\bullet-1}))$-local
\footnote{Recall that if $S$ is finite dimensional and noetherian this is the same as $(\det,\bcube)$-local by \cite[
Theorems 3.5.5 and 3.5.6]{BPO-SH}
}
objects of $\PSh(\SmlSm_S,\cD(\Lambda))$ equipped with an action of the symmetric group $\Sigma_n$ and $\sigma_n\colon X_n(1)\to X_{n+1}$ are maps of complexes of presheaves such that: 
\begin{itemize}
    \item[i)]
The composite\[
X_n(p)\to X_{n+1}(p-1)\to \ldots \to X_{n+p}
\]
is $\Sigma_n\times \Sigma_p$-equivariant, where $\Sigma_p$ acts on the left as the permutation isomorphism of the tensor structure of $\logDAeff(S,\Lambda)$ and on the right via the embedding $\Sigma_n\times \Sigma_p\to \Sigma_{n+p}$.
\item[ii)] The map $
X_n\to \ul{\Map}(\Lambda(1),X_{n+1})
$
adjoint to $\sigma_n$ is an equivalence in $\logDAeff(S,\Lambda)$.
\end{itemize}
If $R$ is a $p$-complete ring with bounded $p^\infty$-torsion, we define $\Lambda(1)\in \logFDAeff(R,\Lambda)$ in the same way as before. It is the image of $\Lambda(1)\in \logDAeff(R,\Lambda)$ under the strong monoidal functor $\mathrm{Comp_!}$ of \eqref{eq:DAtoFDA}, so $\Lambda(1)$ is also symmetric in $\logFDAeff(R,\Lambda)$. We define the the symmetric monoidal stable $\infty$-category\[
\logFDA(R,\Lambda):= \Spt_{\Lambda_{\rm F}(1)}^{\Sigma}\logFDAeff(R,\Lambda),
\]
whose objects are sequences $(X_n,\sigma_n)$, analogously to the case above.
The adjunction \eqref{eq:DAtoFDA} induces canonically an adjunction
\begin{equation}\label{eq:DAtoFDA-stab}
        \begin{tikzcd}
    \logDA(R,\Lambda)\ar[r,shift left=1.5,"\Comp_!"]&\logFDA(R,\Lambda)\ar[l,"\Comp^*"]
    \end{tikzcd}
    \end{equation}
    
\begin{defn}
    Let $\cC^\otimes \in \PrLo$ be a presentable symmetric monoidal $\infty$-category. The category of graded objects of $\cC$ is the functor category $\Fun( \mathbb{Z}^{\rm ds}_{\geq 0}, \cC)$. It is a symmetric monoidal $\infty$-category, under the Day convolution product. A graded commutative monoid in $\cC$ is, by definition, an object of $\CAlg(\Fun( \mathbb{Z}^{\rm ds}_{\geq 0}, \cC))$, where the symmetric monoidal structure on $\mathbb{Z}^\mathrm{ds}_{\geq 0}$ is given by the sum.
\end{defn}

\begin{rmk} A graded commutative monoid $E_*$ of $\cC$ is a sequence $\{E_i\}_{i\in \N}$ of objects of $\cC$ equipped with a unit map $\eta\colon \one \to E_0$ and a coherent commutative and associative multiplication. In particular, we obtain in the homotopy category $h\cC$ the following set of data. For any pair of integers $(i,j)$ a multiplication $\mu_{i,j}\colon E_i\otimes E_j\to E_{i+j}$ such that the diagrams commute: 
\[
    \begin{tikzcd}
        E_i\ar[r,"1\otimes \eta"]\ar[dr,equal]&E_i\otimes E_0\ar[d,"\mu_{1,0}"] &E_i\otimes E_j\otimes E_k\ar[r,"1\otimes \mu_{j,k}"]\ar[d,"\mu_{i,j}\otimes 1"]&E_i\otimes E_{j+k}\ar[d,"\mu_{i,j+k}"] &E_i\otimes E_j\ar[d,"\gamma_{i,k}"]\ar[dr,"\mu_{i,k}"]\\
        &E_i&E_{i+j}\otimes E_k\ar[r,"\mu_{i+j,k}"]&E_{i+j+k}&E_j\otimes E_i\ar[r,"\mu_{j,i}"]&E_{i+j}
    \end{tikzcd}
    \]
    where $\gamma_{i,k}$ is the symmetry isomorphism. 
\end{rmk}

For $X\in \lSm_S$, let $R\Gamma(X, F) = \Map_{\PSh(\lSm_S,\cD(\Lambda))}(\Lambda(X) , F)$. We write $\widetilde{R\Gamma} (X\times \P^1, F)$ for $\Map_{\PSh(\lSm_S,\cD(\Lambda))}(\Lambda(X\times\P^1)/\Lambda(X) , F)$. 
The following proposition is then analogous to \cite[Proposition 1.4.10]{DegMaz}:

\begin{prop}\label{prop:build-spectra}
    Let $E_*$ be a graded commutative monoid in $\logDAeff(S, \Lambda)$ (resp. $\logFDAeff(S, A)$) together with a section $c\colon \Lambda(\P^1)\to E_1[2]$ in $\PSh(\SmlSm_S,\cD(\Lambda))$ (resp. in $\PSh(\FlQSm_R,\cD(\Lambda))$) such that for all $X\in \SmlSm_S$ (resp. $\FlQSm_R$) and all $i$, the following composition is an equivalence:\[
        \begin{tikzcd}
        R\Gamma(X,E_i)\ar[r]\ar[drr,bend right=8,"\simeq"] &R\Gamma(X\times \P^1,E_i\otimes \Lambda(\P^1)) \ar[r,"c"] &R\Gamma(X\times \P^1,E_i\otimes E_{1}[2])\ar[d,"\mu_{i,1}"]\\
        &&\widetilde{R\Gamma}(X\times\P^1,E_{i+1}[2])
        \end{tikzcd}
        \]
    Then there is $\mathbf{E}\in \CAlg(\logDA(S,\Lambda))$ (resp. $\in \CAlg(\logFDA(S,\Lambda))$) such that for all $X\in \lSm_S$ (resp. $\FlQSm_R$),    
\begin{align*}
\Map_{\operatorname{\mathbf{log}\mathcal{DA}}(S,\Lambda)}(\Sigma^{\infty}(X),\Sigma^{m,n}\mathbf{E}) &\simeq R\Gamma(X,E_n[m])
\\
\text{(resp.\ }
\Map_{\operatorname{\mathbf{log}\mathcal{FDA}}(S,\Lambda)}(\Sigma^{\infty}(X),\Sigma^{m,n}\mathbf{E}) &\simeq R\Gamma(X,E_n[m])\text{).}
\end{align*}

\begin{proof}
We prove it only for $\logDA$: the proof for $\logFDA$ proceeds \emph{verbatim}. Consider the symmetric sequence $\mathbf{E}\in \logDAeff(S, A)^\Sigma$ given by setting $\mathbf{E}_i = E_i$ for $i\in \Z_{\geq 0}$, where each $E_i$ is considered as $\Sigma_i$-module with trivial action. More concretely, $\mathbf{E}$ is the image of $E_*$ along the functor
\begin{equation}\label{eq:build-sequence}
\Fun^L(\Z_{\geq 0}^{\rm ds}, \logDAeff(S,\Lambda) )\to  \prod_n \Fun^L(B\Sigma_n , \logDAeff(S,\Lambda))
= \logDAeff(S,\Lambda)^\Sigma
\end{equation}
induced by the projection $B \Sigma_{\N} = \oplus_n B\Sigma_n\to \Z_{\geq 0}^{\rm ds}$. Note that the functor \eqref{eq:build-sequence} is symmetric monoidal, hence it induces a functor
\begin{equation}\label{eq:graded_to_symmetric} \CAlg(\Fun^L(\Z_{\geq 0}^{\rm ds}, \logDAeff(S,\Lambda)))\to \CAlg(\logDAeff(S,\Lambda)^\Sigma).
\end{equation}
Since $E_*$ is by assumption a graded commutative monoid, we obtain that  $\mathbf{E}$ is a commutative algebra object of $\logDAeff(S, A)^\Sigma$. 
For all $i$, the section $c$ induces maps
\begin{equation}\label{eq:assembly}
E_i(1) \to E_i\otimes\Lambda(\P^1_S)[-2]\to E_{i}\otimes E_1\to E_{i+1},    
\end{equation}
in $\logDAeff(S,\Lambda)$, which build up to a map $A(1)\{-1\} \to \mathbf{E}$ in $\logDAeff(S, A)^\Sigma$, since $A(1)$ is a symmetric object. By the universal property, we obtain a morphism of algebras $S_{A(1)}\to \mathbf{E}$, so that $\mathbf{E} \in \PSpt^\Sigma_{A(1)}(\logDAeff(S,\Lambda))$.

We are left to show that $\mathbf{E}$ is  indeed a symmetric spectrum: this amounts to checking that the induced map\[
E_i\to \ul{\Map}(A(1),E_{i+1})
\]
is an equivalence in $\logDAeff(S,\Lambda)$ for all $i$: by construction, for all $X\in \lSm_S$ the map\[
\Map(A(X),E_i)\to \Map(A(X)(1),E_{i+1})\simeq \Map(A(X\times \P^1)/A(X),E_{i+1}[2])
\]
agrees with the map $R\Gamma(X,E_i)\to \widetilde{R\Gamma}(X\times \P^1,E_{i+1}[2])$ in the assumption, hence it is an equivalence.
Since the localization $\PSpt^\Sigma_{A(1)}(\logDAeff(S,\Lambda))\to \logDA(S,\Lambda)$ is symmetric monoidal, we conclude that $\mathbf{E}\in \CAlg(\logDA(S,\Lambda))$.
\end{proof}
\end{prop}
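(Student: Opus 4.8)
The plan is to package the graded commutative monoid $E_*$ into a commutative algebra object of the category of symmetric sequences, upgrade it to a symmetric pre-spectrum using the orientation $c$, and finally check that the resulting object is in fact local (i.e.\ a genuine symmetric spectrum), all while tracking commutativity structures so that the output lands in $\CAlg$. Concretely, I would first recall that the projection $B\Sigma_{\N} = \coprod_n B\Sigma_n \to \Z_{\geq 0}^{\rm ds}$ induces a symmetric monoidal functor $\Fun^{\rm L}(\Z_{\geq 0}^{\rm ds},\cC) \to \Fun^{\rm L}(B\Sigma_{\N},\cC) = \cC^\Sigma$ (both sides with Day convolution), hence a functor on $\CAlg$; applying this to $E_*$ produces a commutative algebra $\mathbf{E} \in \CAlg(\cC^\Sigma)$ with $\mathbf{E}_i = E_i$ carrying the trivial $\Sigma_i$-action, where $\cC = \logDAeff(S,\Lambda)$ (and identically for $\logFDAeff(R,\Lambda)$).

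Next, I would use the section $c\colon \Lambda(\P^1) \to E_1[2]$ together with the Tate splitting $\Lambda(\P^1) \simeq \Lambda \oplus \Lambda(1)[2]$ to produce, for each $i$, the assembly map
\[
E_i(1) \to E_i \otimes \Lambda(\P^1)[-2] \to E_i \otimes E_1 \to E_{i+1}
\]
using $c$ and the monoid multiplication $\mu_{i,1}$. These maps assemble into a single morphism $\Lambda(1)\{-1\} \to \mathbf{E}$ in $\cC^\Sigma$; here one uses that $\Lambda(1)$ is a symmetric object (\cite[Proposition 3.2.7]{BPO-SH}), so that the free commutative algebra $S_{\Lambda(1)} = \Sym^\infty(\Lambda(1)\{-1\})$ is well-behaved, and the universal property of $S_{\Lambda(1)}$ gives an algebra map $S_{\Lambda(1)} \to \mathbf{E}$. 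This exhibits $\mathbf{E}$ as an object of $\PSpt^\Sigma_{\Lambda(1)}(\cC) = \Mod_{S_{\Lambda(1)}}(\cC^\Sigma)$, in fact as a commutative algebra there. Then I would show $\mathbf{E}$ is local for the spectrification localization: unwinding definitions, for every $X \in \lSm_S$ the map $\Map(\Lambda(X), E_i) \to \Map(\Lambda(X)(1), E_{i+1}) \simeq \Map(\Lambda(X\times\P^1)/\Lambda(X), E_{i+1}[2])$ is precisely the composite appearing in the hypothesis, hence an equivalence; since $X$ ranges over a generating set, the adjoint assembly map $E_i \to \uMap(\Lambda(1), E_{i+1})$ is an equivalence in $\cC$, which is exactly the condition for $\mathbf{E}$ to be a symmetric spectrum. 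As the Bousfield localization $\PSpt^\Sigma_{\Lambda(1)}(\cC) \to \Spt^\Sigma_{\Lambda(1)}(\cC) = \logDA(S,\Lambda)$ is symmetric monoidal, we conclude $\mathbf{E} \in \CAlg(\logDA(S,\Lambda))$. Finally, the mapping-space formula $\Map(\Sigma^\infty X, \Sigma^{m,n}\mathbf{E}) \simeq R\Gamma(X, E_n[m])$ follows from the adjunction between $\Sigma^\infty$ and the $n$-th evaluation functor, combined with the fact that $\mathbf{E}$ is $\Omega_{\Lambda(1)}$-local so that $\Map(\Sigma^\infty X, \mathbf{E})$ is computed termwise.

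The main obstacle I anticipate is bookkeeping the $E_\infty$/commutative-algebra structure coherently through the three steps — passing from graded monoids to symmetric sequences, then to $S_{\Lambda(1)}$-modules, then through the monoidal localization — rather than any single conceptual point; in particular, verifying that the map $\Lambda(1)\{-1\} \to \mathbf{E}$ really does underlie a map out of the free commutative algebra (and not merely a map of symmetric sequences) requires invoking the symmetry of $\Lambda(1)$ exactly as in \cite[Definition 2.16]{robalo}, and one must be careful that the $\Sigma_n$-actions introduced by $\Sym^\infty$ are compatible with the trivial actions on the $E_i$. The locality check itself is routine once the generating set $\{\Lambda(X)\}_{X \in \lSm_S}$ is used, and the proof for $\logFDA$ is word-for-word the same, replacing $\SmlSm_S$ by $\FlSm_R$ throughout.
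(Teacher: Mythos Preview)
Your proposal is correct and follows essentially the same approach as the paper: construct $\mathbf{E}$ as a commutative algebra in symmetric sequences via the symmetric monoidal functor induced by $B\Sigma_{\N}\to \Z_{\geq 0}^{\rm ds}$, use $c$ and the multiplication to produce the assembly maps $E_i(1)\to E_{i+1}$ and hence an $S_{\Lambda(1)}$-module structure, and then verify the $\Omega$-spectrum condition by testing against the generators $\Lambda(X)$. Your explicit remark on the mapping-space formula at the end is a small addition the paper leaves implicit, but otherwise the arguments coincide.
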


\begin{prop}\label{prop:build-spectra-maps}
    Let $E_*$ and $E'_*$ be graded commutative monoids in $\logDAeff(S,\Lambda)$ (resp.\ in $\logFDAeff(R,\Lambda)$) with $c\colon \Lambda(\P^1)\to E_1[2]$ and $c'\colon \Lambda(\P^1)\to E_1'[2]$ that satisfy the hypotheses of Proposition \ref{prop:build-spectra}, and let $\E,\E'\in \logDA(S,\Lambda)$ (resp.\ in $\logFDA(R,\Lambda)$) be the associated spectra. Let $\psi_*\colon E_*\to E'_*$ be a map of graded commutative monoids such that $\psi_1[2]\circ c = c'$. Then there exists a map $\psi\colon \E\to \E'$ in $\CAlg(\logDA(S,\Lambda))$ (resp.\ in $\CAlg(\logFDA(R,\Lambda))$)  
    such that for all $X\in \SmlSm_S$ (resp.\ $\FlQSm_R$), the map\[
    R\Gamma(X,E_n[m])\simeq \Map(\Sigma^\infty(X),\Sigma^{m,n}\E)\xrightarrow{\psi_X}\Map(\Sigma^\infty(X),\Sigma^{m,n}\E')\simeq R\Gamma(X,E'_n[m])
    \]
    agrees with $\psi_n$.
\end{prop}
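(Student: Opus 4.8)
The plan is to run the proof of Proposition \ref{prop:build-spectra} functorially in the map $\psi_*$. First I would recall that in the proof of Proposition \ref{prop:build-spectra}, the spectrum $\E$ is built as the image of $E_*$ under the symmetric monoidal functor \eqref{eq:build-sequence} $\Fun^L(\Z_{\geq 0}^{\rm ds}, \logDAeff(S,\Lambda))\to \logDAeff(S,\Lambda)^\Sigma$, followed by the $S_{\Lambda(1)}$-module structure coming from the Chern class $c$, followed by the localization to $\logDA(S,\Lambda)$. All three of these constructions are functorial, so a map $\psi_*\colon E_*\to E'_*$ of graded commutative monoids produces a map of symmetric sequences $\mathbf{E}\to \mathbf{E}'$ in $\CAlg(\logDAeff(S,\Lambda)^\Sigma)$. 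The hypothesis $\psi_1[2]\circ c = c'$ is exactly what is needed to ensure that this map is compatible with the two $S_{\Lambda(1)}$-algebra structures: it says that the assembly maps \eqref{eq:assembly} for $E_*$ and $E'_*$ intertwine under $\psi_*$, so that the induced $S_{\Lambda(1)}$-algebra maps $S_{\Lambda(1)}\to \mathbf{E}$ and $S_{\Lambda(1)}\to \mathbf{E}'$ fit into a commuting triangle. Hence $\mathbf{E}\to \mathbf{E}'$ is a map in $\CAlg(\PSpt^\Sigma_{\Lambda(1)}(\logDAeff(S,\Lambda)))$, and applying the symmetric monoidal localization functor to $\logDA(S,\Lambda)$ yields the desired $\psi\colon \E\to \E'$ in $\CAlg(\logDA(S,\Lambda))$.

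Next I would verify the claimed description on mapping spectra. By Proposition \ref{prop:build-spectra} we already know that $\Map_{\logDA(S,\Lambda)}(\Sigma^\infty X,\Sigma^{m,n}\E)\simeq R\Gamma(X,E_n[m])$ and similarly for $\E'$, and by construction these equivalences are realized by evaluating the underlying symmetric sequences at level $n$ (after the relevant shift): the localization to $\logDA$ does not change the mapping spectra out of $\Sigma^\infty X$ into a spectrum, by the very definition of the Bousfield localization and the fact that $\Sigma^{m,n}\E$ is already local. Since the map $\psi\colon\E\to\E'$ is, at the level of symmetric sequences, just $\psi_*$ placed in each degree (with trivial $\Sigma_n$-action), the induced map on $\Map(\Sigma^\infty X,\Sigma^{m,n}-)$ is, under these identifications, precisely $R\Gamma(X,\psi_n[m])\colon R\Gamma(X,E_n[m])\to R\Gamma(X,E'_n[m])$. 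This gives the final assertion.

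The main obstacle, such as it is, is bookkeeping rather than mathematics: one must be careful that ``map of graded commutative monoids'' is interpreted $\infty$-categorically (a morphism in $\CAlg(\Fun(\Z_{\geq 0}^{\rm ds},\logDAeff))$), so that it carries the full coherence data needed to land in $\CAlg$ after localization, and one must check that the compatibility hypothesis $\psi_1[2]\circ c=c'$ is literally the compatibility of the two maps $S_{\Lambda(1)}\to\mathbf{E}$, $S_{\Lambda(1)}\to\mathbf{E}'$ obtained via the universal property of the free commutative algebra $S_{\Lambda(1)}=\Sym^\infty(\Lambda(1)\{-1\})$. Concretely, by the universal property a map out of $S_{\Lambda(1)}$ is the same as a map $\Lambda(1)\{-1\}\to\mathbf{E}$ of objects of $\logDAeff(S,\Lambda)^\Sigma$, i.e.\ (using symmetry of $\Lambda(1)$) the datum of the maps \eqref{eq:assembly}; these are built from $c$ via $\mu_{i,1}$, so $\psi_*$ being a monoid map together with $\psi_1[2]\circ c=c'$ forces the triangle to commute. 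Once this identification is made, the rest is a formal consequence of the functoriality of the three constructions and the universal property, with no further input needed; the statement for $\logFDA(R,\Lambda)$ follows verbatim, exactly as in the proof of Proposition \ref{prop:build-spectra}.
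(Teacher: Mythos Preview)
Your proposal is correct and follows essentially the same approach as the paper: apply the symmetric monoidal functor \eqref{eq:graded_to_symmetric} to $\psi_*$ to get a map in $\CAlg(\logDAeff(S,\Lambda)^\Sigma)$, then use the compatibility $\psi_1[2]\circ c=c'$ (together with $\psi_*$ being a monoid map) to check that the assembly squares commute and hence that the map respects the $S_{\Lambda(1)}$-algebra structures, and finally localize. The paper's proof is more terse and does not spell out the verification on mapping spectra, but the structure is identical.
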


\begin{proof}
The morphism $\psi_*\colon E_*\to E_*'$ in $\CAlg(\Fun^L(\Z_{\geq 0}^{\rm ds}, \logDAeff(S,\Lambda)))$ induces a morphism 
$\psi\colon \E\to \E'$ in $\CAlg(\logDAeff(S,\Lambda)^\Sigma)$ via the symmetric monoidal functor \eqref{eq:graded_to_symmetric} induced by  $B \Sigma_{\N} = \oplus_n B\Sigma_n\to \Z_{\geq 0}^{\rm ds}$. 
    If it is a map in $\PSpt^\Sigma_{\Lambda(1)}(\logDAeff(S,\Lambda))$, then $\psi$ is indeed a map in $\CAlg(\logDA(S,\Lambda))$ satisfying the desired property. By the compatibility with $c$, we have that the diagram\[
    \begin{tikzcd}
        E_i(1)\ar[d,"\psi_i(1)"] \ar[r] &E_{i+1}\ar[d,"\psi_{i+1}"]\\
        E'_i(1) \ar[r] &E'_{i+1},
    \end{tikzcd}
    \]
    commutes, where the horizontal maps come from \eqref{eq:assembly}. By the universal property, the map $\psi$ is a map of $S_{\Lambda(1)}$-algebras. The proof for $\logFDAeff$ is analogous. 
\end{proof}

Recall from \cite[Definition 7.1.3]{BPO-SH} that a homotopy commutative monoid $\mathbf{E}$ in $\logDA(S,\Lambda)$ (resp.\ $\logFDA(R,\Lambda)$) admits a \emph{Chern orientation}, or simply that it is \emph{oriented} if there is a class $c_\infty\in \pi_0\Map(\Sigma^{\infty}(\P^\infty),\mathbf{E}(1)[2])$ whose restriction to $\P^1/{\rm pt}$ is the map\[
\Sigma^{\infty}(\P^1)\otimes \eta\colon \Sigma^{\infty}(\P^1)\to \Sigma^{\infty}(\P^1)\otimes \mathbf{E}\simeq E(1)[2].
\]

\begin{rmk}
Oriented spectra are particularly nice for the following reason (see \cite[Theorem 9.4]{BLPO-HKR}): Let  $X\in \SmlSm_S$: {by \cite[Lemma A.5.10]{BPO} $|\partial X|$ is a divisor with simple normal crossing}. Let $\ul{Z}\subseteq \ul{X}$ a smooth closed subscheme of pure codimension $d$ with normal crossing with $|\partial X|$, so that $Z:=(\ul{Z},\partial X_{|Z})\in \SmlSm_S$. Let $\partial \Bl_Z(X)$ be the log structure on $\Bl_\ul{Z}(\ul{X})$ induced by the total transform of $|\partial X|$, so that also $\Bl_Z(X):=(\Bl_\ul{Z}(\ul{X}),\partial \Bl_Z(X))\in \SmlSm_S$. Then by \cite[Theorem 3.2.14]{BPO-SH} for all oriented $\E\in \logDA(S,\Lambda)$ there is a fiber sequence
\begin{equation}\label{eq:htp-purity-oriented-spectra}
\Map(\Sigma^{\infty}\Lambda(Z),\Sigma^{r-d,s-d}\mathbf{E})\to \Map(\Sigma^{\infty}\Lambda(X),\Sigma^{r,s}\mathbf{E})\to \Map(\Sigma^{\infty}\Lambda(\Bl_Z(X)),\Sigma^{r,s}\mathbf{E}).
\end{equation}
Moreover, if $\E\in \logFDA(S,\Lambda)$ is oriented, let $\mathrm{Comp}^*$ be the right adjoint of \eqref{eq:DAtoFDA-stab}: for all $Y\in \SmlSm_R$ and all $r,s$ we have an equivalence \[
    \Map_{\logDA(R,\Lambda)}(\Sigma^{\infty}\Lambda(Y),\Sigma^{r,s}\mathrm{Comp}^*(-)) \simeq  \Map_{\logFDA(R,\Lambda)}(\Sigma^{\infty}\Lambda(Y_p^\wedge),\Sigma^{r,s}(-)),
    \]
    by adjunction. In particular, $\mathrm{Comp}^*$ preserves oriented objects. This implies that for all $X$ and $Z$ as above, we have a fiber sequence
\begin{equation}\label{eq:htp-purity-oriented-spectra-formal}
\Map(\Sigma^{\infty}\Lambda(Z_p^\wedge),\Sigma^{r-d,s-d}\mathbf{E})\to \Map(\Sigma^{\infty}\Lambda(X_p^\wedge),\Sigma^{r,s}\mathbf{E})\to \Map(\Sigma^{\infty}\Lambda(\Bl_Z(X)_p^\wedge),\Sigma^{r,s}\mathbf{E}).
\end{equation}
\end{rmk}

\begin{rmk}
Let $X$ be a smooth scheme over $S$ and let $Z\subseteq X$ be a closed subscheme of  pure codimension $d$ such that $Z\to X\to S$ is smooth. Let $X' = \Bl_Z(X)$ and $Z'=Z\times_X X'$. Then by \cite[Theorem 7.3.3]{BPO}, for all $\E\in \logDA(S,\Lambda)$ there is a cartesian square, functorial in $(X,Z)$,
    \begin{equation}\label{eq:sm-blow-up-spectra}
        \begin{tikzcd}
        \Map(\Sigma^{\infty}\Lambda(X),\Sigma^{r,s}\mathbf{E}) \ar[r]\ar[d]& \Map(\Sigma^{\infty}\Lambda(Z),\Sigma^{r,s}\mathbf{E})\ar[d] \\
        \Map(\Sigma^{\infty}\Lambda(X'),\Sigma^{r,s}\mathbf{E}) \ar[r] &\Map(\Sigma^{\infty}\Lambda(Z'),\Sigma^{r,s} \mathbf{E}).
        \end{tikzcd}
    \end{equation}
    Let $X''$ be the blow-up of $X\times \P^1$ in $Z\times 0$, $E$ be the log structure on $X''$ induced by the exceptional divisor and let $Z''=(Z\times 0)\times_{(X\times \P^1)} X''$.
By \cite[Theorem 7.4.2]{BPO}, there is a cartesian square, functorial in $(X,Z)$\[
\begin{tikzcd}
        \Map(\Sigma^{\infty}\Lambda(X\times (\P^1,\infty)),\Sigma^{r,s}\mathbf{E}) \ar[r]\ar[d]& \Map(\Sigma^{\infty}\Lambda(Z\times 0),\Sigma^{r,s}\mathbf{E})\ar[d] \\
        \Map(\Sigma^{\infty}\Lambda(X'',E),\Sigma^{r,s}\mathbf{E}) \ar[r] &\Map(\Sigma^{\infty}\Lambda(Z''),\Sigma^{r,s}\mathbf{E}).
        \end{tikzcd}
\] 
By an argument completely analogous to \cite[Corollary 15.13]{MVW}, the (homotopic) equivalence \[
1_X\times 0\sim 1_X\times 1\colon \Sigma^{\infty}\Lambda(X)\simeq \Sigma^{\infty}\Lambda(X\times(\P^1,\infty))
\] gives a splitting of the map\[
\Map(\Sigma^{\infty}\Lambda(Z\times 0),\Sigma^{r,s}\mathbf{E})\oplus \Map(\Sigma^{\infty}\Lambda(X''),\Sigma^{r,s}\mathbf{E}) \to  \Map(\Sigma^{\infty}\Lambda(Z''),\Sigma^{r-2i,s-i}\mathbf{E}).
\] 
For all oriented $\E\in \logDA(S,\Lambda)$, the map \[
\Map(\Sigma^{\infty}\Lambda(Z''),\Sigma^{r-2i,s-i}\mathbf{E})\to \Map(\Sigma^{\infty}\Lambda(Z'),\Sigma^{r-2i,s-i}\mathbf{E})
\]
is a split epimorphism, since it comes from the embedding $\P(N_{Z\subseteq X}) \to \P(N_{Z\subseteq X}\oplus \cO)$: combining everything we get that the fiber sequence induced by \eqref{eq:sm-blow-up-spectra} splits, giving an equivalence:
 \begin{equation}\label{eq:sm-blow-up-oriented-spectra}
\Map(\Sigma^{\infty}\Lambda(X),\Sigma^{r,s}\mathbf{E})\oplus (\oplus_{0<i<d}\Map(\Sigma^{\infty}\Lambda(Z),\Sigma^{r-2i,s-i}\mathbf{E}))\simeq \Map(\Sigma^{\infty}\Lambda(X'),\Sigma^{r,s}\mathbf{E}).
\end{equation}
As before, via the adjunction \eqref{eq:DAtoFDA-stab} we deduce that for all $\E\in \logFDA(S,\Lambda)$ there is a cartesian square, functorial in $(X,Z)$:
    \begin{equation}\label{eq:sm-blow-up-spectra-formal}
        \begin{tikzcd}
    \Map(\Sigma^{\infty}\Lambda(X_p^\wedge),\Sigma^{r,s}\mathbf{E}) \ar[r]\ar[d]& \Map(\Sigma^{\infty}\Lambda(Z_p^\wedge),\Sigma^{r,s}\mathbf{E})\ar[d] \\
\Map(\Sigma^{\infty}\Lambda((X')_p^\wedge),\Sigma^{r,s}\mathbf{E}) \ar[r] &\oplus_{i=1}^{d-1}\Map(\Sigma^{\infty}\Lambda((Z')_p^\wedge),\Sigma^{r,s}\mathbf{E})
        \end{tikzcd}
    \end{equation}
    and for $\E$ oriented we have an equivalence
    \begin{equation}\label{eq:sm-blow-up-oriented-spectra-formal}
\Map(\Sigma^{\infty}\Lambda(X_p^\wedge),\Sigma^{r,s}\mathbf{E})\oplus (\oplus_{0<i<d}\Map(\Sigma^{\infty}\Lambda(Z_p^\wedge),\Sigma^{r-2i,s-i}\mathbf{E}))\simeq \Map(\Sigma^{\infty}\Lambda((X')_p^\wedge),\Sigma^{r,s}\mathbf{E}).
\end{equation}
\end{rmk}

\subsection{First examples: de Rham and crystalline  motivic spectra}\label{sec:deRham_Hodge_Crys_spectra}

A strict \'etale sheaf on the category of pre-log rings does not automatically produce a strict \'etale sheaf on the category of log schemes unlike the case of schemes since the functor $\Spec$ from the category of pre-log rings to the category of log schemes is not fully faithful.
Gabber \cite[\S 8]{Olsson} used pre-log structures in a topos to define the cotangent complexes for schemes without experiencing the globalization problem,
see also \cite[Remark 2.12]{Koshikawa-Yao}.
However, it is not clear how to adapt such an argument to the various other sheaves we will consider on the category of pre-log rings.
By inspecting \cite[Definition 4.9]{Koshikawa-Yao} and \cite[Proof of Proposition 8.3.8]{BPO-SH},
we have the globalization procedure as follows.

\begin{defn}
\label{df:etale globalization}
Let $\cC$ be an $\infty$-category with colimits and limits.
We have the adjoint functors
\[
\Gamma
:
\mathrm{lSch}
\rightleftarrows
\mathrm{PreLog}^\op
:
\Spec
\]
where $\Gamma(X):=(\Gamma(X,\cO_X),\Gamma(X,\cM_X))$ for $X\in \mathrm{lSch}$.
These induce the adjoint functors
\[
\Gamma^*
:
\PSh(\mathrm{PreLog}^\op,\cC)
\rightleftarrows
\PSh(\mathrm{lSch},\cC)
:
\Gamma_*
\]
such that
$\Gamma^* \cF(X):=\cF(\Gamma(X))$ and $\Gamma_*\cG(A,M):=\cG(\Spec(A,M))$  for every $(A,M)\in \mathrm{PreLog}$, $X\in \mathrm{lSch}$, $\cF\in \PSh(\mathrm{PreLog}^\op,\cC)$, and $\cG\in \PSh(\mathrm{lSch},\cC)$.
Consider also the adjoint functors
\[
L_\set
:
\PSh(\mathrm{lSch},\cC)
\rightleftarrows
\Sh_\set(\mathrm{lSch},\cC)
:
\iota,
\]
where $L_\set$ is the sheafification functor, and $\iota$ is the inclusion.
A presheaf $\cF\in \PSh(\mathrm{PreLog}^\op,\cC)$ is \emph{globalizable} if the unit morphism
\[
\alpha_\cF \colon \cF\to \Gamma_*\iota L_\set \Gamma^*\cF
\]
is an equivalence.
\end{defn}

We provide a general criterion of globalizablity as follows.

\begin{prop}
\label{prop:globalization}
With the above notation,
assume that $\cF$ satisfies the following conditions:
\begin{enumerate}
\item[(i)] $\cF$ preserves filtered colimits as a functor $\mathrm{PreLog}\to \cC$.
\item[(ii)] For every pre-log ring $(A,M)$,
the presheaf
\[
(A\to B)\mapsto \cF(B,M)
\]
on the opposite category of $A$-algebras is an \'etale sheaf.
\item[(iii)] For every pre-log ring $(A,M)$ such that $A$ is strictly local, the induced morphism $\cF(A,M)\to \cF(A,M^a)$ is an isomorphism,
i.e., we have the ``logification invariance'' when the underlying ring is strictly local.
\end{enumerate}
Then $\cF$ is globalizable.
\end{prop}
\begin{proof}
By e.g., \cite[\href{https://kerodon.net/tag/01DK}{Tag 01DK}]{kerodon}, we need to show that the induced morphism $\cF(A,M)\to L_\set\Gamma_*\cF(\Spec(A,M))$ is an equivalence for every pre-log ring $(A,M)$.
Fixing $M$, the natural transformation $\alpha_\cF$ determines a morphism from the presheaf
\begin{equation}
\label{eq:left-hand in lem}
(A\to B)\mapsto \cF(B,M)
\end{equation}
to the presheaf
\begin{equation}
\label{eq:right-hand in lem}
(A\to B) \mapsto L_\set \Gamma_*\cF(\Spec(B,M))
\end{equation}
on the opposite category of $A$-algebras.
The first one is an \'etale sheaf by (ii),
and the second one is an \'etale sheaf by construction.
Hence it suffices to show that every stalk of this morphism of sheaves is an equivalence.

Let $\ol{x}$ be a geometric point of $\Spec(A)$.
By (i), the stalk of \eqref{eq:left-hand in lem} at $\ol{x}$ is $\cF(A_{\ol{x}},M)$.
On the other hand,
the stalk of the sheafification is the same as the original one.
Indeed, if $i\colon \ol{x}\to \Spec(A)$ is the structure morphism, then the stalk of a presheaf $\cG$ on $\Spec(A)_{\et}$ is $i^*\cG(\ol{x})$.
Hence to show $i^*\cG(\ol{x})\simeq i^*L_{\et}\cG(\ol{x})$, by taking right adjoints, it suffices to show $\iota i_* \cH\simeq i_*\cH$ for every constant sheaf $\cH$ on $\ol{x}_{\et}$, i.e., $i_*\cH$ is a sheaf.
This follows from the fact that every \v{C}ech nerve of a covering in $\Spec(A)_{\et}$ pulls back to the \v{C}ech nerve of the corresponding covering in $\ol{x}_{\et}$.
Note also that the category of fs log schemes $Y$ strict over $\Spec(A,M)$ such that $\ul{Y}$ is affine is equivalent to the opposite category of $A$-algebras, and if $\ul{Y}=\Spec(B)$, then $Y\cong \Spec(B,M)$.
Together with (i) again, we see that the stalk of \eqref{eq:right-hand in lem} at $\ol{x}$ is $\cF(A_{\ol{x}},\cM_{\Spec(A,M),\ol{x}})$.
Hence it suffices to show that the induced morphism
\[
\cF(A_{\ol{x}},M)
\to
\cF(A_{\ol{x}},\cM_{\Spec(A,M),\ol{x}})
\]
is an equivalence.
This follows from (iii) since the stalk $\cM_{\Spec(A,M),\ol{x}}$ is the logification $M^a$ of $M$.
\end{proof}

Let $R$ be a $p$-complete ring.
Let $\widehat{\cD\cF}(R)_p^\wedge$ be the full subcategory of $\widehat{\cD\cF}(R)$ spanned by $p$-complete objects.
Consider the Hodge-completed derived log de Rham complex \[
\widehat{L\Omega}_{-/R}\colon \mathrm{PreLog}_R \to \widehat{\cD\cF}(R)_p^\wedge
\]
equipped with the Hodge filtration with graded pieces $(\bigwedge^i \L_{-/R})_p^\wedge$.
The completion functors $\cD\cF(R)\to \widehat{\cD\cF}(R)$ and $\widehat{\cD\cF}(R)\to \widehat{\cD\cF}(R)_p^\wedge$ preserve colimits since it is a left adjoint, so $\widehat{L\Omega}_{-/R}$ preserves sifted colimits. As in \cite[Example 5.11]{BMS2}, $\widehat{L\Omega}_{-/R}$ satisfies strict \'etale descent. Furthermore, $L\Omega_{-/R}$ satisfies logification invariance, see e.g.\ \cite[Lemma 3.11]{BLPO-HKR}. Hence $\widehat{L\Omega}_{-/R}$ satisfies the conditions (i)--(iii) in Proposition \ref{prop:globalization},
so we can globalize $\widehat{L\Omega}_{-/R}$ to formal $R$-schemes.
Note that we do \emph{not} apply Proposition \ref{prop:globalization} to the functor $\widehat{L\Omega}_{-/R}\colon \mathrm{PreLog}_R \to \cD(R)$ obtained by forgetting the filtration structure.
Let $S$ be a $p$-completed $R$-algebra and consider the strict \'etale sheaf \[
\widehat{L\Omega}_{-/R}\colon \FlQSm_S\to \cD(R)\quad \kX\mapsto R\Gamma(\kX,\widehat{L\Omega}_{-/R})
\]
by forgetting the filtration structure.
Similarly, we consider the strict \'etale sheaves 
\begin{equation}\label{eq:cotan_spt}
L\Omega^i_{-/R} \colon \FlQSm_S\to \cD(R)\quad \kX\mapsto R\Gamma(\kX,(\bigwedge^i \L_{-/R})_p^\wedge);
\end{equation}
for $i \geq 0$. The exterior product of differential forms can be used to assemble \eqref{eq:cotan_spt} into a graded commutative monoid $\{L\Omega^i_{-/R}\}_{i\in \N}$ in $\Sh_{\set}(\FlQSm_R,\cD(R))$. Similarly, the cdga structure of the log de Rham complex and the multiplicativity of the Hodge filtration induce a structure of a (constant) graded commutative monoid on $\{\widehat{L\Omega}_{-/R}\}_i$ (see e.g. \cite[4.1]{Bhattcompletion}).

Let now $R$ be a perfectoid or $R=\Z_p$: we can consider the $p$-completed derived log de Rham complex equipped with the conjugate filtration:\[
L\Omega_{-/R}\colon \mathrm{PreLog}_R \to \cD\cF(R).
\]
With an argument completely analogous to \cite[Example 5.12]{BMS2}, it is a log quasisyntomic sheaf.
\begin{prop}\label{prop:spectra-hodge-dR}
\begin{enumerate}
    \item There are oriented $\E^{\rm Hdg},\E^{\rm \widehat{dR}}\in \CAlg(\logFDA(S,R))$ such that for all $\kX\in \FlQSm_S$:
\begin{align*}
    \Map(\Sigma^{\infty}(\kX),\Sigma^{p,q}\E^{\rm Hdg})&\simeq R\Gamma(\kX,L\Omega^q_{-/R})[p],\\
    \Map(\Sigma^{\infty}(\kX),\Sigma^{p,q}\E^{\rm \widehat{dR}})&\simeq R\Gamma(\kX,\widehat{L\Omega}_{-/R})[p].
\end{align*}
\item Let $R$ be perfectoid or $R=\Z_p$, and $S\in \QSyn_R$. Then there is an oriented homotopy commutative monoid $\E^{\rm dR}\in \CAlg(\logFDA(S,R))$ such that\[
\Map(\Sigma^{\infty}(\kX),\Sigma^{p,q}\E^{\rm {dR}})\simeq R\Gamma(\kX,{L\Omega}_{-/R})[p]
\]
\end{enumerate}
    \begin{proof} We begin by showing that $(L\Omega^i_{-/R})_{i\in \N}$ is in fact a graded commutative monoid in $\logFDAeff(S,R)$.
By strict \'etale descent it is enough to show that for any pre-log $R$-algebra $(A,M)$, the projection induces an equivalence
\begin{equation}\label{eq:box-inv-cotan}
R\Gamma_{\Zar}(A, \L_{(A,M)/R})\simeq R\Gamma_{\Zar}(\P^n_{A},  \L_{(\P^n_{(A,M)},\P^{n-1}_{(A,M)})/R}).    
\end{equation}
By left Kan extension from the case of finite free pre-log algebras, this follows from \cite[Proposition 8.3]{BLPO-HKR}.
    Next,  for $\kX\in \FlQSm_S$ and $\cE\to \kX$ a vector bundle of rank $r+1$, consider the map induced by the first Chern class:
    \begin{equation}\label{eq:derived-chern-de-rham}
\bigoplus_{i=0}^rR\Gamma(\kX,L\Omega^{j-i}_{\kX/R})[-2i]\to R\Gamma(\P(\cE),L\Omega^j_{\P(\cE)/R}).
    \end{equation}
By choosing a trivializing cover and by an argument completely analogous to \cite[B.8]{BhattLurie} we have an equivalence \[
R\Gamma(\P^n_{\kX},L\Omega^j_{\P^n_{\kX}/R})\simeq R\Gamma(\P^n_R,\Omega^j_{\P^n_R/R})\otimes_R^L R\Gamma(\kX,L\Omega^j_{\kX/R})
\]
compatible with the Chern classes, so the classical computation of the Hodge cohomology of projective spaces \cite[\href{https://stacks.math.columbia.edu/tag/0FMI}{Tag 0FMI}]{stacks-project} implies that the map \eqref{eq:derived-chern-de-rham} is an equivalence. We can then apply Proposition \ref{prop:build-spectra} to obtain an oriented spectrum $\E^{\rm Hdg}$. 

We now pass to derived de Rham cohomology. By passing to the graded pieces of the filtration, we see from the above computation that $\widehat{L\Omega}_{-/R}$ is $(\P^n,\P^{n-1})$-invariant, so $\widehat{L\Omega}_{-/R}$ is a (constant) commutative monoid in $\logFDAeff(S,R)$. The first Chern class again induces a map
\begin{equation}\label{eq:pbf-de-Rham}
\bigoplus_{i=0}^rR\Gamma(\kX,\widehat{L\Omega}_{-/R})[-2i]\to R\Gamma(\P(\cE),\widehat{L\Omega}_{-/R})
\end{equation}
which is an equivalence by passing to the graded pieces and noting that \eqref{eq:derived-chern-de-rham} is an equivalence. The spectrum $\E^{\rm \widehat{dR}}$ is then assembled using again Proposition \ref{prop:build-spectra}. This proves $(1)$.

Let now $R$ be a perfectoid or $R=\Z_p$. By induction on $n$ and \eqref{eq:box-inv-cotan}, we have similarly to \cite[Example 5.12]{BMS2} that for all $(A,M)\in \lQSyn_R$, and all $m$, \[
R\Gamma_{\Zar}(A,\Fil_m^{\rm conj}L\Omega_{(A,M)/R}\otimes^L_{\Z}\Z/p\Z) \simeq R\Gamma_{\Zar}(\P^n_A,\Fil_m^{\rm conj}L\Omega_{(\P^n_{(A,M)},\P^{n-1}_{(A,M)})}\otimes^L_{\Z}\Z/p\Z),
\]
and since $(A,M)\in \lQSyn_R$, by Lemma \ref{lem:cotimes-qsyn}, we have that $\Fil_m^{\rm conj}L\Omega_{(A,M)/R}\otimes^L_{\Z}\Z/p\Z$ takes value in $\cD^{\geq -1}$ for all $n$, as each $\L_{(A,M)/R}$ has $p$-completed Tor amplitude $[-1,0]$: this implies that cohomology commutes with filtered colimits so we conclude that
\begin{align*}
    R\Gamma_{\Zar}(A,L\Omega_{(A,M)/R}\otimes^L_{\Z}\Z/p\Z)&\simeq \colim_m R\Gamma_{\Zar}(A,\Fil_m^{\rm conj}L\Omega_{(A,M)/R}\otimes^L_{\Z}\Z/p\Z) \\
    &\simeq \colim_m R\Gamma_{\Zar}(\P^n_A,\Fil_m^{\rm conj}L\Omega_{(\P^n_{(A,M)},\P^{n-1}_{(A,M)})}\otimes^L_{\Z}\Z/p\Z)\\
    &\simeq R\Gamma_{\Zar}(\P^n_A,L\Omega_{(\P^n_{(A,M)},\P^{n-1}_{(A,M)})}\otimes^L_{\Z}\Z/p\Z)
\end{align*} 
Again $L\Omega_{/R}$ is a (constant) commutative monoid and the first Chern class induces an equivalence analogous to \eqref{eq:pbf-de-Rham} again by considering $\Fil_m^{\rm conj}$ and passing to the filtered colimit. This proves $(2)$.
    \end{proof}
\end{prop}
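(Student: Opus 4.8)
The plan is to deduce all three statements from the general construction of Proposition~\ref{prop:build-spectra}: in each case one must exhibit the relevant family of cohomology theories as a graded commutative monoid in $\logFDAeff(S,R)$ and produce a Chern section $c\colon R(\P^1)\to E_1[2]$ for which the suspension condition of that proposition holds. The graded monoid structures will be essentially formal: for the Hodge pieces $\{L\Omega^q_{-/R}\}_q$ one uses the wedge product of forms on the $p$-completed exterior powers of Gabber's log cotangent complex; for $\widehat{L\Omega}_{-/R}$, and later for the conjugate-filtered $L\Omega_{-/R}$, one uses the cdga multiplication together with multiplicativity of the Hodge, resp.\ conjugate, filtration. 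In all cases these structures are defined first on pre-log $R$-algebras and then globalized to $\FlSm_S$ by strict \'etale descent. The substantial input is the $(\P^n,\P^{n-1})$-invariance of the cotangent complex together with the associated projective bundle formula.

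For part~(1), I would first prove that $R\Gamma(-,\bigwedge^q\L_{(-,M)/R})$ is $(\P^n,\P^{n-1})$-invariant. By strict \'etale descent this reduces to comparing $R\Gamma_{\Zar}(A,\L_{(A,M)/R})$ with $R\Gamma_{\Zar}(\P^n_A,\L_{(\P^n_{(A,M)},\P^{n-1}_{(A,M)})/R})$ for a pre-log $R$-algebra $(A,M)$; left Kan extending both sides from finite free pre-log algebras reduces this to \cite[Proposition~8.3]{BLPO-HKR}. This makes $\{L\Omega^q_{-/R}\}_q$ a graded commutative monoid in $\logFDAeff(S,R)$. Next I would produce the first Chern class and verify the projective bundle formula: for a rank $r+1$ vector bundle $\cE\to\kX$ one passes to a trivializing cover and, exactly as in \cite[B.8]{BhattLurie}, obtains a K\"unneth decomposition $R\Gamma(\P^n_\kX,L\Omega^q)\simeq R\Gamma(\P^n_R,\Omega^q_{\P^n_R/R})\otimes^L_R R\Gamma(\kX,L\Omega^q_{\kX/R})$ compatible with Chern classes; the classical Hodge cohomology of $\P^n_R$ then forces the Chern-class map to be an equivalence, and Proposition~\ref{prop:build-spectra} produces the oriented $\E^{\rm Hdg}$. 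For $\widehat{L\Omega}_{-/R}$ I would run the same argument on the graded pieces of the Hodge filtration: invariance and the projective bundle formula then follow from the Hodge-piece statements already proved, so $\widehat{L\Omega}_{-/R}$ is a (constant) graded commutative monoid and Proposition~\ref{prop:build-spectra} yields $\E^{\rm \widehat{dR}}$.

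For part~(2), with $R$ perfectoid or $R=\Z_p$ and $S\in\QSyn_R$, the goal is to pass from the Hodge-completed complex to the conjugate-filtered $L\Omega_{-/R}$. Following \cite[Example~5.12]{BMS2}, I would establish $(\P^n,\P^{n-1})$-invariance modulo $p$ at each conjugate-filtration stage $\Fil_m^{\rm conj}L\Omega\otimes^L_\Z\Z/p$ by induction on $m$, using that the graded pieces are shifted exterior powers of $\L_{(-,M)/R}$ and hence have $p$-completed Tor amplitude $[-1,0]$, so that these complexes lie uniformly in $\cD^{\geq -1}$. Uniform boundedness below makes $\Zar$-cohomology commute with the filtered colimit over $m$, so invariance and the projective bundle formula pass to $L\Omega_{-/R}$ in the colimit; a final application of Proposition~\ref{prop:build-spectra} would give the oriented homotopy commutative monoid $\E^{\rm dR}$.

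The main obstacle is precisely the $(\P^n,\P^{n-1})$-invariance of the log cotangent complex and the control of the derived, $p$-completed cohomology of the log pair $(\P^n_{(A,M)},\P^{n-1}_{(A,M)})$: once this is available (via left Kan extension to finite free pre-log algebras and \cite[Proposition~8.3]{BLPO-HKR}), the monoid structures, the orientation, and the reduction to Proposition~\ref{prop:build-spectra} are formal. In part~(2) there is the extra subtlety of verifying the relevant Tor-amplitude and boundedness bounds so that cohomology commutes with the conjugate-filtered colimit.
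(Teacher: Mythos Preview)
Your proposal is correct and follows essentially the same approach as the paper's proof: the same reduction to $(\P^n,\P^{n-1})$-invariance of the cotangent complex via \cite[Proposition~8.3]{BLPO-HKR}, the same projective bundle argument via \cite[B.8]{BhattLurie} and the classical Hodge computation, the same passage to graded pieces for $\widehat{L\Omega}$, and the same conjugate-filtration colimit argument modeled on \cite[Example~5.12]{BMS2} for part~(2). The paper additionally cites Lemma~\ref{lem:cotimes-qsyn} to justify the Tor-amplitude bound in part~(2), but this is exactly the quasisyntomic hypothesis you invoke.
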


Let $k$ be a perfect field of characteristic $p$. For $X\in \lSm_k$, and $m, n \geq 0$,  let $W_m\Omega^n_{X/k}$ be the log de Rham--Witt sheaf of \cite{matsuue} and let $W_m\Omega^\bullet_{X/k}$ the $m$-truncated log de Rham--Witt complex. 
\begin{prop}
There are oriented \[\E^{m,\rm dRW},\E^{m, \rm crys}, \E^{\rm crys}\in \CAlg(\logDA(k,W(k)))\] such that for all $X\in \SmlSm_k$: 
\begin{align*}
    \Map(\Sigma^{\infty}(X),\Sigma^{r,s}\E^{m,\rm dRW}) &\simeq R\Gamma(X,W_m\Omega^s_{X/k})[r]\\
    \Map(\Sigma^{\infty}(X),\Sigma^{p,q}\E^{m,\rm crys})&\simeq R\Gamma_{\rm crys}(X/W_m(k))[p]\\
    \Map(\Sigma^{\infty}(X),\Sigma^{p,q}\E^{\rm crys})&\simeq R\Gamma_{\rm crys}(X/W(k))[p]
\end{align*}
\end{prop}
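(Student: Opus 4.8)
The plan is to produce each of the three spectra by exhibiting a graded commutative monoid equipped with a first Chern class satisfying the hypotheses of Proposition \ref{prop:build-spectra}, in exactly the same way as $\E^{\rm Hdg}$ and $\E^{\rm \widehat{dR}}$ were produced in the proof of Proposition \ref{prop:spectra-hodge-dR}. For $\E^{m,\rm dRW}$ the relevant monoid is $\{W_m\Omega^j_{-/k}\}_{j\in\N}$, the sheaves of log de Rham--Witt differentials of \cite{matsuue}, assembled into a graded commutative monoid via the product on the truncated log de Rham--Witt complex; these are strict \'etale sheaves on $\SmlSm_k$. For $\E^{m,\rm crys}$ (resp.\ $\E^{\rm crys}$) the relevant monoid is the \emph{constant} one attached to the cdga $W_m\Omega^\bullet_{-/k}$ (resp.\ to $W\Omega^\bullet_{-/k}=\lim_m W_m\Omega^\bullet_{-/k}$), which computes $R\Gamma_{\rm crys}(-/W_m(k))$ (resp.\ $R\Gamma_{\rm crys}(-/W(k))$) by the logarithmic de Rham--Witt comparison \cite{matsuue}, applicable since objects of $\SmlSm_k$ are log smooth of Cartier type over $(k,\triv)$. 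Restriction along $W(k)\to W_m(k)$ makes all of these objects of sheaves of $\cD(W(k))$-modules, as required.

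Two inputs are then needed. First, to regard these monoids as objects of $\logDAeff(k,W(k))$ one must check that $R\Gamma(-,W_m\Omega^j)$ and $R\Gamma_{\rm crys}(-/W_m(k))$ are $(\P^n,\P^{n-1})$-invariant. I would prove this by d\'evissage in $m$: the standard filtration on the truncated de Rham--Witt complex, whose graded pieces are built from $\Omega^\bullet_{-/k}$ and its subquotients $B_i\Omega^\bullet$, $Z_i\Omega^\bullet$ (which are $(\P^n,\P^{n-1})$-invariant by the Cartier isomorphism together with a short induction), reduces the question to the case $m=1$; for $X\in \SmlSm_k$ one has $\Omega^j_{X/k}\simeq \wedge^j\L_{X/k}$, so this case is precisely \eqref{eq:box-inv-cotan}, established via \cite[Proposition 8.3]{BLPO-HKR}. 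For crystalline cohomology one totalizes the de Rham--Witt complex, and for $W(k)$ one passes to the limit over $m$.

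Second, to apply Proposition \ref{prop:build-spectra} and obtain \emph{oriented} spectra one needs first Chern classes and the projective bundle formula. The $\dlog$ map $\cO^\times\to W_m\Omega^1_{-/k}$ produces a first Chern class $c_1\in H^2(\P^1,W_m\Omega^1)\cong W_m(k)$ of the tautological line bundle; d\'evissage in $m$ from the classical computation of the Hodge cohomology of $\P^n$ (as recorded in the proof of Proposition \ref{prop:spectra-hodge-dR}) yields $H^q(\P^n,W_m\Omega^j)=W_m(k)\cdot c_1^j$ for $q=j\leq n$ and $0$ otherwise, compatibly in $n$. The trivializing-cover argument of \cite[B.8]{BhattLurie}, already used in the proof of Proposition \ref{prop:spectra-hodge-dR}, then gives the projective bundle formula $\bigoplus_{i=0}^r R\Gamma(\kX,W_m\Omega^{j-i})[-2i]\isom R\Gamma(\P(\cE),W_m\Omega^j)$, i.e.\ the analogue of \eqref{eq:derived-chern-de-rham}; totalizing (resp.\ passing to the limit) gives the corresponding formula for $R\Gamma_{\rm crys}(-/W_m(k))$ (resp.\ $R\Gamma_{\rm crys}(-/W(k))$). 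Proposition \ref{prop:build-spectra} now produces $\E^{m,\rm dRW},\E^{m,\rm crys},\E^{\rm crys}\in\CAlg(\logDA(k,W(k)))$ with the asserted representability, and the compatible powers $c_1^j$ glue to an orientation over $\P^\infty$. Finally, via Proposition \ref{prop:build-spectra-maps} one upgrades the restriction maps $W_m\Omega^\bullet\to W_{m-1}\Omega^\bullet$ to morphisms of motivic ring spectra, so that $\E^{\rm crys}\simeq \lim_m\E^{m,\rm crys}$ (mapping out of a $\Sigma^\infty X$ commutes with this limit), and one records the comparison maps relating these spectra to $\E^{\rm Hdg}$ and $\E^{\rm dR}$ coming from the brutal filtration.

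The main obstacle is the $(\P^n,\P^{n-1})$-invariance of log Hodge--Witt cohomology: unlike derived de Rham cohomology, the truncated de Rham--Witt complex is not a left Kan extension of a cotangent-complex construction, so the reduction to $m=1$ must be carried out by hand, the delicate point being to verify that the subquotient sheaves $B_i\Omega^j$ and $Z_i\Omega^j$ appearing in the filtration are themselves $(\P^n,\P^{n-1})$-invariant (they are, being built from $\Omega^\bullet_{-/k}$, the differential, and the Cartier isomorphism over the perfect field $k$, but this requires a genuine check). An alternative is a direct computation of $R\Gamma((\P^n,\P^{n-1}),W_m\Omega^j)$ via the residue sequence along $\P^{n-1}$; either way this is the only genuinely new point, the remainder being a routine adaptation of the proof of Proposition \ref{prop:spectra-hodge-dR}.
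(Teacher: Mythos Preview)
Your proposal is correct and follows the same framework as the paper: apply Proposition~\ref{prop:build-spectra} to the graded monoid $\{W_m\Omega^j\}_j$ (resp.\ the constant monoid $W_m\Omega^\bullet$) equipped with the $\dlog$ Chern class, then assemble $\E^{\rm crys}$ as $\lim_m \E^{m,\rm crys}$ via Proposition~\ref{prop:build-spectra-maps}. The only difference is one of packaging: the paper simply cites \cite[Theorem~1.3]{mericicrys} for the fact that $W_m\Omega^s$ already lies in $\logDAeff(k,W(k))$ (i.e.\ $(\P^n,\P^{n-1})$-invariance), \cite{Gros1985} for the crystalline Chern class, and \cite[(4.5.1)]{mericicrys} for the projective bundle equivalence needed in Proposition~\ref{prop:build-spectra}, whereas you propose to establish these inputs from scratch by d\'evissage in $m$ from the Hodge case. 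Your d\'evissage outline is sound (the canonical filtration and the log Cartier isomorphism are available for objects of $\SmlSm_k$, which are log smooth of Cartier type over the perfect base $k$), so the argument goes through; the paper's route buys brevity, yours self-containedness.
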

\begin{proof}
Recall from \cite[Theorem 1.3]{mericicrys} that for all $m$ there is $W_m\Omega^s\in \logDAeff(k,W(k))$ representing the cohomology of $W_m\Omega^s{\_/k}$. By considering the graded commutative monoid $\{W_m\Omega^i\}_{i\in \N}$, the crystalline Chern class of \cite{Gros1985} induces a section $c_{m}\colon \Lambda(\P^1)\to W_m\Omega^1[2]$ that induces, for all $X\in \SmlSm_k$, an equivalence (see \cite[(4.5.1)]{mericicrys})\[
    R\Gamma(X,W_m\Omega^i) \simeq \widetilde{R\Gamma}(X\times \P^1,W_m\Omega^{i+1})[2]
\]
By applying Proposition \ref{prop:build-spectra}, we get oriented ring spectra $\E^{m,\rm dRW}\in \CAlg(\logDA(k,W(k)))$.
By considering the $m$-truncated de Rham--Witt complex, again by \cite[Theorem 1.3]{mericicrys}, we get again a constant graded commutative monoid $\{W_m \Omega\}$ in $\logDAeff(k,W(k))$ producing oriented ring spectra $\E^{m,\rm crys}\in \CAlg(\logDA(k,W(k)))$ with, for all $m_1\geq m_2$, maps of oriented motivic spectra $\E^{m_1,\rm crys}\to \E^{m_2,\rm crys}$ 
in $\logDA(k,W(k))$ by Proposition \ref{prop:build-spectra-maps}. Then $\E^{\rm crys}:=\lim_m \E^{m,\rm crys}$.
\end{proof}

If $F\colon W(k)\to W(k)$ is the Frobenius on $W(k)$, we let $F^*\E^{\rm crys}$ be the oriented ring spectrum with $W(k)$-module structure twisted by $F$.

\begin{rmk}
    As proven in \cite{mericicrys}, $\E^{m,\rm dRW}$ and $\E^{\rm crys}$ are ring spectra in $\logDM(k,W(k))$. This will suggest, via the crystalline comparison, that the prismatic and syntomic cohomology are presheaves with logarithmic transfers (in an appropriate sense): this will be investigated in future work.
\end{rmk}

\section{Prismatic and syntomic realizations}\label{sec:prism_synt}

We recall from \cite[Definition 4.5]{BLPO-BMS2}
or from \cite[Section 3.1]{Koshikawa-Yao} (which is simply a translation to the log setting of \cite[Definition 4.10(3)]{BMS2}) the following definition:

\begin{defn}
\label{df:log quasisyntomic}
 A pre-log ring $(A,M)$ is called \emph{log quasisyntomic} if  $A$ is $p$-complete with bounded $p^\infty$ torsion, $p$-completely flat over $\Z_p$ and satisfies the property that $\L_{(A, M_A)/\Z_p}$ has $p$-complete Tor amplitude in degree $[-1,0]$. 
 We denote by $\lQSyn$ the category of pre-log rings that are log quasisyntomic, and for any ring $R$, we let $\lQSyn_R$ denote the category of pre-log $R$-algebras which are log quasisyntomic.
 Let $\kX$ be a quasi-coherent bounded $p$-adic formal log scheme. Then $\kX$ is \emph{log quasisyntomic} if
strict \'etale locally it is isomorphic to $\Spf(A,M)$ with $(A,M)\in \lQSyn$.
We will denote $\FlQSyn$ the category of bounded $p$-adic log quasisyntomic formal log schemes.

The \emph{strict quasisyntomic topology on $\FlQSyn$} is generated by the families of strict morphisms $\{\mathfrak{U}_i\to \mathfrak{X}\}_{i\in I}$ such that $\{\ul{\mathfrak{U}_i}\to \ul{\mathfrak{X}}\}_{i\in I}$ is a quasisyntomic covering.
 \end{defn}
 
 We also recall the following notion:
 
\begin{defn}
An integral pre-log ring $(S,Q)$ is called log quasiregular
semiperfectoid if it is log quasisyntomic, there exists a map $R\to S$ with $R$ perfectoid, and $S/pS$ and $Q$ are semiperfect (in the sense of Definition \ref{def:perf_semiperf_mon}). 
\end{defn}

\begin{lemma}
\label{lem:formal vs ring}
Let $X$ be a $p$-adic formal log scheme,
and let $(A,M)$ be a $p$-complete pre-log ring.
Then there is a natural isomorphism
\begin{equation}
\label{eq:formal vs ring}
\Hom(X,\Spf(A,M))
\cong
\Hom((A,M),(\Gamma(X,\cO_X),\Gamma(X,\cM_X))).
\end{equation}
\end{lemma}
\begin{proof}
Since $\Spf(A,M)\cong \Spf(A)\times_{\Spf(\Z_p)}\Spf(\Z_p\langle M \rangle,M)$,
the left-hand side of \eqref{eq:formal vs ring} is isomorphic to
\[
\Hom(X,\Spf(A))\times_{\Hom(X,\Spf(\Z_p\langle M \rangle))}\Hom(X,\Spf(\Z_p\langle M \rangle,M)).
\]
The right-hand side of \eqref{eq:formal vs ring} is isomorphic to
\[
\Hom(A,\Gamma(X,\cO_X))\times_{\Hom(M,\Gamma(X,\cO_X))}\Hom(M,\Gamma(X,\cM_X)).
\]
Since $\Hom(X,\Spf(A))\cong \Hom(A,\Gamma(X,\cO_X))$ and $\Hom(X,\Spf(\Z_p\langle M \rangle))\cong \Hom(M,\Gamma(X,\cO_X))$,
it suffices to show
\[
\Hom(X,\Spf(\Z_p\langle M \rangle,M))
\cong
\Hom(M,\Gamma(X,\cM_X)).
\]
This can be shown arguing as in \cite[Proposition III.1.2.9]{ogu}.
\end{proof}

\begin{defn}
\label{df:QSyn globalization}
Let $\cC$ be an $\infty$-category with colimits and limits.
By Lemma \ref{lem:formal vs ring},
we have the adjoint functors
\[
\Gamma
:
\mathrm{FlQSyn}
\rightleftarrows
\mathrm{lQSyn}^\op
:
\Spf
\]
where $\Gamma(X):=(\Gamma(X,\cO_X),\Gamma(X,\cM_X))$ for $X\in \mathrm{FlQSyn}$.
We have the induced adjoint functors
\[
\Gamma^*
:
\PSh(\mathrm{lQSyn}^\op,\cC)
\rightleftarrows
\PSh(\mathrm{FlQSyn},\cC)
:
\Gamma_*,
\]
Let $\sqsyn$ be the shorthand for the strict quasisyntomic topology.
Then we have the adjoint functors
\[
L_\sqsyn
:
\PSh(\mathrm{FlQSyn},\cC)
\rightleftarrows
\Sh_\sqsyn(\mathrm{FlQSyn},\cC)
:
\iota,
\]
A presheaf $\cF\in \PSh(\mathrm{lQSyn}^\op,\cC)$ is \emph{globalizable} if the unit morphism
\[
\alpha_\cF \colon \cF\to \Gamma_*\iota L_\sqsyn \Gamma^*\cF
\]
is an equivalence.
\end{defn}

\begin{prop}
\label{prop:QSyn globalization}
With the above notation,
assume that $\cF$ satisfies the following conditions:
\begin{enumerate}
\item[(i)] The restriction of $\cF$ to $\lQSyn_{\Z_p^\mathrm{cyc}}$ with $\Z_p^\mathrm{cyc}:=\Z_p[\mu_{p^\infty}]_p^\wedge$ preserves filtered colimits as a functor $\lQSyn_{\Z_p^\mathrm{cyc}}\to \cC$.
\item[(ii)] For every $(A,M)\in \lQSyn$,
the presheaf
\[
(A\to B)\mapsto \cF(B,M)
\]
on the opposite category of quasisyntomic $A$-algebras is a quasisyntomic sheaf.
\item[(iii)] For every $(A,M)\in \lQSyn_{\Z_p^\mathrm{cyc}}$ such that $A$ is strictly local, the induced morphism $\cF(A,M)\to \cF(A,M^a)$ is an isomorphism.
\end{enumerate}
Then $\cF$ is globalizable.
\end{prop}
\begin{proof}
We need to show that the induced morphism $\cF(A,M)\to L_\sqsyn \Gamma^* \cF(\Spf(A,M))$ is an equivalence for every $(A,M)\in \lQSyn$.
Both sides satisfy quasisyntomic descent on $A$.
Since $\Z_p^\mathrm{cyc}\in \QSyn$ and $\Z_p\to \Z_p^\mathrm{cyc}$ is $p$-completely faithfully flat, $\Z_p\to \Z_p^\mathrm{cyc}$ is a quasisyntomic cover.
Use the \v{C}ech nerve of the induced quasisyntomic cover $A\to A\widehat{\otimes}_{\Z_p}\Z_p^\mathrm{cyc}$ to reduce to the case where $A$ is a $\Z_p^\mathrm{cyc}$-algebra.

We only need to show that
the morphism from the quasisyntomic sheaf
\[
(A\to B)\mapsto \cF(B,M)
\]
to the quasisyntomic sheaf
\[
(A\to B) \mapsto L_\sqsyn \Gamma_*\cF(\Spf(B,M))
\]
on the opposite category of quasisyntomic $A$-algebras is an equivalence.
The quasisyntomic pretopology consists of the families $\{A\to B_i\}_{i\in I}$ in $\QSyn$ with finite $I$ such that $A\to \oplus_{i\in I} B_i$ is a quasisyntomic cover.
Due to the finiteness of the index set $I$,
the quasisyntomic topos is coherent.
Hence by Deligne's theorem \cite[Proposition VI.9.0]{SGA4}, the quasisyntomic topology has enough points.
Arguing as in Proposition \ref{prop:globalization},
we reduce to the case where $A$ is local in the quasisyntomic topology. In this case, $A$ is strictly local since the \'etale topology is coarser than the quasisyntomic topology, so (iii) finishes the proof.
\end{proof}

\begin{rmk}
\label{rmk:QSyn globalization}
We often apply Proposition \ref{prop:QSyn globalization} to the case where $\cC=\widehat{\cD\cF}(R)_p^\wedge$ for some ring $R$.
Together with \cite[Lemma 5.2(1),(3)]{BMS2}, we see that $\cF\in \PSh(\lQSyn,\widehat{\cD\cF}(R)_p^\wedge)$ is globalizable if the graded pieces $\mathrm{gr}^i\cF$ satisfy the conditions (i)--(iii) in Proposition \ref{prop:QSyn globalization} for all integers $i$.
\end{rmk}

Recall from \cite[\S 7.3]{BLPO-BMS2} that, analogously to \cite[\S 7]{BMS2}, the Nygaard-complete absolute log prismatic cohomology 
$\cPrism_{-}:= R\Gamma_{\rm lqsyn}(-,\pi_{0}\TP(-)_p^\wedge)$  equipped with the Nygaard filtration define functors
\begin{align*}
&\cPrism_{-}\in \Fun(\lQSyn,\cD(\Z_p)), &\Fil^{\geq \bullet}_\rN\cPrism_{-}
\in \Fun(\lQSyn, \widehat{\cD\cF}(\Z_p)_p^\wedge)).
\end{align*}
These complexes commute with (homotopy) limits and satisfy quasisyntomic descent (this is a consequence of \cite[Theorem 2.3]{BLPO-BMS2} and the construction).
Define the Breuil--Kisin twists 
\begin{align*}
    \cPrism_{-}\{1\}&:= R\Gamma_{\rm lqsyn}(-,\pi_{2}\TP(-)_p^\wedge)[-2]\\
    \cPrism_{-}\{-1\}&:= R\Gamma_{\rm lqsyn}(-,\pi_{-2}\TP(-)_p^\wedge)[2]
\end{align*}
with the Nygaard filtrations given by unfolding the double-speed Postnikov filtration.  
In analogy with \cite[Theorem 1.12 (3)]{BMS2}, we get the twisted prismatic cohomology $\cPrism_{-}\{i\}$ by taking tensor powers in $\Fun(\lQSyn,\widehat{\cD\cF}(\Z_p)_p^\wedge)$, with the induced filtration.

As observed in \cite[Lemma 7.14]{BMS2}, the object $\cPrism_{(A,M)}\{1\}$ is not invertible as an $\cPrism_{(A,M)}$-module, but it is invertible when considered a module over the \emph{filtered ring} $\cPrism_{(A,M)}$ (the proof for log rings works verbatim). In particular, in the filtered category $\widehat{\cD\cF}(\Z_p)_p^\wedge)$, we have an equivalence
\begin{equation*}
    \Fil^{\geq \bullet}_\rN\cPrism_{-}\{i\}\otimes\Fil^{\geq \bullet}_\rN\cPrism_{-}\{j\} \to \Fil^{\geq \bullet}_\rN\cPrism_{-}\{i+j\}
\end{equation*}
of filtered objects for all $i,j\in \Z$, where the tensor product is taken in $\Fun(\lQSyn\widehat{\cD\cF}(\Z_p)_p^\wedge)$. By definition of the convolution product, we have   \[
\Fil^m(\Fil^{\geq \bullet}_\rN\cPrism_{-}\{i\}\otimes\Fil^{\geq \bullet}_\rN\cPrism_{-}\{j\})\simeq \bigoplus_{m_1 + m_2=m} \Fil^{\geq m_1}_\rN\cPrism_{-}\{i\}\otimes_{\Z_p}^L\Fil_\rN^{\geq m_2}\cPrism_{-}\{j\}.
\] 
This gives associative and commutative multiplication maps in $\Fun(\lQSyn,\cD(\Z_p))$
\begin{equation}\label{eq:mult}
\Fil^{\geq m}_\rN\cPrism_{-}\{i\}\otimes^L_{\Z_p}\Fil^{\geq n}_\rN\cPrism_{-}\{j\} \to \Fil^{\geq m+n}_\rN\cPrism_{-}\{i+j\}    
\end{equation} for all $i,j,m,n$.

\begin{prop}
\label{prop:prism globalizable}
The above functor
$\Fil^{\geq \bullet}_\rN\cPrism_{-}$ is globalizable.
\end{prop}
\begin{proof}
Observe that $\Fil^{\geq \bullet}_\rN\cPrism_{-}$ is a log quasisyntomic sheaf and hence strict quasisyntomic sheaf.
By \cite[Proposition 7.4]{BLPO-BMS2} and Remark \ref{rmk:QSyn globalization},
it suffices to show that the sheaves
\[
(A,M)\mapsto (\wedge_A^i \L_{(A,M)/\Z_p^\mathrm{cyc}})_p^\wedge
\]
on $\lQSyn_{\Z_p^\mathrm{cyc}}$ with values in the full subcategory $\cD(\Z_p)_p^\wedge$ of $\cD(\Z_p)$ spanned by the $p$-complete objects satisfy the conditions (i) and (iii) in Proposition \ref{prop:QSyn globalization} for all integers $i$.
The completion functor $\cD(\Z_p)\to \cD(\Z_p)_p^\wedge$ preserves colimits since it is left adjoint to the inclusion functor,
so it suffices to show that the sheaves
\[
(A,M)\mapsto \wedge_A^i \L_{(A,M)/\Z_p^\mathrm{cyc}}
\]
on $\lQSyn_{\Z_p^\mathrm{cyc}}$ with values in $\cD(\Z_p)$ satisfy the conditions (i) and (iii) in Proposition \ref{prop:QSyn globalization} for all integers $i$.
The condition (i) holds by \cite[Lemma 3.11]{BLPO-HKR}, and the condition (iii) holds since $\wedge_A^i \L_{(A,M)/\Z_p^\mathrm{cyc}}$ is left Kan extended from $\mathrm{Poly}_{(\Z_p^\mathrm{cyc},\{1\})}$ and hence preserves sifted colimits, see \cite[\S 2.10]{BLPO-HKR} for the notation $\mathrm{Poly}$.
\end{proof}
 
For every $i\in \Z$, we denote by $\Fil^{\geq \bullet}_\rN R\Gamma_{\cPrism}(-)\{i\}$ the object defined in $\Sh_{\set}(\FlQSyn,\widehat{\cD\cF}(\Z_p)_p^\wedge)$ by globalization,
and we denote by $R\Gamma_{\cPrism}(-)\{i\}$ the object defined in $\Sh_{\set}(\FlQSyn,\cD(\Z_p))$ after forgetting the filtration structure.

\begin{rmk}\label{rmk:prism-monoid}
    Since the filtrations obtained by unfolding from the double-speed Postnikov filtration of (logarithmic) $\TC^{-}$  and $\TP$ are functorial and multiplicative, we have that
\[
R\Gamma_{\cPrism}(-)\{i\}\simeq R\Gamma_{\rm lqsyn}(-,\pi_{2i}\TP(-)_p^\wedge)[-2i]
\]
and \[
\Fil^{\geq i}_\rN R\Gamma_{\cPrism}(-)\{i\}\simeq R\Gamma_{\rm lqsyn}(-,\pi_{2i}\TCmin(-)_p^\wedge)[-2i].
\]
The multiplication maps in \eqref{eq:mult} agree with the one induced by the $\E_\infty$-structure of $\TP$ and $\TCmin$, which also satisfy the identity axiom: this implies that the objects $\{R\Gamma_{\cPrism}(-)\{i\}\}_{i\in \Z}$ and $\{\Fil^{\geq i}_\rN R\Gamma_{\cPrism}(-)\{i\}\}_{i\in \Z}$ are graded commutative monoids in their respective categories.
\end{rmk}

\begin{rmk} 
The filtration $\Fil_\rN^{\geq \bullet}R\Gamma_{\cPrism}(-)$ is complete and exhaustive (in fact, constant in negative degrees). 
Indeed, since the category $\Sh_{\set}(\FlQSyn,\cD(\Z))$ is generated by $\Z(U)$ with $U=\Spf(A,M)$, it is enough to check that
\begin{align*}
    \Map(\Z(U),\lim_n\Fil_\rN^{\geq n}R\Gamma_{\cPrism}(-/R)) &= \lim_n \Map(\Z(U),\Fil_\rN^{\geq n}R\Gamma_{\cPrism}(-/R)) \\ &= \lim_n \Fil_\rN^{\geq n}\cPrism_{(A,M)/R} = 0
    \end{align*}
and, since the colimit is filtered and $\Z(U)$ is compact, for all $m\geq 0$ the map \[
     \Map(\Z(U),\Fil_\rN^{\geq -m}R\Gamma_{\cPrism}(-))\to \Map(\Z(U),\colim_n\Fil_\rN^{\geq -n}R\Gamma_{\cPrism}(-))
    \]
    agrees with the map\[
    \cPrism_{(A,M)}\to \colim_n \Fil_\rN^{\geq -n}\cPrism_{(A,M)},
    \]
    which is an equivalence (this can be checked $\lQSyn$-locally, reducing to the case of $\lQRSPerfd$, where it is obvious since the Postnikov filtration is exhaustive, so $\Fil_\rN^{\geq \bullet}R\Gamma_{\cPrism}(-)$ is eventually constant. 
\end{rmk}

\begin{rmk}\label{rmk:secondary_filtr} 
If $R$ is perfectoid ring, let $\Lambda:= \cPrism_R\cong A_{\rm inf}(R)$. By Proposition \ref{prop:prism globalizable}, we get an object of $\Sh_{\set}(\FlQSyn_R,\cD(\Lambda))$ (resp. in $\Sh_{\set}(\FlQSyn_R,\widehat{\cD\cF}(\Lambda))$) that we denote $R\Gamma_{\cPrism}(-/R)$ (resp. by $\Fil^{\geq \bullet}_\rN R\Gamma_{\cPrism}(-/R)$) In this case, we have that the graded pieces $\gr_N^i(R\Gamma_{\cPrism}))$ carry themselves a finite \emph{conjugate} filtration with graded pieces given by
    $\Map(\Z(U),\gr^j\gr_N^i(R\Gamma_{\cPrism}(-/R))) = (L\Omega^j_{(A,M)/R})_p^{\wedge}[-j]$ for $0 \le j \le i$. 
    \end{rmk}
\begin{rmk}
    The perfectoid base is useful for the following reason: the Breuil-Kisin  twist of $\cPrism_R$ is given as in \cite[\S 6.2]{BMS2} by $\cPrism_R\{1\}:=\pi_2\TP(R;\Z_p)\in \cD(\cPrism_R)$,  defined in terms of the (non-logarithmic) prismatic cohomology. Since $R$ is perfectoid, it is a free module of rank $1$ over $\cPrism_R$ generated by the element $\xi \in A_{\inf}(R)$. In this case, the $i$th twisted prismatic cohomology satisfies \[
    \cPrism_{-/R}\{i\}\simeq \cPrism_{-/R}\cotimes^L_{\cPrism_R}\cPrism_R\{1\}^{\cotimes i},\]
    Similarly, the twisted Nygaard filtration satisfies
    \[\Fil_\rN^{\geq i}\cPrism_{-/R}\{i\}\simeq \Fil_\rN^{\geq i}\cPrism_{-/R}\cotimes^L_{\cPrism_R}\cPrism_R\{1\}^{\cotimes i}.\] 
\end{rmk} 
\begin{rmk}
    We can interpret the filtered object $\Fil_\rN^{\geq \bullet}\cPrism_{-/R}\{i\}$ as the convolution product of the filtered objects $\Fil_\rN^{\geq \bullet}\cPrism_{-/R}$ and $\cPrism_R\{1\}^{\cotimes i}$, where the latter is equipped with the trivial filtration. With this convention, the object $\cPrism_{R/R}\{i\}$ comes equipped with its Nygaard filtration as in \cite[\S 6.2]{BMS2}, given by $\Fil^{\geq n}_\rN \cPrism_{R/R}\{i\} = \xi^n A_{\inf} (R) u^i$, where $u$ is a generator of $\pi_{2} \TCmin(R;\Z_p)$. This agrees with the filtration given by the convolution product on \[
    (\cPrism_{-/R}\{1\})^{\otimes_{\widehat{\cD\cF}} i}.
    \]
    By quasisyntomic descent, this recovers the filtration on the absolute prismatic cohomology.  
\end{rmk}

We recall the non-Nygaard complete version:
\begin{constr}\label{constr:non-nygaard-complete}
    Let $R$ be a perfectoid ring and let $\cPrism^{\rm nc}_{-/R}$ be the functor obtained by left Kan extension from $p$-completion of polynomial pre-log $R$ algebras   (see \cite[Remark 2.11]{BLPO-HKR})\footnote{Equivalently, from derived log smooth pre-log $R$-algebras in the sense of \cite[Definition 4.2]{BLPO-HKR}.}
 Then as in \cite[Construction 7.12]{BMS2}, $\cPrism^{\rm nc}_{-/R}/\xi\simeq L\Omega_{-/R}$, where $L\Omega_{-/R}$ is the $p$-completed derived de Rham cohomology. As observed in \cite[Construction 7.12]{BMS2}, this depends on the choice of a perfectoid base. 
By the comparison with $L\Omega$, we deduce that $\cPrism^{\rm nc}$ is a quasisyntomic sheaf on $\lQSyn_R$ with values in $\cD(A_{\inf}(R))$ and that it takes discrete values on $\lQRSPerfd_R$: thus, it can be globalized as before and we consider the quasisyntomic sheaves $R\Gamma_{\cPrism^{\rm nc}}(-/R)$ and the Breuil--Kisin twists given by tensoring with $\cPrism_R\{1\}$ (note that $\cPrism_R\simeq \cPrism_R^{\rm nc}$ by construction). As observed in the proof of \cite[Theorem 13.1]{BSPrism}, it comes equipped with the Nygaard filtration $\Fil_\rN^{\geq m}R\Gamma_{\cPrism^{\rm nc}}(S/R)\{i\}$ again by left Kan extension, and $R\Gamma_{\cPrism}(S/R)\{i\}$ is the completion of $R\Gamma_{\cPrism^{\rm nc}}(S/R)\{i\}$ with respect to this filtration: this follows from the comparison of the graded pieces.

\end{constr}
\begin{rmk}
In \cite[Theorem 1.3]{LogBeilinson}, we apply saturated descent to compare the Nygaard-complete log prismatic cohomology considered here with the Nygaard-completion of the site-theoretic log prismatic cohomology pursued by Koshikawa \cite{Koshikawa} and Koshikawa--Yao \cite{Koshikawa-Yao}.
\end{rmk}
\begin{prop}\label{prop:prism-motivic} Assume that $R$ is a  perfectoid ring. Let $S\in \lQSyn_R$. For all $i,m\in \Z$ and $n\geq 1$, we have equivalences:
    \begin{align*}
    R\Gamma_{\cPrism}(S/R)\{i\}&\simeq R\Gamma_{\cPrism}(((\P^n_R,\P^{n-1}_R)\otimes_R S)/R)\{i\}\\
    \Fil_\rN^{\geq m}R\Gamma_{\cPrism}(S/R)\{i\}&\simeq \Fil_\rN^{\geq m}R\Gamma_{\cPrism}(((\P^n_R,\P^{n-1}_R)\otimes_R S)/R)\{i\}\\
    R\Gamma_{\cPrism^{\rm nc}}(S/R)\{i\}&\simeq R\Gamma_{\cPrism^{\rm nc}}(((\P^n_R,\P^{n-1}_R)\otimes_R S)/R)\{i\}\\
    \Fil_\rN^{\geq m}R\Gamma_{\cPrism^{\rm nc}}(S/R)\{i\}&\simeq \Fil_\rN^{\geq m}R\Gamma_{\cPrism^{\rm nc}}(((\P^n_R,\P^{n-1}_R)\otimes_R S)/R)\{i\}.
    \end{align*}
    In particular, for all $S\in \lQSyn_R$, the strict \'etale sheaves
    \begin{align*}
    R\Gamma_{\cPrism}(-/R)\{i\},\quad \Fil_\rN^{\geq m}R\Gamma_{\cPrism}(-/R)\{i\}, \\
    R\Gamma_{\cPrism^{\rm nc}}(-/R)\{i\}, \quad \Fil_\rN^{\geq m}R\Gamma_{\cPrism^{\rm nc}}(-/R)\{i\}
    \end{align*}
    are $(\P^n,\P^{n-1})$-invariant, so they define objects of $\logFDAeff(S,\Lambda)$. 

\end{prop}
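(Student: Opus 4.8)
The plan is to peel off, in turn, the Breuil--Kisin twist, the Nygaard filtration, and the conjugate (``secondary'') filtration, so as to reduce everything to the $(\P^n,\P^{n-1})$-invariance of the exterior powers $(\bigwedge^k\L_{-/R})_p^\wedge$ of Gabber's log cotangent complex, which is already available: it is exactly equation \eqref{eq:box-inv-cotan} from the proof of Proposition~\ref{prop:spectra-hodge-dR}, ultimately \cite[Proposition 8.3]{BLPO-HKR}. First I would reduce to $i=0$: the Breuil--Kisin twist $R\Gamma_{\cPrism}(-/R)\{i\}$ is the tensor product of $R\Gamma_{\cPrism}(-/R)$ with the free rank-one $\cPrism_R$-module $\cPrism_R\{1\}^{\otimes i}$ --- and likewise in the filtered category $\widehat{\cD\cF}(\Lambda)$, with the trivial filtration on the twist --- an operation which plainly commutes with and does not affect $(\P^n,\P^{n-1})$-invariance. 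Second, since all four functors are strict \'etale sheaves on $\FlQSyn_R$ and $(\P^n_R,\P^{n-1}_R)\times_R(-)$ preserves strict \'etale covers, it suffices to check the asserted equivalences for $S=\Spf(A,M)$ affine; once the equivalences are proved, the final sentence is immediate, since it is just the statement that these four sheaves are local with respect to the defining projections of $\logFDAeff$.

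For the two Nygaard-complete functors, recall that $\Fil_N^{\geq \bullet}R\Gamma_{\cPrism}(-/R)$ is complete and exhaustive (indeed constant in negative degrees). Hence $R\Gamma_{\cPrism}(-/R)=\Fil_N^{\geq 0}R\Gamma_{\cPrism}(-/R)$, as well as each $\Fil_N^{\geq m}R\Gamma_{\cPrism}(-/R)$, is the limit over $k$ of the truncations $\Fil_N^{\geq m}/\Fil_N^{\geq k}$ (take $m\le 0$ in the first case), and each such truncation is a finite iterated extension of the graded pieces $\gr_N^j R\Gamma_{\cPrism}(-/R)$. Since the property of being $(\P^n,\P^{n-1})$-invariant is stable under fiber sequences and under limits in the target, it is enough to show that each $\gr_N^j R\Gamma_{\cPrism}(-/R)$ is $(\P^n,\P^{n-1})$-invariant. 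By Remark~\ref{rmk:secondary_filtr}, $\gr_N^j R\Gamma_{\cPrism}(-/R)$ carries a \emph{finite} secondary filtration whose graded pieces are the sheaves $L\Omega^k_{-/R}[-k]=R\Gamma(-,(\bigwedge^k\L_{-/R})_p^\wedge)[-k]$ of \eqref{eq:cotan_spt}, $0\le k\le j$, and each of these is $(\P^n,\P^{n-1})$-invariant by Proposition~\ref{prop:spectra-hodge-dR}(1) (equivalently \eqref{eq:box-inv-cotan}, proved by left Kan extension from finite free pre-log algebras using \cite[Proposition 8.3]{BLPO-HKR}). Applying stability under finite extensions once more gives the claim for $\gr_N^j$, hence for $R\Gamma_{\cPrism}(-/R)\{i\}$ and $\Fil_N^{\geq m}R\Gamma_{\cPrism}(-/R)\{i\}$.

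For the non-Nygaard-complete versions, recall from Construction~\ref{constr:non-nygaard-complete} that $\cPrism^{\rm nc}_{-/R}/\xi\simeq L\Omega_{-/R}$, the $p$-completed derived de Rham cohomology, and that $R\Gamma_{\cPrism^{\rm nc}}(-/R)\{i\}$ together with its Nygaard filtration steps --- defined by left Kan extension --- are derived $\xi$-complete. By Proposition~\ref{prop:spectra-hodge-dR}(2), $L\Omega_{-/R}$ is $(\P^n,\P^{n-1})$-invariant; since $\cPrism_R\{1\}/\xi\simeq R$, the quotient $R\Gamma_{\cPrism^{\rm nc}}(-/R)\{i\}/\xi^k$ is, for each $k\ge 1$, a finite iterated extension of copies of $L\Omega_{-/R}$, hence $(\P^n,\P^{n-1})$-invariant, and passing to the limit over $k$ settles $R\Gamma_{\cPrism^{\rm nc}}(-/R)\{i\}$. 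The filtered version $\Fil_N^{\geq m}R\Gamma_{\cPrism^{\rm nc}}(-/R)\{i\}$ is handled by the same pattern: by Construction~\ref{constr:non-nygaard-complete} its Nygaard graded pieces coincide with those of the complete version, hence are $(\P^n,\P^{n-1})$-invariant by the previous paragraph, and combining this with derived $\xi$-completeness --- so that $\Fil_N^{\geq m}R\Gamma_{\cPrism^{\rm nc}}(-/R)/\xi^k$ is built out of the graded pieces $\gr_N^j$ and of $L\Omega_{-/R}$ --- and taking the limit over $k$ yields the claim.

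The one genuinely new geometric ingredient is the $(\P^n,\P^{n-1})$-invariance of the exterior powers of Gabber's log cotangent complex, and this is already isolated and available. The remaining work --- organizing the Nygaard and conjugate filtrations, and verifying the finiteness of the secondary filtration together with the completeness and $\xi$-completeness assertions in the log setting --- is a formal, if somewhat delicate, transcription of the corresponding arguments in \cite[\S 5, \S 7]{BMS2}, and it is precisely this bookkeeping that I expect to be the main point requiring care.
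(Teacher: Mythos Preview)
Your proposal is correct and follows essentially the same route as the paper: trivialize the Breuil--Kisin twist to reduce to $i=0$, use completeness of the Nygaard filtration plus the finite secondary filtration to reduce the complete case to the $(\P^n,\P^{n-1})$-invariance of $L\Omega^k_{-/R}$, and for $\cPrism^{\rm nc}$ reduce modulo $\xi$ to $L\Omega_{-/R}$. The only place where the paper is a touch cleaner is the filtered non-complete case: rather than invoking $\xi$-completeness again, it simply inducts on $m$ via the fiber sequence $\Fil_N^{\geq m+1}\to\Fil_N^{\geq m}\to\gr_N^m$ starting from the base case $\Fil_N^{\geq m}R\Gamma_{\cPrism^{\rm nc}}(-/R)=R\Gamma_{\cPrism^{\rm nc}}(-/R)$ for $m\le 0$, which avoids the somewhat vague claim that $\Fil_N^{\geq m}\cPrism^{\rm nc}/\xi^k$ is ``built out of $\gr_N^j$ and $L\Omega_{-/R}$''.
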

\begin{proof}
Via the trivialization $\cPrism_R\{1\}\simeq \cPrism_R$ (which can be chosen since $R$ is perfectoid) we reduce to the case where $i=0$. Let us prove the completed version first.
Since the filtration is complete and exhaustive, it is enough to prove it for $\gr_N^i(R\Gamma_{\cPrism}(-/R))$. Since the conjugate filtration they carry by Remark \ref{rmk:secondary_filtr} is finite, it is enough to check it on the graded pieces, i.e. it is enough to check that $L\Omega^j_{-/R}$ is $(\P^n,\P^{n-1})$-local: this is done in Proposition \ref{prop:spectra-hodge-dR}. 

For the non-complete version, we deduce the result for $R\Gamma_{\cPrism^{\rm nc}}(-/R)$ by checking it modulo $\xi$ from the analogous result for $p$-completed de Rham cohomology \ref{prop:spectra-hodge-dR}. Finally, we deduce the statement for the filtration from the sequence\[
\Fil_\rN^{\geq m}R\Gamma_{\cPrism^{\rm nc}}(-/R)\to \Fil_\rN^{\geq m}R\Gamma_{\cPrism^{\rm nc}}(-/R) \to \gr^{\geq m}R\Gamma_{\cPrism}(-/R)
\]
plus the fact that $\Fil_\rN^{\geq m}R\Gamma_{\cPrism^{\rm nc}}(-/R)\simeq R\Gamma_{\cPrism^{\rm nc}}(-/R)$ for $m\leq 0$. 
\end{proof}

\begin{cor} \label{cor:prism-motivic-abs} 
 
    Let $\kX\in \FlQSyn$. For all $i,m\in \Z$ and $n\geq 1$, we have equivalences:
    \begin{align*}
    R\Gamma_{\cPrism}(\kX)\{i\}&\simeq R\Gamma_{\cPrism}((\P^n,\P^{n-1})\times \kX)\{i\}\\
    \Fil_\rN^{\geq m}R\Gamma_{\cPrism}(\kX)\{i\}&\simeq \Fil_\rN^{\geq m}R\Gamma_{\cPrism}((\P^n,\P^{n-1})\times \kX)\{i\}.
    \end{align*}
    In particular, for all $S\in \lQSyn$, the strict \'etale sheaves   \[
    R\Gamma_{\cPrism}(-)\{i\},\Fil_\rN^{\geq m}R\Gamma_{\cPrism}(-)\{i\}\colon \FlQSm_S^{\rm op}\to \cD(\Z_p)
    \]
    are $(\P^n,\P^{n-1})$-invariant, so they define objects of $\logFDAeff(S,\Z_p)$. 

\begin{proof}
    By Proposition \ref{prop:prism globalizable}, we can assume $\kX=\Spf(S)$ with $S\in \lQSyn$. Let $S\to S'$ be a quasisyntomic cover in $\lQSyn$ with $S'\in \lQRSPerfd$ and let $(S')^\bullet$ be its \v Cech nerve. Then there is a perfectoid ring $R$ with a map $R\to S'$ so $\cPrism_{S'}\{i\}\simeq \cPrism_{S'/R}\{i\}$ and $\Fil_\rN^{\geq m}\cPrism_{S}\{i\}\simeq \Fil_\rN^{\geq m}\cPrism_{S/R}\{i\}$. In particular, the projections $\P^n_S\to S$ and $\P^n_{(S')^\bullet}\to (S')^\bullet$ induce an equivalence\[
    \begin{tikzcd}
        \arrow[dd, start anchor=north, end anchor=south, no head, xshift=-5em, decorate, decoration={brace,mirror}]R\Gamma_{\cPrism}(S)\{i\}\ar[dd]\arrow[dd, start anchor=north, end anchor=south, no head, xshift=5em, decorate, decoration={brace}]&&&\arrow[dd, start anchor=north, end anchor=south, no head, xshift=-6em, decorate, decoration={brace,mirror}, "\varlim_{m\in \Delta}" left=3pt]R\Gamma_{\cPrism}((S')^m/R)\{i\}\ar[dd]\arrow[dd, start anchor=north, end anchor=south, no head, xshift=6em, decorate, decoration={brace}]\\
       &\ \ar[r,"\simeq"] &\  \\
        R\Gamma_{\cPrism}(\P^n_S,\P^{n-1}_S)\{i\}&&&R\Gamma_{\cPrism}((\P^n_{(S')^m},\P^{n-1}_{(S')^m})/R)\{i\}
     \end{tikzcd}
    \]
     in $\Fun(\Delta^1,\cD(\Z_p))$. Since the vertical maps on the right-hand side are equivalences by Proposition \ref{prop:prism-motivic}, we deduce that the map on the left-hand side is an equivalence too. The result for $\Fil_\rN^{\geq m}R\Gamma_{\cPrism}(-)\{i\}$ follows in the same way. 
\end{proof}
\end{cor}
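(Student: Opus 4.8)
The plan is to bootstrap from the relative statement of Proposition~\ref{prop:prism-motivic} to the absolute one by quasisyntomic descent. First I would reduce to the affine case: since $R\Gamma_{\cPrism}(-)\{i\}$ and $\Fil_N^{\geq m}R\Gamma_{\cPrism}(-)\{i\}$ are strict \'etale sheaves on $\FlQSyn$ and the assignment $\kX\mapsto(\P^n,\P^{n-1})\times\kX$ carries strict \'etale covers to strict \'etale covers, it suffices to show that for $S\in\lQSyn$ the two projections induce equivalences $\cPrism_S\{i\}\simeq R\Gamma_{\cPrism}((\P^n_S,\P^{n-1}_S))\{i\}$ and $\Fil_N^{\geq m}\cPrism_S\{i\}\simeq\Fil_N^{\geq m}R\Gamma_{\cPrism}((\P^n_S,\P^{n-1}_S))\{i\}$.

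Next I would choose a quasisyntomic cover $S\to S'$ with $S'\in\lQRSPerfd$, pick a perfectoid ring $R$ admitting a map $R\to S'$, and form the \v Cech nerve $(S')^\bullet$ (the $p$-completed tensor powers of $S'$ over $S$). Each term $(S')^m$ is log quasisyntomic and is an $R$-algebra, hence lies in $\lQSyn_R$; moreover, since $R\Gamma_{\cPrism}(-/R)\{i\}$ and $\Fil_N^{\geq m}R\Gamma_{\cPrism}(-/R)\{i\}$ are by construction the restrictions of the absolute theories to $\FlQSyn_R$, we have $R\Gamma_{\cPrism}((S')^m)\{i\}\simeq R\Gamma_{\cPrism}((S')^m/R)\{i\}$ and likewise for the filtration. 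Base-changing the cover along $(\P^n_S,\P^{n-1}_S)\to\Spf(S)$ produces a strict quasisyntomic cover $(\P^n_{S'},\P^{n-1}_{S'})\to(\P^n_S,\P^{n-1}_S)$ whose \v Cech nerve is $(\P^n_{(S')^\bullet},\P^{n-1}_{(S')^\bullet})$, and one has $(\P^n_{(S')^m},\P^{n-1}_{(S')^m})=(\P^n_R,\P^{n-1}_R)\otimes_R(S')^m$.

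Then I would run the descent argument. Using that $R\Gamma_{\cPrism}$ and $\Fil_N^{\geq\bullet}R\Gamma_{\cPrism}$ satisfy quasisyntomic descent on $\FlQSyn$, one obtains a commutative square whose horizontal maps are the descent equivalences $R\Gamma_{\cPrism}(S)\{i\}\simeq\varlim_{m\in\Delta}R\Gamma_{\cPrism}((S')^m/R)\{i\}$ and $R\Gamma_{\cPrism}((\P^n_S,\P^{n-1}_S))\{i\}\simeq\varlim_{m\in\Delta}R\Gamma_{\cPrism}((\P^n_{(S')^m},\P^{n-1}_{(S')^m})/R)\{i\}$ and whose vertical maps are induced by the projections. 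By Proposition~\ref{prop:prism-motivic} applied to $(S')^m\in\lQSyn_R$, the right-hand vertical map is an equivalence for every $m\in\Delta$, hence also after passing to the limit; therefore the left-hand vertical map is an equivalence, which is exactly the claimed statement. The filtered version follows verbatim with $R\Gamma_{\cPrism}$ replaced by $\Fil_N^{\geq m}R\Gamma_{\cPrism}$ throughout. For the ``in particular'', note that $\FlSm_S\subseteq\FlQSyn$ and $(\P^n,\P^{n-1})\times\kX\in\FlSm_S$ whenever $\kX\in\FlSm_S$, so the restrictions of $R\Gamma_{\cPrism}(-)\{i\}$ and $\Fil_N^{\geq m}R\Gamma_{\cPrism}(-)\{i\}$ to $\FlSm_S^{\rm op}$ are $(\P^n,\P^{n-1})$-invariant strict \'etale sheaves and thus define objects of $\logFDAeff(S,\Z_p)$.

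The step I expect to require the most care is the passage that replaces the \emph{absolute} prismatic cohomology by a \emph{relative} one to which Proposition~\ref{prop:prism-motivic} applies: one must verify that the whole \v Cech nerve of the cover lies in $\lQSyn_R$ for a \emph{single} perfectoid ring $R$, so that the equivalences of Proposition~\ref{prop:prism-motivic} are available and compatible in the cosimplicial direction, and that the absolute and relative theories, together with their Nygaard filtrations, are indeed identified over that base. Once this bookkeeping is in place, the rest of the argument is formal: it only manipulates limits of equivalences.
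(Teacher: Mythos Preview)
Your proposal is correct and follows essentially the same approach as the paper: reduce to the affine case, take a log quasisyntomic cover by a log quasiregular semiperfectoid ring admitting a map from a perfectoid $R$, identify absolute and relative prismatic cohomology along the \v{C}ech nerve, and then invoke Proposition~\ref{prop:prism-motivic} termwise. Your closing remark about the bookkeeping is exactly the point the paper leaves implicit: once $R\to S'$ is chosen, every term $(S')^m$ of the nerve is an $R$-algebra in $\lQSyn$, hence lies in $\lQSyn_R$, so Proposition~\ref{prop:prism-motivic} applies compatibly in the cosimplicial direction.
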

\subsection{The first Chern class} In order to apply Proposition \ref{prop:build-spectra}, we want to exploit the Chern classes defined in \cite{BhattLurie}. 

We consider the non-Nygaard-complete absolute prismatic cohomology of (non-log) quasisyntomic bounded $p$-adic formal schemes $R\Gamma_{\Prism}(-)\{i\}$ of \cite{BhattLurie} with its Nygaard filtration $\Fil^{\geq \bullet}_\rN R\Gamma_{\Prism}(-)\{i\}$. For $\ul{\kX}$ a (non-log) bounded $p$-adic formal scheme, the filtered object $(R\Gamma_{\cPrism}(\ul{\kX}), \Fil^{\geq \bullet}_\rN R\Gamma_{\cPrism}(\ul{\kX}))$ of \cite{BMS2} coincides with the completion of the filtered object $(R\Gamma_{\Prism}(\ul{\kX}),\Fil^{\geq \bullet}_\rN R\Gamma_{\Prism}(\ul{\kX}))$ functorially in  $\ul{\kX}$ (see \cite[Theorem 5.6.2]{BhattLurie} and \cite[Theorem 13.1]{BSPrism} for the case over a perfectoid ring and then use quasisyntomic descent). 
Recall that by \cite[Notation 7.5.3]{BhattLurie} for all (non-log) bounded $p$-adic formal schemes $\ul{\kX}$ there is a Chern class map\[
c_1^\Prism \colon R\Gamma_{\et}(\ul{\kX},\G_m)[-1]\to \Fil^1_\rN R\Gamma_\Prism(\ul{\kX})\{1\},
\]
Let $\ul{\cE}\to \ul{\kX}$ be a vector bundle and let $\P(\ul{\cE})$ be the associated projective bundle. Since $H^0_{\et}(\P(\ul{\cE})/\ul{\kX},\G_m)=0$ and $H^1_{\et}(\P(\ul{\cE})/\ul{\kX},\G_m)=\Pic(\P(\ul{\cE})/\ul{\kX})\cong \Z$, we have a Chern class given by:
\begin{align*}
\Z\cong\Pic(\P(\ul{\cE})/\ul{\kX}) &\simeq \tau_{\leq 0}R\Gamma_{\et}(\P(\ul{\cE})/\ul{\kX}),\G_m)[1] \to R\Gamma_{\et}(\P(\ul{\cE}),\G_m)[1] \\
&\xrightarrow{c_1^\Prism[2]} \Fil^1_\rN R\Gamma_\Prism(\P(\ul{\cE}))\{1\})[2],
\end{align*}
and by composing with $\Fil^1_\rN R\Gamma_\Prism(-)\{1\}[2]\to R\Gamma_\Prism(-)\{1\}[2]$ we get \[c_1^{\Prism}\colon \Z\cong \Pic(\P(\ul{\cE})/\ul{\kX})\to R\Gamma_\Prism(\P(\ul{\cE}))\{1\}[2].\]

More generally, for all $i\geq 0$ we get \[
\begin{tikzcd}
&\Fil^1_\rN R\Gamma_\Prism(\P(\ul{\cE}))\{1\}[2]^{\otimes i}\ar[r] &\Fil^i_\rN R\Gamma_\Prism(\P(\ul{\cE}))\{i\}[2i]\\
    \Pic(\P(\ul{\cE})/\ul{\kX})\ar[r,"\Delta"]&\Pic(\P(\ul{\cE})/\ul{\kX})^{\otimes i}\ar[u]\ar[d]\\ 
    &R\Gamma_\Prism(\P(\ul{\cE}))\{1\}[2]^{\otimes i}\ar[r]&R\Gamma_\Prism(\P(\ul{\cE}))\{i\})[2i]
\end{tikzcd}\]
If $\ul{\kX}$ is a bounded $p$-adic formal scheme (with trivial log structure), we obtain by \cite[Theorem 6.2.8]{BhattLurie}
\begin{align*}
R\Gamma_\Prism(\ul{\kX})\{i\}&\to \lim_n  R\Gamma_\Prism(\ul{\kX})\{i\}/
\Fil_\rN^{\geq n}R\Gamma_\Prism(\ul{\kX})\{i\} \simeq R\Gamma_\cPrism(\ul{\kX})\{i\}\\
\Fil_\rN^{\geq \bullet}R\Gamma_\Prism(\ul{\kX})\{i\}&\to \lim_n  \Fil_\rN^{\geq \bullet}R\Gamma_\cPrism(\ul{\kX})\{i\}/
\Fil_\rN^{\geq \bullet + n}R\Gamma_\cPrism(\ul{\kX})\{i\}\simeq \Fil_\rN^{\geq \bullet}R\Gamma_\cPrism(\ul{\kX})\{i\}
\end{align*}
induced by completions. This gives Chern classes 
\begin{align*}
(c_1^{\Fil\cPrism})^i\colon \Z\cong \Pic(\P(\ul{\cE})/\ul{\kX})&\to \Fil_\rN^{\geq i}R\Gamma_\cPrism(\P(\ul{\cE}))\{i\}[2i]\\ (c_1^{\cPrism})^i\colon \Z\cong \Pic(\P(\ul{\cE})/\ul{\kX}) &\to R\Gamma_\cPrism(\P(\ul{\cE}))\{i\}[2i].
\end{align*}

\begin{constr}\label{constr:log-chern} Let $\kX\in \FlQSyn$ and $\ul{\cE}\to \ul{\kX}$ a vector bundle, we let $\cE$ and $\P(\cE)$ be the log schemes with the pullback log structure from $\kX$ (see e.g. \cite[Definition 7.1.2]{BPO}). Recall by \cite[Theorem 13.1]{BSPrism} that  for all $i,m\in \Z$, we have maps
\begin{equation}\label{eq:chern-map}
\begin{tikzcd}
  &R\Gamma_{\cPrism}({\kX})\{m-i\}[-2i]\ar[d,"\simeq"]\\ 
  &  R\Gamma_{\cPrism}({\kX})\{m-i\}[-2i]\otimes_{\Z}^L\Pic(\P(\ul{\cE})/\ul{\kX})\ar[d,"id\otimes (c_1^{\cPrism})^i"]\\
  &  R\Gamma_{\cPrism}({\kX})\{m-i\}[-2i]\otimes_{\Z}^L R\Gamma_{\cPrism}(\P(\ul{\cE}))\{i\}[2i]\ar[d,"(*)"]\\
  &R\Gamma_{\cPrism}(\P(\cE)))\{m-i\}[-2i]\otimes_{\Z}^L R\Gamma_{\cPrism}(\P(\cE))\{i\}[2i]\ar[d,"\mu"]\\
  &R\Gamma_{\cPrism}(\P(\cE))\{m\}
\end{tikzcd}    
\end{equation}
 which is functorial in $\kX$ and $\cE$. Here $(*)$ is induced by the pull-back along the maps $\P(\cE)\to \kX$ and $\P(\cE)\to \P(\ul{\cE})$. Similarly, we obtain a map
 \begin{equation}\label{eq:chern-map-fil-i}
    \Fil^{\geq m-i}_\rN R\Gamma_{\cPrism}({\kX})\{n-i\}[-2i]\to \Fil^{\geq m}_\rN R\Gamma_{\cPrism}(\P(\cE))\{n\}.     
 \end{equation}

The maps of \eqref{eq:chern-map-fil-i} assemble to a map
\begin{equation}\label{eq:chern-map-fil}
    \Fil_\rN^{\geq \bullet-i}R\Gamma_\cPrism(\kX)\{n-i\}[-2i]\to \Fil_\rN^{\geq \bullet}R\Gamma_\cPrism(\P(\cE))\{n\}.
\end{equation} in $\widehat{\mathcal{DF}}(\Z_p)$ for every $n, i \in \Z$. 
\end{constr}

In a completely analogous way, for $R$ perfectoid using that $\cPrism^{\rm nc}_{\ul{\kX}/R}\simeq \Prism_{\ul{\kX}/R}$ by \cite[Theorem 13.1]{BSPrism} we obtain a map\[
 \Fil_\rN^{\geq \bullet-i}R\Gamma_{\cPrism^{\rm nc}}(\kX)\{n-i\}[-2i]\to \Fil_\rN^{\geq \bullet}R\Gamma_{\cPrism^{\rm nc}}(\P(\cE))\{n\}.
\]
in $\mathcal{DF}(A_{\inf}(R))$ for every $n, i \in \Z$.
The following result is now analogous to \cite[Lemma 9.1.4]{BhattLurie}.

\begin{lemma}\label{lem:pbf}
    Let $R$ be a perfectoid ring and let $\kX\in \FlQSyn_R$. Let $\cE\to \kX$ be a vector bundle of rank $r+1$ equipped with the induced log structure as above. The maps defined in \eqref{eq:chern-map} and \eqref{eq:chern-map-fil-i} give equivalences for all $m, n$.
    \begin{align*}
        &\bigoplus_{i=0}^r\Fil_\rN^{\geq m-i}R\Gamma_\cPrism(\kX/R)\{n-i\}[-2i]\xrightarrow{\sim} \Fil_\rN^{\geq m}R\Gamma_\cPrism(\P(\cE)/R)\{n\}\\
        &\bigoplus_{i=0}^rR\Gamma_\cPrism(\kX/R)\{n-i\}[-2i]\xrightarrow{\sim} R\Gamma_\cPrism(\P(\cE)/R)\{n\}\\
&\bigoplus_{i=0}^r\Fil_\rN^{\geq m-i}R\Gamma_{\cPrism^{\rm nc}}(\kX/R)\{n-i\}[-2i]\xrightarrow{\sim} \Fil_\rN^{\geq m}R\Gamma_{\cPrism^{\rm nc}}(\P(\cE)/R)\{n\}\\
        &\bigoplus_{i=0}^rR\Gamma_{\cPrism^{\rm nc}}(\kX/R)\{n-i\}[-2i]\xrightarrow{\sim} R\Gamma_{\cPrism^{\rm nc}}(\P(\cE)/R)\{n\}
    \end{align*}

    \begin{proof} We follow the pattern of \cite[Lemma 9.1.4]{BhattLurie}: we first prove the completed version.
    Notice that the second equivalence is a special case of the first with $m\leq 0$. 
As in \eqref{eq:chern-map-fil}, for fixed $n$ we can build a map \[
\bigoplus_{i=0}^r\Fil_\rN^{\geq \bullet-i}R\Gamma_\cPrism(\kX/R)\{n-i\}[-2i]\rightarrow \Fil_\rN^{\geq \bullet}R\Gamma_\cPrism(\P(\cE)/R)\{n\}
\] 
in $\widehat{\mathcal{DF}}(\cPrism_R)$. Since the filtration is complete it is enough to check that the induced map \[
\bigoplus_{i=0}^r\gr_N^{m-i}R\Gamma_\cPrism(\kX/R)\{n-i\}[-2i]\xrightarrow{} \gr_N^{m}R\Gamma_\cPrism(\P(\cE)/R)\{n\}
\]
on graded pieces is an equivalence by \cite[Lemma 5.2 (1)]{BMS2}. Let $I$ be the kernel of the map $\theta\colon \cPrism_R\to R$, generated by $\xi$. By \cite[Remark 6.6]{BMS2}, there is an equivalence of $R$-modules\[
\cPrism_R\{i\}\cotimes_{\cPrism_R}^L R \simeq (I/I^2)^{\cotimes_R i}.
\]
Since the structure of $\cPrism_R$-module on $\gr^m_NR\Gamma_\cPrism(\kX/R)$ induces an $R$-module structure by  \cite[Proposition 7.8]{BMS2}, we have that \[
\gr^m_N(R\Gamma_\cPrism(\kX/R)\{i\})\simeq \gr^m_N(R\Gamma_\cPrism(\kX/R))\cotimes_{\cPrism_R}^L\cPrism\{i\}\simeq \gr^m_N(R\Gamma_\cPrism(\kX/R))\cotimes_{R}^L (I/I^2)^{\otimes i}.
\]
This implies that the finite filtration on $\gr^i_N(R\Gamma_\cPrism(\kX/R))$ induces a map  
\[
\bigoplus_{i=0}^rR\Gamma(\kX,(\bigwedge^{j-i}\L_{\kX/R})_p^\wedge\cotimes_R^L (I/I^2)^{\otimes n-i})[-2i]\to R\Gamma(\P(\cE),\bigwedge^j\L_{\P(\cE)/R})_p^\wedge\cotimes_R^L (I/I^2)^{\otimes n}.\]
The map on $I/I^2$ is just the multiplication. Since they are free $R$-modules of rank $1$, they can be coherently trivialized, which leaves us with the map\[
\bigoplus_{i=0}^rR\Gamma(\kX,\bigwedge^{j-i}\L_{\kX/R})[-2i]\to R\Gamma(\P(\cE),\bigwedge^j\L_{\P(\cE)/R}).
\] 
By construction (see \cite[Theorem 7.6.2]{BhattLurie}), this map agrees with the powers of the first Chern class $c_1^{\mathrm{dR}}(\cO(1))$, which is an equivalence (see \eqref{eq:derived-chern-de-rham}).

For the non-completed version, in the same way as before we see that the twists trivialize modulo $\xi$ and by construction as before, the Chern classes also agree with the de Rham chern class modulo $\xi$ (see again \cite[Theorem 7.6.2]{BhattLurie}): this gives the equivalence
\begin{align*}
\bigoplus_{i=0}^rR\Gamma_{\cPrism^{\rm nc}}(\kX/R)\{n-i\}/\xi[-2i]&\simeq \bigoplus_{i=0}^rR\Gamma(\ul{\kX},L\Omega_{\kX/R})\{n-i\}\xrightarrow{\sim} R\Gamma(\P(\ul{\cE}),L\Omega_{\P(\cE)/R})\\
&\simeq R\Gamma_{\cPrism^{\rm nc}}(\P(\cE)/R)\{n\}/\xi
\end{align*}
which gives the fourth equation by $\xi$-completeness. Finally, again use the sequence\[
\Fil_\rN^{\geq m}R\Gamma_{\cPrism^{\rm nc}}(-/R)\{i\}\to \Fil_\rN^{\geq m}R\Gamma_{\cPrism^{\rm nc}}(-/R)\{i\} \to \gr^{\geq m}R\Gamma_{\cPrism}(-/R)\cotimes_R^L(I/I^2)^{\otimes i}
\]
plus the fact that $\Fil_\rN^{\geq m}R\Gamma_{\cPrism^{\rm nc}}(-/R)\simeq R\Gamma_{\cPrism^{\rm nc}}(-/R)$ for $m\leq 0$ to deduce the third equivalence.

\end{proof}
\end{lemma}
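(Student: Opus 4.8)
The plan is to follow the pattern of \cite[Lemma 9.1.4]{BhattLurie} in the logarithmic setting, using the Chern class maps assembled in Construction \ref{constr:log-chern}. First I would reduce the number of statements to prove: the second displayed equivalence is the $m\leq 0$ special case of the first (the Nygaard filtration being constant, equal to $R\Gamma_\cPrism$, in non-positive degrees), and likewise the fourth is the $m\leq 0$ case of the third, so it suffices to prove the two \emph{filtered} statements. For the completed version I would assemble the maps \eqref{eq:chern-map-fil-i} over $0\leq i\leq r$ into a single morphism of filtered objects
\[ \bigoplus_{i=0}^r \Fil_N^{\geq\bullet-i}R\Gamma_\cPrism(\kX/R)\{n-i\}[-2i] \longrightarrow \Fil_N^{\geq\bullet}R\Gamma_\cPrism(\P(\cE)/R)\{n\} \]
in $\widehat{\cD\cF}(\cPrism_R)$, as in \eqref{eq:chern-map-fil}. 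Since both sides are complete, by \cite[Lemma 5.2(1)]{BMS2} it is enough to check that the induced map on each graded piece $\gr_N^m$ is an equivalence.

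The second step is to analyze the Breuil--Kisin twist on graded pieces. By \cite[Proposition 7.8]{BMS2} the $\cPrism_R$-module structure on $\gr_N^mR\Gamma_\cPrism(\kX/R)$ factors through $\theta\colon\cPrism_R\to R$, and by \cite[Remark 6.6]{BMS2} one has $\cPrism_R\{i\}\cotimes_{\cPrism_R}^L R\simeq(I/I^2)^{\cotimes_R i}$ with $I=\ker\theta=(\xi)$; hence $\gr_N^m(R\Gamma_\cPrism(\kX/R)\{i\})\simeq\gr_N^m(R\Gamma_\cPrism(\kX/R))\cotimes_R^L(I/I^2)^{\otimes i}$, and similarly for $\P(\cE)$. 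I would then invoke the finite \emph{secondary} filtration on $\gr_N^mR\Gamma_\cPrism(\kX/R)$ of Remark \ref{rmk:secondary_filtr}, whose graded pieces are $R\Gamma(\kX,(\bigwedge^j\L_{\kX/R})_p^\wedge)[-j]$, to reduce the claim to the level of those pieces; since $I/I^2$ is a free $R$-module of rank $1$ it can be coherently trivialized, so the map becomes
\[ \bigoplus_{i=0}^r R\Gamma(\kX,(\textstyle\bigwedge^{j-i}\L_{\kX/R})_p^\wedge)[-2i]\longrightarrow R\Gamma(\P(\cE),(\textstyle\bigwedge^{j}\L_{\P(\cE)/R})_p^\wedge). \]
By \cite[Theorem 7.6.2]{BhattLurie} this agrees with the powers of the derived de Rham first Chern class $c_1^{\rm dR}(\cO(1))$, which is an equivalence by the projective bundle computation \eqref{eq:derived-chern-de-rham} established inside Proposition \ref{prop:spectra-hodge-dR}.

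For the non-completed versions I would work modulo $\xi$: by Construction \ref{constr:non-nygaard-complete} one has $\cPrism^{\rm nc}_{-/R}/\xi\simeq L\Omega_{-/R}$, the twists trivialize modulo $\xi$ by the same mechanism, and by \cite[Theorem 7.6.2]{BhattLurie} the prismatic Chern class reduces to the de Rham Chern class modulo $\xi$; so modulo $\xi$ the fourth map becomes the de Rham projective bundle map, an equivalence by Proposition \ref{prop:spectra-hodge-dR}, and $\xi$-completeness of $R\Gamma_{\cPrism^{\rm nc}}(-/R)$ gives the fourth equivalence. The third (filtered) statement then follows from the cofiber sequence $\Fil_N^{\geq m}R\Gamma_{\cPrism^{\rm nc}}(-/R)\{i\}\to R\Gamma_{\cPrism^{\rm nc}}(-/R)\{i\}\to R\Gamma_{\cPrism^{\rm nc}}(-/R)\{i\}/\Fil_N^{\geq m}$, whose last term is controlled by the graded pieces $\gr_N^{<m}$ (already handled via the completed statement, since $\gr_N^m R\Gamma_{\cPrism^{\rm nc}}\simeq\gr_N^m R\Gamma_\cPrism$), together with $\Fil_N^{\geq m}R\Gamma_{\cPrism^{\rm nc}}(-/R)=R\Gamma_{\cPrism^{\rm nc}}(-/R)$ for $m\leq 0$. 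The main obstacle is the bookkeeping of the two nested filtrations — the Nygaard filtration and, on its graded pieces, the secondary HKR filtration — while simultaneously tracking the Breuil--Kisin twist, and the verification that the map built from the \emph{prismatic} Chern class genuinely reduces on the associated graded to the \emph{de Rham} Chern class, which is exactly the input of \cite[Theorem 7.6.2]{BhattLurie}; one has to make sure the compatibilities recorded in Construction \ref{constr:log-chern} are applied coherently in the filtered category rather than only degreewise.
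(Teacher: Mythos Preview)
Your proposal is correct and follows essentially the same approach as the paper: reduce to graded pieces of the Nygaard filtration via completeness, trivialize the Breuil--Kisin twist on graded pieces using \cite[Remark 6.6, Proposition 7.8]{BMS2}, pass to the secondary (Hodge) filtration, and identify the resulting map with the de Rham Chern class via \cite[Theorem 7.6.2]{BhattLurie} and \eqref{eq:derived-chern-de-rham}; the non-completed case is handled mod $\xi$ and then by induction on $m$ using the cofiber sequence and $\gr_N^m\cPrism^{\rm nc}\simeq\gr_N^m\cPrism$. Your bookkeeping is, if anything, slightly more explicit than the paper's (which has a small typo in the displayed sequence for the third statement), but the argument is the same.
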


As before, we deduce the result in the absolute case:

\begin{cor}\label{cor:pbf-abs}
 
    Let $\kX\in \FlQSyn$ and let $\cE\to \kX$ be a vector bundle of rank $r+1$ equipped with the induced log structure as above. Then, for all $m$, the maps defined in \eqref{eq:chern-map} give equivalences
    \begin{align*}
        &\bigoplus_{i=0}^r\Fil_\rN^{\geq m-i}R\Gamma_\cPrism(\kX)\{n-i\}[-2i]\xrightarrow{\sim} \Fil_\rN^{\geq m}R\Gamma_\cPrism(\P(\cE))\{n\}\\
        &\bigoplus_{i=0}^rR\Gamma_\cPrism(\kX)\{n-i\}[-2i]\xrightarrow{\sim} R\Gamma_\cPrism(\P(\cE))\{n\}
    \end{align*}
\begin{proof}
    As in the proof of Corollary \ref{cor:prism-motivic-abs}, we assume $\kX=\Spf(S)$ with $S\in \lQSyn$ and we let $S\to S'$ be a quasisyntomic cover in $\lQSyn$ with $S'\in \lQRSPerfd$, $(S')^\bullet$ the \v Cech nerve, and we fix $R\to S'$ with $R$ perfectoid. We have an equivalence \[
    \begin{tikzcd}
        \arrow[dd, start anchor=north, end anchor=south, no head, xshift=-7em, decorate, decoration={brace,mirror}]\bigoplus_{i=0}^r R\Gamma_\cPrism(S)\{n-i\}[-2i]\ar[dd]\arrow[dd, start anchor=north, end anchor=south, no head, xshift=7em, decorate, decoration={brace}]&&&\arrow[dd, start anchor=north, end anchor=south, no head, xshift=-8em, decorate, decoration={brace,mirror}, "\varlim_{m\in \Delta}" left=3pt]\bigoplus_{i=0}^rR\Gamma_\cPrism((S')^m/R))\{n-i\}[-2i]\ar[dd]\arrow[dd, start anchor=north, end anchor=south, no head, xshift=8em, decorate, decoration={brace}]\\
       &\ \ar[r,"\simeq"] &\  \\
        R\Gamma_{\cPrism}(\P(\cE_S))\{n\}&&&R\Gamma_{\cPrism}((\P(\cE_{(S')^m}))/R)\{n\}
     \end{tikzcd},
    \]
    in $\Fun(\Delta^1,\cD(\Z_p))$. The result follows from Lemma \ref{lem:pbf}.
\end{proof}
    \end{cor}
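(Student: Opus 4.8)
The strategy is to deduce the absolute statement from the perfectoid–relative projective bundle formula of Lemma~\ref{lem:pbf} by quasisyntomic descent, following \emph{verbatim} the pattern of the proof of Corollary~\ref{cor:prism-motivic-abs}. First I would reduce to the affine case: since $R\Gamma_{\cPrism}(-)\{i\}$ and $\Fil_N^{\geq m}R\Gamma_{\cPrism}(-)\{i\}$ are strict \'etale (indeed quasisyntomic) sheaves, and the Chern class maps of \eqref{eq:chern-map} (together with their filtered refinements \eqref{eq:chern-map-fil}) are functorial in $\kX$ and $\cE$, it suffices to prove the two equivalences after replacing $\kX$ by $\Spf(S)$ with $S\in \lQSyn$; the projective bundle $\P(\cE)$ is then built from this datum together with the vector bundle $\cE$.

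Next I would pick a quasisyntomic cover $S\to S'$ with $S'\in \lQRSPerfd$, form its \v{C}ech nerve $(S')^\bullet$, and choose a map $R\to S'$ with $R$ perfectoid. Then $\cPrism_{(S')^m}\{i\}\simeq \cPrism_{(S')^m/R}\{i\}$ and $\Fil_N^{\geq m}\cPrism_{(S')^\bullet}\{i\}\simeq \Fil_N^{\geq m}\cPrism_{(S')^\bullet/R}\{i\}$, compatibly in the cosimplicial variable, and quasisyntomic descent identifies $R\Gamma_{\cPrism}(\Spf(S))\{i\}$ with $\varlim_{m\in\Delta}R\Gamma_{\cPrism}((S')^m/R)\{i\}$, and likewise for $\P(\cE)$ and for the Nygaard filtration. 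The functoriality of the Chern class maps then yields a commutative square in $\Fun(\Delta^1,\cD(\Z_p))$ (resp.\ in $\Fun(\Delta^1,\widehat{\cD\cF}(\Z_p))$) comparing the map over $S$ with $\varlim_{m\in\Delta}$ of the maps over $R$; by Lemma~\ref{lem:pbf} the right-hand vertical maps are levelwise equivalences, so the left-hand vertical map is an equivalence as well. This disposes of both the unfiltered and the filtered assertions at once.

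The only point deserving a little care — though I do not expect it to be a genuine obstacle — is that the trivializations of the Breuil--Kisin twists $\{i\}$ over the various $(S')^m$ must be chosen compatibly along the \v{C}ech nerve, so that the comparison of cosimplicial objects above is legitimate, and that the absolute Chern class construction \eqref{eq:chern-map} on each $(S')^m$ agrees, under the identification with the relative prismatic cohomology over $R$, with the relative construction used in Lemma~\ref{lem:pbf}. Both are automatic: the twists and their multiplications are induced by the functorial $E_\infty$-structure on $\TP$ and $\TCmin$ (Remark~\ref{rmk:prism-monoid}), and over the perfectoid ring $R$ the module $\cPrism_R\{1\}$ is free of rank one with a canonical generator; the required bookkeeping is identical to that in the proof of Corollary~\ref{cor:prism-motivic-abs}.
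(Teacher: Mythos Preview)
Your proposal is correct and follows essentially the same approach as the paper's proof: reduce to the affine case, choose a quasisyntomic cover by an object of $\lQRSPerfd$ with a map from a perfectoid $R$, identify the absolute Chern map with the limit over the \v{C}ech nerve of the relative maps, and invoke Lemma~\ref{lem:pbf} levelwise. The additional paragraph on compatibility of twists is more explicit than the paper's treatment but does not change the argument.
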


We are now ready to prove our first main result. 

\begin{thm}\label{thm:spectra-prism}
    Let $S\in \QSyn$. There are oriented ring spectra $\mathbf{E}^{\cPrism}$ and $\mathbf{E}^{\Fil\cPrism}$ in $\CAlg(\logFDA(S,\Z_p))$ such that for all $\kX\in \FlQSm_S$ we have\[
\Map_{\logFDA(S,\Z_p)}(\Sigma^{\infty}(\kX),\Sigma^{r,s}\mathbf{E}^{\cPrism}) \simeq R\Gamma_{\cPrism}(\kX)\{s\}[r]
\]
and
\[
\Map_{\logFDA(S,\Z_p)}(\Sigma^{\infty}(\kX),\Sigma^{r,s}\mathbf{E}^{\Fil\cPrism}) \simeq \Fil_\rN^{\geq s}R\Gamma_{\cPrism}(\kX)\{s\}[r].
\]
If $S\in \QSyn_R$ for $R$ perfectoid, $\mathbf{E}^{\cPrism}$ and $\mathbf{E}^{\Fil\cPrism}$ live in $\logFDA(S,A_{\inf}(R))$, and there are oriented ring spectra $\mathbf{E}^{\cPrism^{\rm nc}}$ and $\mathbf{E}^{\Fil\cPrism^{\rm nc}}$ in $\CAlg(\logFDA(S,A_{\inf}(R)))$, together with equivalences 
\begin{equation}\label{eq:prism-deRham}
\mathbf{E}^{\cPrism}\otimes_{\cPrism_R,\theta}R \simeq \E^{\rm \widehat{dR}}\qquad \mathbf{E}^{\cPrism^{\rm nc}}\otimes_{\cPrism_R,\theta}R \simeq \E^{\rm dR}        
\end{equation}
of oriented ring spectra in $\CAlg(\logFDA(S,R))$, where $\theta \colon A_{\rm inf}(R) \to R$ is Fontaine's period map.

\begin{proof} 
Let $\{E_i\}$ be either the collection $\{R\Gamma_{\cPrism}\{i\}\}_{i\in \N}$ or the collection $\{\Fil_\rN^{\geq i}R\Gamma_{\cPrism}\{i\}\}_{i\in \N}$.
By Corollary \ref{cor:prism-motivic-abs} and Remark \ref{rmk:prism-monoid}, $E_*$ is a graded commutative monoid in $\logFDAeff(S,\Lambda)$. Moreover, Proposition \ref{prop:prism-motivic} gives equivalences
\begin{align*}
&R\Gamma_\cPrism(\P^1_{\Spf(S)})\{1\}[2]\simeq\Map_{\logFDAeff(S)}(\Lambda (\P^1),E_1^{\cPrism}[2])\\
&\Fil_\rN^{\geq m}R\Gamma_\cPrism(\P^1_{\Spf(S)})\{1\}[2]\simeq\Map_{\logFDAeff(S)}(\Lambda(\P^1),E_1^{\Fil\cPrism}[2]),
\end{align*}
from which we get a section $\Lambda(\P^1)\to E_1[2]$. By Corollary \ref{cor:pbf-abs}, the composition as constructed in Proposition \ref{prop:build-spectra} is an equivalence. So we can apply Proposition \ref{prop:build-spectra} and get the ring spectra $\mathbf{E}^{\cPrism}$ and $\mathbf{E}^{\Fil\cPrism}$. They are oriented by Lemma \ref{lem:pbf}. 

If $S\in \QSyn_R$ with $R$ perfectoid, the same construction using Proposition \ref{prop:prism-motivic} and  Lemma \ref{lem:pbf} gives $\mathbf{E}^{\cPrism^{\rm nc}}$ and $\mathbf{E}^{\Fil\cPrism^{\rm nc}}$. Moreover, we have equivalences of graded commutative monoids:
\begin{align*}
(R\Gamma_{\cPrism}(-/R)\{i\}\otimes_{\cPrism_R} R)_i &\simeq (R\Gamma_{\cPrism}(-/R)\otimes_{\cPrism_R} (I/I^2)^{\otimes i})_i\\ 
(R\Gamma_{\cPrism^{\rm nc}}(-/R)\{i\}\otimes_{\cPrism_R} R)_i &\simeq (R\Gamma_{\cPrism^{\rm nc}}(-/R)\otimes_{\cPrism_R} (I/I^2)^{\otimes i})_i
\end{align*}
similarly to Lemma \ref{lem:pbf}. Here $I$ is the kernel of Fontaine's period map $\cPrism_R\to R$, generated by $\xi$. In particular, as before, the twists can be canonically trivialized and  the natural equivalence\[
\cPrism_{-/R}/\xi\simeq \widehat{L\Omega_{-/R}}
\]
of \cite[\S 7.2]{BLPO-BMS2} and the equivalence\[
\cPrism^{\rm nc}_{-/R}/\xi\simeq L\Omega_{-/R}
\]
obtained by left Kan extension, are indeed equivalences of graded commutative monoids in $\logFDAeff(S,R)$ by descent from $\lQRSPerfd$ (analogously to \cite[Proposition 7.9]{BMS2}), which by Proposition \ref{prop:build-spectra-maps} lift to the desired equivalences.
\end{proof}
\end{thm}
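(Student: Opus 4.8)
The plan is to feed the (filtered, twisted) prismatic cohomology sheaves into the abstract recognition principle of Proposition~\ref{prop:build-spectra}. First I would take $E_*$ to be either the collection $\{R\Gamma_{\cPrism}(-)\{i\}\}_{i\in\N}$ or $\{\Fil_N^{\geq i}R\Gamma_{\cPrism}(-)\{i\}\}_{i\in\N}$, viewed in $\Sh_{\set}(\FlQSyn,\cD(\Z_p))$ (resp.\ in $\Sh_{\set}(\FlQSyn,\widehat{\cD\cF}(\Z_p))$) and restricted along $\FlSm_S\hookrightarrow\FlQSyn$. By Remark~\ref{rmk:prism-monoid} these are graded commutative monoids, and by Corollary~\ref{cor:prism-motivic-abs} every $E_i$ is $(\P^n,\P^{n-1})$-invariant; hence $E_*$ descends to a graded commutative monoid in $\logFDAeff(S,\Z_p)$.

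Next I would produce the orientation datum. Since each $E_i$ is $(\P^\bullet,\P^{\bullet-1})$-local by Corollary~\ref{cor:prism-motivic-abs}, a map in $\logFDAeff(S,\Z_p)$ out of a representable into $E_i$ is the same as a map of presheaves, so the class $c_1^{\cPrism}(\cO(1))\in R\Gamma_{\cPrism}(\P^1)\{1\}[2]$ of Construction~\ref{constr:log-chern} supplies a section $c\colon\Z_p(\P^1)\to E_1[2]$, landing in filtration level $1$ in the filtered case. To invoke Proposition~\ref{prop:build-spectra} I must check that for every $\kX\in\FlSm_S$ and every $i$ the composite built from $c$ and the multiplication $\mu_{i,1}$ is an equivalence $R\Gamma(\kX,E_i)\xrightarrow{\sim}\widetilde{R\Gamma}(\kX\times\P^1,E_{i+1}[2])$; this is exactly the rank-two case of the projective bundle formula of Corollary~\ref{cor:pbf-abs} (applied to $\cE=\cO^{\oplus 2}$ on $\kX$). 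Proposition~\ref{prop:build-spectra} then yields $\mathbf{E}^{\cPrism},\mathbf{E}^{\Fil\cPrism}\in\CAlg(\logFDA(S,\Z_p))$ with the asserted mapping spectra, and the full projective bundle formula of Lemma~\ref{lem:pbf}, together with the compatibility of $c_1^{\cPrism}$ with $c$, shows both are oriented.

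For $S\in\QSyn_R$ with $R$ perfectoid I would run the identical argument with coefficients $\Lambda=A_{\inf}(R)=\cPrism_R$, now using Proposition~\ref{prop:prism-motivic} and Lemma~\ref{lem:pbf} directly, and analogously for the non-Nygaard-complete sheaves $\{R\Gamma_{\cPrism^{\rm nc}}(-/R)\{i\}\}_i$ with the Chern section of the paragraph following Construction~\ref{constr:log-chern}, obtaining $\mathbf{E}^{\cPrism^{\rm nc}}$ and $\mathbf{E}^{\Fil\cPrism^{\rm nc}}$ in $\CAlg(\logFDA(S,A_{\inf}(R)))$. For the de Rham comparisons~\eqref{eq:prism-deRham}, I would use that base change along Fontaine's $\theta$ trivializes the Breuil--Kisin twist: $\cPrism_R\{1\}\otimes_{\cPrism_R,\theta}R\simeq I/I^2$ is free of rank one over $R$ (with $I=\ker\theta$), so a coherent trivialization identifies the graded commutative monoid $\{R\Gamma_{\cPrism}(-/R)\{i\}\otimes_{\cPrism_R}R\}_i$ with the constant-twist monoid $R\Gamma_{\cPrism}(-/R)\otimes_{\cPrism_R}R$, and likewise in the non-complete case. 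Combining this with the comparison equivalences $\cPrism_{-/R}/\xi\simeq\widehat{L\Omega}_{-/R}$ of \cite[\S 7.2]{BLPO-BMS2} and $\cPrism^{\rm nc}_{-/R}/\xi\simeq L\Omega_{-/R}$ from left Kan extension --- which are equivalences of graded commutative monoids by descent from $\lQRSPerfd$, following \cite[Proposition 7.9]{BMS2} --- an application of Proposition~\ref{prop:build-spectra-maps} (with $\psi_*$ the comparison map and $c,c'$ the respective Chern sections) upgrades them to the asserted equivalences $\mathbf{E}^{\cPrism}\otimes_{\cPrism_R,\theta}R\simeq\mathbf{E}^{\rm\widehat{dR}}$ and $\mathbf{E}^{\cPrism^{\rm nc}}\otimes_{\cPrism_R,\theta}R\simeq\mathbf{E}^{\rm dR}$ in $\CAlg(\logFDA(S,R))$.

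The step I expect to be the main obstacle is this last one: one must check that all the structural maps in play --- the prismatic first Chern class reduces mod $\xi$ to the de Rham first Chern class, the Breuil--Kisin twists trivialize \emph{coherently}, and the comparison maps respect the graded multiplications --- are genuinely morphisms of graded commutative monoids, which is precisely the hypothesis $\psi_1[2]\circ c=c'$ of Proposition~\ref{prop:build-spectra-maps}. All of these are verified by reducing to the category $\lQRSPerfd$, where the objects are discrete and the comparisons are those of \cite{BMS2}, \cite{BLPO-BMS2}.
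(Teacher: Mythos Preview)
Your proposal is correct and follows essentially the same route as the paper's proof: set up the graded commutative monoid via Remark~\ref{rmk:prism-monoid} and Corollary~\ref{cor:prism-motivic-abs}, feed the prismatic Chern class into Proposition~\ref{prop:build-spectra} with Corollary~\ref{cor:pbf-abs} as the $\P^1$-stability check, and then handle the de Rham comparison by trivializing the twists mod $\xi$ and applying Proposition~\ref{prop:build-spectra-maps}. Your elaboration of the final compatibility step (Chern class reduction, coherence of twist trivializations) is a bit more explicit than the paper's, but the content is the same.
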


\subsection{Syntomic cohomology}
Recall that the canonical and Frobenius maps \[
\mathrm{can}\{n\},\phi\{n\}\colon \Fil_\rN^{\geq n}R\Gamma_{\cPrism}(-)\{n\}\to R\Gamma_{\cPrism}(-)\{n\}
\]
arise via unfolding of the maps of graded commutative algebras\[
\pi_{2*}(\mathrm{can}),\pi_{2*}(\phi)\colon \pi_{2*}\TCmin(S,\Z_p)\to \pi_{2*}\TP(S,\Z_p).
\]
These are functorial in $S\in \lQRSPerfd$ (since they are functorial in $\mathrm{Alg}_{E_1}$, see e.g. \cite[\S III]{NikolausScholze}), so in particular they induce maps of graded commutative monoids\[
\{\mathrm{can}\{i\}\},\{\phi\{i\}\}\colon \{\Fil^{\geq i}_\rN R\Gamma_{\cPrism}(-)\{i\}\}_{i}\to \{R\Gamma_{\cPrism}(-)\{i\}\}_{i}
\] by unfolding.
Let $S\in \QSyn$ and $\kX\in \FlQSm_S$, and let 
\begin{align*}
    R\Gamma_{\Fsyn}(\kX,\Z_p(s)):=\fib(\Fil_\rN^{\geq s}R\Gamma_\cPrism(\kX)\{s\}[r]\xrightarrow{\phi\{s\}-\mathrm{can}\{s\}}R\Gamma_\cPrism(\kX)\{s\}[r])
\end{align*} for $s \in \Z$.
If $\kX$ has trivial log structure, then it agrees with the syntomic cohomology of formal schemes of \cite[Construction 7.4.1]{BhattLurie} (therein denoted $R\Gamma_{\syn}$: we keep the notation for schemes and formal schemes separate). Observe that the same proof as in \cite[Theorem 5.1]{AntieauMathewMorrowNikoalus} implies that $R\Gamma_{\Fsyn}$ is left Kan extended from $p$-complete polynomial algebras, so in case $S\in \QSyn_R$ for $R$ perfectoid, it agrees with the same fiber taken in the non-Nygaard completed setting. 
\begin{thm}
Let $S\in \QSyn$. Then there is an oriented ring spectrum $\mathbf{E}^{\Fsyn}$ in $\CAlg(\logFDA(S,\Z_p))$ such that for all $\kX\in \FlQSm_S$ we have\[
\Map_{\logFDA(S,\Lambda)}(\Sigma^{\infty}(\kX),\Sigma^{r,s}\mathbf{E}^{\Fsyn}) \simeq R\Gamma_{\Fsyn}(\kX)\{s\}[r].
\]
\end{thm}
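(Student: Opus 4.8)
The plan is to realize $\mathbf{E}^{\Fsyn}$ as the equalizer of the prismatic Frobenius and the canonical map, now promoted to morphisms of oriented ring spectra. The first step is to produce, via Proposition~\ref{prop:build-spectra-maps}, morphisms $\mathrm{can},\phi\colon \mathbf{E}^{\Fil\cPrism}\to \mathbf{E}^{\cPrism}$ in $\CAlg(\logFDA(S,\Z_p))$. By the discussion preceding the theorem, $\{\mathrm{can}\{i\}\}_i$ and $\{\phi\{i\}\}_i$ are maps of graded commutative monoids $\{\Fil_N^{\geq i}R\Gamma_{\cPrism}(-)\{i\}\}_i\to \{R\Gamma_{\cPrism}(-)\{i\}\}_i$; by Corollary~\ref{cor:prism-motivic-abs} both sides are $(\P^n,\P^{n-1})$-invariant, so they and the maps between them descend to $\logFDAeff(S,\Z_p)$. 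Write $c^{\Fil\cPrism}$ and $c^{\cPrism}$ for the orientation sections used in the proof of Theorem~\ref{thm:spectra-prism}, built from the first Chern classes of Construction~\ref{constr:log-chern}. To invoke Proposition~\ref{prop:build-spectra-maps} it suffices to check that $\mathrm{can}_1[2]\circ c^{\Fil\cPrism}=c^{\cPrism}$ and $\phi_1[2]\circ c^{\Fil\cPrism}=c^{\cPrism}$. The first holds by the very definition of $(c_1^{\cPrism})^i$ as the image of $(c_1^{\Fil\cPrism})^i$ under $\mathrm{can}$. The second says that the divided Frobenius carries $c_1^{\Fil\cPrism}(\cO(1))$ to $c_1^{\cPrism}(\cO(1))$; since all log structures on the $\P(\cE)$ occurring in Construction~\ref{constr:log-chern} are pulled back from the base, this reduces to the analogous statement for the non-logarithmic prismatic first Chern class, which is \cite[\S 7.5, Theorem 7.6.2]{BhattLurie}. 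This compatibility, equivalently the well-definedness of Bhatt--Lurie's syntomic first Chern class $c_1^{\syn}$, is the only non-formal input and is the main point to verify.

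Granting it, Proposition~\ref{prop:build-spectra-maps} yields morphisms $\mathrm{can},\phi\in \CAlg(\logFDA(S,\Z_p))(\mathbf{E}^{\Fil\cPrism},\mathbf{E}^{\cPrism})$ that induce $\mathrm{can}\{s\}[r]$ and $\phi\{s\}[r]$ on mapping spectra. Define $\mathbf{E}^{\Fsyn}$ to be their equalizer, equivalently the fibre of $\phi-\mathrm{can}$, formed as a limit in $\CAlg(\logFDA(S,\Z_p))$ (limits of $E_\infty$-algebras are created by the forgetful functor to $\logFDA(S,\Z_p)$). Since for each $\kX\in \FlSm_S$ the functor $\Map_{\logFDA(S,\Z_p)}(\Sigma^\infty(\kX),\Sigma^{r,s}(-))$ preserves limits, we obtain
\[
\Map_{\logFDA(S,\Z_p)}(\Sigma^\infty(\kX),\Sigma^{r,s}\mathbf{E}^{\Fsyn})\simeq \fib\bigl(\Fil_N^{\geq s}R\Gamma_{\cPrism}(\kX)\{s\}[r]\xrightarrow{\phi\{s\}-\mathrm{can}\{s\}}R\Gamma_{\cPrism}(\kX)\{s\}[r]\bigr)=R\Gamma_{\Fsyn}(\kX)\{s\}[r],
\]
by the definition of $R\Gamma_{\Fsyn}$ recalled above.

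It remains to produce an orientation. By the formula just obtained, $\pi_0\Map_{\logFDA(S,\Z_p)}(\Sigma^\infty(\P^\infty_{\Spf(S)}),\mathbf{E}^{\Fsyn}(1)[2])\cong H^2_{\Fsyn}(\P^\infty_{\Spf(S)},\Z_p(1))$, with $\P^\infty_{\Spf(S)}$ carrying the trivial log structure. Bhatt--Lurie's syntomic first Chern class of $\cO(1)$ defines a class in this group which, under $\mathbf{E}^{\Fsyn}\to\mathbf{E}^{\Fil\cPrism}$, maps to the orientation class $c^{\Fil\cPrism}$ of $\mathbf{E}^{\Fil\cPrism}$ (the requisite vanishing of $\phi-\mathrm{can}$ on it over $\P^\infty$ is again the Chern-class compatibility above), and whose restriction to $\P^1_{\Spf(S)}/\mathrm{pt}$ is the unit map by construction. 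Hence $\mathbf{E}^{\Fsyn}$ is oriented, with orientation pulled back from $\mathbf{E}^{\Fil\cPrism}$ and compatible with that of $\mathbf{E}^{\cPrism}$, and the identification of mapping spectra above completes the proof.
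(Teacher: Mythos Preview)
Your proposal is correct and follows essentially the same approach as the paper: construct $\mathrm{can}$ and $\phi$ as morphisms in $\CAlg(\logFDA(S,\Z_p))$ via Proposition~\ref{prop:build-spectra-maps} by checking compatibility with the Chern-class sections, then take the equalizer. The one discrepancy is your citation for the Frobenius compatibility of $c_1^{\Fil\cPrism}$: the relevant fact is that the prismatic first Chern class factors through the syntomic one (hence is fixed under $\phi-\mathrm{can}$), which is \cite[Notation~7.5.3]{BhattLurie} rather than Theorem~7.6.2 (the latter concerns the de Rham comparison); you identify this correctly in words, so this is only a reference slip.
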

\begin{proof}
Let $c\colon \Lambda(\P^1)\to E_1^{\cPrism}[2]$ and $c'\colon \Lambda(\P^1)\to E_1^{\Fil\cPrism}[2]$ be the maps constructed in the proof of Theorem \ref{thm:spectra-prism}. By construction, the diagram\[
\begin{tikzcd}
    \Lambda(\P^1)[2]\ar[r,"c'"]\ar[dr,bend right=20,"c"]&E_1^{\Fil\cPrism}[2]\ar[d,"{\mathrm{can}\{1\}}"]\\
    &E_1^{\cPrism}[2]
\end{tikzcd}\] 
commutes. By Proposition \ref{prop:build-spectra-maps}, there is a map of ring spectra $\mathrm{can}\colon \E^{\Fil\cPrism}\to \E^{\cPrism}$ in $\logFDA(S,\Lambda)$. Moreover, the two compositions
\[
\begin{tikzcd}
R\Gamma_{\mathrm{\acute{e}t}}(\P^1_{\Spf(S)},\G_m)[1]\ar[r,"{c_1^{\cPrism}}"] &\Fil^1_\rN R\Gamma_\cPrism(\P^1_{\Spf(S)})\{1\}[2]\ar[r,shift left = .5,"{\rm can}\{1\}"]\ar[r,shift right = .5,"\phi\{1\}"']  &\Fil^1_\rN R\Gamma_\cPrism(\P^1_{\Spf(S)})\{1\}[2]
\end{tikzcd}
\]
agree, since $c_1^{\cPrism}$ factors through the equalizer of $\mathrm{can}\{1\}$ and $\phi\{1\}$ (see \cite[Notation 7.5.3]{BhattLurie}). This implies that the diagram \[
\begin{tikzcd}
    \Lambda(\P^1)[2]\ar[r,"c'"]\ar[dr,bend right=20,"c"]&E_1^{\Fil\cPrism}[2]\ar[d,"\phi"]\\
    &E_1^{\cPrism}[2],
\end{tikzcd}
\] commutes.
By Proposition \ref{prop:build-spectra-maps}, there is a map $\phi\colon \E^{\Fil\cPrism}\to \E^{\cPrism}$ of oriented ring spectra in $\CAlg(\logFDA(S,\Z_p))$.

Let $\mathbf{E}^{\Fsyn}$ denote the equalizer of $\varphi$ and ${\rm can}$ in $\logFDA(S,\Z_p)$. For $\kX\in \FlQSm_S$, we have
\begin{align*}
    \Map(\Sigma^{\infty}(\kX),\Sigma^{r,s}\mathbf{E}^{\Fsyn})\simeq &\fib( \Map(\Sigma^{\infty}(\kX),\Sigma^{r,s}\mathbf{E}^{\Fil\cPrism})\xrightarrow{\phi-{\rm can}} \Map(\Sigma^{\infty}(\kX),\Sigma^{r,s}\mathbf{E}^{\cPrism}))\\
      &\simeq \fib(\Fil_\rN^{\geq s}R\Gamma_\cPrism(\kX)\{s\}[r]\xrightarrow{\phi\{s\}-\mathrm{can}\{s\}}R\Gamma_\cPrism(\kX)\{s\}[r])\\
    &=R\Gamma_{\Fsyn}(\kX,\Z_p(s))[r].
\end{align*}
Since $\phi$ and $\mathrm{can}$ are maps in $\CAlg(\logFDA(S,\Z_p))$, and the forgetful functor \[\CAlg(\logFDA(S,\Z_p))\to \logFDA(S,\Z_p)\] preserves all limits and all limits are representable in $\CAlg(\logFDA(S,\Z_p))$ by \cite[Corollary 3.2.2.5]{HA}, we conclude.
\end{proof}

\subsection{Consequences of motivic representability}

 We now list some immediate result following from the fact that $\mathbf{E}^{\cPrism}, \mathbf{E}^{\Fil\cPrism}$, $\mathbf{E}^{\Fsyn}$, $\mathbf{E}^{\cPrism^{\rm nc}}, \mathbf{E}^{\Fil\cPrism^{\rm nc}}$  are oriented ring spectra in $\logDA$.

\begin{thm}\label{thm:gysin-prism}
Let $S\in \QSyn$. Let $\kZ\to \kX$ be a morphism of $p$-adic smooth formal schemes over $S$ such that it is locally the $p$-completion of a pure codimension $d$ closed immersion of smooth schemes over $S$.
Let $\Bl_\kZ(\kX)$ denote the blow-up of $\kX$ in $\kZ$ and $E$ be the exceptional divisor, so that $(\Bl_\kZ(\kX),E)$ is log smooth over $S$. Then for all $j$ there are Gysin maps in $\cD(\Z_p)$, functorial in $(\kX,\kZ)$, 
\begin{align*}
\mathrm{gys}_{\kZ/\kX}^{\cPrism}\colon &R\Gamma_{\cPrism}(\kZ)\{j-d\}[-2d]\to R\Gamma_{\cPrism}(\kX)\{j\}\\ 
\mathrm{gys}_{\kZ/\kX}^{\Fil  \cPrism}\colon  &\Fil_\rN^{\geq j-d}R\Gamma_{\cPrism}(\kZ)\{j-d\}[-2d]\to \Fil_\rN^{\geq j} R\Gamma_{\cPrism}(\kX)\{j\}\\
\mathrm{gys}_{\kZ/\kX}^{\Fsyn}\colon &R\Gamma_{\Fsyn}(\kZ,\Z_p(j-d))[-2d]\to R\Gamma_{\Fsyn}(\kX,\Z_p(j))
\end{align*}
whose homotopy cofibers are respectively given as 
\begin{align*}
    &R\Gamma_{\cPrism}(\Bl_\kZ(\kX),E)\{j\}\\
    &\Fil_\rN^{\geq j} R\Gamma_{\cPrism}(\Bl_\kZ(\kX),E)\{j\}\\
    & R\Gamma_{\Fsyn}((\Bl_\kZ(\kX),E),\Z_p(j)).
\end{align*}
If $S\in \mathrm{QSyn}_R$ with $R$ perfectoid, we have similar Gysin maps for the non-completed prismatic cohomology of \cite{BSPrism}, relative to the perfect prism $(A_{\inf}(R),\ker(\theta))$:
\begin{align*}
\mathrm{gys}_{\kZ/\kX}^{\Prism}\colon &R\Gamma_{\Prism}(\kZ/A_{\inf}(R))\{j-d\}[-2d]\to R\Gamma_{\Prism}(\kX/A_{\inf}(R))\{j\}\\ 
\mathrm{gys}_{\kZ/\kX}^{\Fil\Prism}\colon  &\Fil_\rN^{\geq j-d}R\Gamma_{\Prism}(\kZ/A_{\inf}(R))\{j-d\}[-2d]\to \Fil_\rN^{\geq j} R\Gamma_{\Prism}(\kX/A_{\inf}(R))\{j\}.
\end{align*}
The homotopy cofibers are respectively given as 
\begin{align*}
    &R\Gamma_{\cPrism^{\rm nc}}((\Bl_\kZ(\kX),E)/R)\{j\}\\
    &\Fil_\rN^{\geq j} R\Gamma_{\cPrism^{\rm nc}}((\Bl_\kZ(\kX),E)/R)\{j\}
\end{align*}
\end{thm}
\begin{proof}
    Immediate from \eqref{eq:htp-purity-oriented-spectra-formal}.
\end{proof}
\begin{rmk}
    When $S\in \QSyn_R$ for $R$ a perfectoid ring, the Gysin sequences above take values in $\cD(A_{\inf}(R))$. In that case, after base-change along $\theta$, the equivalence \eqref{eq:prism-deRham} exchanges the prismatic Gysin sequence with the analogous sequence for de Rham cohomology, as both come from homotopy purity. More generally, we expect the Gysin sequence to take value in $\cD(R^{\rm syn})$, following \cite{Bhatt_FGauges}. 
\end{rmk}

\begin{rmk}\label{rmk:generalGysin}  More general log structures on $\kX$ and $\kZ$ are allowed, in light of \cite[Theorem 7.5.4]{BPO} and \cite[Proposition 7.3.9]{BPO-SH},
 see \cite[Theorem 9.4]{BLPO-HKR}. More precisely, the Gysin maps can be constructed in the following generality:
For simplicity of writing,
we work with the non-formal case here.
Let $D_1, D_2, \ldots, D_r$ be smooth divisors on $X\in \Sm_S$, forming a strict normal crossing divisor $D$ over $S$. Let $Z$ be a smooth closed subscheme of $X$ of pure codimension $d$, having strict normal crossings with $D_1+\ldots + D_r$ over $S$ in the sense of \cite[Definition 7.2.1]{BPO} and not contained in any component of $D_1\cup \ldots \cup D_r$. Let $Y = (X, D)$ (resp.\ $W=(Z, Z\cap D)$ be the log scheme in $\SmlSm_S$ given by the compactifying log structure $X-D \hookrightarrow X$ (resp.\ $Z- (Z\cap D) \hookrightarrow Z$). Let $E$ be the exceptional divisor in $\Bl_Z(X)$, and consider the log scheme $(\Bl_Z(Y),E)$, given by the compactifying log structure 
    \[
    \Bl_Z(X) - (E \cup W_1 \cup \ldots \cup W_r) \hookrightarrow \Bl_Z(X)
    \]
    where $W_i$ is the strict transform of $D_i$. For all $j$, there are fiber sequences
    \[
    R\Gamma_{\cPrism}(W^\wedge_p)\{j-d\}[-2d] \to R\Gamma_{\cPrism}(Y^\wedge_p)\{j\} \to R\Gamma_{\cPrism}((\Bl_Z(Y), E)^\wedge_p)\{j\}
    \]
    in $\cD(\Z_p)$, and similarly for the filtered version, for syntomic cohomology, and for the non-complete version over a perfectoid. 
\end{rmk}
\begin{rmk}[Comparison with Tang's Gysin map]   In \cite[Theorem 1.4]{TangPaper}, Tang constructs a Gysin map in (derived) syntomic cohomology \[ \mathrm{cyc}_{Y/X}\colon
R\Gamma_{\syn}(Y, \Z_p(j-d))[-2d] \to R\Gamma_{\syn}(X, \Z_p(j))
\]
for $Y\to X$ a regular immersion of pure codimension $r$, via weighted deformation to the normal cone. We expect this map to agree with our Gysin map $\mathrm{gys}_{Y/X}^{\Fsyn}$ when both are defined.
We remark that the argument of \cite{TangPaper} does not allow, to the best of our understanding, to identify the cofiber of ${\rm cyc}_{Y/X}$ explicitly. 
\end{rmk}
We can also use motivic methods to compute the prismatic and syntomic  cohomology of Grassmannians. Write $\mathrm{Gr}(r,n)$ for the Grassmannian classifying $r$-dimensional subspaces of an $n$-dimensional vector space, defined over $\mathbb{Z}$. We consider $\mathrm{Gr}(r,n)$ as a log scheme over $\Spec(\Z)$, with trivial log structure. 
As in \cite[\S 7.4]{BPO-SH}, we let :\begin{equation}
\label{Znd}
Z_{n,d} = \Z[x_1,\ldots,x_n,y_1,\ldots,y_d]/(z_1, \ldots , z_n)
\end{equation}
with $z_1,\ldots, z_n$ satisfying
\[
1+tz_1 +\ldots + t^nz_n =(1+tx_1 +\ldots +t^rx_r)(1+ty_1+\ldots +t^{n-r}y_{n-r}).
\]
As a ring, it agrees with the singular cohomology of the complex Grassmannian $\mathrm{Gr}(r,n)(\mathbb{\C})$ (see \cite[Section 6.2]{Naumann_Spitzweck_Oestvaer} for another description).

\begin{thm}
\label{thm:coho_Grassmannian}Let $S\in \QSyn$. Let $\kX$ be a $p$-adic log smooth formal scheme over $S$.
Then there are isomorphisms of bigraded rings, functorial in $X$:
    \begin{align*}
 \phi_{r,n}^{\cPrism} \colon  H^*_{\cPrism}(\kX)\{\bullet\} \otimes_{\mathbb{Z}} Z_{r,n} &\xrightarrow{\sim} H^*_{\cPrism}(\mathrm{Gr}(r, n) \times \kX)\{\bullet\}\\
 \phi_{r,n}^{\Fil\cPrism}\colon \Fil^{\geq \bullet}  H^*_{\cPrism}(\kX)\{\bullet\} \otimes_\mathbb{Z} Z_{r,n} &\xrightarrow{\sim} \Fil^{\geq \bullet}H^*_{\cPrism}(\mathrm{Gr}(r, n)\times \kX)\{\bullet\} \\
\phi_{r,n}^{\Fsyn}\colon H^*_{\Fsyn}(\kX,\Z_p(\bullet))\otimes_\Z Z_{r,n} &\xrightarrow{\sim} H^*_{\Fsyn}(\mathrm{Gr}(r, n)\times \kX,\Z_p(\bullet)).
\end{align*}
If $S\in \QSyn_R$ with $R$ perfectoid, a similar statement holds for the non-completed prismatic cohomology relative to $R$. 
\end{thm}
\begin{proof}
The analogous version with $R\Gamma$ is strict \'etale local on $\kX$,
so we may assume that $\kX$ is a $p$-completion of $X\in \lSm_S$.
Then this is an immediate consequence of \cite[Theorem 7.4.2]{BPO-SH}, together with the fact that the motivic ring spectra $\mathbf{E}^{\cPrism}, \mathbf{E}^{\Fil\cPrism}$, $\mathbf{E}^{\Fsyn}$ are oriented.
\end{proof}

Finally, by applying the smooth blow-up formula \eqref{eq:sm-blow-up-oriented-spectra-formal} to the aforementioned ring spectra, we have:

\begin{thm}\label{thm:blow-up_main} (Blow-up formula) Let $S\in \QSyn$.
Let $\kZ\to \kX$ be a morphism of $p$-adic smooth formal schemes over $S$ such that it is locally the $p$-completion of a pure codimension $d$ closed immersion of smooth schemes over $S$.
Let $\kX' = \Bl_\kZ(\kX)$. Then there are equivalences, functorial in $(\kX,\kZ)$,
    \begin{align*}
 R\Gamma_{\cPrism}(\kX)\{j\} \oplus \bigoplus_{0<i< d} R\Gamma_{\cPrism}(\kZ)\{j-i\} [-2i] &\xrightarrow{\sim} R\Gamma_{\cPrism}(\kX')\{j\}\\
  \Fil^{\geq j} R\Gamma_{\cPrism}(\kX)\{j\} \oplus  \bigoplus_{0<i< d} \Fil^{\geq j-i} R\Gamma_{\cPrism}(\kZ)\{j-i\}[-2i]&\xrightarrow{\sim} \Fil^{\geq j} R\Gamma_{\cPrism} \{j\}(\kX') \\
R\Gamma_{\Fsyn}(\kX, \Z_p(j)) \oplus \bigoplus_{0<i< d} R\Gamma_{\Fsyn}(\kZ, \Z_p(j-i)) [-2i] &\xrightarrow{\sim} R\Gamma_{\Fsyn}(\kX', \Z_p(j))
    \end{align*}
If $S\in \QSyn_R$ with $R$ perfectoid, a similar statement holds for the non-completed prismatic cohomology relative to $R$. 
\end{thm}

\section{Saturated descent and crystalline comparison}
The goal of this section is to prove that, under certain conditions, Gabber's cotangent complex is controlled by the classical, non-logarithmic cotangent complex. In light of the conjugate filtration, the same is true for log prismatic cohomology of log quasisyntomic rings over a perfectoid ring (see Theorem \ref{thm:main-sat-descent-prism}). 
Similar descent results are enjoyed by the log de Rham-Witt complex of \cite{Yao-logDRW}, and are a key tool in the proof of the crystalline comparison Theorems \ref{thm:crys_comp_triv} and \ref{thm:crys_comp_logpoint}. 

\subsection{Saturated descent for the cotangent complex} Fix a prime number $p$.  Recall that for any commutative monoid $M$, we have the $p$-power map (we will use the additive notation for monoids in this section) 
\[F_M\colon M\to M, \quad x\mapsto p x\]

\begin{defn}\label{def:perf_semiperf_mon}We say that the monoid $M$ is \emph{perfect} (resp.\ \emph{semiperfect}) if $F_M$ is an isomorphism (resp.\ if $F_M$ is surjective). \end{defn}

The two notions admit a relative version given as follows. We write $M_{\rm perf}$ (resp.~$M^\flat$) for the direct (resp.~inverse) limit perfection (see \cite[\S 4.3]{BLPO-BMS2}).

\begin{definition}[Relative Frobenius]\label{def:FrobMonoid}
Let $P\to M$ be a map of monoids.
Consider the commutative diagram
\[
\begin{tikzcd}
P\ar[r,"F_{P}"]\ar[d]&
P\ar[d]
\\
M\ar[r]\ar[rr,bend right,"F_M"']&
M^{(1)}\ar[r,"F_{M/P}"]&
M
\end{tikzcd}
\]
where the left square is defined to be cocartesian. Here $F_P$ and $F_M$ denote the $p$-Frobenius endomorphism of $P$ and $M$ respectively (see \cite[\S I.4.4]{ogu}). 
We say that the map $P\to M$ is \emph{relatively perfect} (resp.\ \emph{relatively semiperfect}) if the relative Frobenius $F_{M/P}$ is an isomorphism (resp.\ surjective).
\end{definition}
 
Recall that a monoid $M$ is \emph{saturated} if it is integral (i.e.\ $M\subseteq M^{\rm gp}$) and if $x\in M^{\rm gp}$ and $nx\in M$ for some $n>0$ imply $x \in M$. A monoid $M$ is \emph{sharp} if its neutral element is its unique invertible element. 
\begin{lemma}
\label{descent_1}
Let $M$ be a semiperfect saturated monoid.
Then the inclusion  $M^*\to M$ is relatively perfect.
In particular,
if $M$ is a semiperfect sharp saturated monoid,
then $M$ is perfect.
\end{lemma}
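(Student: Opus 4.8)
The plan is to unwind the definition of the relative Frobenius and analyze the relevant pushout directly. Set $P := M^*$, let $f\colon P\hookrightarrow M$ be the inclusion, and denote by $i\colon M\to M^{(1)}$ and $j\colon P\to M^{(1)}$ the two maps of the cocartesian square defining $M^{(1)}$, so that $i\circ f = j\circ F_P$ while $F_{M/P}$ is characterized by $F_{M/P}\circ i = F_M$ and $F_{M/P}\circ j = f$. The goal is to show $F_{M/P}$ is an isomorphism.

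First I would record two facts about $P = M^*$. Since $M$ is saturated it is integral, so if $y\in M$ satisfies $py\in M^*$, writing $py + z = 0$ with $z\in M$ gives $y + \bigl((p-1)y + z\bigr) = 0$ with $(p-1)y + z\in M$, hence $y\in M^*$. Combined with semiperfectness of $M$, this shows every $a\in M^*$ is of the form $pb = F_P(b)$ for some $b\in M^*$; in particular $F_P$ is surjective. From this I draw two consequences inside $M^{(1)}$: (a) any element of $M^{(1)}$ is a finite sum of terms $i(x)$ ($x\in M$) and $j(a)$ ($a\in P$), and $j(a) = j(F_P(b)) = i(f(b))$ whenever $pb = a$, so every element of $M^{(1)}$ already has the form $i(x)$ for some $x\in M$; (b) if $u\in M^*$ with $pu = 0$ then $i(f(u)) = j(F_P(u)) = j(0) = 0$, i.e.\ $i$ annihilates $p$-torsion units.

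Surjectivity of $F_{M/P}$ is then immediate, since $F_{M/P}\circ i = F_M$ is surjective by semiperfectness. For injectivity, suppose $F_{M/P}(m) = F_{M/P}(m')$; by (a) write $m = i(x)$ and $m' = i(x')$ with $x,x'\in M$, so $F_M(x) = F_{M/P}(i(x)) = F_{M/P}(i(x')) = F_M(x')$, that is $px = px'$ in $M$. Passing to $M^{\rm gp}$, into which $M$ embeds because it is integral, we get $p(x - x') = 0$; since $M$ is saturated and $p(x - x')\in M$, the element $u := x - x'$ lies in $M$, and then $pu = 0$ forces $u\in M^*$ by the argument of the previous paragraph. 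By (b), $i(u) = 0$, hence $m = i(x) = i(x' + u) = i(x') + i(u) = i(x') = m'$. Therefore $F_{M/P}$ is an isomorphism, i.e.\ $M^*\to M$ is relatively perfect. For the last assertion: if $M$ is in addition sharp, then $M^* = 0$, the pushout collapses to $M^{(1)} = M$ with $i = \mathrm{id}$ and $F_{M/M^*} = F_M$, so the statement just proved says precisely that $F_M$ is an isomorphism, i.e.\ $M$ is perfect.

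The only genuine obstacle I anticipate is understanding the pushout $M^{(1)}$ well enough: in general $i\colon M\to M^{(1)}$ fails to be injective — exactly when $M^*$ carries $p$-torsion — so one cannot merely identify $M^{(1)}$ with $M$. Observation (a) is the way around this: $p$-divisibility of $M^*$ makes $i$ \emph{surjective}, which reduces injectivity of $F_{M/P}$ to the implication ``$px = px'$ in $M$ $\Rightarrow$ $i(x) = i(x')$'', and then saturatedness pins down the ambiguity $x - x'$ as a $p$-torsion unit, killed by $i$ via (b).
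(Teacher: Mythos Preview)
Your proof is correct and rests on the same key observation as the paper's: saturatedness of $M$ forces $x-x'\in M^*$ once $p(x-x')\in M^*$, which is exactly what makes the relative Frobenius injective. The paper's argument is organized a bit more economically: instead of first proving that $i\colon M\to M^{(1)}$ is surjective and then handling $p$-torsion units separately, it works directly with representatives $(x,y)\in M\oplus_{M^*,F_{M^*}}M^*$, notes that $F_{M/M^*}(x,y)=px+y$, and from $px+y=px'+y'$ immediately deduces $p(x-x')=y'-y\in M^*$, hence $x-x'\in M^*$ by saturation, hence $(x,y)=(x',y')$ in the pushout. Your preliminary step (a) is a harmless detour that reduces to the special case $y=y'=0$, after which you need the extra observation (b) about $p$-torsion units; the paper's pair-based bookkeeping absorbs both of these into a single line.
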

\begin{proof}
Let $(x,y)$ and $(x',y')$ be two elements of $M\oplus_{M^*,F_{M^*}}M^*=M^{(1)}$.
If $F_{M/M^*}(x,y)=F_{M/M^*}(x',y')$,
then we have $px+y=px'+y'$.
This implies $p(x-x')\in M^*$,
so $x-x'\in M^*$, since $M$ is saturated.
Hence $(x,y)=(x',y')$ in $M^{(1)}$.
This shows that $F_{M/M^*}$ is injective.
Since $F_M$ is surjective by assumption,
$F_{M/M^*}$ is surjective.
\end{proof}

We now state a key result that will be very useful in the rest of the section. The proof is the same as \cite[Lemma 3.3]{Illusielog}, but for the sake of completeness, we spell it out in the generality that we need. Recall \cite[Definition I.4.3.1]{ogu} that a morphism $u\colon P\to Q$ of integral monoids is \emph{Kummer} if it is injective and if, for every $q\in Q$ there exists $n\in \Z^+$ and $p\in P$ such that $nq = u(p)$ (this last condition is also known as being $\Q$-surjective). 
 
\begin{lemma}\label{lem:illusie-trick}
    Let $u\colon P\to Q$ be a Kummer map of saturated monoids.
    Then the map \[
    Q\oplus_P^{\rm sat} Q \to Q\oplus Q^{\rm gp}/P^{\rm gp}\qquad (a,b)\mapsto (ab, \overline{b})
    \] is an isomorphism of monoids.
The left-hand side is the pushout in the category of saturated monoids and $\overline{(\ )}$ is the natural map $Q\hookrightarrow Q^{\rm gp}\to Q^{\rm gp}/P^{\rm gp}$. More generally, we have an isomorphism
\begin{equation}\label{eq:illusie-trick}
Q^{\oplus^{\rm sat}_P d}\cong Q\oplus (Q^{\rm gp}/P^{\rm gp})^{\oplus d-1}
\end{equation} for all $d \ge 1$. 
\end{lemma}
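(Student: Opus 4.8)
Write $G:=Q^{\rm gp}/P^{\rm gp}$. The plan is to recognize the map in the statement as an honest morphism of monoids and then to produce its inverse, first over groupifications and then monoid‑theoretically. Concretely, the assignment $(a,b)\mapsto(ab,\overline b)$ is the pair $\alpha=(\mu,\nu)$, where $\mu\colon Q\oplus_P^{\rm sat}Q\to Q$ is the codiagonal (well defined by the universal property of the pushout, since the two maps $Q\to Q$ agree after precomposition with $u$) and $\nu\colon Q\oplus_P^{\rm sat}Q\to G$ is groupification followed by the second projection modulo $P^{\rm gp}$ (well defined because the defining relations $(u(x),u(x)^{-1})$, $x\in P^{\rm gp}$, of the pushout vanish in $G$). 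Since $Q\oplus G$ is a product of the saturated monoid $Q$ with a group, it is saturated, and $\alpha$ lands in $Q\oplus G$ on generators, hence everywhere; so it remains to see $\alpha$ is bijective. I would first check that the induced map $\alpha^{\rm gp}\colon Q^{\rm gp}\oplus_{P^{\rm gp}}Q^{\rm gp}\to Q^{\rm gp}\oplus G$ of abelian groups is an isomorphism, the inverse sending $(c,\overline d)$ to the class of $(cb^{-1},b)$ for any lift $b\in Q^{\rm gp}$ of $\overline d$ (independence of the lift being exactly the defining relation). As $Q\oplus_P^{\rm sat}Q$ is integral it embeds into $Q^{\rm gp}\oplus_{P^{\rm gp}}Q^{\rm gp}$, so $\alpha$ is injective, and only surjectivity is left.

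This is where the Kummer hypothesis enters. Fix $(c,\overline d)\in Q\oplus G$ with $c\in Q$. Since $u$ is Kummer, every $q\in Q$ satisfies $q^m=u(p)$ for some $m\ge1$ and $p\in P$; hence $G$ is torsion, and because $Q^{\rm gp}$ is generated by $Q$ one can lift $\overline d$ all the way into $Q$: writing $\overline d=\overline a\,\overline b^{-1}$ with $a,b\in Q$ and replacing $\overline b^{-1}$ by $\overline b^{k-1}$ for $k$ the order of $\overline b$, we get $\overline d=\overline{q_0}$ with $q_0:=ab^{k-1}\in Q$. Choosing $m\ge1$ and $p\in P$ with $q_0^m=u(p)$, the defining relation (with $x=p$) gives
\[
[(cq_0^{-1},q_0)]^m=[(c^mq_0^{-m},q_0^m)]=[(c^mu(p)^{-1},u(p))]=[(c^m,1)],
\]
which lies in the image $Q\oplus_P^{\rm int}Q$ of $Q\oplus Q$ inside $Q^{\rm gp}\oplus_{P^{\rm gp}}Q^{\rm gp}$ (here $c^m\in Q$ uses $c\in Q$). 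Since $Q\oplus_P^{\rm sat}Q$ is the saturation of $Q\oplus_P^{\rm int}Q$ in this groupification, the element $[(cq_0^{-1},q_0)]$ already belongs to $Q\oplus_P^{\rm sat}Q$, and $\alpha$ sends it to $(c,\overline{q_0})=(c,\overline d)$. Thus $\alpha$ is surjective, so an isomorphism, proving the first isomorphism of the lemma, i.e.\ \eqref{eq:illusie-trick} for $d=2$.

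For general $d$ I would argue by induction: $d=1$ is trivial, and writing $Q^{\oplus^{\rm sat}_P d}=(Q^{\oplus^{\rm sat}_P(d-1)})\oplus_P^{\rm sat}Q\cong(Q\oplus G^{d-2})\oplus_P^{\rm sat}Q$, one observes that under this identification the structure map $P\to Q\oplus G^{d-2}$ factors through the first summand $Q$ (because $u(P)\subseteq Q$ and $\overline{u(P)}=0$ in $G$); since pushouts commute with coproducts in saturated monoids, this equals $(Q\oplus_P^{\rm sat}Q)\oplus G^{d-2}\cong Q\oplus G^{d-1}$, as claimed. The main obstacle is the surjectivity step of the base case: one must make sure the candidate preimage lands in the \emph{saturated} pushout and not merely in its groupification, and this is exactly what the torsion‑lift of $\overline d$ into $Q$ secures, after which the relevant $m$‑th power visibly lies in the integral pushout with no further computation.
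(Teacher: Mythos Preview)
Your proof is correct and follows essentially the same route as the paper: both arguments establish the isomorphism on groupifications to get injectivity, then use that $Q^{\rm gp}/P^{\rm gp}$ is torsion (from the Kummer hypothesis) together with the saturation of the pushout to conclude surjectivity, and both handle general $d$ by the same induction. Your surjectivity step is spelled out more explicitly (you lift $\overline d$ to $q_0\in Q$ and exhibit the preimage), whereas the paper argues more tersely that $n(x,y)=(nx,0)$ lies in the image and invokes saturation of the image directly; these are the same argument viewed from the source and target sides, respectively.
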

\begin{proof}
The second part follows from the first by induction on $d$ since\[
Q^{\oplus^{\rm sat}_P d} \cong Q\oplus^{\rm sat}_P (Q^{\oplus^{\rm sat}_P d-1})\cong   Q \oplus^{\rm sat}_P (Q\oplus (Q^{\rm gp}/P^{\rm gp})^{\oplus d-2}) \cong (Q\oplus_P^{\rm sat} Q)\oplus (Q^{\rm gp}/P^{\rm gp})^{\oplus d-1}).
\]
We follow \cite[Lemma 3.3]{Illusielog}. Recall that $Q\oplus_P^{\rm sat} Q$ is the submonoid of $(Q\oplus_{P}Q)^{\rm gp}\cong Q^{\rm gp}\oplus_{P^{\rm gp}}Q^{\rm gp}$ given by pairs $(a,b)$ such that there is $m\in \N^+$ with   $ma, m b \in Q$. On the other hand, $Q^{\rm gp}\oplus_{ P^{\rm gp}}Q^{\rm gp}$ is also a pushout in the category of abelian groups, so there is an isomorphism $Q^{\rm gp}\oplus_{P^{\rm gp}}Q^{\rm gp}\simeq Q^{\rm gp}\oplus Q^{\rm gp}/P^{\rm gp}$ given by the map $(a,b)\mapsto (ab,\overline{b})$. In particular, we have a commutative diagram\[
\begin{tikzcd}
Q\oplus^{\rm sat}_P Q\ar[d,hook]\ar[rrr,"({a,b)\mapsto (ab,\overline{b})}"]&&&Q\oplus Q^{\rm gp}/P^{\rm gp}\ar[d]\\
(Q\oplus_{P}Q)^{\rm gp}\ar[r,"\simeq"]&Q^{\rm gp}\oplus_{P^{\rm gp}}Q^{\rm gp}\ar[rr,"{(a,b)\mapsto (ab,\overline{b})}"',"\simeq"]&&Q^{\rm gp}\oplus Q^{\rm gp}/P^{\rm gp}
\end{tikzcd}
\]
which implies that \eqref{eq:illusie-trick} is injective. Let $(x,y)\in Q\oplus Q^{\rm gp}/P^{\rm gp}$, then since $P\to Q$ is $\Q$-surjective there is $n>0$ such that $ny=0$, so $n(x,y) = (nx,0)$, which is in the image of $Q\oplus^{\rm sat}_PQ$, and since it is saturated we have that $(x,y)$ is also in the image, showing the surjectivity of \eqref{eq:illusie-trick}.
\end{proof}

\begin{rmk}\label{rmk:perf-kummer}
If $M$ is $p$-torsionfree, the map $M\to M_{\rm perf}$ is always Kummer: Indeed, it is injective, and for all $x\in M_{\rm perf}$ there is $n$ such that ${p^{n}}x\in M$, hence the canonical morphism is $\Q$-surjective. 
Moreover, if $M$ is integral (resp.~saturated), then $M_{\rm perf}$ is also integral (resp.~saturated) by \cite[Proposition I.1.3.6]{ogu}.
\end{rmk} 

\begin{rmk}\label{rmk:Niziol_contraction}
 Let $P\to Q$ be a map of saturated monoids, and let $\Z[Q^\bullet]$ the cosimplicial ring given by $d\mapsto \Z[Q^{\oplus^{\rm sat}_P d}]$, with the usual {\v C}ech differentials. In light of Lemma \ref{lem:illusie-trick}, following \cite[Lemma 3.28]{Niziol} we can rewrite it as 
\[
\begin{tikzcd}
\Z[P]\xrightarrow{\epsilon} \Z[Q] \ar[r,shift left = 1.5,"b_0"]\ar[r,"b_1"'] & \Z[Q \oplus Q^{\rm gp}/ P^{\rm gp}] \ar[r,shift left = 2]\ar[r,shift left = .5]\ar[r,shift right=1] &\Z[Q \oplus (Q^{\rm gp}/ P^{\rm gp})^{\oplus 2}] \longrightarrow \ldots
\end{tikzcd}
\]
where the differentials $b_k^n \colon \Z[Q \oplus (Q^{\rm gp}/ P^{\rm gp})^{\oplus n}] \to \Z[Q \oplus (Q^{\rm gp}/ P^{\rm gp})^{\oplus n+1}] $    are determined (in multiplicative notation) by
\[ 
b_k^n(x_0, x_1, \ldots, x_n) = \begin{cases}
    (x_0,   \overline{x_0}x_1^{-1}x_2^{-1}\cdots x_n^{-1}, x_1, \ldots, x_n) &\text{ if $k=0$}\\
    (x_0,x_1, \ldots, x_{k-1}, 1, x_k, \ldots, x_n)\quad &\text{ if $k\neq0$}
\end{cases}
\]
and, as above, $\overline{x_0}$ denotes the image  of $x_0\in Q$ in $Q^{\rm gp}/P^{\rm gp}$.
By (the proof of) \cite[Lemma 3.28]{Niziol} we have that $\Z[P]\to \Z[Q \oplus (Q^{\rm gp}/P^{\rm gp})^{\oplus \bullet}]$ is a $\mathbb{Z}[P]$-linear chain homotopy equivalence, and therefore the same is true for $\Z[P] \to \Z[Q^\bullet]$.
\end{rmk}
The following is a generalization of \cite[Corollary 4.13]{BLPO-BMS2}, providing sufficient conditions under which the $p$-completed log cotangent complex agrees with its non-log (classical) counterpart. 

\begin{prop}
\label{prop:semipf-invariance-cotangent}
 Let $R\to (R,P)\to (A,M)$ be pre-log rings with $P$ and $M$ saturated.
\begin{enumerate}
    \item If $M$ is semiperfect, the map\[
\bigwedge^i\L_{A/R}\to \bigwedge^i\L_{(A,M)/R}
\]
is an equivalence after derived $p$-completion.
\item If $P$ is semiperfect, the map\[
\bigwedge^i\L_{(A,M)/R}
\rightarrow
\bigwedge^i\L_{(A,M)/(R,P)}
\] 
is an equivalence after derived $p$-completion.
\end{enumerate}
In particular, if both $P$ and $M$ are semiperfect the composite map \[(\bigwedge^i\L_{A/R})_p^\wedge \xrightarrow{}
(\bigwedge^i\L_{(A,M)/(R,P)})_p^\wedge
\] is an equivalence.
\end{prop}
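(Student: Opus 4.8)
The plan is to prove the two statements (1) and (2) separately and then compose. Both reduce, by standard arguments, to a computation on monoid rings, and for those we will exploit the saturated \v Cech complex of Remark~\ref{rmk:Niziol_contraction}.

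\emph{Step 1 (reduction to free/polynomial pre-log algebras).} The Gabber cotangent complex $\L_{(A,M)/(R,P)}$ is defined by left Kan extension from finitely generated free pre-log algebras, and exterior powers commute with this left Kan extension; moreover everything in sight commutes with derived $p$-completion. Since the statement is about the $p$-completed exterior powers, I would first reduce to the case where $(A,M)$ is of the form $(R[N], N)$ with $N$ a finitely generated monoid, and then — using that $M$ and $P$ are \emph{saturated} and semiperfect — pass to the perfection. Concretely, writing $N_{\rm perf}$ for the direct-limit perfection of $N$ and using that $\L$ commutes with filtered colimits, it suffices to treat the ``universal'' situation $P = P_{\rm perf}$, $M = M_{\rm perf}$, i.e.\ genuinely perfect saturated monoids.

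\emph{Step 2 (the key vanishing for a semiperfect monoid).} For statement (1), the heart of the matter is to show that for a semiperfect saturated monoid $M$ and the map $R \to (A,M)$, one has $\L_{(A,M)/A}{}_p^\wedge \simeq 0$, whence the cofiber sequence for the composite $A \to (A,M)$ (the log version of the transitivity triangle, \cite[(3.4)]{BLPO-HKR}) gives $\L_{A/R}{}_p^\wedge \simeq \L_{(A,M)/R}{}_p^\wedge$; taking exterior powers (which preserve this equivalence, as each side is $p$-completely flat or at least one filters by wedge powers) finishes (1). To prove the vanishing $\L_{(A,M)/A}{}_p^\wedge \simeq 0$, I would use that $\L_{(A,M)/A}$ only depends on the log structure, so by logification-invariance (\cite[Lemma 3.11]{BLPO-HKR}) we may compute it on the chart, where it is governed by $\L_{\Z[M]/\Z}$ (more precisely by $A \otimes_{\Z[M]} \L_{\Z[M]/\Z}$ together with the ``$d\log$'' correction term — this is exactly the content of \cite[Corollary 4.18]{BLPO-BMS2}, which is the case without the wedge powers). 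Using Remark~\ref{rmk:Niziol_contraction} and Lemma~\ref{lem:illusie-trick} one checks that for $M$ semiperfect and saturated the relevant complex $\L_{\Z[M]/\Z}$ becomes $p$-completely trivial: the $p$-power (Frobenius) map on $M$ being surjective forces the module of log differentials $\Omega^1_{(\Z[M],M)/\Z} \cong \Z[M]\otimes_\Z M^{\rm gp}$ to be $p$-divisible, hence $p$-completely zero, and the higher terms vanish because, after perfection, $\Z[M_{\rm perf}]$ is a filtered colimit of \'etale (in fact of the form ``adjoin $p$-power roots'') extensions. This is essentially the argument of \cite[Corollary 4.18]{BLPO-BMS2} carried through the wedge powers; since $\L_{(A,M)/A}{}_p^\wedge \simeq 0$ its exterior powers also vanish.

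\emph{Step 3 (statement (2) and composition).} For (2) I would run the symmetric argument with the transitivity triangle for $(R,P) \to (R,M) \to (A,M)$ — wait, more precisely for $(A,M)/(R,P)$ versus $(A,M)/R$ one uses the triangle
\[
A \otimes_R^L \L_{(R,P)/R} \to \L_{(A,M)/R} \to \L_{(A,M)/(R,P)},
\]
so it suffices that $\L_{(R,P)/R}{}_p^\wedge \simeq 0$, which is again Step~2 applied to the semiperfect saturated monoid $P$ over $R$ with trivial log structure. Taking $p$-completed exterior powers of this triangle (filtering by the wedge-power filtration and using that the wedge powers of the acyclic term vanish) gives the equivalence in (2). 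Finally, for the last clause, when both $P$ and $M$ are semiperfect we simply compose the equivalences of (1) and (2):
\[
(\textstyle\bigwedge^i \L_{A/R})_p^\wedge \xrightarrow{\ \sim\ } (\textstyle\bigwedge^i \L_{(A,M)/R})_p^\wedge \xrightarrow{\ \sim\ } (\textstyle\bigwedge^i \L_{(A,M)/(R,P)})_p^\wedge.
\]

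\textbf{Main obstacle.} The delicate point is Step~2: making precise, \emph{with the wedge powers included}, the claim that $\L_{(A,M)/A}{}_p^\wedge$ vanishes for $M$ semiperfect saturated — i.e.\ upgrading \cite[Corollary 4.18]{BLPO-BMS2} (which handles $\L$ itself) to $\bigwedge^i\L$. The cleanest route is probably to observe that once $\L_{(A,M)/A}{}_p^\wedge \simeq 0$ as an object of $\cD(A)$, all its derived exterior powers are automatically $p$-completely zero (derived exterior powers send $p$-completely acyclic objects to $p$-completely acyclic objects), so no genuinely new computation is needed; the content is entirely in the $i=1$ case, which is \cite[Corollary 4.18]{BLPO-BMS2}, and the saturatedness hypothesis is used exactly as in Lemma~\ref{descent_1} and Remark~\ref{rmk:Niziol_contraction} to control the perfection. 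The reduction in Step~1 to perfect monoids, and the compatibility of the transitivity triangle with derived $p$-completion and with passing to wedge powers, are routine but should be spelled out.
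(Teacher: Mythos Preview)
Your overall architecture --- transitivity triangles plus the wedge-power filtration on $\bigwedge^i$ coming from a cofiber sequence --- is exactly what the paper does, and Step~3 together with your reduction of the wedge powers to the case $i=1$ is fine. But Step~1 is both unnecessary and confused: you cannot reduce to finitely generated free pre-log algebras by left Kan extension, because the hypothesis is that \emph{the given} monoid $M$ is semiperfect, not some resolving monoid. The paper works directly with $(A,M)$: after derived Nakayama reduces the question modulo $p$, the cocartesian square of \cite[(3.3)]{BLPO-HKR} transfers the comparison $\L_{A/R}\to\L_{(A,M)/R}$ to the universal case $\L_{\Z[M]/\Z}\to\L_{(\Z[M],M)/\Z}$, so the entire content is in showing $\L_{\F_p[M]/\F_p}\simeq\L_{(\F_p[M],M)/\F_p}$.

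Your Step~2 has the right target but the wrong tools. The citations of Remark~\ref{rmk:Niziol_contraction} and Lemma~\ref{lem:illusie-trick} are red herrings: those concern the saturated \v Cech nerve and are used later for saturated descent, not here. The paper's actual input is Lemma~\ref{descent_1}: for $M$ saturated and semiperfect the inclusion $M^*\hookrightarrow M$ is \emph{relatively perfect}, so \cite[Lemma~4.17]{BLPO-BMS2} gives $\L_{\F_p[M]/\F_p[M^*]}\simeq 0\simeq \L_{(\F_p[M],M)/\F_p[M^*]}$, and transitivity along $\F_p\to\F_p[M^*]\to\F_p[M]$ then identifies the two cotangent complexes. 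Your $p$-divisibility remark handles only the log side (indeed $\L_{(\Z[M],M)/\Z}\simeq \Z[M]\otimes_\Z M^{\rm gp}$ with $M^{\rm gp}$ torsion-free and $p$-divisible by saturatedness plus semiperfectness), while the claim that the non-log side vanishes because ``$\Z[M_{\rm perf}]$ is a filtered colimit of \'etale extensions'' is unjustified as stated. One can in fact observe that $M$ saturated semiperfect is already \emph{perfect} (since $M^{\rm gp}$ torsion-free makes $F_M$ injective), whence $\F_p[M]$ is a perfect $\F_p$-algebra and both sides vanish outright --- but you don't make this observation, and the paper takes the relative route through $M^*$ instead.
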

\begin{proof}
Let $M$ be semiperfect. By derived Nakayama, it is enough to prove that the maps are equivalences after applying $-\otimes_\Z^L\Z/p\Z$.
 By Lemma  \ref{descent_1},  we can use \cite[Lemma 4.12]{BLPO-BMS2} (which is essentially \cite[Corollary 7.11]{Bhatt2012padicDD}) and 
obtain equivalences
\[
\L_{\F_p[M]/\F_p[M^*]}
\simeq
\L_{(\F_p[M],M)/\F_p[M^*]}
\simeq
0.
\]
The two transitivity sequences
\begin{gather*}
\L_{\F_p[M^*]/\F_p}\otimes_{\F_p}^L \F_p[M]
\to
\L_{\F_p[M]/\F_p}
\to
\L_{\F_p[M]/\F_p[M^*]}
\\
\L_{\F_p[M^*]/\F_p}\otimes_{\F_p}^L \F_p[M]
\to
\L_{(\F_p[M],M)/\F_p}
\to
\L_{(\F_p[M],M)/\F_p[M^*]}
\end{gather*}
induce an equivalence
$
\L_{\F_p[M]/\F_p}
\simeq
\L_{(\F_p[M],M)/\F_p}.
$
From this,
we obtain an equivalence
\[
\L_{\Z[M]/\Z}\otimes_\Z^L \Z/p\Z
\simeq
\L_{(\Z[M],M)/\Z}\otimes_\Z^L \Z/p\Z
\]
since $\Z[M]\otimes_\Z^L \Z/p\Z\simeq \F_p[M]$.
We have a cocartesian square
\[
\begin{tikzcd}
\L_{\Z[M]/\Z}\otimes_{\Z[M]}^L A\ar[d]\ar[r]&
\L_{A/R}\ar[d]
\\
\L_{(\Z[M],M)/\Z}\otimes_{\Z[M]}^L A\ar[r]&
\L_{(A,M)/R}
\end{tikzcd}
\]
by \cite[(3.3)]{BLPO-HKR},
so we deduce
$
\L_{A/R}\otimes_\Z^L \Z/p\Z
\simeq
\L_{(A,M)/R}\otimes_\Z^L \Z/p\Z
$, in particular $\L_{(A,M)/A}\otimes_\Z^L\Z/p\Z \simeq 0$. Then the transitivity sequence\[
\L_{A/R}\to \L_{(A,M)/R}\to \L_{(A,M)/A}
\]
induces a finite filtration on $\bigwedge^i\L_{(A,M)/R}$ with graded pieces given by \[
\gr^j(\bigwedge^i\L_{(A,M)/R})\simeq \bigwedge^j\L_{A/R}\otimes^L_A \bigwedge^{i-j}\L_{(A,M)/A}.
\]
By applying $-\otimes^L_\Z \Z/p\Z$ we deduce that $\gr^j$ vanishes for $j<i$ and for $j=i$ it gives the first desired equivalence \[
\bigwedge^i\L_{A/R}\otimes_\Z^L\Z/p\Z\simeq \bigwedge^i\L_{(A,M)/R}\otimes_\Z^L\Z/p\Z.
\]
Let now $P$ be semiperfect, consider  the transitivity sequence
\[
\L_{(R,P)/R}\otimes_R^L A
\to
\L_{(A,M)/R}
\to
\L_{(A,M)/(R,P)}.
\]
By the previous case, we have $\L_{(R,P)/R}\otimes^L \Z/p\Z\simeq 0$,
hence again by looking at the filtration on the derived exterior powers, we conclude that \[
\bigwedge^i\L_{(A,M)/R}\otimes_\Z^L \Z/p\Z
\simeq
\bigwedge^i \L_{(A,M)/(R,P)}\otimes_\Z^L \Z/p\Z
\]
as required.
\end{proof}

\begin{cor}
\label{semiperfect.5}
Let $\phi\colon (R,P)\to (A,M)$ be a map of saturated pre-log rings with $P$ and $M$ semiperfect.
Then $\phi$ is log quasismooth (resp.\ quasisyntomic) if and only if $\underline{\phi}\colon R\to A$ is quasismooth (resp.\ quasisyntomic).
\end{cor}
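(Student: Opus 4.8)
The plan is to deduce the corollary directly from Proposition \ref{prop:semipf-invariance-cotangent}, with essentially no extra work beyond unwinding definitions. First I would recall that, for a map of pre-log rings $\phi\colon (R,P)\to (A,M)$, being log quasismooth (resp.\ log quasisyntomic) means that $A$ is $p$-complete with bounded $p^\infty$-torsion and $p$-completely flat over $R$, together with the condition that $(\L_{(A,M)/(R,P)})_p^\wedge$ is $p$-completely flat (resp.\ has $p$-complete Tor-amplitude in $[-1,0]$); dually, $\underline\phi\colon R\to A$ is quasismooth (resp.\ quasisyntomic) precisely when the same conditions on the underlying ring map hold and $(\L_{A/R})_p^\wedge$ is $p$-completely flat (resp.\ has $p$-complete Tor-amplitude in $[-1,0]$). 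The point is that the finiteness/flatness hypotheses on the underlying map $R\to A$ are literally the same in both formulations and make no reference whatsoever to the log structures, so they transfer verbatim; the only thing left to verify is that the condition on $(\L_{(A,M)/(R,P)})_p^\wedge$ is equivalent to the corresponding condition on $(\L_{A/R})_p^\wedge$.

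This is exactly where Proposition \ref{prop:semipf-invariance-cotangent} is used. Since $P$ and $M$ are saturated and semiperfect, the final (``in particular'') clause of that proposition, applied with $i=1$, provides a natural equivalence $(\L_{A/R})_p^\wedge\xrightarrow{\ \sim\ }(\L_{(A,M)/(R,P)})_p^\wedge$. Base-changing this equivalence along $A\to A/p$ gives $(\L_{A/R})_p^\wedge\otimes^L_A A/p\simeq (\L_{(A,M)/(R,P)})_p^\wedge\otimes^L_A A/p$, so one side is concentrated in degree $0$ and flat over $A/p$ (resp.\ has Tor-amplitude in $[-1,0]$) if and only if the other side is. Combining this with the identity of the underlying-ring conditions noted above yields both equivalences: $\phi$ is log quasismooth $\iff$ $\underline\phi$ is quasismooth, and $\phi$ is log quasisyntomic $\iff$ $\underline\phi$ is quasisyntomic.

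I do not expect a genuine obstacle here: the substantive content is already packaged in Proposition \ref{prop:semipf-invariance-cotangent}, and what remains is bookkeeping with the definitions and a reduction mod $p$ (via derived Nakayama, already invoked in the proof of that proposition). The only step deserving minor care is confirming that the ``background'' conditions in the definitions of log quasismooth and log quasisyntomic maps — $p$-completeness, bounded $p^\infty$-torsion, and $p$-complete flatness of $R\to A$ — are conditions on the underlying commutative rings alone, so that their validity is genuinely insensitive to the logarithmic enhancement; granting this, the corollary is immediate.
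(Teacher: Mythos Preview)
Your proposal is correct and follows exactly the paper's approach: the paper's proof simply reads ``Immediate from Proposition \ref{prop:semipf-invariance-cotangent} and the definition of quasismooth/log quasismooth (resp.\ quasisyntomic/log quasisyntomic),'' and you have unpacked precisely this, observing that the underlying-ring conditions are identical and that the cotangent-complex conditions are exchanged by the equivalence $(\L_{A/R})_p^\wedge \simeq (\L_{(A,M)/(R,P)})_p^\wedge$ from that proposition.
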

\begin{proof}
Immediate from Proposition \ref{prop:semipf-invariance-cotangent} and the definition of (log) quasismooth (resp.\ (log) quasisyntomic), see  \cite[Definition 4.5]{BLPO-BMS2} and \cite[Definition 4.10(2, 3)]{BMS2}. 
\end{proof} 

Let $\Delta$ be the cosimplicial category. Recall that if $\cC^\otimes$ is a monoidal $\infty$-category, then the category of cosimplicial objects $\Fun(\Delta, \cC^\otimes)$ is again a monoidal $\infty$-category with tensor product defined levelwise $(A\otimes B)^{[i]} = A^{[i]}\otimes_{\cC}B^{[i]}$.  

\begin{lemma}\label{lem:vanish-cotangent-Kummer}
    Let $(A,M)$ be a pre-log ring with $M$ saturated, and let $M\to N$ be a Kummer map with $N$ saturated. Let $N^{\oplus_M^{\rm sat} (\bullet + 1)}$ be the  cosimplicial monoid given by the \v Cech nerve in the category of saturated monoids of the map $M\to N$. Then
    \[
\mathbb{L}_{(A\otimes_{\Z[M]}^L\Z[N^{\oplus_M^{\rm sat} (\bullet + 1)}],N^{\oplus_M^{\rm sat} (\bullet + 1)})/(A,M)}\]
    is nullhomotopic, and its cosimplicial limit vanishes in $\cD(A)$.
\begin{proof}  
By \cite[Proposition 3.8]{BLPO-HKR}, we have that \[
\mathbb{L}_{(\Z[N^d],N^d)/(\Z[M],M)}\simeq \Z[N^d]\otimes^L_\Z (N^d)^{\rm gp}/M^{\rm gp}
\]
for all $d \ge 1$. The map $M^{\rm gp} \to (N^d)^{\rm gp}$ is the diagonal. From this, we obtain an equivalence\begin{equation}\label{eq:sat_cot1}
\mathbb{L}_{(\Z[N^{\oplus_M^{\rm sat} (\bullet + 1)}],N^{\oplus_M^{\rm sat} (\bullet + 1)})/(\Z[M],M)}\simeq \Z[N^{\oplus_M^{\rm sat} (\bullet + 1)}]\otimes^L_\Z (N^{\oplus_M^{\rm sat} (\bullet + 1)})^{\rm gp}/M^{\rm gp},
\end{equation} 
in $\Fun(\Delta,\cD(\Z[M]))$, where the tensor product is taken levelwise.
We now apply the equivalence $N^{\oplus_M^{\rm sat} d}\simeq N\oplus (N^{\rm gp}/M^{\rm gp})^{\oplus d-1}$ of Lemma \ref{lem:illusie-trick} to obtain an equivalence \[
(N^{\oplus_M^{\rm sat} (\bullet + 1)})^{\rm gp}/M^{\rm gp} \simeq (N^{\rm gp}/M^{\rm gp})^{\oplus \bullet}.
\] in $\Fun(\Delta,\cD(\Z))$. 
The right-hand side is the cosimplicial abelian group 
\[
\begin{tikzcd}
N^{\rm gp}/M^{\rm gp}\ar[r,shift left = 1.5,"id\oplus 0"]\ar[r,"0\oplus id"'] &N^{\rm gp}/M^{\rm gp}\oplus N^{\rm gp}/M^{\rm gp}\ar[r,shift left = 2]\ar[r,shift left = .5]\ar[r,shift right=1] &N^{\rm gp}/M^{\rm gp}\oplus N^{\rm gp}/M^{\rm gp}\oplus N^{\rm gp}/M^{\rm gp}\ldots,
\end{tikzcd}
\] 
which is chain homotopic to $0$ via the map $N^{\rm gp}/M^{\rm gp}\to 0$. This implies that the cosimplicial $\Z[M]$-module \eqref{eq:sat_cot1} is chain homotopic to zero. 
Base-change for the Gabber cotangent complex provides an equivalence \[
\mathbb{L}_{(A\otimes_{\Z[M]}^L\Z[N^{\oplus_M^{\rm sat} (\bullet + 1)}],N^{\oplus_M^{\rm sat} (\bullet + 1)})/(A,M)}\simeq A\otimes^L_{\Z[M]}\Z[N^{\oplus_M^{\rm sat} (\bullet + 1)}]\otimes^L_\Z (N^{\oplus_M^{\rm sat} (\bullet + 1)})^{\rm gp}/M^{\rm gp}
\] in $\Fun(\Delta,\cD(A))$.
Here $A\in \Fun(\Delta,\cD(\Z[M]))$ is the constant functor and the tensor product is taken in $\Fun(\Delta,\cD(\Z[M]))$. By the Eilenberg--Zilber theorem, taking the associated double complex we have a quasi--isomorphism of double complexes:\[
0\simeq A\otimes^L_{\Z[M]}\Z[N^{\oplus_M^{\rm sat} (\bullet + 1)}]\otimes^L_\Z (N^{\oplus_M^{\rm sat} (\bullet + 1)})^{\rm gp}/M^{\rm gp}
\]
Recall that the homotopy limit of cosimplicial chain complexes agrees with the product totalization of the associated double complex by \cite[Proposition 4.9]{BousfieldKan}, and since the double complex associated to $A\otimes^L_{\Z[M]}\Z[N^{\oplus_M^{\rm sat} (\bullet + 1)}]\otimes^L_\Z (N^{\oplus_M^{\rm sat} (\bullet + 1)})^{\rm gp}/M^{\rm gp}$ is an acyclic right-half plane double complex, by \cite[Acyclic Assembly Lemma 2.7.3]{WeibelHom} we conclude that
\begin{align*}
&\varlim_\Delta (\mathbb{L}_{(A\otimes_{\Z[M]}^L\Z[N^{\oplus_M^{\rm sat} (\bullet + 1)}],N^{\oplus_M^{\rm sat} (\bullet + 1)})/(A,M)})\\&\simeq \Tot(A\otimes^L_{\Z[M]}\Z[N^{\oplus_M^{\rm sat} (\bullet + 1)}]\otimes^L_\Z (N^{\oplus_M^{\rm sat} (\bullet + 1)})^{\rm gp}/M^{\rm gp})\\&\simeq 0,
\end{align*}
as required.
\end{proof}

\end{lemma} 
\begin{lemma}\label{lem:sat-descent-cotangent}
    Let $(R,P)\to (A,M)$ be a map of integral pre-log rings with $M$ saturated, and let $M\to N$ be a Kummer map of saturated monoids. Then the canonical map\[
    \bigwedge^i\mathbb{L}_{(A,M)/(R,P)}\to \varlim_{\Delta}(\bigwedge^i\mathbb{L}_{(A\otimes_{\Z[M]}^L\Z[N^{\oplus_M^{\rm sat} (\bullet + 1)}],N^{\oplus_M^{\rm sat} (\bullet + 1)})/(R,P)})
    \]
    is an equivalence.
\end{lemma}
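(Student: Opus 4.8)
The plan is to compare $\bigwedge^i\mathbb{L}_{(A_\bullet,N^\bullet)/(R,P)}$ with $\bigwedge^i\mathbb{L}_{(A,M)/(R,P)}$ via the transitivity triangle attached to the tower $(R,P)\to(A,M)\to(A_\bullet,N^\bullet)$, where I abbreviate $N^\bullet:=\check{C}^{\rm sat}_M(N)^\bullet$ and $A_\bullet:=A\otimes_{\Z[M]}^L\Z[N^\bullet]$, all viewed as cosimplicial objects. Levelwise in $\Delta$, the transitivity sequence $\mathbb{L}_{(A,M)/(R,P)}\otimes_A^L A_\bullet\to\mathbb{L}_{(A_\bullet,N^\bullet)/(R,P)}\to\mathbb{L}_{(A_\bullet,N^\bullet)/(A,M)}$ (\cite[(3.3)]{BLPO-HKR}), together with base change for derived exterior powers, induces a finite filtration on $\bigwedge^i\mathbb{L}_{(A_\bullet,N^\bullet)/(R,P)}$ with graded pieces
\[
\gr^j\simeq \bigl(\textstyle\bigwedge^j\mathbb{L}_{(A,M)/(R,P)}\bigr)\otimes_A^L A_\bullet\otimes_{A_\bullet}^L\textstyle\bigwedge^{i-j}\mathbb{L}_{(A_\bullet,N^\bullet)/(A,M)},\qquad 0\le j\le i,
\]
exactly as in the proof of Proposition \ref{prop:semipf-invariance-cotangent}. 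Since $\varlim_\Delta$ is exact it preserves this finite filtration, so it suffices to compute $\varlim_\Delta\gr^j$ for each $j$.

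For the top piece $j=i$ we have $\gr^i\simeq\bigl(\bigwedge^i\mathbb{L}_{(A,M)/(R,P)}\bigr)\otimes_A^L A_\bullet$. By Remark \ref{rmk:Niziol_contraction} the augmented cosimplicial ring $\Z[M]\to\Z[N^\bullet]$ is a $\Z[M]$-linear chain homotopy equivalence; tensoring the contracting homotopy up along $\bigl(\bigwedge^i\mathbb{L}_{(A,M)/(R,P)}\bigr)\otimes_A^L(A\otimes_{\Z[M]}^L-)$ shows that $\gr^i$ is chain homotopy equivalent to the constant cosimplicial object $\bigwedge^i\mathbb{L}_{(A,M)/(R,P)}$, whence $\varlim_\Delta\gr^i\simeq\bigwedge^i\mathbb{L}_{(A,M)/(R,P)}$. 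Moreover, for each $n$ the base change map $\bigwedge^i\mathbb{L}_{(A,M)/(R,P)}\to\bigwedge^i\mathbb{L}_{(A_n,N^n)/(R,P)}$ factors as the scalar extension $\bigwedge^i\mathbb{L}_{(A,M)/(R,P)}\to\bigwedge^i\mathbb{L}_{(A,M)/(R,P)}\otimes_A A_n=\gr^i_n$ followed by the inclusion $\gr^i_n=F^i_n\hookrightarrow\bigwedge^i\mathbb{L}_{(A_n,N^n)/(R,P)}$ of the top step of the transitivity filtration. Hence the canonical map of the statement is identified, after $\varlim_\Delta$, with the composite $\bigwedge^i\mathbb{L}_{(A,M)/(R,P)}\xrightarrow{\ \sim\ }\varlim_\Delta\gr^i\to\varlim_\Delta\bigwedge^i\mathbb{L}_{(A_\bullet,N^\bullet)/(R,P)}$, and it remains to prove that this last map is an equivalence, i.e.\ that $\varlim_\Delta\gr^j=0$ for $0\le j\le i-1$.

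For $j\le i-1$ the piece $\gr^j$ carries the factor $\bigwedge^{i-j}\mathbb{L}_{(A_\bullet,N^\bullet)/(A,M)}$ with $i-j\ge 1$. The proof of Lemma \ref{lem:vanish-cotangent-Kummer} identifies $\mathbb{L}_{(A_\bullet,N^\bullet)/(A,M)}\simeq A_\bullet\otimes_\Z^L W^\bullet$ with $W^\bullet=(N^\bullet)^{\rm gp}/M^{\rm gp}\simeq(N^{\rm gp}/M^{\rm gp})^{\oplus\bullet}$, a cosimplicial abelian group admitting a contracting homotopy over $W^{-1}=0$ (this is where Lemma \ref{lem:illusie-trick} and Nizio\l's formulas enter). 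Applying the functor $\bigwedge^{i-j}_\Z$ levelwise preserves that contracting homotopy over $\bigwedge^{i-j}_\Z(0)=0$, and base change for derived exterior powers gives $\bigwedge^{i-j}_{A_\bullet}\mathbb{L}_{(A_\bullet,N^\bullet)/(A,M)}\simeq A_\bullet\otimes_\Z^L\bigwedge^{i-j}_\Z W^\bullet$; thus $\gr^j\simeq\bigl(\bigwedge^j\mathbb{L}_{(A,M)/(R,P)}\bigr)\otimes_A^L A_\bullet\otimes_\Z^L\bigwedge^{i-j}_\Z W^\bullet$, and repeating the Eilenberg--Zilber plus acyclic assembly argument of Lemma \ref{lem:vanish-cotangent-Kummer} (\cite[Lemma 2.7.3]{WeibelHom}) shows $\varlim_\Delta\gr^j\simeq 0$. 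Combining the three paragraphs, $\varlim_\Delta\bigwedge^i\mathbb{L}_{(A_\bullet,N^\bullet)/(R,P)}$ has a finite filtration whose only nonzero graded piece is $\bigwedge^i\mathbb{L}_{(A,M)/(R,P)}$, realized by the canonical map, so the canonical map is an equivalence. The main obstacle is the bookkeeping in this last step: one must be careful that the null-homotopy of $W^\bullet$ is structural enough (an extra codegeneracy, not merely an acyclicity) to survive the non-additive functor $\bigwedge^{i-j}_\Z$, and that the canonical map really is the inclusion of the top of the transitivity filtration; the rest is formal given Lemma \ref{lem:vanish-cotangent-Kummer} and Remark \ref{rmk:Niziol_contraction}.
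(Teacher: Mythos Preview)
Your proof is correct and follows essentially the same route as the paper's: the transitivity filtration for $(R,P)\to(A,M)\to(A_\bullet,N^\bullet)$, identification of $\gr^i$ via the chain homotopy equivalence $\Z[M]\to\Z[N^\bullet]$ of Remark \ref{rmk:Niziol_contraction}, and vanishing of $\gr^j$ for $j<i$ via the method of Lemma \ref{lem:vanish-cotangent-Kummer}. You are in fact a bit more careful than the paper on two points: you make explicit that the canonical map lands in the top step of the filtration, and you spell out why the null-homotopy of $W^\bullet$ survives the non-additive functor $\bigwedge^{i-j}_\Z$ (the paper just cites Lemma \ref{lem:vanish-cotangent-Kummer}, which literally only treats $i-j=1$, though its proof method is what is really being invoked).
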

\begin{proof}
As in \cite[Theorem 2.9]{BLPO-HKR}, the transitivity sequence for the composition $(R,P)\to (A,M)\to (A\otimes_{\Z[M]}^L\Z[N^{\oplus_M^{\rm sat} (\bullet + 1)}],N^{\oplus_M^{\rm sat} (\bullet + 1)})$ gives a cofiber sequence \[\begin{tikzcd}[row sep = small]
\Z[N^{\oplus_M^{\rm sat} (\bullet + 1)}]\otimes_{\Z[M]}^L\mathbb{L}_{(A,M)/(R,P)} \ar{d} \\ \mathbb{L}_{(A\otimes_{\Z[M]}^L\Z[N^{\oplus_M^{\rm sat} (\bullet + 1)}],N^{\oplus_M^{\rm sat} (\bullet + 1)})/(R,P)} \ar{d} \\ \mathbb{L}_{(A\otimes_{\Z[M]}^L\Z[N^{\oplus_M^{\rm sat} (\bullet + 1)}],N^{\oplus_M^{\rm sat} (\bullet + 1)})/(A,M)}
\end{tikzcd}\]
in $\Fun(\Delta,\cD(A))$. This induces a finite filtration on the cosimplicial complex of $A$-modules $\bigwedge^i\mathbb{L}_{(A\otimes_{\Z[M]}^L\Z[N^{\oplus_M^{\rm sat} (\bullet + 1)}],N^{\oplus_M^{\rm sat} (\bullet + 1)})/(R,P)}$ with $j$th graded piece \[
\mathrm{gr}^j
=
\bigwedge^j ( \Z[N^{\oplus_M^{\rm sat} (\bullet + 1)}] \otimes^L_{\Z[M]}\mathbb{L}_{(A,M)/(R,P)} )\otimes^L_{A} \bigwedge^{i-j}\mathbb{L}_{(A\otimes_{\Z[M]}^L\Z[N^{\oplus_M^{\rm sat} (\bullet + 1)}],N^{\oplus_M^{\rm sat} (\bullet + 1)})/(A,M)}.
\]
For $j<i$, $\bigwedge^{i-j}\mathbb{L}_{(A\otimes_{\Z[M]}^L\Z[N^{\oplus_M^{\rm sat} (\bullet + 1)}],N^{\oplus_M^{\rm sat} (\bullet + 1)})/(A,M)}$ is nullhomotopic by Lemma \ref{lem:vanish-cotangent-Kummer},
so we conclude that $\varlim_\Delta \mathrm{gr}^j$ vanishes as in the proof of Lemma \ref{lem:vanish-cotangent-Kummer}.
While for $j=i$ we have\[
\bigwedge^i\mathbb{L}_{(A\otimes_{\Z[M]}^L\Z[N^{\oplus_M^{\rm sat} (\bullet + 1)}],N^{\oplus_M^{\rm sat} (\bullet + 1)})/(R,P)}\simeq \bigwedge^i \Z[N^{\oplus_M^{\rm sat} (\bullet + 1)}] 
 \otimes^L_{\Z[M]}\mathbb{L}_{(A,M)/(R,P)}.
\]
By Remark \ref{rmk:Niziol_contraction}, the map $\Z[M]\to \Z[N^\bullet]$ is a homotopy equivalence, hence\[
\varlim_{\Delta} \bigwedge^i\mathbb{L}_{(A\otimes_{\Z[M]}^L\Z[N^{\oplus_M^{\rm sat} (\bullet + 1)}],N^{\oplus_M^{\rm sat} (\bullet + 1)})/(R,P)} \simeq
\varlim_{\Delta} \bigwedge^i\mathbb{L}_{(A,M)/(R,P)}\otimes_{\Z[M]}^L\Z[N^{\oplus_M^{\rm sat} (\bullet + 1)}],
\]
which is equivalent to $\bigwedge^i\mathbb{L}_{(A,M)/(R,P)}$, as required.
\end{proof}

The following definition is inspired by \cite[Definition 4.1]{BMS2}.
\begin{defn}
Let $A$ be a ring and let $C\in \cD(A)$. We say that $C$ is \emph{$p$-completely discrete} if $C \otimes^L_A A/p \in \cD(A/p)$ is concentrated in degree $0$. 
\end{defn}
We remark that if $C$ is $p$-completely flat in the sense of \cite[Definition 4.1(2)]{BMS2}, then it is $p$-completely discrete in the above sense.

From Lemma \ref{lem:sat-descent-cotangent} we get the following descent result for the cotangent complex. 

\begin{thm}\label{thm:sat-descent-nonderived}
 
Let $(R,P)\to (A,M)$ be an integral map of integral pre-log rings with bounded $p$-power torsion and let
$M \to N$ be a Kummer map of saturated monoids.
Assume that $A\otimes^L_{
\Z[M]} \Z[N]$ 
is $p$-completely discrete.
Then for all $(A,M)\to (S,Q)$ with $A\to S$ $p$-completely 
flat, $M\to Q$ integral and injective, and $S$ with bounded $p$-power torsion, the natural map
\begin{equation}\label{eq:sat-descent}
    (\bigwedge^i\L_{(S,Q)/(R,P)})_p^{\wedge}\to \varlim_{[m]\in \Delta}(\bigwedge^i\L_{(S\otimes_{\Z_p\langle M\rangle}\Z_p\langle N^{\oplus_M^{\rm sat} (m+ 1)}\rangle,Q\oplus_M N^{\oplus_M^{\rm sat} (m + 1)})/(R,P)})_p^{\wedge}
\end{equation}
is an equivalence.
\begin{proof} 
    As in \cite[Remark 4.9]{BMS2}, it is enough to check that the natural map
\begin{equation}\label{eq:reduction-to-cotan}
    \L_{(S/p^n,Q)/(R,P)}\to \varlim_{[m]\in \Delta}   (\L_{(S/p^n\otimes_{\Z[M]}\Z[N^{\oplus_M^{\rm sat} (m + 1)}],Q\oplus_M N^{\oplus_M^{\rm sat} (m + 1)})/(R,P)})
\end{equation}
 is an equivalence for all $n$. We can thus assume that $p$ is nilpotent in $A$, so $A\to S$ is  flat and the map
 \begin{equation}\label{eq:clubsuit-pnilp}
  A\otimes^L_{
\Z[M]} \Z[N] \to A\otimes_{\Z[M]} \Z[N]
 \end{equation}
 is an equivalence.
Now, we have that for all $d$
\begin{align*}
S\otimes_{\Z[M]}\Z[N^{\oplus_M^{\rm sat} (d + 1)}]&\cong S\otimes_{A}A\otimes_{\Z[M]}\Z[N]\otimes_\Z\Z[(N^{\rm gp}/M^{\rm gp})^{\oplus d}]\\
&\overset{(*1)}{\simeq} S\otimes_{A}(A\otimes_{\Z[M]}^L\Z[N]\otimes_\Z^L\Z[(N^{\rm gp}/M^{\rm gp})^{\oplus d}])\\
&\overset{(*2)}{\simeq} S\otimes_{A}^L(A\otimes_{\Z[M]}^L\Z[N^{\oplus_M^{\rm sat} (d + 1)}])\\
&\simeq S\otimes_{\Z[M]}^L\Z[N^{\oplus_M^{\rm sat} (d + 1)}]
\end{align*}
where $(*1)$ is \eqref{eq:clubsuit-pnilp}
and the fact that $\Z[N]$ is flat over $\Z$ and $(*2)$ is Lemma \ref{lem:illusie-trick} together with the flatness of $S$ over $A$ (remember that now we are assuming that $p$ is nilpotent, so $p$-completely flat is flat). 
Observe that 
\[
S \otimes^L_{\Z[M]} \Z[N^{\oplus_M^{\rm sat} (d + 1)}] \simeq  S \otimes^L_{\Z[Q]} \Z[Q\oplus_M N^{\oplus_M^{\rm sat} (d + 1)}],
\]
so that we may consider the animated pre-log rings $(S \otimes^L_{{\Z}[M]} \Z[N^{\oplus_M^{\rm sat} (d + 1)}], Q \oplus_M N^{\oplus_M^{\rm sat} (d + 1)})$. 

By the transitivity sequence for the composites $(R,P) \to (S, M) \to (S, Q)$ and $(R,P)\to (S\otimes_{\Z[M]}\Z[N^{\oplus_M^{\rm sat} (d + 1)}], N^d)\to (S\otimes_{\Z[M]}\Z[N^{\oplus_M^{\rm sat} (d + 1)}], Q\oplus_M N^{\oplus_M^{\rm sat} (d + 1)})$, we have a commutative diagram:
\begin{equation}\label{eq:diagram-technical-descent-lqrsp}
\begin{tikzcd}
\L_{(S,M)/(R,P)}\arrow[r, "\simeq"] \arrow[d] & \varlim_{[m]\in \Delta} \L_{(S\otimes^L_{\Z[M]} \Z[N^{\oplus_M^{\rm sat} (m + 1)}], N^{\oplus_M^{\rm sat} (m + 1)})/(R,P) } \arrow[d] \\
\L_{(S,Q)/(R,P)}\arrow[r] \arrow[d]  & \varlim_{[m]\in \Delta}   \L_{(S\otimes^L_{\Z[M]} \Z[N^{\oplus_M^{\rm sat} (m + 1)}], Q\oplus_M N^{\oplus_M^{\rm sat} (m + 1)})/(R,P) } \arrow[d] \\
\L_{(S,Q)/(S,M)}\arrow[r] &\varlim_{[m]\in \Delta} \L_{(S\otimes^L_{\Z[M]} \Z[N^{\oplus_M^{\rm sat} (m + 1)}], Q\oplus_M N^{\oplus_M^{\rm sat} (m + 1)})/S\otimes^L_{\Z[M]} \Z[N^{\oplus_M^{\rm sat} (m + 1)}])}
\end{tikzcd}
\end{equation}
where the top horizontal arrow is an equivalence in light of Lemma \ref{lem:sat-descent-cotangent}, so it is enough to check that the bottom horizontal arrow of \eqref{eq:diagram-technical-descent-lqrsp} is an equivalence. By \cite[Corollary 3.9]{BLPO-HKR}, we have a commutative diagram where the columns are fiber sequences 
\begin{equation}\label{eq:diagram-technical-descent-lqrsp-2}
\begin{tikzcd}[column sep = tiny]
S\otimes_\Z Q^{\rm gp}/M^{\rm gp} \arrow[r]\ar[d] &\varlim_{[m]\in \Delta} S\otimes_\Z (Q\oplus_M N^{\oplus_M^{\rm sat} (m + 1)})^{\rm gp}/(N^{\oplus_M^{\rm sat} (m + 1)})^{\rm gp}\ar[d]\\
\L_{(S,Q)/(S,M)}\arrow[r]\ar[d] &\varlim_{[m]\in \Delta} \L_{(S\otimes^L_{\Z[M]} \Z[N^{\oplus_M^{\rm sat} (m + 1)}], Q\oplus_M N^{\oplus_M^{\rm sat} (m + 1)})/S\otimes^L_{\Z[M]} \Z[N^{\oplus_M^{\rm sat} (m + 1)}])}\ar[d]\\
\L_{S/S\otimes_{\Z[M]}^L\Z[Q]}\arrow[r] &\varlim_{[m]\in \Delta} \L_{(S\otimes^L_{\Z[M]} \Z[N^{\oplus_M^{\rm sat} (m + 1)}]/S\otimes^L_{\Z[M]} \Z[N^{\oplus_M^{\rm sat} (m + 1)}]\otimes^L_{\Z[N^{\oplus_M^{\rm sat} (m + 1)}]}\Z[Q\oplus_M N^{\oplus_M^{\rm sat} (m + 1)}])}
\end{tikzcd}
\end{equation}
The map $N^{\oplus_M^{\rm sat} (d + 1)} \to Q\oplus_M N^{\oplus_M^{\rm sat} (d + 1)}$ is constructed as  $M\oplus_M N^{\oplus_M^{\rm sat} (d + 1)} \to Q\oplus_M N^{\oplus_M^{\rm sat} (d + 1)}$, with the identity on the second factor, so by Lemma \ref{lem:illusie-trick} we have \begin{align*}
(Q\oplus_M N^{\oplus_M^{\rm sat} (d + 1)})^{\rm gp}/(N^{\oplus_M^{\rm sat} (d + 1)})^{\rm gp} &\cong (Q^{\rm gp}\oplus(N^{\oplus_M^{\rm sat} (d + 1)})^{\rm gp}/M^{\rm gp})/(0\oplus N^{\oplus_M^{\rm sat} (d + 1)})^{\rm gp} \\ &\cong (Q^{\rm gp}\oplus(N^{\oplus_M^{\rm sat} (d + 1)})^{\rm gp}/(0\oplus N^{\oplus_M^{\rm sat} (d + 1)})^{\rm gp}) /M^{\rm gp} \\ &\cong Q^{\rm gp}/M^{\rm gp},
\end{align*}
so the top horizontal map of \eqref{eq:diagram-technical-descent-lqrsp-2} is an equivalence. Finally, we have that the homotopy pushout diagram\[
\begin{tikzcd}
S\ar[r]\ar[d]&S\otimes_{Z[M]}^L\Z[Q]\ar[d]\\
S\otimes_{\Z[M]}^L\Z[N^{\oplus_M^{\rm sat} (d + 1)}]\ar[r]&S\otimes_{Z[M]}\Z[Q]\otimes_{\Z[M]}^L\Z[N^{\oplus_M^{\rm sat} (d + 1)}]
\end{tikzcd}
\]
implies that the map
 
\[
\L_{S\otimes_{\Z[M]}^L\Z[Q]/S}\to \varlim_{[m]\in \Delta} \L_{S\otimes^L_{\Z[M]} \Z[N^{\oplus_M^{\rm sat} (m + 1)}]\otimes^L_{\Z[N^{\oplus_M^{\rm sat} (m + 1)}]}\Z[Q\oplus_M N^{\oplus_M^{\rm sat} (m + 1)}]/S\otimes^L_{\Z[M]} \Z[N^{\oplus_M^{\rm sat} (m + 1)}]}
\]
equals the map\[
\L_{S\otimes_{\Z[M]}^L\Z[Q]/S}\to \varlim_{[m]\in \Delta} \Z[N^{\oplus_M^{\rm sat} (m + 1)}]\otimes^L_{\Z[M]}\L_{S\otimes_{\Z[M]}^L\Z[Q]/S}
\]
By Remark \ref{rmk:Niziol_contraction}, the map $\Z[M]\to \Z[N^{\oplus_M^{\rm sat} (\bullet + 1)}]$ is a homotopy equivalence, so the map above is an equivalence. By the transitivity sequences of the compositions\begin{align*}
S\to &S\otimes_{\Z[M]}^L\Z[Q]\to S\\ S\otimes_{\Z[M]}^L\Z[N^{\oplus_M^{\rm sat} (d + 1)}]\to &S\otimes_{\Z[M]}\Z[Q]\otimes_{\Z[M]}^L\Z[N^{\oplus_M^{\rm sat} (d + 1)}]\to S\otimes_{\Z[M]}^L\Z[N^{\oplus_M^{\rm sat} (d + 1)}],
\end{align*}
we conclude that the bottom horizontal arrow of \eqref{eq:diagram-technical-descent-lqrsp-2} is an equivalence too, so we conclude that the bottom horizontal arrow of \eqref{eq:diagram-technical-descent-lqrsp} is an equivalence too. This concludes the proof. 
\end{proof}
\end{thm}

\begin{lemma}
\label{lem:p-torsionfree abelian group}

Let $G$ be a $p$-torsionfree abelian group.
Then the Frobenius $\F_p[G]\to \F_p[G]$ is flat.
Hence the induced map $\F_p[G]\to \F_p[G_\perf]$ is flat.
\end{lemma}
\begin{proof}
Since a colimit of a directed system of flat modules is flat, and since $G$ is the filtered colimit of its finitely generated subgroups,
we may assume that $G$ is finitely generated.
We further reduce to the cases of $G=\Z$ and $G=\Z/n$ with $n$ prime to $p$.
The claim is clear for these cases.
\end{proof}

\begin{example}\label{ex:condition-clubsuit} Here we give some examples of $(A,M)$ and $M\to N$ such that $A\otimes^L_{
\Z[M]} \Z[N]$ is $p$-completely discrete.
\begin{enumerate}
\item    Let $R$ be a ring. Then for $M\to N$ any map of monoids, we have that the following squares are (homotopy) cocartesian:\[
\begin{tikzcd}
   \Z \ar[r] \ar[d] & \Z[M] \ar[r] \ar[d] & \Z[N] \ar[d] \\ R \ar[r] & R[M] \ar[r] & R[N].
\end{tikzcd}
    \]
    This implies that we have equivalences\[
    R[N]\simeq R[M]\otimes_{\Z[M]}\Z[N]\simeq R[M]\otimes_{\Z[M]}^L\Z[N]
    \]
In particular, $(R[M],M)$ is such that $R\otimes_{\Z[M]}^L \Z[N]$ has $p$-complete Tor-amplitude in degree $0$ for all $M\to N$.
\item Let $(A,M)$ be a saturated pre-log $R$-algebra
such that $M^\mathrm{gp}$ is $p$-torsionfree.
Then $M\to M_{\rm perf}$ is Kummer and $M_{\rm perf}$ is saturated, as observed in Remark \ref{rmk:perf-kummer}. 
Assume that there exists a section $\ol{M}\to M$ of the quotient map $M\to \ol{M}$ such that $R[\ol{M}]\to A$ is $p$-completely flat, where we write $\ol{M}$ for $M/M^*$ as usual. 
Then
\begin{align*}
A/p\otimes_{\Z[\ol{M}]}\Z[\ol{M}_{\rm perf}]&\simeq A/p\otimes_{R/p[\ol{M}]}R/p[\ol{M}_{\rm perf}] \\ &\simeq    A/p\otimes_{R/p[\ol{M}]}^LR/p[\ol{M}_{\rm perf}]\simeq A/p\otimes_{\Z[\ol{M}]}^L\Z[\ol{M}_{\rm perf}].
\end{align*}
This implies that $A\otimes^L_{\Z[\ol{M}]}\Z[\ol{M}_{\rm perf}]$ is $p$-completely discrete.

Assume further that $M^*$ is $p$-torsionfree.
We have isomorphisms
\begin{gather*}
A\otimes_{\Z[\ol{M}]}^L \Z[\ol{M}_\perf]\simeq 
A\otimes^L_{\Z[M]}\Z[\ol{M}_\perf \oplus M^*],
\\
A\otimes_{\Z[M]}^L \Z[M_\perf]
\simeq
A\otimes_{\Z[M]}^L \Z[\ol{M}_\perf \oplus (M^*)_\perf].
\end{gather*}
Together with Lemma \ref{lem:p-torsionfree abelian group},
we deduce that $A\otimes^L_{\Z[M]}\Z[M_\perf]$ is $p$-completely discrete.
\item   Let $(R,\N)$ be the pre-log ring with structure map $m_0\colon \N\to R$ given by \[
r\mapsto \begin{cases} 1 &\mathrm{if } \:\:\: r=0\\ 0&\mathrm{otherwise}.\end{cases}
\] This factors through a map $\N_{\rm perf}\to R$. Let $(A,M)$ be a saturated pre-log $(R,\N)$-algebra such that $A$ is nonzero. Notice that the commutativity of the diagram \[
\begin{tikzcd}
    \N\ar[r,"f"]\ar[d,"m_0"]&M\ar[d,"m"]\\
    {R}\ar[r,"\phi"]&A
\end{tikzcd}
\]
of monoids implies that the map $\N\to M$ is injective. Indeed, if $r\in \N$ such that $f(r)=1$ in $M$, then $1=mf(r)=\phi m_0(r)$, so $r=0$. 

Let us moreover assume that there exists a section $\ol{M}\to M$ of the quotient map $M\to \ol{M}$ factoring $f$ such that $R\otimes_{\Z[\N]}\Z[\ol{M}] \to A$ is $p$-completely flat and that $\mathbb{N}\to M$ is a saturated morphism of monoids. Let $M':= \ol{M}\oplus_{\N}\N_{\rm perf}\simeq \colim_{\id\otimes F_{\N}} \ol{M}\oplus_{\N}\N$. Since it is a filtered colimit of saturated monoids it is again saturated by \cite[Proposition I.1.3.6]{ogu}. Moreover, we have that  $M'_{\rm perf}\simeq \ol{M}_{\rm perf}$, which is saturated and $M'\to \ol{M}_{\rm perf}$ is Kummer by Remark \ref{rmk:perf-kummer}. Also, the map $\N_{\rm perf}\to M'$ is injective and saturated since the colimit is filtered, so by \cite[Proposition 4.1]{katolog} the map $\Z[\N_{\rm perf}]\to \Z[M']$ is flat. Finally, we have that $R\otimes_{\Z[\N]}\Z[\ol{M}]\cong R\otimes_{\Z[\N_{\rm perf}]}\Z[M']$, so $R\otimes_{\Z[\N_{\rm perf}]}\Z[M']\to A$ is also $p$-completely flat. Putting everything together, we have that \[
\begin{aligned}
A/p\otimes_{\Z[M']}\Z[\ol{M}_{\rm perf}] &\cong A/p\otimes_{R/p\otimes_{\Z[\N_{\rm perf}]}\Z[M']}R/p\otimes_{\Z[\N_{\rm perf}]}\Z[M']\otimes_{\Z[M']}\Z[\ol{M}_{\rm perf}] \\
&\overset{(*)}{\simeq}  A/p\otimes_{R/p\otimes^L_{\Z[\N_{\rm perf}]}\Z[M']}^LR/p\otimes_{\Z[\N_{\rm perf}]}^L\Z[M']\otimes_{\Z[M']}^L\Z[\ol{M}_{\rm perf}]\\
&\simeq A/p\otimes_{\Z[M']}^L\Z[\ol{M}_{\rm perf}],
\end{aligned}
\]
where the derived tensors that appear in $(*)$ follow respectively from the assumption that $A$ is $p$-completely flat over $R\otimes_{\Z[\N_{\rm perf}]}\Z[M']$, from the fact that $\Z[\N_{\rm perf}]\to \Z[M']$ is flat, and from (1) above. From this, we deduce that $A\otimes_{\Z[M']}^L\Z[\ol{M}_{\rm perf}]$ is $p$-completely discrete.

Assume further that $M^*$ is $p$-torsionfree.
As in (2),
using Lemma \ref{lem:p-torsionfree abelian group},
we can deduce that 
$A\otimes_{\Z[M\oplus_\N \N_\perf]}^L\Z[M_{\rm perf}]$ is $p$-completely discrete.

\end{enumerate}
\end{example}

\subsection{Saturated descent for prismatic cohomology}

\begin{prop}\label{prop:semipf-invariance-prismatic}
Let $A$ be a quasisyntomic ring. For all pre-log rings $(A, M)$ with $M$ semiperfect, the canonical map $A \to (A, M)$ induces an isomorphism \[
\Fil_\rN^{\geq m}\cPrism_{A}\simeq \Fil_\rN^{\geq m}\cPrism_{(A,M)}.
\]
\begin{proof}
 Consider a quasisyntomic cover $A\to S$ with $S\in \QRSPerfd$. As observed in Corollary \ref{semiperfect.5}, $(A,M)\to (S,M)$ is a quasisyntomic cover. It thus suffices to check that $\Fil_\rN^{\geq m}\cPrism_{S/R}\simeq \Fil_\rN^{\geq m}\cPrism_{(S,M)/R}$ for $R \to S$ a map with $R$ perfectoid. 
By the conjugate filtration, it is enough to check it on the graded pieces, so it is enough to check that the map\[
(\bigwedge^i\mathbb{L}_{S/R})_p^{\wedge}\to (\bigwedge^i\mathbb{L}_{(S,M)/R})_p^{\wedge}
\]
is an equivalence. This follows directly from Proposition \ref{prop:semipf-invariance-cotangent}.
\end{proof}
\end{prop}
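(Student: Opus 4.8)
The plan is to reduce the statement, as in the author's strategy, from log prismatic cohomology to the log cotangent complex, where the work has already been done in Proposition~\ref{prop:semipf-invariance-cotangent}. First I would pass to a quasisyntomic cover $A \to S$ with $S \in \QRSPerfd$. By Corollary~\ref{semiperfect.5}, since $M$ is semiperfect, the map $(A,M) \to (S,M)$ is still a quasisyntomic cover (the non-log map $A \to S$ is quasisyntomic and semiperfectness of $M$ makes the log condition equivalent to the non-log one). Since both $\Fil_N^{\geq m}\cPrism_{(-)}$ and $\Fil_N^{\geq m}\cPrism_{(-,M)}$ satisfy quasisyntomic descent, it suffices to prove the equivalence after pulling back along this cover, i.e.\ to check $\Fil_N^{\geq m}\cPrism_{S} \simeq \Fil_N^{\geq m}\cPrism_{(S,M)}$. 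Moreover, since $S \in \QRSPerfd$, there is a perfectoid ring $R$ with a map $R \to S$, and then $\cPrism_S \simeq \cPrism_{S/R}$ (and similarly for the filtration and the log version), so we are reduced to comparing $\Fil_N^{\geq m}\cPrism_{S/R}$ with $\Fil_N^{\geq m}\cPrism_{(S,M)/R}$.

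Next I would use the secondary filtration on the Nygaard graded pieces. Since $\Fil_N^{\geq m}$ is complete and exhaustive, it is enough to prove the claim on $\gr_N^i$ for each $i$; and by Remark~\ref{rmk:secondary_filtr}, over the perfectoid base $R$ each $\gr_N^i(\cPrism_{(-)/R})$ carries a finite secondary filtration whose graded pieces are identified (up to shift and the Breuil--Kisin twist, which over a perfectoid ring is a free rank-one module that can be trivialized) with $(\bigwedge^j \L_{S/R})_p^\wedge$ in the non-log case and $(\bigwedge^j \L_{(S,M)/R})_p^\wedge$ in the log case. Since the secondary filtration is finite, it suffices to check the statement on these graded pieces, i.e.\ that the canonical map
\[
(\textstyle\bigwedge^i \L_{S/R})_p^\wedge \to (\textstyle\bigwedge^i \L_{(S,M)/R})_p^\wedge
\]
is an equivalence. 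But this is exactly the content of Proposition~\ref{prop:semipf-invariance-cotangent}(1): $M$ is saturated and semiperfect, so the $p$-completed $i$-th exterior power of the log cotangent complex agrees with its non-log counterpart. (Here one also uses that $S$, being quasiregular semiperfectoid over $R$, is a quasisyntomic $R$-algebra with the relevant flatness, so the hypotheses of the cited Proposition are met; if needed, one can further reduce via a chart $R[M] \to A$ — or directly invoke that $S$ and $R$ have trivial, hence saturated, log structures.)

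The main obstacle, such as it is, lies in bookkeeping rather than in a genuine mathematical difficulty: one must be careful that the secondary filtration and its identification of graded pieces are compatible with the comparison map $A \to (A,M)$ (equivalently $S/R \to (S,M)/R$), so that checking the equivalence graded-piece by graded-piece genuinely implies it for $\gr_N^i$ and hence for $\Fil_N^{\geq m}$. This compatibility follows from the functoriality in the pre-log ring of the HKR-type secondary filtration established in \cite{BLPO-HKR} and recalled in Remark~\ref{rmk:secondary_filtr}. Once that is in place, the argument is a purely formal combination of quasisyntomic descent, completeness/exhaustiveness of the Nygaard filtration, finiteness of the secondary filtration, and Proposition~\ref{prop:semipf-invariance-cotangent}. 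I would also remark that the same reduction works verbatim for the non-Nygaard-complete version $\cPrism^{\rm nc}_{(-)/R}$, by checking modulo $\xi$ and invoking the de Rham comparison, though the statement as phrased only concerns the completed theory.
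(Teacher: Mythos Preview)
Your proposal is correct and follows essentially the same route as the paper: reduce by quasisyntomic descent to the case of $S\in\QRSPerfd$ over a perfectoid $R$, then use completeness of the Nygaard filtration and the finite secondary filtration on $\gr_N^i$ to reduce to the equivalence of $p$-completed exterior powers of the cotangent complex, which is Proposition~\ref{prop:semipf-invariance-cotangent}. Your write-up is a bit more explicit about the bookkeeping (functoriality of the secondary filtration, the identification $\cPrism_S\simeq\cPrism_{S/R}$), but there is no substantive difference in strategy.
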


\begin{lemma}\label{lem:free-quasismooth}
    Let $M$ be any monoid and let $A$ be $p$-complete with bounded $p^{\infty}$-torsion. Then $\L_{(A\langle M \rangle,M)/A}\simeq A\langle M\rangle \otimes_\Z^L M^{\rm gp}[0]$ after derived $p$-completion. In particular, the map $A\to (A\langle M \rangle,M)$ is log quasisyntomic.
\end{lemma}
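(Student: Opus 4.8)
The assertion $\L_{(A\langle M\rangle, M)/A}\simeq A\langle M\rangle\otimes_{\Z}^L M^{\rm gp}[0]$ (after derived $p$-completion) is the logarithmic analogue of the classical computation $\L_{A[M]/A}\simeq A[M]\otimes_{\Z}M^{\rm gp}[0]$ for monoid algebras; the plan is to bootstrap from the known formula for Gabber's log cotangent complex of a free pre-log ring. First I would recall from \cite[Proposition 3.8]{BLPO-HKR} (or its non-completed version) that for a monoid $M$ one has $\L_{(\Z[M],M)/\Z}\simeq \Z[M]\otimes_{\Z}^L M^{\rm gp}[0]$, concentrated in degree $0$ and given there by the free $\Z[M]$-module on $M^{\rm gp}$. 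Then, using base change for the Gabber cotangent complex along $\Z\to A$ (the cocartesian square \cite[(3.3)]{BLPO-HKR}, or rather its relative-to-$A$ form), one gets $\L_{(A[M],M)/A}\simeq A[M]\otimes_{\Z[M]}^L\L_{(\Z[M],M)/\Z}\simeq A[M]\otimes_{\Z}^L M^{\rm gp}[0]$. Finally I would derived $p$-complete: since $A$ has bounded $p^\infty$-torsion and $A\langle M\rangle = (A[M])_p^\wedge$ is the $p$-completion, the cofiber sequence \cite[(3.4)]{BLPO-HKR} relating $\L_{(A[M],M)/A}$, $\L_{(A\langle M\rangle,M)/A}$ and $\L_{(A\langle M\rangle,M)/(A[M],M)}$ — together with the vanishing of the last term after derived $p$-completion (the map $A[M]\to A\langle M\rangle$ is a $p$-completion of a ring flat over $\Z$, hence its relative cotangent complex is $p$-completely zero) — shows $\L_{(A\langle M\rangle,M)/A}\simeq (A[M]\otimes_{\Z}^L M^{\rm gp})_p^\wedge \simeq A\langle M\rangle\otimes_{\Z}^L M^{\rm gp}[0]$ after derived $p$-completion, exactly as in the argument already used in the excerpt for $\L_{(A_p^\wedge,M)/R}\simeq A_p^\wedge\otimes_A^L\L_{(A,M)/R}$.

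For the ``in particular'' clause, I need to check that $A\to (A\langle M\rangle, M)$ is log quasisyntomic, i.e.\ that $A\langle M\rangle$ is $p$-complete with bounded $p^\infty$-torsion, is $p$-completely flat over $A$ (hence, composing with a base $\Z_p\to A$ if one wants the absolute statement, $p$-completely flat over $\Z_p$ once $A$ is), and that $\L_{(A\langle M\rangle, M)/A}$ has $p$-complete Tor-amplitude in $[-1,0]$. The Tor-amplitude condition is immediate from the computation just made: $A\langle M\rangle\otimes_{\Z}^L M^{\rm gp}[0]$ is $p$-completely concentrated in degree $0$ and is $p$-completely flat (it is a free $A\langle M\rangle$-module on the set $M^{\rm gp}$, $p$-completed), so it has $p$-complete Tor-amplitude $[0,0]\subseteq[-1,0]$. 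For $p$-complete flatness of $A\to A\langle M\rangle$: $A[M]$ is a free $A$-module, hence flat, and $A\langle M\rangle$ is its $p$-completion, so by \cite[Lemma 4.7]{BMS2} (as invoked elsewhere in the excerpt) it is $p$-completely flat over $A$; bounded $p^\infty$-torsion of $A\langle M\rangle$ then follows from \cite[Corollary 4.8]{BMS2} since $A$ has bounded $p^\infty$-torsion. This verifies the definition of log quasisyntomicity of the map (in the relative sense; if an absolute statement over $\Z_p$ is intended, one additionally composes and uses transitivity).

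The only genuinely delicate point is the vanishing, after derived $p$-completion, of $\L_{(A\langle M\rangle,M)/(A[M],M)}$ — equivalently, that the square
\[
\begin{tikzcd}
\L_{A[M]/A}\ar[r]\ar[d] & \L_{(A[M],M)/A}\ar[d]\\
\L_{A\langle M\rangle/A}\ar[r] & \L_{(A\langle M\rangle,M)/A}
\end{tikzcd}
\]
becomes cocartesian and that the left vertical map is a $p$-completed equivalence; this is exactly the logification-invariance plus the cofiber sequence \cite[(3.4)]{BLPO-HKR} already used in the excerpt to handle $X_p^\wedge$, so I would simply cite that argument rather than reprove it. Everything else is a formal manipulation of base-change and transitivity triangles for Gabber's cotangent complex, all of which are recorded in \cite[\S 3]{BLPO-HKR}. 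I do not anticipate any real obstacle beyond bookkeeping the derived $p$-completions carefully, which the ambient paper has already set up.
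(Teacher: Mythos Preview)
Your approach is essentially the same as the paper's. The paper cites \cite[Remark 4.13]{BLPO-BMS2} directly to pass from $\L_{(A[M],M)/A}$ to $\L_{(A\langle M\rangle,M)/A}$ after $p$-completion, whereas you unwind that remark via the transitivity sequence \cite[(3.4)]{BLPO-HKR}; these are the same argument.

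There is, however, one genuine slip in your Tor-amplitude paragraph. You write that $A\langle M\rangle\otimes_{\Z}^L M^{\rm gp}$ ``is a free $A\langle M\rangle$-module on the set $M^{\rm gp}$'' and hence $p$-completely flat with Tor amplitude $[0,0]$. This is false: the tensor is with the \emph{abelian group} $M^{\rm gp}$, not the free module on its underlying set. If $M^{\rm gp}$ has $p$-torsion (the lemma allows $M$ to be any monoid), then $(A\langle M\rangle\otimes_{\Z}^L M^{\rm gp})\otimes_A^L A/p$ is not concentrated in degree $0$. The paper's argument is the correct one: since $A\to A\langle M\rangle$ is $p$-completely flat, $(A\langle M\rangle\otimes_\Z^L M^{\rm gp})\otimes_A^L A/p\simeq (A\langle M\rangle\otimes_A A/p)\otimes_\Z^L M^{\rm gp}$, and this has Tor amplitude in $[-1,0]$ because any abelian group has projective dimension $\leq 1$ over $\Z$. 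Your conclusion (log quasisyntomic) survives, since only $[-1,0]$ is required, but the justification needs to be repaired along these lines.
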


\begin{proof}
Since $A$ has bounded $p^\infty$-torsion and $A[M]$ is a free $A$-module, $A\to A\langle M \rangle$ is $p$-completely faithfully flat by \cite[Lemma 4.4]{BMS2}, and it is classically $p$-complete concentrated in degree zero with bounded $p^\infty$-torsion by \cite[Lemma 4.7]{BMS2}. By \cite[Remark 4.8]{BLPO-BMS2}, we have \[
    \L_{(A\langle M \rangle,M)/A}\simeq A\langle M \rangle\otimes_{A[M]}^L\L_{(A[M],M)/A}
    \]
after derived $p$-completion. On the other hand, by \cite[Proposition 3.8]{BLPO-HKR} we have that 
\begin{align*}
 \L_{(A\langle M \rangle,M)/A} & \simeq A\langle M \rangle\otimes_{A[M]}^L(A[M]\otimes^L_\Z M^{\rm gp}[0]) \\
 &\simeq A\langle M \rangle\otimes_\Z^L M^{\rm gp}[0].
\end{align*}
after derived $p$-completion. Since $A\to A\langle M\rangle$ is $p$-completely faithfully flat, we have that 
$A\langle M\rangle\otimes_{A}^LA/p$ has $p$-complete Tor amplitude in degree $0$, so\[
(A\langle M\rangle\otimes^L_{\Z}M^{\rm gp})\otimes_{A}^LA/p \simeq (A\langle M\rangle\otimes_{A}A/p) \otimes^L_{\Z}M^{\rm gp}
\]
has Tor amplitude in $[-1,0]$ (as the Tor amplitude of any abelian group is $[-1,0]$), which concludes the proof.
\end{proof}

\begin{lemma}\label{lem:cotimes-qsyn} 
 
Let $(B,P)$ be a pre-log ring such that $B$ is $p$-complete with bounded $p^\infty$-torsion.
Let $(A,M)\in \lQSyn_{(B,P)}$ with $M$ saturated and $M^{gp}$ $p$-torsionfree.
    Assume that there is a section $\ol{M}\to M$ of the quotient map $M\to \ol{M}$ factoring $P\to M$,
    and let $\ol{M}\to N_0$ and $M\oplus_{\ol{M}} N_0\to N$ be maps of  saturated monoids   such that the following conditions hold:
    \begin{itemize}
        \item[(i)] $\Z_p\langle P\rangle \to \Z_p\langle N_0\rangle$ is $p$-completely flat.
        \item[(ii)] $B\cotimes_{\Z_p\langle P\rangle }\Z_p\langle \ol{M}\rangle \to A$ is $p$-completely flat.
        \item[(iii)] $A\cotimes_{\Z_p\langle M \rangle}^L \Z_p \langle N \rangle$ is $p$-completely discrete.
        \item[(iv)] $\Z_p\langle M\oplus_{\ol{M}} N_0\rangle \to \Z_p\langle N\rangle$ is $p$-completely flat.
    \end{itemize}
    Then $(A\cotimes_{\Z_p\langle M\rangle }\Z_p\langle N\rangle,N)\in \lQSyn_{(B,P)}$.
    \begin{proof}
 By $(i)$, we have that $B\to B\cotimes_{\Z_p\langle P\rangle} \Z_p\langle N_0\rangle$ is $p$-completely flat, so the latter has bounded $p^\infty$-torsion by \cite[Corollary 4.8 (1)]{BMS2}. Moreover, by $(ii)$ and $(iv)$, the composite map $B\cotimes_{\Z_p\langle P\rangle} \Z_p\langle N_0\rangle\to A\cotimes_{\Z_p\langle \ol{M}\rangle} \Z_p\langle N_0\rangle \to A\cotimes_{\Z_p\langle M\rangle} \Z_p\langle N\rangle$ is $p$-completely flat, so $A\cotimes_{\Z_p\langle M\rangle} \Z_p\langle N\rangle$ has bounded $p^\infty$-torsion.
    By $p$-completing the sequence of \cite[Corollary 3.10]{BLPO-HKR} and by \cite[Remark 4.8]{BLPO-BMS2}, we obtain a cofiber sequence of the form
    \[
    A\cotimes_{\Z_p}^L (M^{\rm gp})_p^\wedge\to (\L_{(A,M)/\Z_p})_p^\wedge\to (\L_{A/\Z_p\langle M\rangle})_p^\wedge.
    \]
    Since $(A,M)\in \lQSyn_{(B,P)}$, $(\L_{(A,M)/\Z_p})_p^\wedge$ has $p$-complete Tor amplitude $[-1,0]$ in $\cD(A)$, and since $M^{\rm gp}$ is $p$-torsionfree, the $A$-module $A\cotimes_{\Z_p}^L (M^{\rm gp})_p^\wedge$ is  $p$-completely flat.
    We conclude that $\L_{A/\Z_p\langle M\rangle}$ has $p$-completed Tor amplitude $[-1,0]$ in $\cD(A)$.
    Thus the same holds for the pushout $\L_{A/\Z_p\langle M \rangle}\cotimes^L_A (A\cotimes_{\Z_p\langle M \rangle}\Z_p\langle N\rangle)$
     along the map $A\to A\cotimes_{\Z_p\langle M\rangle}\Z_p\langle N\rangle$ by \cite[Lemma 4.5]{BMS2}.
     By condition $(iii)$ and base-change for the cotangent complex, this is equivalent to $\L_{A\cotimes_{\Z_p\langle M \rangle}\Z_p\langle N\rangle/\Z_p\langle N \rangle}$. Using the cofiber sequence \[
    (A\cotimes_{\Z_p\langle M \rangle}\Z_p\langle N\rangle)\cotimes_{\Z_p}^L (N^{\rm gp})_p^\wedge\to \L_{(A\cotimes_{\Z_p\langle M \rangle}\Z_p\langle N\rangle,N)/\Z_p} \to \L_{A\cotimes_{\Z_p\langle M \rangle}\Z_p\langle N\rangle/\Z_p\langle N \rangle},
    \]
    we find that $\L_{(A\cotimes_{\Z_p\langle M \rangle}\Z_p\langle N\rangle,N)/\Z_p}$ has $p$-complete Tor amplitude $[-1,0]$ in $\cD(A\cotimes_{\Z_p\langle M\rangle}\Z_p\langle N\rangle)$.
    \end{proof}
\end{lemma}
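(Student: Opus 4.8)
The plan is to verify directly that $A' := A\cotimes_{\Z_p\langle M\rangle}\Z_p\langle N\rangle$, with the pre-log structure coming from $N$, satisfies the conditions defining $\lQSyn_{(B,P)}$: it is $p$-complete (automatic from $\cotimes$) with bounded $p^\infty$-torsion, $p$-completely flat over $\Z_p$, and $(\L_{(A',N)/\Z_p})_p^\wedge$ has $p$-complete Tor amplitude in $[-1,0]$. The last point is the crux, and I would reduce it to the corresponding statement for $(A,M)$ via the comparison between Gabber's log cotangent complex and its non-log counterpart.

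For the torsion bound I would argue by repeated base change of $p$-complete flatness. Condition (i) gives that $B = B\cotimes_{\Z_p\langle P\rangle}\Z_p\langle P\rangle \to B\cotimes_{\Z_p\langle P\rangle}\Z_p\langle N\rangle$ is $p$-completely flat, so by \cite[Corollary 4.8(1)]{BMS2} the target has bounded $p^\infty$-torsion. Writing $A' = A\cotimes_{B\cotimes_{\Z_p\langle P\rangle}\Z_p\langle M\rangle}(B\cotimes_{\Z_p\langle P\rangle}\Z_p\langle N\rangle)$ exhibits $B\cotimes_{\Z_p\langle P\rangle}\Z_p\langle N\rangle \to A'$ as the base change of the map $B\cotimes_{\Z_p\langle P\rangle}\Z_p\langle M\rangle \to A$ of condition (ii); since the latter is $p$-completely flat, so is the former, whence $A'$ again has bounded $p^\infty$-torsion, and the flatness over $\Z_p$ is inherited along the same chain.

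The cotangent complex estimate rests on the transitivity cofiber sequence \cite[Corollary 3.10]{BLPO-HKR} together with the base-change identity \cite[Remark 4.13]{BLPO-BMS2} (to pass from $\Z_p[M]$ to its $p$-completion), which after derived $p$-completion yields
\[
A\cotimes^L_{\Z_p}(M^{\rm gp})_p^\wedge \to (\L_{(A,M)/\Z_p})_p^\wedge \to (\L_{A/\Z_p\langle M\rangle})_p^\wedge .
\]
Since $(A,M)\in\lQSyn$, the middle term has $p$-complete Tor amplitude $[-1,0]$; since $M$ is sharp and saturated, $M^{\rm gp}$ is torsion-free by \cite[Proposition I.1.3.5(2)]{ogu}, so $A\cotimes^L_{\Z_p}(M^{\rm gp})_p^\wedge$ is $p$-completely flat. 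A short chase of the long exact sequence then gives that $(\L_{A/\Z_p\langle M\rangle})_p^\wedge$ has $p$-complete Tor amplitude $[-1,0]$. Base-changing along $A\to A'$ preserves this bound by \cite[Lemma 4.5]{BMS2}; using the discreteness hypothesis (iii) — transferred from $B$ to $A$ via the $p$-complete flatness of $A$ over $B\cotimes_{\Z_p\langle P\rangle}\Z_p\langle M\rangle$ — to identify $A\cotimes^L_{\Z_p\langle M\rangle}\Z_p\langle N\rangle \simeq A'$, base change for the Gabber cotangent complex identifies the outcome with $(\L_{A'/\Z_p\langle N\rangle})_p^\wedge$. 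Finally, feeding this back into the same comparison sequence, now for $(A',N)$,
\[
A'\cotimes^L_{\Z_p}(N^{\rm gp})_p^\wedge \to (\L_{(A',N)/\Z_p})_p^\wedge \to (\L_{A'/\Z_p\langle N\rangle})_p^\wedge ,
\]
and observing that $A'\cotimes^L_{\Z_p}(N^{\rm gp})_p^\wedge$ always has $p$-complete Tor amplitude in $[-1,0]$ (any abelian group does), the long exact sequence forces $(\L_{(A',N)/\Z_p})_p^\wedge$ to have $p$-complete Tor amplitude $[-1,0]$, completing the verification.

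I expect the main obstacle to be the bookkeeping of $p$-completed derived tensor products — in particular the transfer of the discreteness hypothesis (iii), which is phrased over the base $B$, to the discreteness of $A\cotimes^L_{\Z_p\langle M\rangle}\Z_p\langle N\rangle$ itself. This is what makes the derived and underived descriptions of $A'$ coincide and is what licenses the base-change identification $(\L_{A/\Z_p\langle M\rangle})_p^\wedge\cotimes^L_A A' \simeq (\L_{A'/\Z_p\langle N\rangle})_p^\wedge$. Everything else amounts to assembling transitivity sequences and routine Tor-amplitude estimates.
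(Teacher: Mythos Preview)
Your proposal is correct and follows essentially the same argument as the paper: establish bounded $p^\infty$-torsion via the chain of $p$-completely flat base changes from (i) and (ii), then use the transitivity cofiber sequence from \cite[Corollary 3.10]{BLPO-HKR} twice (once for $(A,M)$ to pin down $\L_{A/\Z_p\langle M\rangle}$, once for $(A',N)$ to conclude), with the base-change step in between licensed by the discreteness hypothesis. If anything, you are slightly more explicit than the paper about why the discreteness condition stated over $B$ transfers to $A$ and why the $N^{\rm gp}$-term in the final sequence has the required Tor amplitude.
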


\begin{rmk}\label{rmk:sharp-p-torsionfree}
If $M$ is sharp and saturated, then $M^{\rm gp}$ is torsionfree by \cite[Proposition I.1.3.5 (2)]{ogu}.
\end{rmk}

\begin{rmk}\label{rmk:integral cover}
Let $(A,M)\in \lQSyn$. Then there exists a log quasisyntomic cover $(A,M)\to (S,Q)$ with $(S,Q)\in \lQRSPerfd$ such that $A\to S$ is a quasisyntomic cover and $M\to Q$ is integral.
Indeed, if $(S,Q)$ is chosen as \cite[Proof of Proposition 4.15]{BLPO-BMS2}, then $A\to S$ (resp.\ $M\to Q$) is a pushout of some quasisyntomic cover $F\to F_\infty$ (resp.\ integral map of integral monoids $P\to P_\infty$).
\end{rmk}

\begin{lemma}\label{lem:sat-descent-prismatic-general}
Let $(R, P)$ be a pre-log ring with $R$ perfectoid and $P$ semiperfect, valuative.
Let $(A,M)\in \lQSyn_{(R,P)}$ with $M$ saturated and $M^\mathrm{gp}$ $p$-torsionfree,  and $P\to M$ exact.  Assume that there is a section $\ol{M}\to M$ of the quotient map $M\to \ol{M}$ factoring $P\to M$ such that $R\cotimes_{\Z_p\langle P\rangle}\Z_p\langle \ol{M}\rangle \to A$ is $p$-completely flat.
Let
$\ol{M} \to N_0$ and $M\oplus_{\ol{M}}N_0 \to N$ be maps of saturated monoids such that $N_0$ and $N$ are semiperfect, 
$A\cotimes_{R\langle M\rangle}^LR\langle N\rangle$ is $p$-completely discrete,
and $\Z_p\langle M\oplus_{\ol{M}} N_0\rangle \to \Z_p\langle N\rangle$ is $p$-completely flat.
Assume also that the map $\ol{M}\to N_0$ is exact and the induced map $M\to N$ is Kummer.
Then the natural $\phi$-equivariant map of $E_\infty$-rings
\begin{equation}\label{eq:sat-descent-prismaic}
    \cPrism_{(A,M)/R}\to \varlim_{
    \Delta
    } (\cPrism_{(A\cotimes_{R\langle M\rangle }R\langle N^{\oplus_M^{\rm sat} (\bullet + 1)} \rangle,N^{\oplus_M^{\rm sat} (\bullet + 1)} )/R})
\end{equation}
is an equivalence, compatible with the Nygaard filtrations.
\end{lemma}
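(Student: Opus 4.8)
The plan is to reduce everything to the descent statement for Gabber's cotangent complex, Theorem~\ref{thm:sat-descent-nonderived}, via the secondary filtration on the Nygaard-graded pieces. First I would check that the cosimplicial object on the right-hand side of \eqref{eq:sat-descent-prismaic} is well-defined: by Lemma~\ref{lem:cotimes-qsyn} (whose hypotheses are exactly what is assumed here, using that $R[P]\to R[M]$ is flat since $P\to M$ is exact and Kato's theorem, and that $A\cotimes_{R\langle M\rangle}^LR\langle N\rangle$ is $p$-completely discrete) each term $(A\cotimes_{R\langle M\rangle}R\langle \check{C}^{\rm sat}(N)^m\rangle,\check{C}^{\rm sat}(N)^m)$ lies in $\lQSyn_R$, so its (absolute) log prismatic cohomology and Nygaard filtration are defined. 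The maps in the cosimplicial diagram are induced by functoriality of $\cPrism_{-}$ on $\lQSyn$, and the $\phi$-equivariance is automatic since $\phi$ is functorial.

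Next, since $\Fil_N^{\geq\bullet}\cPrism_{-}$ is complete and the limit on the right is a limit of complete filtered objects, it suffices to prove the equivalence after passing to the $i$-th Nygaard-graded piece $\gr_N^i$ for every $i$ (completeness of the totalization follows from completeness of each term plus the fact that limits commute with limits). Now I would invoke the secondary filtration of Remark~\ref{rmk:secondary_filtr}: since $R$ is perfectoid and we are working relative to $R$, each $\gr_N^i(\cPrism_{(A,M)/R})$ carries a finite filtration whose graded pieces are $(\bigwedge^j\L_{(A,M)/R})_p^\wedge[-j]$; the same holds for every term in the cosimplicial diagram, and the comparison maps respect these secondary filtrations. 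As the secondary filtration is \emph{finite}, it is enough to prove the statement on its graded pieces, i.e.\ to show that
\[
(\textstyle\bigwedge^j\L_{(A,M)/R})_p^\wedge \xrightarrow{\ \sim\ } \varlim_{\Delta}\bigl((\textstyle\bigwedge^j\L_{(A\cotimes_{R\langle M\rangle}R\langle \check{C}^{\rm sat}(N)^\bullet\rangle,\check{C}^{\rm sat}(N)^\bullet)/R})_p^\wedge\bigr)
\]
is an equivalence for all $j\geq 0$.

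This last statement is precisely an instance of Theorem~\ref{thm:sat-descent-nonderived}, applied to the base map $(\Z_p,0)\to(R,P)\to(A,M)$ (or directly $(R,P)\to(A,M)$, noting $R$ has bounded $p^\infty$-torsion and $M$ is saturated) and the Kummer map $M\to N$, with $(S,Q)=(A,M)$ itself (so $A\to S$ is the identity, trivially $p$-completely flat): the hypothesis that $A\cotimes_{R\langle M\rangle}^LR\langle N\rangle$, equivalently $A\otimes^L_{\Z[M]}\Z[N]$ after $p$-completion, is $p$-completely discrete is assumed. One small point to verify is that the cosimplicial pre-log ring $(A\cotimes_{R\langle M\rangle}R\langle\check{C}^{\rm sat}(N)^\bullet\rangle,\check{C}^{\rm sat}(N)^\bullet)$ appearing here matches the one in Theorem~\ref{thm:sat-descent-nonderived} with $Q=M$, which is immediate since $M\oplus_M\check{C}^{\rm sat}(N)^m\cong\check{C}^{\rm sat}(N)^m$. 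Finally one observes that all the equivalences constructed are compatible with $\phi$, since the crystalline/prismatic Frobenius is functorial and the secondary filtration is $\phi$-compatible (the $\phi$ on $\gr_N^i$ is identified with the Sen-type operator / the divided Frobenius, which is functorial in the pre-log ring); alternatively one simply notes that the map \eqref{eq:sat-descent-prismaic} is a priori $\phi$-equivariant and we have shown it is an equivalence of underlying filtered $E_\infty$-rings.

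The main obstacle is not any single deep input — it is the bookkeeping of compatibilities: checking that the finite secondary filtration and its graded pieces genuinely behave functorially across the whole cosimplicial diagram (so that the reduction to cotangent complexes is legitimate simplicial-degreewise and commutes with the totalization), and confirming that the hypotheses of Lemma~\ref{lem:cotimes-qsyn} and Theorem~\ref{thm:sat-descent-nonderived} are met by each term of the \v Cech nerve uniformly. Once those are in place the proof is essentially a citation of Theorem~\ref{thm:sat-descent-nonderived} together with the secondary-filtration formalism.
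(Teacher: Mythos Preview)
Your approach is essentially the paper's: reduce via completeness of the Nygaard filtration and finiteness of the secondary filtration (Remark~\ref{rmk:secondary_filtr}) to the cotangent-complex descent of Theorem~\ref{thm:sat-descent-nonderived}. The paper inserts one extra step you omit---it first passes by log quasisyntomic descent to a cover $(S,Q)\in\lQRSPerfd_R$ before invoking the secondary filtration---but your shortcut of applying the secondary filtration directly on $\lQSyn_R$ is legitimate, and your instance of Theorem~\ref{thm:sat-descent-nonderived} with $(S,Q)=(A,M)$ is exactly what is needed.

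There is, however, one genuine gap in your verification that the right-hand side of \eqref{eq:sat-descent-prismaic} is well-defined. Lemma~\ref{lem:cotimes-qsyn} as stated only yields $(A\cotimes_{R\langle M\rangle}R\langle N\rangle,N)\in\lQSyn_R$, i.e.\ the case $m=1$; it does not automatically apply to every $\check{C}^{\rm sat}(N)^m$. For general $m$ you must still check that $N^m:=\check{C}^{\rm sat}(N)^m$ is semiperfect and that $A\cotimes_{R\langle M\rangle}^LR\langle N^m\rangle$ is $p$-completely discrete. The paper does this via Lemma~\ref{lem:illusie-trick}: the isomorphism $N^m\cong N\oplus(N^{\rm gp}/M^{\rm gp})^{m-1}$ reduces both conditions to $m=1$ (semiperfectness of $N^{\rm gp}/M^{\rm gp}$ follows from that of $N$; discreteness follows from $R\langle N^m\rangle\simeq R\langle N\rangle\cotimes_R R\langle(N^{\rm gp}/M^{\rm gp})^{m-1}\rangle$ with the second factor free over $R$). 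A smaller point: the flatness of $\Z_p\langle P\rangle\to\Z_p\langle M\rangle$ comes from $P$ being valuative (hence local) together with exactness of $P\to M$ and \cite[Propositions~I.4.6.3(4) and I.4.6.7]{ogu}, not from ``Kato's theorem'' in the usual sense.
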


\begin{proof} 
 The map is one of $E_\infty$-rings, so it is enough to show that the map is an equivalence of objects of $\cD(\Lambda)$, as the forgetful functor from $E_\infty$-rings is conservative. We again write $N^\bullet$ for $N^{\oplus_M^{\rm sat} (\bullet + 1)}$ to shorten the notation. 
 Since $P\to \ol{M}$ and $\ol{M}\to N_0$ are exact, the composite $P\to N_0$ is exact.
Since $P$ is valuative, it is local by \cite[Proposition I.4.2.1]{ogu}, and so the map $\Z[P] \to \Z[N_0]$ is flat by \cite[Propositions I.4.6.3(5) and I.4.6.7]{ogu}. Combining this with the assumptions,
we conclude by Lemma \ref{lem:cotimes-qsyn} that $(A\cotimes_{R\langle M\rangle}R\langle N\rangle,N)\in \lQSyn_{(R,P)}$.
    Since $N$ is saturated and semiperfect, Corollary \ref{semiperfect.5} implies that $A\cotimes_{R\langle M\rangle}R\langle N\rangle$ is quasisyntomic.
    By Remark \ref{rmk:integral cover} and quasisyntomic descent \cite[Theorem 4.22 and \S 7.4]{BLPO-BMS2}, it is enough to show that for $(A,M)\to (S,Q)$ a log quasisyntomic cover with $(S,Q)\in \lQRSPerfd_R$ such that $A\to S$ is a quasisyntomic cover and $M\to Q$ is integral, the map
    \begin{equation}\label{eq:saturated-descent-Nygaard}
    \Fil_\rN^{\geq n}\cPrism_{(S,Q)/R}\to \varlim_{
\Delta^{\rm op}}(\Fil_\rN^{\geq n}\cPrism_{(S\cotimes_{R\langle M\rangle }R\langle N^{\bullet}\rangle, Q\oplus_M N^{\bullet})/R})     
    \end{equation}
    is an equivalence. By Lemma \ref{lem:illusie-trick}, we have that $N^d\simeq N\oplus (N^{\rm gp}/M^{\rm gp})^{\oplus d-1}$, which induces an isomorphism $R\langle N^d\rangle\simeq R\langle N\rangle \cotimes_R R\langle (N^{\rm gp}/M^{\rm gp})^{\oplus d-1}\rangle$. We thus have that
    \[
    A\cotimes_{R\langle M\rangle}R\langle N^{ d} \rangle \cong (A\cotimes_{R\langle M\rangle}R\langle N\rangle )\cotimes_R R\langle (N^{\rm gp}/M^{\rm gp})^{\oplus d-1}\rangle,
\]
which is quasisyntomic over $R\langle (N^{\rm gp}/M^{\rm gp})^{d-1}\rangle$ by \cite[Lemma 4.16(2)]{BMS2}. 
In particular, it is quasisyntomic over $R$ by Lemma \ref{lem:free-quasismooth}. 
 
Since $A\to S$ is a quasisyntomic cover, $S\cotimes_{R\langle M\rangle}R\langle N^d\rangle$ is quasisyntomic again by \cite[Lemma 4.16(2)]{BMS2}. Moreover, $S/p$ and $Q$ are semiperfect by the assumption that $(S,Q)\in \lQRSPerfd_R$ (see \cite[Definition 4.7]{BLPO-BMS2}), and $N^d\cong N\oplus (N^{\rm gp}/M^{\rm gp})^{\oplus d-1}$ is semiperfect since $N$ is semiperfect by assumption, so $N^{\rm gp}$ is also semiperfect. This implies that $R\langle N^d\rangle$ is quasiregular semiperfectoid. We thus have that $S\cotimes_{R\langle M\rangle }R\langle N^d\rangle/p$ and $Q \oplus_M N^d$ are semiperfect, so that
\[(S\cotimes_{R\langle M\rangle }R\langle N^d\rangle,Q \oplus_M N^d)\in \lQRSPerfd_R.\]
By \cite[Proposition 7.4]{BLPO-BMS2}, 
we are reduced to check \eqref{eq:saturated-descent-Nygaard} on the finite filtration of the graded pieces of $\Fil_\rN^{\geq n}$. That is, we are reduced to checking that the map \[
(\bigwedge^i\L_{(S,Q)/R})_p^{\wedge}\to \varlim_{\Delta}(\bigwedge^i\L_{(S\otimes_{R\langle M\rangle}R\langle N^{\bullet}\rangle,Q\oplus_M N^{\bullet})/R})_p^{\wedge},
\]
 is an equivalence for all $i$. But this is precisely Theorem \ref{thm:sat-descent-nonderived}.
\end{proof}
\begin{thm}\label{thm:main-sat-descent-prism}
Let $B$ be a $p$-complete ring with bounded $p^\infty$-torsion and let $(A,M)\in \lQSyn_B$ with $M$ saturated and $M^{\rm gp}$ $p$-torsionfree.
Assume that there exists a section $\ol{M}\to M$ of the quotient map $M\to \ol{M}$ such that $B\langle \ol{M}\rangle \to A$ is $p$-completely flat.
Then there is a $\phi$-equivariant equivalence  of $E_\infty$-rings:
\begin{equation}\label{eq:sat-descent-prismatic-nolog}
    \cPrism_{(A,M)}\simeq \varlim_{\Delta}
     (\cPrism_{A\cotimes_{B\langle M\rangle}B\langle M_\mathrm{perf}^{\oplus_M^{\rm sat} (\bullet + 1)}\rangle})
\end{equation}
compatible with the Nygaard filtrations.
\begin{proof}
As observed in Example \ref{ex:condition-clubsuit}, $M\to M_{\rm perf}$ is a Kummer map of saturated monoids and $A\cotimes^L_{\Z[M]}\Z[M_{\rm perf}]$ is $p$-completely discrete.
Furthermore, Lemma \ref{lem:p-torsionfree abelian group} implies that $\Z_p\langle M\oplus_{\ol{M}} \ol{M}_\perf\rangle \to \Z_p\langle M_\perf\rangle$ is $p$-completely flat.
 By Lemma \ref{lem:cotimes-qsyn}, since $P=\triv$ in this case, $(A\cotimes_{B\langle M\rangle}B\langle M_{\rm perf}\rangle,M_{\rm perf})\in \lQSyn_B$. As in the proof of Lemma \ref{lem:sat-descent-prismatic-general}, we consider $(A,M)\to (S,Q)$ a log quasisyntomic cover with $(S,Q)\in \lQRSPerfd$ and let $R\to S$ be a map with $R$ perfectoid. Then we have an equivalence \eqref{eq:saturated-descent-Nygaard} \[
\cPrism_{(S,Q)/R}\simeq \varlim_{\Delta} (\cPrism_{(S\cotimes_{R\langle M\rangle}R\langle M_\mathrm{perf}^{\oplus_M^{\rm sat} (\bullet + 1)}\rangle,Q\oplus_M M_\mathrm{perf}^{\oplus_M^{\rm sat} (\bullet + 1)})/R}),
\]
compatibly with the Nygaard filtration, which by quasisyntomic descent gives an equivalence\[
\cPrism_{(A,M)}\simeq \varlim_{\Delta} (\cPrism_{(A\cotimes_{B\langle M\rangle}B\langle M_\mathrm{perf}^{\oplus_M^{\rm sat} (\bullet + 1)}\rangle,M_\mathrm{perf}^{\oplus_M^{\rm sat} (\bullet + 1)})}).
\] 
By Proposition \ref{prop:semipf-invariance-prismatic} we have \[
\cPrism_{(A\cotimes_{B\langle M\rangle }B\langle M_\mathrm{perf}^{\oplus_M^{\rm sat} (m + 1)}\rangle,M_\mathrm{perf}^{\oplus_M^{\rm sat} (m + 1)})}\simeq \cPrism_{A\cotimes_{B\langle M\rangle }B\langle M_\mathrm{pref}^{\oplus_M^{\rm sat} (m + 1)}\rangle}
\]
 for all $m\in \Delta$, noting that each $M_\mathrm{perf}^{\oplus_M^{\rm sat} (m + 1)}$ is saturated (by construction) and semiperfect. This concludes the proof.
\end{proof}

\end{thm}

\begin{example}\label{ex:log-smooth-descent}
 Let $\kX$ be a log smooth formal scheme over $B$,
 where $B$ is a noetherian $p$-complete ring with bounded $p^\infty$-torsion. Then by \cite[Theorems III.2.5.5 and IV.3.3.3]{ogu} \'etale locally on $\ul{\kX}$ there is a chart $(\Spec(A),M_0)$ neat at a point $x$ with $M_0$ sharp and saturated (see Remark \ref{rmk:crys-global} later) and $B[M_0]\to A$ flat. Furthermore, working again locally,
 by \cite[Lemma 1.3.3]{Tsu99}, we may also assume that $M:=\Gamma(\kX,\cM_{\kX})$ is a chart of $\kX$ and the composite $M_0\to M\to \ol{M}$ is surjective. The composite $M_0\to \ol{M}\to \ol{\cM}_{\kX,x}$ is an isomorphism since the chart is neat at $x$ (see \cite[Definition II.2.3.1 (2-b)]{ogu}), so we have $M_0\cong \ol{M}$. In particular, locally $\kX$ satisfies the condition in Theorem \ref{thm:main-sat-descent-prism}. 
\end{example}

\begin{rmk}
    Notice that the right-hand side of \eqref{eq:sat-descent-prismatic-nolog}
    is the non-logarithmic Nygaard-completed prismatic cohomology, but the system is not a \v Cech nerve as the product is different!
\end{rmk}

\begin{rmk}
    A similar descent result has been proved in \cite{Koshikawa-Yao} for the Kummer \'etale cohomology with $p$ inverted. In particular one could deduce a comparison between syntomic and Kummer \'etale cohomology. We leave this to future work.
\end{rmk}

We finish by proving saturated descent for the non-completed version of prismatic cohomology. Let $R$ be a perfectoid ring and denote again by $\cPrism^{nc}_{-/R}$  the non-Nygaard complete prismatic cohomology of Construction \ref{constr:non-nygaard-complete}.

\begin{cor}\label{cor:sat-descent-de-Rham-prismnc}
   Let $R$ be a perfectoid ring and let $(A,M)\in \lQSyn_R$ with $M$ saturated and $M^{\rm gp}$ $p$-torsionfree.
   Assume that there exists a section $\ol{M}\to M$ of the quotient map $M\to \ol{M}$ such that $B\langle \ol{M}\rangle \to A$ is $p$-completely flat.
   Then there is an equivalence
   \[
    L\Omega_{(A,M)/R}\simeq \varlim_{[m]\in \Delta}L\Omega_{A\otimes_{R\langle M\rangle}R\langle M_\mathrm{perf}^{\oplus_M^{\rm sat} (m + 1)}\rangle/R}
    \]
    of $E_\infty$-rings, and a $\phi$-equivariant  equivalence \[
    \cPrism^{\rm nc}_{(A,M)/R}\simeq \varlim_{[m]\in \Delta}\Prism_{A\otimes_{\Z_p\langle M\rangle}\Z_p\langle M_\mathrm{perf}^{\oplus_M^{\rm sat} (m + 1)}\rangle/R},
    \]
    of $E_\infty$-rings, where the right-hand side is the prismatic cohomology of \cite{BSPrism}.
    \begin{proof}
    We begin by proving the statement for derived (log) de Rham cohomology. 
    
    Since $L\Omega_{(A,M)/R}$ is $p$-complete (by definition), it is enough to check that the canonical map to the limit is an equivalence modulo $p$. As in \cite[Example 5.12]{BMS2}, let $\Fil^{\rm conj}_{n}L\Omega_{-/R}\otimes^L_{\Z}\Z/p\Z$ denote the conjugate filtration (see \cite{Bhatt2012padicDD}, the definition is identical to the non-log case, and simply follows from the existence of a canonical filtration on the homotopy-colimit of a simplicial cosimplicial $A$-module). 
   We have that $\Fil^{\rm conj}_{-1}L\Omega_{-/R}\otimes^L_{\Z}\Z/p\Z = 0$, and the filtration is exhaustive, that is $\colim_n \Fil^{\rm conj}_{n}L\Omega_{-/R}\otimes^L_{\Z}\Z/p\Z\simeq L\Omega_{-/R}\otimes^L_{\Z}\Z/p\Z$. Moreover we have an identification $\mathrm{gr}_n^{\rm conj}L\Omega_{-/R}\otimes^L_{\Z}\Z/p\Z\simeq \bigwedge^n(\L_{-/R})[-n]\otimes^L_{\Z}\Z/p\Z$. By Proposition \ref{prop:semipf-invariance-cotangent} and Theorem \ref{thm:sat-descent-nonderived}, we deduce by induction that for all $n$ that\[
    \Fil^{\rm conj}_{n}L\Omega_{(A,M)/R}\otimes^L_{\Z}\Z/p\Z\simeq \lim_{[m]\in\Delta} \Fil^{\rm conj}_{n}L\Omega_{A\otimes_{R\langle M\rangle}R\langle M_\mathrm{perf}^{\oplus_M^{\rm sat} (m + 1)}\rangle/R}\otimes^L_{\Z}\Z/p\Z.
    \] 
    Since $A\otimes_{R\langle M\rangle}R\langle M_\mathrm{perf}^{\oplus_M^{\rm sat} (m + 1)}\rangle$ is quasisyntomic for all $m$ by Lemma \ref{lem:cotimes-qsyn}, we have that $\Fil^{\rm conj}_{n}L\Omega_{A\otimes_{R\langle M\rangle}R\langle M_\mathrm{perf}^{\oplus_M^{\rm sat} (m + 1)}\rangle/R}\otimes^L_{\Z}\Z/p\Z$ takes value in $\cD^{\geq -1}$ for all $n$,
 as each $\L_{-/R}$ has $p$-completed Tor amplitude $[-1,0]$: this implies that in this case, the direct product totalization coincides with the direct sum totalization (as in each cosimplicial degree there are only finitely many terms), and since the direct sum totalization commutes with filtered colimits we conclude.

    For the second isomorphism, we use the fact that $\cPrism^{\rm nc}_{-/R}$ is $\xi$-complete, so it is enough to check that the natural map\[
    \cPrism^{\rm nc}_{(A,M)/R}\longrightarrow \varlim_{[m]\in \Delta}\cPrism^{\rm nc}_{(A\otimes_{\Z_p\langle M\rangle}\Z_p\langle M_\mathrm{perf}^{\oplus_M^{\rm sat} (m + 1)}\rangle,M_\mathrm{perf}^{\oplus_M^{\rm sat} (m+ 1)})}
    \]
    is an equivalence modulo $\xi$, thus reducing to the case of $L\Omega$.  One only needs to notice that, since $M_{\rm perf}$ is perfect, we have 
    \[\cPrism^{\rm nc}_{(A\otimes_{\Z_p\langle M\rangle}\Z_p\langle M_\mathrm{perf}^{\oplus_M^{\rm sat} (m + 1)}\rangle,M_\mathrm{perf}^{\oplus_M^{\rm sat} (m + 1)})}\simeq \cPrism^{\rm nc}_{A\otimes_{\Z_p\langle M\rangle}\Z_p\langle M_\mathrm{perf}^{\oplus_M^{\rm sat} (m + 1)}\rangle}.\] This can also be checked by working modulo $\xi$, reducing the statement to derived de Rham cohomology, and then to the cotangent complex as before.
    Finally, the fact that (non logarithmic) $\cPrism^{\rm nc}_{-/R}$ agrees with $\Prism_{-/A_{\rm inf}(R)}$ is the content of  \cite[Theorem 13.1]{BSPrism}.
    
    \end{proof}
\end{cor}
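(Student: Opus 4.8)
The plan is to prove the two equivalences in turn: first the statement for derived log de Rham cohomology, and then deduce the one for non-completed prismatic cohomology by $\xi$-completeness.

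For the de Rham equivalence, I would use that $L\Omega_{(A,M)/R}$ is $p$-complete by construction, so it suffices to check that the canonical map to the totalization is an equivalence after applying $-\otimes^L_\Z\Z/p\Z$. I would then argue by induction along the conjugate filtration $\Fil^{\rm conj}_\bullet$ on $L\Omega_{-/R}\otimes^L_\Z\Z/p\Z$ (defined exactly as in the non-log case, cf.\ \cite[\S 6]{Bhatt2012padicDD}): it is exhaustive with $\Fil^{\rm conj}_{-1}=0$ and graded pieces $\gr_n^{\rm conj}\simeq(\bigwedge^n\L_{-/R})[-n]$. Since each $\check{C}^{\rm sat}(M_{\rm perf})^m$ is semiperfect, Proposition~\ref{prop:semipf-invariance-cotangent} identifies the log and non-log wedge powers of the cotangent complex occurring on the right-hand side; combined with the cotangent-complex descent of Theorem~\ref{thm:sat-descent-nonderived}, this yields saturated descent for each graded piece, hence, inductively, for each $\Fil^{\rm conj}_n$.

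The step I expect to be the main obstacle is passing from this finite-stage descent to the full statement, i.e.\ commuting $\varlim_{[m]\in\Delta}$ past $\colim_n\Fil^{\rm conj}_n$. Here I would invoke Lemma~\ref{lem:cotimes-qsyn} to see that each $A\otimes_{R\langle M\rangle}R\langle \check{C}^{\rm sat}(M_{\rm perf})^m\rangle$ is quasisyntomic over $R$, so that $\Fil^{\rm conj}_n L\Omega_{-/R}\otimes^L_\Z\Z/p\Z$ lies in $\cD^{\geq -1}$ uniformly in $n$ and $m$ (the cotangent complex has $p$-complete Tor amplitude $[-1,0]$). This uniform boundedness forces the product totalization of the relevant cosimplicial object to agree in each cohomological degree with the direct-sum totalization, which commutes with the filtered colimit over $n$; multiplicativity of the resulting equivalence is inherited from that of the conjugate filtration and its graded pieces.

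For the non-completed prismatic statement, I would use that $\cPrism^{\rm nc}_{-/R}$ is $\xi$-complete together with the reductions $\cPrism^{\rm nc}_{-/R}/\xi\simeq L\Omega_{-/R}$ (Construction~\ref{constr:non-nygaard-complete}) and $\Prism_{-/A_{\inf}(R)}/\xi\simeq L\Omega_{-/R}$, so that modulo $\xi$ the claim reduces to the de Rham case just proved. Two compatibilities remain: that $\cPrism^{\rm nc}$ of the log ring $(A\otimes_{\Z_p\langle M\rangle}\Z_p\langle \check{C}^{\rm sat}(M_{\rm perf})^m\rangle,\check{C}^{\rm sat}(M_{\rm perf})^m)$ agrees with $\cPrism^{\rm nc}$ of its underlying ring, which follows from Proposition~\ref{prop:semipf-invariance-prismatic} (checked again modulo $\xi$ and reduced to the cotangent complex); and that $\cPrism^{\rm nc}_{-/R}$ coincides with the prismatic cohomology $\Prism_{-/A_{\inf}(R)}$ of \cite{BSPrism}, which is \cite[Theorem 13.1]{BSPrism}. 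Finally, $\phi$-equivariance comes for free: the Frobenius is natural on both $\cPrism^{\rm nc}$ and $\Prism$, so all maps in play are $\phi$-equivariant and the equivalence passes to the totalization.
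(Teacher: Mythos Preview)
Your proposal is correct and follows essentially the same route as the paper's proof: reduce the de Rham statement modulo $p$ via the conjugate filtration, use Proposition~\ref{prop:semipf-invariance-cotangent} and Theorem~\ref{thm:sat-descent-nonderived} on graded pieces, and handle the colimit/totalization exchange via the uniform $\cD^{\geq -1}$-bound coming from Lemma~\ref{lem:cotimes-qsyn}; then deduce the prismatic statement by $\xi$-completeness and \cite[Theorem 13.1]{BSPrism}. One small caveat: Proposition~\ref{prop:semipf-invariance-prismatic} as stated is for the Nygaard-\emph{completed} $\cPrism$, not for $\cPrism^{\rm nc}$, so it does not literally apply to the step where you drop the log structure on $\cPrism^{\rm nc}$; the parenthetical argument you give (reduce modulo $\xi$ to $L\Omega$ and then to the cotangent complex via Proposition~\ref{prop:semipf-invariance-cotangent}) is the correct one and is exactly what the paper does.
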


\subsection{Saturated descent for de Rham--Witt cohomology and comparison}

For this subsection, we assume that $k$ is a perfect field equipped with an fs log structure $P$. Let $(A,M)\in \lSm_{(k,P)}$ with $P\to M$ saturated
(or equivalently by \cite[Corollary III.2.5.4]{ogu} $(k,P)\to (A,M)$ is of log-Cartier type).
The log de Rham--Witt complex $\WOmega_{(A,M)/(k,P)}$ is  equipped with the Nygaard filtration\[
    (\Fil_\rN^{\geq i}\WOmega_{(A,M)/(k,P)})^q :=\begin{cases}
p^{i-q-1}V\WOmega^q_{(A,M)/(k,P)}&{\rm if } \:\:\:q<i\\
\WOmega^q_{(A,M)/(k,P)}&{\rm if } \:\:\:q\geq i.
\end{cases}
    \]
By \cite[Proposition 8.2.1]{BLM}, since $\WOmega_{(A,M)/(k,P)}$ is a saturated Dieudonn\'e complex by \cite[Lemma 7.5]{Yao-logDRW},
there is an isomorphism
\begin{equation}
\label{eq:drw-graded}
        \gr^i_\rN(\WOmega_{(A,M)/(k,P)}) \simeq \tau^{\leq i}(\WOmega_{(A,M)/(k,P)}/p).
        \end{equation}

\begin{rmk}
As we will only be interested in the case of log smooth algebras of log-Cartier type, we will not use here the de Rham--Witt complex constructed in \cite{Yao-logDRW}, but just the fact that \eqref{eq:drw-graded} holds for the usual log de Rham--Witt complex. We note that  there is a gap in \cite[Lemma A.1]{Yao-logDRW}, as shown in Tsuji's counterexample in \cite[Section 3.6]{AHLS}. However, this lemma is only used in the globalization construction of the saturated de Rham--Witt complex, therefore it does not affect the validity of \cite[Lemma 7.5]{Yao-logDRW}.
\end{rmk}

Recall from \cite[\S 2.10]{BLPO-HKR} that the category $\mathrm{Poly}_{(k,P)}$ is defined as the category of polynomial pre-log rings over $(k,P)$. The objects are defined to be the free pre-log algebras: concretely, they are given as polynomial rings  $(k[T_{I\coprod J}],P\oplus \N^I)$, where $I$ and $J$ are finite sets and $\N^I$ is the pre-log structure sending the standard basis element $e_i\in \prod_{i\in I}\N$ to $T_{i}$ (see \cite[\S 2.2]{BLPO-BMS2}): by construction if $(A,M)\in \mathrm{Poly}_{(k,P)}$, then it is log smooth and saturated.

By left Kan extension from $\Poly_{(k,P)}$, we construct the derived de Rham--Witt complex with its Nygaard filtration on animated pre-log $(k,P)$-algebras similarly to \cite[Construction 9.2.5]{BLM}, so $\gr_N^{i}L\WOmega_{(-)/(k,P)}$ has a finite conjugate filtration with graded pieces $\bigwedge^{i}\L_{(-)/(k,P)}$. 
We immediately deduce the following result:

\begin{prop}\label{prop:semipf-invariance-dRW}
    Let $(A,M)$ be a quasisyntomic $(k,P)$-algebra with $M,P$ saturated and semiperfect. Then we have that\[
    \Fil_\rN^{\geq i}L\WOmega_{(A,M)/(k,P)}\simeq \Fil_\rN^{\geq i}L\WOmega_{A/k}.
    \]
    \begin{proof}
        As in Proposition \ref{prop:semipf-invariance-prismatic}, we reduce to the graded pieces and deduce the result from Proposition \ref{prop:semipf-invariance-cotangent}.    
        \end{proof}
\end{prop}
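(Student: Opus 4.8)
The plan is to reduce Proposition~\ref{prop:semipf-invariance-dRW} to the already-established invariance statement for Gabber's cotangent complex, Proposition~\ref{prop:semipf-invariance-cotangent}, exactly as Proposition~\ref{prop:semipf-invariance-prismatic} was proved for prismatic cohomology. First I would recall that the left Kan extended derived log de Rham--Witt complex $L\WOmega_{(-)/(k,P)}$, equipped with its Nygaard filtration, has the property that each graded piece $\gr_N^i L\WOmega_{(-)/(k,P)}$ carries a finite \emph{secondary} filtration whose graded pieces are the exterior powers $\bigwedge^j \L_{(-)/(k,P)}$ for $0\le j\le i$ (shifted appropriately); this is the content of the construction via \cite[Construction 9.2.5]{BLM}, together with the identification \eqref{eq:drw-graded} of the Nygaard graded pieces with the truncations $\tau^{\le i}(\WOmega/p)$, whose conjugate filtration produces the wedge powers of the cotangent complex. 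Since both the ambient filtration $\Fil_N^{\geq i}$ and the secondary filtration are finite (in each fixed cohomological range), it suffices to check the claimed equivalence on the bigraded pieces.

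The key steps, in order, are: (i) observe that the map $(k,P)\to (A,M)$ with $P$, $M$ saturated and semiperfect is the situation covered by Proposition~\ref{prop:semipf-invariance-cotangent}(2) applied with $R=k$, $(R,P)=(k,P)$, so that $(\bigwedge^j \L_{(A,M)/(k,P)})_p^\wedge \simeq (\bigwedge^j \L_{A/k})_p^\wedge$ for all $j$ --- here one uses that $k$ being a perfect field of characteristic $p$ makes everything automatically $p$-complete, and that the hypotheses of Proposition~\ref{prop:semipf-invariance-cotangent} are met since $P$ semiperfect gives part (2) and (implicitly, for the absolute comparison) $M$ semiperfect gives part (1); (ii) deduce that the secondary graded pieces of $\gr_N^i L\WOmega_{(A,M)/(k,P)}$ and of $\gr_N^i L\WOmega_{A/k}$ agree; (iii) conclude, by the finiteness of the secondary filtration, that $\gr_N^i L\WOmega_{(A,M)/(k,P)} \simeq \gr_N^i L\WOmega_{A/k}$; (iv) conclude, by the finiteness (eventual constancy) of the Nygaard filtration and a two-out-of-three/induction argument on the graded pieces together with completeness of the Nygaard filtration, that $\Fil_N^{\geq i}L\WOmega_{(A,M)/(k,P)} \simeq \Fil_N^{\geq i}L\WOmega_{A/k}$. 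This is the argument that the proof sketch in the excerpt compresses into one line, and I would simply spell out the reduction to the graded pieces and invoke Proposition~\ref{prop:semipf-invariance-cotangent}.

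The one point that deserves slightly more care --- and which I expect to be the only real obstacle --- is making sure the comparison of Nygaard graded pieces is \emph{compatible} with the secondary filtrations on both sides, i.e.\ that the natural map $L\WOmega_{A/k}\to L\WOmega_{(A,M)/(k,P)}$ (induced by functoriality in the pre-log ring, using the map $(A,\triv)\to(A,M)$ over $(k,\triv)\to(k,P)$, after left Kan extension) induces \emph{the} map on secondary graded pieces which is identified with $\bigwedge^j\L_{A/k}\to\bigwedge^j\L_{(A,M)/(k,P)}$, so that Proposition~\ref{prop:semipf-invariance-cotangent} really applies to it. This is a compatibility of the conjugate/secondary filtration with the relevant functoriality, and it holds because left Kan extension from $\Poly_{(k,P)}$ is natural and the secondary filtration is built functorially from the simplicial resolution; on $\Poly_{(k,P)}$ itself the comparison of secondary graded pieces with cotangent complexes is the de Rham--Witt analogue of \cite{Bhatt2012padicDD}, and it is manifestly compatible with maps of polynomial pre-log algebras. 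Once this functoriality is noted, the proof is just the two-line reduction already written: reduce to graded pieces (Nygaard, then secondary) and cite Proposition~\ref{prop:semipf-invariance-cotangent}.
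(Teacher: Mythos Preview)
Your proposal is correct and takes essentially the same approach as the paper: the paper's proof is the single line ``As in Proposition~\ref{prop:semipf-invariance-prismatic}, we reduce to the graded pieces and deduce the result from Proposition~\ref{prop:semipf-invariance-cotangent},'' and you have faithfully unpacked exactly this reduction (Nygaard graded pieces, then secondary filtration, then the cotangent-complex comparison). Your extra remark on the functoriality of the secondary filtration is a reasonable point to make explicit, but it is not a deviation from the paper's argument.
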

Let $\widehat{L\WOmega}_{(-)/(k,P)}$ be the Nygaard completion of $L\WOmega_{(-)/(k,P)}$. By reducing to the filtrations in the usual way, we have that $(\widehat{L\WOmega}_{(-)/(k,P)},\Fil_\rN^{\geq \bullet})$ is a quasisyntomic sheaf. The following is analogous to Lemma \ref{lem:sat-descent-prismatic-general}.
\begin{lemma}\label{lem:sat-descent-dRW-general}
    Let $(A,M)$ be a quasisyntomic saturated $(k,P)$-algebra and let $M\to N$ be a Kummer map of saturated monoids such that $N$ is semiperfect and $A\otimes^L_{k[M]}k[N]$ is discrete.
    Consider the \v Cech nerve $N^{\oplus_M^{\rm sat} (\bullet + 1)}$ of $M \to N$ in the category of saturated monoids.
Then the natural $\phi$-equivariant map of $E_\infty$-rings 
\begin{equation}\label{eq:sat-descent-drw}
    \widehat{L\WOmega}_{(A,M)/(k,P)}\to \varlim_{
    \Delta} (\widehat{L\WOmega}_{(A\cotimes_{k[M]}k[N^{\oplus_M^{\rm sat} (\bullet + 1)}],N^{\oplus_M^{\rm sat} (\bullet + 1)})/(k,P)})
\end{equation}
is an equivalence and compatible with the Nygaard filtrations.
\begin{proof}
    By reducing to the finite filtration of $\gr_N^{i}\widehat{L\WOmega}_{(A,M)/(k,P)}$, it boils down to \eqref{eq:reduction-to-cotan}. 
\end{proof}
\end{lemma}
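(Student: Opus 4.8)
The plan is to mirror, in the de Rham--Witt setting, the argument given for prismatic cohomology in Lemma \ref{lem:sat-descent-prismatic-general} and Theorem \ref{thm:main-sat-descent-prism}. Since the map in \eqref{eq:sat-descent-drw} is a map of $E_\infty$-rings and the forgetful functor to $\cD$ is conservative, it suffices to check it is an equivalence of underlying objects. First I would use the Nygaard filtration: by \eqref{eq:drw-graded} the source and all the terms of the cosimplicial diagram carry a complete, exhaustive Nygaard filtration, so — since $\varlim_\Delta$ commutes with the limits defining completeness — it is enough to prove the statement after passing to the graded pieces $\gr_N^i$. By \eqref{eq:drw-graded} again, $\gr_N^i\widehat{L\WOmega}_{(-)/(k,P)} \simeq \tau^{\leq i}(L\WOmega_{(-)/(k,P)}/p)$, which by construction (left Kan extension from $\Poly_{(k,P)}$) carries a finite secondary filtration whose graded pieces are the $p$-completed derived exterior powers $\bigwedge^i\L_{(-)/(k,P)}$. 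Thus the claim reduces to showing that
\[
\bigwedge^i\L_{(A,M)/(k,P)} \to \varlim_{\Delta}\bigl(\bigwedge^i\L_{(A\cotimes_{k[M]}k[\check{C}^{\rm sat}(N^\bullet)],\check{C}^{\rm sat}(N^\bullet))/(k,P)}\bigr)
\]
is an equivalence.

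The second step is to observe that this last statement is exactly Theorem \ref{thm:sat-descent-nonderived}: one must check its hypotheses. Here $(k,P)\to (A,M)$ is an integral map of pre-log rings (and $p$ is nilpotent in $k$, hence $p$-power torsion is trivially bounded), $M\to N$ is a Kummer map of saturated monoids, and $A\otimes^L_{k[M]}k[N]$ is assumed discrete; moreover we only need the statement for $(S,Q)=(A,M)$ itself, i.e.\ with $A\to S$ the identity, which is trivially $p$-completely flat. One small point to verify is that working over $k$ (where $p=0$) the ``$p$-completed'' version of Theorem \ref{thm:sat-descent-nonderived} specializes to the un-completed one, which is immediate since all modules in sight are already $p$-nilpotent. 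With these inputs, Theorem \ref{thm:sat-descent-nonderived} gives the desired equivalence on exterior powers, and unwinding the two filtrations (secondary, then Nygaard) yields \eqref{eq:sat-descent-drw}.

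It remains to address compatibility with the Nygaard filtrations and $\phi$-equivariance. The Nygaard filtration is built in functorially (it is defined termwise by the formula recalled before \eqref{eq:drw-graded}, and the derived/left Kan extended version is functorial by construction), so the map in \eqref{eq:sat-descent-drw} is automatically filtered; that it is a \emph{filtered} equivalence is precisely what the reduction to graded pieces established. The Frobenius $\phi$ on $L\WOmega$ is likewise functorial in the pre-log ring, so the cosimplicial diagram is one of $\phi$-modules and the comparison map is $\phi$-equivariant. I expect the main obstacle to be purely bookkeeping: making sure that the secondary filtration on $\gr_N^i L\WOmega$ and its interaction with the cosimplicial limit behaves as in the prismatic case — in particular that the relevant \emph{finite} filtrations let one exchange $\varlim_\Delta$ with passage to graded pieces — but this is handled exactly as in \cite[Proposition 7.4]{BLPO-BMS2} and in the proof of Lemma \ref{lem:sat-descent-prismatic-general}, so no genuinely new difficulty arises beyond invoking Theorem \ref{thm:sat-descent-nonderived} over the base $(k,P)$.
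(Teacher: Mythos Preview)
Your proposal is correct and follows essentially the same route as the paper: reduce via the complete Nygaard filtration to the graded pieces $\gr_N^i$, then via their finite secondary filtration to the exterior powers of the log cotangent complex, at which point the claim is precisely the saturated descent for $\bigwedge^i\L_{(-)/(k,P)}$ established in Theorem \ref{thm:sat-descent-nonderived} (this is the reference to \eqref{eq:reduction-to-cotan} in the paper's one-line proof). The additional remarks you give on checking the hypotheses over $(k,P)$ and on the functorial Nygaard/$\phi$-compatibility are fine elaborations of what the paper leaves implicit.
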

Then the proof of the saturated descent follows immediately as in Theorem \ref{thm:main-sat-descent-prism}:
\begin{thm}\label{thm:main-sat-descent-dRW}
Let $k$ be a perfect field and let $(A,M)\in \lQSyn_k$ with $M$ saturated and $M^{\rm gp}$ $p$-torsionfree.
Assume that there is a section $\ol{M}\to M$ of the quotient map $M\to \ol{M}$ such that $k[\ol{M}]\to A$ is flat. Then there is a $\phi$-equivariant equivalence  of $E_\infty$-rings:
\begin{equation}\label{eq:sat-descent-drw-nolog}
    \widehat{L\WOmega}_{(A,M)/k}\simeq \varlim_{
    \Delta}  (\widehat{L\WOmega}_{A\otimes_{k[M]}k[M_\mathrm{perf}^{\oplus_M^{\rm sat} (\bullet + 1)}]/k})
\end{equation}
compatible with the Nygaard filtration. \qed
\end{thm}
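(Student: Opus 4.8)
The strategy mirrors exactly the proof of Theorem \ref{thm:main-sat-descent-prism}, with the saturated Dieudonné complex $\WOmega$ (and its derived, Nygaard-completed variant) playing the role of $\cPrism$. First I would record that $M\to M_{\rm perf}$ is a Kummer map of saturated monoids and that $A\otimes^L_{k[M]}k[M_{\rm perf}]$ is discrete: the former is Remark \ref{rmk:perf-kummer}, and the latter follows from Example \ref{ex:condition-clubsuit}(2) applied with $R=k$ (here $k[M]\to A$ is flat, hence $p$-completely flat, so $A/p\otimes_{k[M]}k[M_{\rm perf}]\simeq A/p\otimes^L_{k[M]}k[M_{\rm perf}]$). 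Thus the hypotheses of Lemma \ref{lem:sat-descent-dRW-general} are met with $N=M_{\rm perf}$, giving a $\phi$-equivariant equivalence of $E_\infty$-rings
\[
\widehat{L\WOmega}_{(A,M)/k}\simeq \varlim_{\Delta}\bigl(\widehat{L\WOmega}_{(A\otimes_{k[M]}k[\check{C}^{\rm sat}(M_{\rm perf})^{\bullet}],\,\check{C}^{\rm sat}(M_{\rm perf})^{\bullet})/k}\bigr),
\]
compatibly with the Nygaard filtrations.

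The second and final step is to strip the log structure off each term of the cosimplicial diagram. Each $\check{C}^{\rm sat}(M_{\rm perf})^{m}$ is saturated by construction of the \v Cech nerve in saturated monoids, and it is semiperfect: indeed by Lemma \ref{lem:illusie-trick} it is isomorphic to $M_{\rm perf}\oplus (M_{\rm perf}^{\rm gp}/M^{\rm gp})^{\oplus m-1}$, and both $M_{\rm perf}$ and the quotient of groups $M_{\rm perf}^{\rm gp}/M^{\rm gp}$ are $p$-divisible, hence semiperfect. Since $(A\otimes_{k[M]}k[\check{C}^{\rm sat}(M_{\rm perf})^{m}],\check{C}^{\rm sat}(M_{\rm perf})^{m})$ is log quasisyntomic over $k$ (apply Lemma \ref{lem:cotimes-qsyn} with $B=k$, $P=\triv$, exactly as in the proof of Theorem \ref{thm:main-sat-descent-prism}), Proposition \ref{prop:semipf-invariance-dRW} yields
\[
\widehat{L\WOmega}_{(A\otimes_{k[M]}k[\check{C}^{\rm sat}(M_{\rm perf})^{m}],\,\check{C}^{\rm sat}(M_{\rm perf})^{m})/k}\simeq \widehat{L\WOmega}_{(A\otimes_{k[M]}k[\check{C}^{\rm sat}(M_{\rm perf})^{m}])/k}
\]
for every $[m]\in\Delta$, compatibly with filtrations and $\phi$. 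Substituting into the displayed equivalence above gives \eqref{eq:sat-descent-drw-nolog}.

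The only real subtlety is bookkeeping: one must check that the identifications of Proposition \ref{prop:semipf-invariance-dRW} assemble into an equivalence of \emph{cosimplicial} objects (so that one may pass to $\varlim_\Delta$), which is automatic since the argument of \ref{prop:semipf-invariance-dRW} is functorial in the pre-log ring, and that the maps in the \v Cech-type diagram $(A\otimes_{k[M]}k[\check{C}^{\rm sat}(M_{\rm perf})^{\bullet}])$ are the ones induced from the cosimplicial monoid $\check{C}^{\rm sat}(M_{\rm perf})^{\bullet}$ rather than a genuine \v Cech nerve — a point already flagged in the remark after Theorem \ref{thm:main-sat-descent-prism}. I expect no genuine obstacle here: all the analytic input (discreteness of $A\otimes^L_{k[M]}k[M_{\rm perf}]$, the vanishing of the relative cotangent complex after saturated base change, quasisyntomicity of the terms) has already been isolated in Lemmas \ref{lem:sat-descent-dRW-general}, \ref{lem:cotimes-qsyn}, \ref{descent_1} and Proposition \ref{prop:semipf-invariance-cotangent}, so the proof is a short assembly. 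Hence the statement follows. \qed
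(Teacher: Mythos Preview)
Your proposal is correct and follows exactly the approach the paper intends: the paper's own proof is literally ``\qed'' after the remark that it ``follows immediately as in Theorem \ref{thm:main-sat-descent-prism}'', and you have faithfully spelled out that parallel argument, replacing Lemma \ref{lem:sat-descent-prismatic-general} and Proposition \ref{prop:semipf-invariance-prismatic} by their de Rham--Witt analogues Lemma \ref{lem:sat-descent-dRW-general} and Proposition \ref{prop:semipf-invariance-dRW}. One small caveat: your appeal to Lemma \ref{lem:cotimes-qsyn} requires $M$ sharp, which is assumed in Theorem \ref{thm:main-sat-descent-prism} but not in the statement of Theorem \ref{thm:main-sat-descent-dRW}; however the quasisyntomic hypothesis in Proposition \ref{prop:semipf-invariance-dRW} is not actually used in its proof (which reduces directly to Proposition \ref{prop:semipf-invariance-cotangent} via the secondary filtration), so this does not affect the argument.
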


We are now ready to prove our crystalline comparison results. We first deduce the comparison with $P=\triv$.

\begin{thm}\label{thm:crys_comparison}
Let $(A,M)\in \lQSyn_k$ with $M$ saturated and $M^{\rm gp}$ $p$-torsionfree.
Assume that there exists a section $\ol{M}\to M$ of the quotient map $M\to \ol{M}$ such that $k[\ol{M}]\to A$ is flat.
Then there is a $\phi$-equivariant equivalence of filtered $E_{\infty}$-rings
    \begin{equation}\label{eq:crystalline-comp}
        \widehat{L\WOmega}_{(A,M)/k}\simeq \cPrism_{(A,M)}
    \end{equation}
lifting the equivalence $\cPrism_{(A, M)}/\xi \simeq \widehat{L\Omega}_{(A,M)/k}$. This equivalence is functorial among maps $(A,M)\to (A',M')\in \lQSyn_k$ and $(A',M')$ as above. 
\end{thm}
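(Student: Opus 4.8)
The plan is to reduce the comparison \eqref{eq:crystalline-comp} to the non-logarithmic crystalline comparison of \cite[Theorem 1.10]{BMS2} (equivalently \cite[Theorem 1.12(4)]{BMS2}, in the formulation comparing Nygaard-completed prismatic cohomology over $\Z_p$ with Nygaard-completed derived de Rham--Witt cohomology over $k$) by applying saturated descent on both sides. First I would invoke Theorem \ref{thm:main-sat-descent-prism}: since $M$ is sharp and saturated and $k[M]\to A$ is flat (so in particular $k\langle M\rangle\to A$ is $p$-completely flat, as $A/p$ is then a filtered colimit of finite free $k[M]/p$-modules, or directly by flatness), we have a $\phi$-equivariant equivalence of filtered $E_\infty$-rings
\[
\cPrism_{(A,M)}\simeq \varlim_{\Delta}\cPrism_{(A\cotimes_{k\langle M\rangle}k\langle \check{C}^{\rm sat}(M_{\rm perf})^{\bullet}\rangle)}.
\]
Next, applying Theorem \ref{thm:main-sat-descent-dRW} with the same chart (note $(A,M)\in\lQSyn_k$ with $M$ saturated and $k[M]\to A$ flat is exactly the hypothesis there), we get a $\phi$-equivariant equivalence of filtered $E_\infty$-rings
\[
\widehat{L\WOmega}_{(A,M)/k}\simeq \varlim_{\Delta}\widehat{L\WOmega}_{(A\cotimes_{k[M]}k[\check{C}^{\rm sat}(M_{\rm perf})^{\bullet}])/k}.
\]
So it suffices to produce a $\phi$-equivariant filtered equivalence, functorially in the cosimplicial object $[m]\mapsto A\cotimes_{k\langle M\rangle}k\langle \check{C}^{\rm sat}(M_{\rm perf})^m\rangle$, between the two right-hand sides termwise; each such term is a \emph{non-logarithmically}-structured ring, so the comparison we need is precisely the (absolute) crystalline comparison of \cite{BMS2}.

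\textbf{Key steps.} The main body of the argument is therefore: (i) verify that each ring $B_m:=A\cotimes_{k\langle M\rangle}k\langle \check{C}^{\rm sat}(M_{\rm perf})^m\rangle$ is quasisyntomic (this is part of the proof of Theorem \ref{thm:main-sat-descent-prism}, via Lemma \ref{lem:cotimes-qsyn} and Example \ref{ex:condition-clubsuit}(2), since $M\to M_{\rm perf}$ is Kummer and $A\cotimes^L_{\Z[M]}\Z[M_{\rm perf}]$ is $p$-completely discrete, hence so is each iterate by the \v Cech nerve formula $\check C^{\rm sat}(M_{\rm perf})^d\simeq M_{\rm perf}\oplus(M_{\rm perf}^{\rm gp}/M^{\rm gp})^{\oplus d-1}$ of Lemma \ref{lem:illusie-trick} combined with Example \ref{ex:condition-clubsuit}(1)); (ii) apply the absolute crystalline comparison \cite[Theorem 1.10 and Theorem 15.3]{BMS2} (resp.\ \cite[\S 8.5, Proposition 8.13]{BLM} for the Nygaard-filtered, Frobenius-equivariant refinement) to get, for each quasisyntomic $B_m$, a canonical $\phi$-equivariant filtered equivalence $\widehat{L\WOmega}_{B_m/k}\simeq \cPrism_{B_m}$ lifting $\cPrism_{B_m}/\xi\simeq \widehat{L\Omega}_{B_m/k}$; (iii) check that this equivalence is natural in $B_m$ as an object of $\mathrm{Fun}(\Delta,\mathrm{CAlg})$, so that it passes to the limit; and (iv) identify the resulting $\varlim_\Delta$-level equivalence with one lifting $\cPrism_{(A,M)}/\xi\simeq\widehat{L\Omega}_{(A,M)/k}$, using the analogous saturated-descent statement for $\widehat{L\Omega}$ (which follows from Proposition \ref{prop:semipf-invariance-cotangent} and Theorem \ref{thm:sat-descent-nonderived} by reducing to graded pieces, exactly as in Corollary \ref{cor:sat-descent-de-Rham-prismnc}) together with the identification $\cPrism_{(A,M)}/\xi\simeq\widehat{L\Omega}_{(A,M)/k}$ recalled in \cite[\S 7.2]{BLPO-BMS2}. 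Finally, functoriality among maps $(A,M)\to(A',M')$ is inherited from the functoriality of all the descent equivalences, provided we arrange charts compatibly; alternatively one observes that, since both sides are quasisyntomic sheaves on $\lQSyn_k$ and the equivalence is canonical (constructed from the absolute one, which is functorial in arbitrary quasisyntomic rings), the diagram of equivalences is automatically functorial in the \v Cech-local input, hence globalizes.

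\textbf{Expected obstacle.} The genuinely delicate point is \emph{coherence/naturality of the absolute comparison in cosimplicial degree}: the crystalline comparison of \cite{BMS2} is stated as an equivalence for a fixed quasisyntomic ring, and to take $\varlim_\Delta$ one needs it as an equivalence of functors $\Delta\to\widehat{\cD\cF}(\Z_p)$ (with $\phi$-action), i.e.\ one must know the comparison isomorphisms are compatible with the cosimplicial structure maps. This is true because the comparison in \cite{BMS2} is constructed by descent from $\QRSPerfd$ and is functorial there; but spelling this out requires being careful that the saturated \v Cech nerve maps (which are \emph{not} the structure maps of any ordinary ring \v Cech nerve, as emphasized in the remark after Theorem \ref{thm:main-sat-descent-prism}) are still maps of quasisyntomic rings to which functoriality applies --- and they are, since $B_m\to B_{m+1}$ is an honest ring map in each cosimplicial degree. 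A secondary, more bookkeeping-level obstacle is checking that the two \emph{independent} saturated-descent presentations (prismatic via $k\langle-\rangle$, de Rham--Witt via $k[-]$) use compatible cosimplicial objects: this is immediate after $p$-completion since $\cPrism$ and $\widehat{L\WOmega}$ only see the $p$-complete rings, and $(k[N])_p^\wedge=k\langle N\rangle$ for the monoids $N$ appearing here. Once these are in place, the equivalence is the limit of the termwise absolute comparisons, and the lift of $\cPrism_{(A,M)}/\xi\simeq\widehat{L\Omega}_{(A,M)/k}$ is automatic.
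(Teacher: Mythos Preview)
Your proposal is correct and follows essentially the same approach as the paper: reduce to the non-logarithmic crystalline comparison of \cite[Theorem 8.17]{BMS2} by applying saturated descent on both sides (Theorems \ref{thm:main-sat-descent-prism} and \ref{thm:main-sat-descent-dRW}), after checking via Lemma \ref{lem:cotimes-qsyn} that each term of the \v Cech nerve is quasisyntomic. The paper's proof is terser and does not spell out your steps (iii)--(iv) or the ``expected obstacle'' about cosimplicial naturality, but these are exactly the points implicitly used, and your discussion of them is accurate.
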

\begin{proof}
By \cite[Theorem 8.17]{BMS2} (see also \cite[Theorem 9.1]{MathewTHH}), for every $A$ quasisyntomic over $k$ (with trivial log structure) there are functorial isomorphisms of filtered $E_\infty$-rings\[
\cPrism_A\simeq \widehat{L\WOmega}_{A/k}.
\]
in $\cD(\Z_p)$. By Theorems \ref{thm:main-sat-descent-prism} and \ref{thm:main-sat-descent-dRW}, since $A\otimes_{k[M]}k[M_\mathrm{perf}^{\oplus_M^{\rm sat} (m + 1)}]\in \lQSyn_k$ by Lemma \ref{lem:cotimes-qsyn}, the result follows from the equivalence
\begin{equation}\label{eq:reduction-to-BMS2-trivial}
    \cPrism_{A\otimes_{k[M]}k[M_\mathrm{perf}^{\oplus_M^{\rm sat} (m + 1)}]} \simeq \widehat{L\WOmega}_{A\otimes_{k[M]}k[M_\mathrm{perf}^{\oplus_M^{\rm sat} (m + 1)}]},
\end{equation}
    in $\cD(\Z_p)$, which is functorial since the resolution $A\to A\otimes_{k[M]}k[M_\mathrm{perf}^{\oplus_M^{\rm sat} (\bullet + 1)}]$ is.
\end{proof}

Moreover, we also deduce the following interesting comparison in case $P=\N$, i.e.\ $(k, {\N})$ is the standard log point.

\begin{thm}
\label{thm:main-sat-descent-dRW2}
Let $(A,M)\in \lQSyn_{(k,\N)}$ with $M$ saturated and $M^{\rm gp}$ $p$-torsionfree. Assume that there is a section $\ol{M}\to M$ of $M\to \ol{M}$ factoring $\N\to M$ such that $k\otimes_{k[\N]}k[\ol{M}]\to A$ is flat.
If $\N\to M$ is saturated,
then there is a $\phi$-equivariant equivalence\[
\widehat{L\WOmega}_{(A,M)/(k,\N)}\simeq \widehat{L\WOmega}_{(A,M\oplus_\N\N_{\rm perf})/k}\simeq \cPrism_{(A,M\oplus_\N\N_{\rm perf})}.
\]
This equivalence is functorial among maps $(A,M)\to (A',M')\in \lQSyn_{(k,\N)}$ and $(A',M')$ as above.
\begin{proof}
By Example \ref{ex:condition-clubsuit} (3), we have that\[
(A,M\oplus_\N\N_{\rm perf})\simeq (A,M)\otimes_{(k,\N)}^L(k,\N_{\rm perf})
\]
so since the map $\N\to k$ factors through $\N_{\rm perf}$ we have that\[
  \L_{(A,M)/(k,\N)}\simeq \L_{(A,M\oplus_\N\N_{\rm perf})/(k,\N_{\rm perf})}\simeq \L_{(A,M\oplus_\N\N_{\rm perf})/k},\]
  where the last equivalence follows from \cite[Corollary 4.13]{BLPO-BMS2}. Then the first equivalence follows from the conjugate filtration. 
By Example \ref{ex:condition-clubsuit} (3), we have that $A\otimes^L_{k[M\oplus_\N\N_{\rm perf}]}k[M_{\rm perf}]$ is discrete, so by Lemma \ref{lem:sat-descent-dRW-general} and Proposition \ref{prop:semipf-invariance-cotangent} we have that
\begin{align*}
    \widehat{L\WOmega}_{(A,M\oplus_\N\N_{\rm perf})/(k,\N_{\rm perf})} &\simeq \varlim_{
    \Delta} (\widehat{L\WOmega}_{(A\cotimes_{k[M\oplus_\N\N_{\rm perf}]} k[M_\mathrm{perf}^{\oplus_Q^{\rm sat} (\bullet + 1)}],M_\mathrm{perf}^{\oplus_Q^{\rm sat} (\bullet + 1)})/(k,\N_{\rm perf})})\\
    &\simeq \varlim_{
    \Delta} (\widehat{L\WOmega}_{(A\cotimes_{k[M\oplus_\N\N_{\rm perf}]} k[M_\mathrm{perf}^{\oplus_Q^{\rm sat} (\bullet + 1)}])/k}).
\end{align*}
Where the cosimplicial monoid given by the saturated \v Cech nerve $M_\mathrm{perf}^{\oplus_Q^{\rm sat} (\bullet + 1)}$ is computed relative to the map $Q:=M\oplus_\N\N_{\rm perf} \to M_{\rm perf}$.
Moreover, by \cite[Proposition I.4.2.1 (6)]{ogu}, the map $\N_{\rm perf}\to M\otimes_\N\N_{\rm perf}$ is exact. Finally, by direct computation we have that \[
\N_{\rm perf} = \Z[1/p^{\infty}]_{\geq 0}\subseteq \Z[1/p^{\infty}]=(\N_{\rm perf})^{\rm gp},
\] so it is sharp and valuative (in particular $p$-torsionfree as observed in Remark \ref{rmk:sharp-p-torsionfree}). 
We now check that $(M\oplus_\N\N_{\rm perf})^{\rm gp}$ is $p$-torsionfree. Since $\N\to M$ is injective as observed in Example \ref{ex:condition-clubsuit} (iii), we have an exact sequence\[
0\to (\N_{\rm perf})^{\rm gp}\to (M\oplus_{\N}\N_{\rm perf})^{\rm gp}\to M^{\rm gp}/\Z\to 0,
\]
so $(M\oplus_{\N}\N_{\rm perf})^{\rm gp}$ is $p$-torsionfree if and only if $M^{\rm gp}/\Z$ is $p$-torsionfree. Let $M^*\subseteq M$ be the group of units: it is also $p$-torsionfree, and let $\ol{M}:= M/M^*$. Let $x\in M^{\rm gp}$ such that $n\ol{x}=0$ in $\ol{M}^{\rm gp}$. Then $n x$ comes from $M^*$, and since $M$ is saturated $x$ comes from $M$, and $x + (n-1)x + (-nx) = 0$, so $x$ comes from $M^*$ therefore $\Z\to \ol{M}^{\rm gp}$ is also injective and we have an exact sequence\[
0\to M^* \to M^{\rm gp}/\Z\to \ol{M}^{\rm gp}/\Z\to 0.
\] 
By \cite[Proposition I.4.8.8]{ogu}, the map $\N\to \ol{M}$ is saturated, therefore since we assumed $\ol{M}$ fine, by \cite[Proposition I.4.8.11]{ogu} \footnote{In the statement of \emph{loc.\ cit.}\ we take the face $G$ to be zero. As the proof shows, the finiteness assumption in the statement is not necessary.} we have that $\ol{M}^{\rm gp}/\Z$ is torsionfree, so $M^{\rm gp}/\Z$ is $p$-torsionfree since $M^*\subseteq M^{\rm gp}$ is $p$-torsionfree: this implies that $(M\oplus_\N\N_{\rm perf})^{\rm gp}$ is $p$-torsionfree.

By Lemma \ref{lem:sat-descent-prismatic-general} and Proposition \ref{prop:semipf-invariance-prismatic}, we have equivalences
\begin{align*}
\cPrism_{(A,M\oplus_\N\N_{\rm perf})}&\simeq \varlim_{
    \Delta
    } (\cPrism_{(A\cotimes_{k[M\oplus_\N\N_{\rm perf}]} k[M_\mathrm{perf}^{\oplus_Q^{\rm sat} (\bullet + 1)}],M_\mathrm{perf}^{\oplus_Q^{\rm sat} (\bullet + 1)})})\\
    &\simeq \varlim_{\Delta^{\rm op}
    } (\cPrism_{A\cotimes_{k[M\oplus_\N\N_{\rm perf}]} k[M_\mathrm{perf}^{\oplus_Q^{\rm sat} (\bullet + 1)}}).
\end{align*}
From this, we obtain an isomorphism of simplicial $\Z_p$-modules 
\begin{equation}\label{eq:reduction-to-BMS2-logpoint}
\cPrism_{A\cotimes_{k[M\oplus_\N\N_{\rm perf}]} k[M_\mathrm{perf}^{\oplus_Q^{\rm sat} (\bullet + 1)}]}
\simeq \widehat{L\WOmega}_{A\cotimes_{k[M\oplus_\N\N_{\rm perf}]} k[M_\mathrm{perf}^{\oplus_Q^{\rm sat} (\bullet + 1)}]/k}
\end{equation}
by \cite[Theorem 8.17]{BMS2}. Functoriality follows from the resolution\[
A\to A\otimes_{k[M\oplus_{\N}\N_{\rm perf}]}k[M_\mathrm{perf}^{\oplus_Q^{\rm sat} (\bullet + 1)}]
\] being functorial.
\end{proof}
\end{thm}

We now aim for variants of the crystalline comparison. For this, we restrict to the case of $(A,M)\in \lSm_{(k,P)}$ with $P\to M$ saturated. Notice that the Nygaard filtration on $\WOmega_{(A,M)/(k,P)}$ is clearly complete as $p^{i-q-1}V\WOmega^q_{(A,M)/(k,P)}$ vanishes for $i\gg 0$. In particular the map of filtered $E_\infty$-rings $L\WOmega_{(A,M)/(k,P)}\to \WOmega_{(A,M)/(k,P)}$ induced by any choice of a polynomial resolution of $(A,M)$ factors through $\widehat{L\WOmega}_{(A,M)/(k,P)}$. In fact, we have the following:
\begin{lemma}\label{lem:LWOmega-vs-WOmega}
    Let $(A,M)\in \lSm_{(k,P)}$ with $P\to M$ saturated. Then the canonical map\[
    \widehat{L\WOmega}_{(A,M)/(k,P)}\xrightarrow{} \WOmega_{(A,M)/(k,P)}
    \]
    is an equivalence of complete filtered $E_\infty$-rings.
    \begin{proof}
It is enough to check it on the graded pieces of the conjugate filtration: in that case, it is the map\[
\bigwedge^i\L_{(A,M)/(k,P)}\to \Omega^i_{(A,M)/(k,P)}.
\]
By \cite[Proposition 4.6]{BLPO-HKR}, since $(k,P)\to (A,M)$ is log smooth and integral by hypothesis, we have that $\L_{(A,M)/(k,P)}\simeq \Omega^1_{(A,M)/(k,P)}$, so the claim follows. 
    \end{proof}
\end{lemma}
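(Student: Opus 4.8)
The plan is to reduce the statement to the identification of the associated graded pieces of the secondary (Hodge) filtration on both sides, since both $\widehat{L\WOmega}_{(A,M)/(k,P)}$ and $\WOmega_{(A,M)/(k,P)}$ are equipped with the Nygaard filtration, which is complete in both cases. The Nygaard completeness of the left-hand side is by construction (it is the Nygaard completion), and for the right-hand side it follows from the observation already made in the text that $p^{i-q-1}V\WOmega^q_{(A,M)/(k,P)}$ vanishes for $i \gg 0$, so the filtration is bounded in each cohomological degree and hence complete. Therefore the canonical comparison map is an equivalence of filtered objects if and only if it induces equivalences on $\gr_N^i$ for every $i$.

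Next I would invoke the description of the graded pieces. On the left-hand side, $\gr_N^i L\WOmega_{(-)/(k,P)}$ carries a finite secondary filtration with graded pieces $\bigwedge^j \L_{(-)/(k,P)}[-j]$ for $0 \le j \le i$, by left Kan extension from $\Poly_{(k,P)}$ as recalled just before Proposition \ref{prop:semipf-invariance-dRW}; after Nygaard completion this description is unchanged since the filtration is finite in each degree. On the right-hand side, by \cite[Proposition 8.2.1]{BLM} (equation \eqref{eq:drw-graded}) we have $\gr_N^i \WOmega_{(A,M)/(k,P)} \simeq \tau^{\le i}(\WOmega_{(A,M)/(k,P)}/p)$, and by \eqref{eq:drw-yao-vs-hk} — valid precisely because $(A,M)\in\lSm_{(k,P)}$ with $P\to M$ saturated, i.e.\ of log-Cartier type — we have $\WOmega_{(A,M)/(k,P)}/p \simeq \Omega_{(A,M)/(k,P)}$, the honest log de Rham complex. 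So the right-hand graded piece is $\tau^{\le i}\Omega^{\bullet}_{(A,M)/(k,P)}$, whose cohomology sheaves are the usual ones computed by the Cartier isomorphism, i.e.\ $\Omega^j_{(A,M)/(k,P)}$ in degree $j$ for $j \le i$. Thus both sides of $\gr_N^i$ have the same underlying graded object $\bigoplus_{j \le i} \Omega^j_{(A,M)/(k,P)}[-j]$, reducing us to checking that the comparison map realizes the identity on each $\bigwedge^j \L_{(A,M)/(k,P)} \to \Omega^j_{(A,M)/(k,P)}$ — which is exactly what is proved for the top filtration level in Lemma \ref{lem:LWOmega-vs-WOmega}, whose statement I am now writing; more precisely one argues by induction on the (finitely many) secondary filtration steps, the inductive step being the map $\bigwedge^i \L_{(A,M)/(k,P)} \to \Omega^i_{(A,M)/(k,P)}$.

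For this last identification I would use \cite[Proposition 4.6]{BLPO-HKR}: since $(k,P)\to (A,M)$ is log smooth and integral (the latter because $P\to M$ is saturated, hence integral, and this is preserved under the log smooth hypothesis as in the characterization via log-Cartier type), the Gabber cotangent complex $\L_{(A,M)/(k,P)}$ is concentrated in degree $0$ and equal to $\Omega^1_{(A,M)/(k,P)}$, which is moreover a finite projective $A$-module. Hence $\bigwedge^i \L_{(A,M)/(k,P)} \simeq \Omega^i_{(A,M)/(k,P)}$, and one checks that the comparison map induced by a polynomial resolution is the canonical one, hence an equivalence. The main obstacle, and the only genuinely delicate point, is bookkeeping the compatibility of the two secondary filtrations under the comparison map — i.e.\ verifying that the map $L\WOmega_{(A,M)/(k,P)} \to \WOmega_{(A,M)/(k,P)}$ obtained from a polynomial resolution is filtered for both the Nygaard and the conjugate/secondary filtrations, and that on $\gr\gr$ it induces precisely the Cartier-type identification rather than something off by a Frobenius twist; all of this is already implicit in the constructions of \cite{BLM} and \cite{Yao-logDRW} adapted to the Gabber cotangent complex, so the proof is essentially a citation-level argument once the filtrations are set up, which is why Lemma \ref{lem:LWOmega-vs-WOmega} can be stated with a one-paragraph proof.
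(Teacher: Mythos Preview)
Your proposal is correct and follows essentially the same approach as the paper's proof: reduce via Nygaard completeness to the graded pieces, then via the finite secondary filtration to the map $\bigwedge^i\L_{(A,M)/(k,P)}\to \Omega^i_{(A,M)/(k,P)}$, and conclude by \cite[Proposition~4.6]{BLPO-HKR}. You simply spell out several steps (Nygaard completeness of the right-hand side, the Cartier-type identification via \eqref{eq:drw-graded} and \eqref{eq:drw-yao-vs-hk}, and the filtration compatibility) that the paper leaves implicit in its one-line reduction.
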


\begin{lemma}
\label{lem:reduced p-torsionfree}
Let $X$ be a noetherian fs log scheme over $\F_p$ such that $\ul{X}$ is reduced. Then $\Gamma(X,\cM_X)^\mathrm{gp}$ is $p$-torsionfree.
\end{lemma}
\begin{proof}

We set $M:=\Gamma(X,\cM_X)$.
We can work strict \'etale locally on $X$,
so we may assume that $\ul{X}$ is an affine scheme $\Spec(A)$ and the quotient map $M\to \ol{M}$ admits a section $\ol{M}\to M$ by \cite[Lemma 1.3.3]{Tsu99}.
Since $\ol{M}^\mathrm{gp}$ is torsionfree by \cite[Proposition I.1.3.5 (2)]{ogu},
it suffices to show that $M^*\cong A^\times$ is $p$-torsionfree.
If $x\in A^\times$ satisfies $x^p=1$,
then we have $(x-1)^p=0$.
Since $A$ is reduced,
we have $x=1$.
Hence $A^\times$ is $p$-torsionfree.
\end{proof}

For a noetherian pre-log ring $(R,P)$ such that $\ol{P}$ is fine and saturated and the induced map $P\to \Gamma(\Spec(R,P),\cM_{\Spec(R,P)})$ is an isomorphism,
let $\lSm_{(R,P)}^\mathrm{Aff}$ be the full subcategory of $\lSm_{(R,P)}$ spanned by those $X$ such that $\ul{X}$ is affine and $\Gamma(X,\cM_X)$ is a chart for $X$.

By \cite[Proposition 7.7]{BLPO-HKR},
$\lSm_{(R,P)}^\mathrm{Aff}$ is equivalent to the opposite category of the full subcategory of pre-log $(R,P)$-algebras spanned by those objects $(A,M)$ such that $\Spec(A,M)$ is log smooth over $\Spec(R,P)$, the induced map $M\to \Gamma(\Spec(A,M),\cM_{\Spec(A,M)})$ is an isomorphism, and $\ol{M}$ is fine and saturated.

\begin{lemma}
\label{lem:Tsuji globalization}
With the above notation,
let $\cC$ be an $\infty$-category with limits.
There is an equivalence of $\infty$-categories
\[
\Sh_\set(\lSm_{(R,P)}^\mathrm{Aff},\cC)
\simeq
\Sh_\set(\lSm_{(R,P)},\cC).
\]
The same holds for any basis $\cB$ for the strict \'etale topology on $\lSm_{(R,P)}^\mathrm{Aff}$.
\end{lemma}
\begin{proof}
By \cite[Lemmas 1.3.2, 1.3.3]{Tsu99},
$\lSm_{(R,P)}^\mathrm{Aff}$ is a basis for  the strict \'etale topology on $\lSm_{(R,P)}$.
Hence \cite[Corollary A.8]{zbMATH07654538} implies that 
we have an equivalence of $\infty$-categories
\[
\Sh_\set(\cB,\Spc)
\simeq
\Sh_\set(\lSm_{(R,P)},\Spc),
\]
where $\Spc$ denotes the $\infty$-category of spaces.
Use \cite[Proposition 1.3.1.7]{SAG} to conclude.
\end{proof}

\begin{rmk}\label{rmk:crys-global}
 
Let $R$ be a noetherian ring, let $P$ be an fs sharp monoid, and let $(R,P)$ be the standard log base with $P\to R$ the zero map. Write again $\lSm_{(R,P)}$ for the category of log smooth and separated fs log schemes of finite type over $(R,P)$. Let $X\in \lSm_{(R,P)}$ and assume that $X\to \Spec(R,P)$ is saturated.
By \cite[Theorem IV.3.3.1 (3)]{ogu}, for every geometric point $x$ of $\ul{X}$ there is an \'etale neighborhood $\widetilde{\ul{X}}$ and a chart $P\to Q$ neat at $x$.
 Since $f$ is saturated, $P\to Q$ is saturated too. 
 By \cite[Remark II.2.4.5]{ogu},
 $Q$ is a chart neat at $x$.
 In particular, $Q$ is sharp.
Furthermore,  by \cite[Lemma 1.3.3]{Tsu99}, we may also assume that $M:=\Gamma(\widetilde{X},\cM_{\widetilde{X}})$ is a chart of $\widetilde{X}$ and the composite $Q\to M\to \ol{M}$ is surjective. The composite $Q\to \ol{M}\to \ol{\cM}_{X,x}$ is an isomorphism since the chart $Q$ is neat at $x$, so we have $Q\cong \ol{M}$. Furthermore, $M^\mathrm{gp}$ is $p$-torsionfree by Lemma \ref{lem:reduced p-torsionfree}.
 In particular, we can globalize Theorems  \ref{thm:crys_comparison} and \ref{thm:main-sat-descent-dRW2}:
\end{rmk}
\begin{thm}\label{thm:crys_comp_triv}
For all $X\in \lSm_{(k,\triv)}$ we have an equivalence 
\begin{equation}\label{eq:global-crys-comp-trivial-base}
R\Gamma_{\cPrism}(X/k)\simeq R\Gamma_{\rm crys}(X/W(k)).
\end{equation} of filtered $E_\infty$-rings that depends functorially on $X$.
\begin{proof}
In light of Lemma \ref{lem:Tsuji globalization} and Remark \ref{rmk:crys-global} we only need to construct a natural equivalence
\begin{equation}\label{eq:global-crys-comp-trivial-base2}
R\Gamma_{\cPrism}(\Spec(A,M)/k)\simeq R\Gamma_{\rm crys}(\Spec(A,M)/W(k))
\end{equation}
for $X=\Spec(A,M)\in \lSm_{(k,k^\times)}^\mathrm{Aff}$
with $M$ saturated and $M^\mathrm{gp}$ $p$-torsionfree such that $M\cong \Gamma(X,\cM_X)$ and  that there is a section $\ol{M}\to M$ of the quotient map $M\to \ol{M}$ such that $k[M]\to A$ is flat. Note that we only require that \eqref{eq:global-crys-comp-trivial-base2} is natural in $(A,M)$ and not in $\ol{M}$.
So we conclude by combining Theorem \ref{thm:crys_comparison}, Lemma \ref{lem:LWOmega-vs-WOmega}, and quasisyntomic descent.
 
    \end{proof}
\end{thm}

\begin{thm}\label{thm:crys_comp_logpoint}
 
For all $X\in \lSm_{(k,\N)}$ with $\ul{X}$ reduced, we have an equivalence
\begin{equation}\label{eq:global-crys-comp-log-base}
R\Gamma_{\cPrism}((\ul{X},\partial X\oplus_\N\N_{\rm perf})/k)\simeq R\Gamma_{\rm crys}(X/W(k,\N))
\end{equation} of filtered $E_\infty$-rings that depends functorially on $X$.

\begin{proof}
    Since $\N$ is valuative, the map $X\to \Spec(k,\N)$ is integral, and it is saturated if and only if $\ul{X}$ is reduced by \cite[Theorem II.4.2]{tsuji}, so we deduce that $X\to \Spec(k,\N)$ is saturated. Then by \cite[Proposition I.4.7.5 and Theorem III.2.2.7]{ogu} the morphism is also s-integral. As before,
 we only need to construct a natural equivalence \eqref{eq:global-crys-comp-log-base} for $X=\Spec(A,M)\in \lSm_{(k,k^\times\oplus \N)}^\mathrm{Aff}$
with $M$ saturated and $M^\mathrm{gp}$ $p$-torsionfree such that $M\cong \Gamma(X,\cM_X)$ and that there is a section $\ol{M}\to M$ of the quotient map $M\to \ol{M}$ factoring $\N\to M$ such that $k\otimes_{k[\N]}k[\ol{M}]\to A$ is flat. 
 
So we conclude again by combining Theorem \ref{thm:main-sat-descent-dRW2}, Lemma \ref{lem:LWOmega-vs-WOmega}, and quasisyntomic descent.
\end{proof}
\end{thm}

\begin{rmk}
    As observed in \cite[Warning 4.6.2]{BhattLurie}, the equivalences \eqref{eq:global-crys-comp-trivial-base} and \eqref{eq:global-crys-comp-log-base} are \emph{not} $W(k)$-linear. Indeed, the maps are $F$-semilinear, inducing equivalences
    \begin{align*}
 &F^*R\Gamma_{\cPrism}(X)\simeq R\Gamma_{\rm crys}(X/W(k))\\
 &F^*R\Gamma_{\cPrism}(X\times_{(k,\N)}\Spec(k,\N_{\rm perf}))\simeq R\Gamma_{\rm crys}(X/W(k,\N))
\end{align*} in $\cD(W(k))$.
\end{rmk}

\subsection{Motivic crystalline comparison}

\begin{thm}
    When restricted to $\SmlSm_k$, the equivalence \eqref{eq:crystalline-comp} induces a $\Z_p$-linear equivalence \[
\mathbf{E}^{\cPrism}\simeq \mathbf{E}^{\rm crys}\textrm{ in }\logDA(k,\Z_p)
\]
of oriented ring spectra, which becomes $W(k)$-linear after twisting with Frobenius: \[
F^*\mathbf{E}^{\cPrism}\simeq \mathbf{E}^{\rm crys}\textrm{ in }\logDA(k,W(k))
\]
\end{thm}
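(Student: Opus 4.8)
The plan is to invoke Proposition \ref{prop:build-spectra-maps}: both spectra are built via Proposition \ref{prop:build-spectra} from graded commutative monoids equipped with a compatible section of $\Lambda(\P^1)$, so it suffices to produce a map of graded commutative monoids on the underlying effective objects that is compatible with the respective first Chern classes. First I would recall the two monoids involved. On the prismatic side, $\mathbf{E}^{\cPrism}$ is constructed (in the proof of Theorem \ref{thm:spectra-prism}, via Corollary \ref{cor:prism-motivic-abs}) from the graded commutative monoid $\{R\Gamma_{\cPrism}(-)\{i\}\}_{i\in\N}$ in $\logFDAeff(k,\Z_p)$, pulled back to $\logDAeff(k,\Z_p)$ along $\mathrm{Comp}^*$ and restricted to $\SmlSm_k$. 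On the crystalline side, $\mathbf{E}^{\rm crys}=\lim_m \mathbf{E}^{m,\rm crys}$, where $\mathbf{E}^{m,\rm crys}$ comes from the constant graded monoid $\{W_m\Omega\}$ representing $R\Gamma_{\rm crys}(-/W_m(k))$.

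The key input is the crystalline comparison equivalence \eqref{eq:crystalline-comp} of Theorem \ref{thm:crys_comparison}, globalized to $\SmlSm_k$: for $X\in\SmlSm_k$ with a chart $(A,M)$ as in Remark \ref{rmk:crys-global} one has $\widehat{L\WOmega}_{(A,M)/k}\simeq \cPrism_{(A,M)}$ functorially, and combining with Lemma \ref{lem:LWOmega-vs-WOmega}, \cite[Theorem 3]{Yao-logDRW} and quasisyntomic descent (exactly as in Theorem \ref{thm:crys_comp_triv}), this yields $R\Gamma_{\cPrism}(X)\simeq R\Gamma_{\rm crys}(X/W(k))$ as filtered $E_\infty$-rings, functorially in $X$. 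Taking the $i$-th graded piece for the Breuil--Kisin/Nygaard twist and using the comparison of the secondary graded pieces with $\bigwedge^i$ of the (logification-invariant) cotangent complex, one gets an equivalence of graded commutative monoids $\{R\Gamma_{\cPrism}(-)\{i\}\}_i \simeq \{R\Gamma_{\rm crys}(-/W(k))\}_i \simeq \{W\Omega_{-/k}\}_i$ in $\logDAeff(k,W(k))$; here I would note that the prismatic monoid is only $\Z_p$-linear a priori, and the $W(k)$-linearity of $\mathbf{E}^{\rm crys}$ is introduced precisely by the Frobenius twist, which matches the $F$-semilinearity of \eqref{eq:crystalline-comp} recorded in the Remark after Theorem \ref{thm:crys_comp_logpoint} (and \cite[Warning 4.6.2]{BhattLurie}). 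Concretely: over $\Z_p$ the comparison is an equivalence of graded monoids $\{R\Gamma_{\cPrism}(-)\{i\}\}_i \simeq \{R\Gamma_{\rm crys}(-/W(k))\}_i$ with no linearity claim beyond $\Z_p$, giving $\mathbf{E}^{\cPrism}\simeq \mathbf{E}^{\rm crys}$ in $\logDA(k,\Z_p)$; regarding both sides as $W(k)$-module objects, the comparison becomes an equivalence $F^*\mathbf{E}^{\cPrism}\simeq\mathbf{E}^{\rm crys}$ in $\logDA(k,W(k))$.

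To apply Proposition \ref{prop:build-spectra-maps} I must check the Chern-class compatibility $\psi_1[2]\circ c^{\cPrism} = c^{\rm crys}$, where $c^{\cPrism}\colon \Lambda(\P^1)\to R\Gamma_{\cPrism}(-)\{1\}[2]$ is the prismatic first Chern class of Bhatt--Lurie (restricted along $\mathrm{Comp}^*$) and $c^{\rm crys}$ is the crystalline Chern class of Gros \cite{Gros1985}. This is the step I expect to be the main obstacle: one needs that the comparison isomorphism $\cPrism_{(A,M)}\simeq \widehat{L\WOmega}_{(A,M)/k}$ carries $c_1^{\cPrism}(\cO(1))$ to $c_1^{\rm crys}(\cO(1))$, i.e. compatibility of the two constructions of $c_1$ on $\P^1$. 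For the non-log case this is essentially \cite[Theorem 7.6.2]{BhattLurie} (compatibility of the prismatic Chern class with the de Rham/crystalline one under the crystalline comparison), and on $\P^1$ the log and non-log Chern classes coincide since the log structure is trivial; I would reduce to this via the strict chart and naturality of both Chern classes under $\mathrm{Comp}^*$ and under quasisyntomic descent. Once this compatibility is in hand, Proposition \ref{prop:build-spectra-maps} produces the desired map $\psi\colon \mathbf{E}^{\cPrism}\to\mathbf{E}^{\rm crys}$ in $\CAlg(\logDA(k,\Z_p))$, and it is an equivalence because it induces the equivalences of graded monoids on all of $\SmlSm_k$ (both sides are determined by their values on $\SmlSm_k$, by Proposition \ref{prop:build-spectra}). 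Twisting the $W(k)$-module structure by $F$ on the source upgrades this to the $W(k)$-linear equivalence $F^*\mathbf{E}^{\cPrism}\simeq\mathbf{E}^{\rm crys}$ in $\logDA(k,W(k))$, using that $\mathbf{E}^{\rm crys}=\lim_m\mathbf{E}^{m,\rm crys}$ and that $\psi$ is compatible with the transition maps $\mathbf{E}^{m_1,\rm crys}\to\mathbf{E}^{m_2,\rm crys}$.
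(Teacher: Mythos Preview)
Your approach is essentially the same as the paper's: apply Proposition \ref{prop:build-spectra-maps} after identifying the two graded commutative monoids and checking Chern class compatibility. There are, however, two places where your argument is imprecise and the paper's is cleaner.

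First, your identification of the graded monoids is confused: the Breuil--Kisin twist $\{i\}$ is not a graded piece of the Nygaard filtration, and the secondary filtration lives on $\gr_N^i$, not on the twists themselves, so ``taking the $i$-th graded piece for the Breuil--Kisin/Nygaard twist'' does not yield what you want. The paper instead observes that since the base has characteristic $p$, the Breuil--Kisin twist is canonically trivialized (\cite[Example 2.6.4]{BhattLurie}), so $\{R\Gamma_{\cPrism}(-)\{i\}\}_i$ is already the \emph{constant} graded monoid $\{R\Gamma_{\cPrism}(-)\}$, and then \eqref{eq:global-crys-comp-trivial-base} directly gives an equivalence of constant graded monoids. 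Second, for the Chern-class compatibility the relevant Bhatt--Lurie reference is \cite[Proposition 7.5.5]{BhattLurie} (crystalline comparison of $c_1$), not Theorem 7.6.2 (de Rham). For the $W(k)$-linear statement, the paper makes the argument explicit rather than just invoking $F$-semilinearity: the prismatic Chern class factors through $R\Gamma_{\Fsyn}$, hence through the $F$-semilinear divided Frobenius $\phi_1\colon \Fil^1_N R\Gamma_{\cPrism}\to R\Gamma_{\cPrism}$, which is exactly what makes the Chern-class triangle commute with $F^*R\Gamma_{\cPrism}$ in place of $R\Gamma_{\cPrism}$.
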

\begin{proof}
 
    As recalled for example in \cite[Example 2.6.4]{BhattLurie}, since the base has characteristic $p$ the Breuil--Kisin twist is canonically trivialized, so for the graded commutative monoid in $\logDAeff(k,W(k))$ given by the collection $\{R\Gamma_{\cPrism}(-)\{i\}\}_{i\in \Z}$ is equivalent to the (constant) graded commutative monoid $\{R\Gamma_{\cPrism}(-)\}$. This implies that the equivalence \eqref{eq:global-crys-comp-trivial-base} induces an equivalence of constant graded commutative monoids, so by Proposition \ref{prop:build-spectra-maps} it is enough to show that for all $X\in \SmlSm_k$ the following diagram commutes in $\logDAeff(k,\Z_p)$:
    \[
    \begin{tikzcd}
        \Z_p(\P^1)\ar[r,"c_{\cPrism}"]\ar[dr,bend right = 20,"c_{\rm crys}"] &R\Gamma_{\cPrism}(-)\ar[d,"\gamma_{\cPrism}"]\\
        &R\Gamma_{\rm crys}(-)
    \end{tikzcd}
    \]
    In fact, $c_{\cPrism}$ is induced by the prismatic Chern class $c_{1}^{\cPrism}(\cO(1))$ and $c_{\rm crys}$ is induced by the crystalline Chern class $c_{1}^{\rm crys}(\cO(1))$: then the result follows from \cite[Proposition 7.5.5]{BhattLurie}.

    To get the second equivalence, just notice that the prismatic Chern class factors through $R\Gamma_{\syn}(-)\to \Fil^1_\rN R\Gamma_{\cPrism}(-)\xrightarrow{\rm can}  R\Gamma_{\cPrism}(-)$: in particular it factors through the $F$-semilinear map $\Fil^1_\rN R\Gamma_{\cPrism}(-)\xrightarrow{\rm can}  R\Gamma_{\cPrism}(-)$: this implies that there is a commutative diagram in $\logDAeff(k,W(k))$\[
     \begin{tikzcd}
        W(k)(\P^1)\ar[r,"c_{\cPrism}"]\ar[dr,bend right = 20,"c_{\rm crys}"] &F^*R\Gamma_{\cPrism}(-)\ar[d,"F^*\gamma_{\cPrism}"]\\
        &R\Gamma_{\rm crys}(-),
    \end{tikzcd}
    \]
    then the result is clear.
\end{proof}
\begin{rmk}
    This implies that the prismatic Gysin map of \ref{thm:gysin-prism} agrees with the crystalline Gysin map of \cite{Gros1985} (up to $F^*$), as both agree with the sequence induced by homotopy purity (see \cite[(4.5.2)]{mericicrys}) 
\end{rmk}

\section{A log Segal conjecture}

Let $k$ be a field of characteristic $p$. In this section, we will prove an analogue of \cite[Corollary 8.18]{BMS2} in the log setting, which is needed to define the Breuil--Kisin cohomology. First, we prove a statement for pre-log algebras over $(k,\triv)$, which is independent of the saturated descent of the previous section. To simplify the notation, we will write $\THH, \TC^{-}$ and $\TP$ for their respective $p$-completed versions.

By \cite[Proposition 6.3]{BMS2},
we have a commutative square of graded rings with vertical isomorphisms
\[
\begin{tikzcd}[column sep=huge]
\pi_*\TC^-(k)
\ar[d,"\simeq"']
\ar[r,"\varphi"]
&
\pi_*\TP(k)
\ar[d,"\simeq"]
\\
\Z_p[u,v]/(uv-p)
\ar[r,"{u\mapsto \sigma,v\mapsto p\sigma^{-1}}","\varphi-\text{linear}"']
&
{\Z_p[\sigma,\sigma^{-1}]}.
\end{tikzcd}
\]
As observed in the proof of \cite[Proposition 6.4]{BMS2}, for all $\bT$-equivariant $\THH(k)$-modules $\sF$,
we have a natural equivalence
\[
\sF^{h\bT}
\otimes_{\TC^-(k)}
\THH(k)
\simeq
\sF
\]
For a pre-log $k$-algebra $(A,M)$,
we apply this equivalence to $\cF=\THH(A,M)$ and $\cF=\THH(A,M)^{tC_p}$ to obtain a commutative square with vertical equivalences
\begin{equation}
\label{p-case.20.1}
\begin{tikzcd}
\TC^-(A,M)/v
\ar[r,"\varphi"]
\ar[d,"\simeq"']
&
\TP(A,M)/p
\ar[d,"\simeq"]
\\
\THH(A,M)
\ar[r,"\varphi"]
&
\THH(A,M)^{tC_p}.
\end{tikzcd}
\end{equation}
The equivalence $\THH(A,M)^{tC_p}\simeq \TP(A,M)/p$ is compatible with the motivic filtrations of \cite[Proposition 7.5]{BLPO-BMS2} by checking on log quasiregular semiperfect $k$-algebras, so it gives equivalences
\begin{align}
&\mathrm{gr}^i\THH(A,M)\simeq \mathrm{gr}^i\TCmin(A,M)/v\simeq \Fil^{\geq i}_\rN\cPrism_{(A,M)}/v[2i]\simeq \gr^{i}_N\cPrism_{(A,M)}[2i],\label{eq:general-THH-graded-prismatic} \\
&\mathrm{gr}^i\THH(A,M)^{tC_p}\simeq \mathrm{gr}^i\TP(A,M)/p\simeq \cPrism_{(A,M)}/p[2i]\simeq \widehat{L\Omega}_{(A,M)/k}[2i],
\label{eq:general-THHtCp-derived-de-Rham} 
\end{align}
where the last equivalences are stated in \cite[\S 7.2]{BLPO-BMS2}, analogous to \cite[Theorem 7.2]{BMS2} for the non-log case.

\begin{prop}\label{prop:log-segal}
For all $i\in\Z$, the above map gives an equivalence of sheaves of spectra on $\lSm(k)$:\[
\mathrm{gr}^i\THH(-)\simeq (\tau^{\leq i}\Omega)[2i],
\]
Moreover, under this equivalence, the map $\phi\colon \mathrm{gr}^i\THH(-)\to \mathrm{gr}^i\THH(-)^{tC_p}$ is the natural map $(\tau^{\leq i}\Omega)[2i]\to \Omega[2i]$. In particular, for all $(A,M)\in \lSm(k)$ the map\[
\phi\colon \THH(A,M)\to \THH(A,M)^{tC_p}
\]
is an equivalence in degrees $\geq \dim(A)$. 
\begin{proof}
Recall that by the conjugate filtration $\gr^i\THH(-)$ lives naturally in $\logDAeff(k,\Z_p)$ (via the Dold-Kan correspondence). Composing \eqref{eq:general-THHtCp-derived-de-Rham} with the map $\phi\colon \mathrm{gr}^i\THH(-)\to \mathrm{gr}^i\THH(-)^{tC_p}$ we obtain a map\[
\mathrm{gr}^i\THH(-) \to \Omega_{(-)/k}[2i].
\] of objects of $\logDAeff(k,\Z_p)$.
Let $X\in \lSm_k$ connected: by \cite[Corollary 8.18]{BMS2}, the result holds if $X$ has trivial log structure, in particular if $\eta_X$ is the generic point of $X$ we have that $\gr^i\THH(\eta_X)\simeq (\tau^{\leq i}\Omega_{\eta_X})[2i]$. This implies that $\mathrm{gr}^i\THH[-2i]$ is generically $(-i)$-connective, so by \cite[Corollary 4.6]{BindaMerici}, we conclude that $\mathrm{gr}^i\THH[-2i]$ is $(-i)$-connective for the homotopy $t$-structure of $\logDAeff(k)$, in particular $\phi[-2i]$ factors as \[
\begin{tikzcd}
{\rm gr}^i\THH(-)[-2i]\ar[rr,"{\phi[-2i]}"]\ar[dr,"\psi"]&&{\rm gr}^i\THH(-)^{tC_p}[-2i]\\
&\tau_{\geq-i}{\rm gr}^i\THH(-)^{tC_p}[-2i]\ar[ur].
\end{tikzcd}
\]
To conclude, it is enough to check that $\psi$ is an equivalence, and
again by \cite[Corollary 4.6]{BindaMerici}, it is enough to check it on $(\eta_X,\triv)$, which follows again from \cite[Corollary 8.18]{BMS2}.
\end{proof}
\end{prop}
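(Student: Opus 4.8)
The plan is to prove the statement about $\gr^i\THH$ as a sheaf on $\lSm(k)$ by reducing everything to the non-logarithmic Segal conjecture of \cite[Corollary 8.18]{BMS2}, using the logarithmic motivic $t$-structure as the bookkeeping device for the reduction. First I would record that, via the secondary filtration on $\gr^i_N\cPrism_{(-,M)}$ and the identifications \eqref{eq:general-THH-graded-prismatic} and \eqref{eq:general-THHtCp-derived-de-Rham}, the object $\gr^i\THH(-)$ naturally lives in $\logDAeff(k,\Z_p)$ (after shifting by $[-2i]$ and applying Dold--Kan), and that $\phi$ induces a well-defined morphism $\gr^i\THH(-)\to \Omega_{(-)/k}[2i]$ of such objects. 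The target $\Omega_{(-)/k}$ here is log K\"ahler differentials, which agrees generically with the non-log de Rham complex; this is exactly where one feeds in that the log structure is ``invisible'' at the generic point.

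Next I would invoke the non-logarithmic case: for a field $\eta_A$, hence for the strictly local ring at the generic point of $\ul A$ with trivial log structure, \cite[Corollary 8.18]{BMS2} gives $\gr^i\THH(\eta_A)\simeq (\tau^{\leq i}\Omega_{\eta_A})[2i]$, with $\phi$ realizing the canonical map $\tau^{\leq i}\Omega\to\Omega$. From this I would conclude that $\gr^i\THH(-)[-2i]$ is \emph{generically} $(-i)$-connective for the homotopy $t$-structure on $\logDAeff(k)$, and then use the fact (the reference to \cite{BindaMerici} in the proof) that generic connectivity implies connectivity, to get that $\gr^i\THH(-)[-2i]$ is $(-i)$-connective globally. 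This forces $\phi[-2i]$ to factor through the truncation $\tau_{\geq -i}\,\gr^i\THH(-)^{tC_p}[-2i]$, producing the comparison map $\psi$. The remaining point is that $\psi$ is an equivalence, which by another application of the generic-to-global principle of \cite{BindaMerici} can be checked after restricting to the generic point with trivial log structure, where it is again \cite[Corollary 8.18]{BMS2}.

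Granting the sheaf-level statement $\gr^i\THH(-)\simeq(\tau^{\leq i}\Omega)[2i]$ with $\phi$ the truncation map, the pointwise conclusion is formal: for $(A,M)\in\lSm(k)$ one has a finite filtration (the motivic/secondary filtration on $\THH$) whose graded pieces are $\gr^i\THH(A,M)$, and since $\Omega^j_{(A,M)/k}=0$ for $j>\dim A$, the map $\tau^{\leq i}\Omega_{(A,M)/k}\to\Omega_{(A,M)/k}$ is an equivalence once $i\geq\dim A$; hence $\phi\colon\gr^i\THH(A,M)\to\gr^i\THH(A,M)^{tC_p}$ is an equivalence for all $i\geq\dim A$, and passing to the filtration (which is complete and exhaustive, with graded pieces $\gr^i$) shows $\phi\colon\THH(A,M)\to\THH(A,M)^{tC_p}$ is an equivalence in degrees $\geq\dim A$.

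The main obstacle is the passage from \emph{generic} connectivity/equivalence to \emph{global} connectivity/equivalence in $\logDAeff(k)$: this is precisely the input needed from the theory of the homotopy $t$-structure on log motives (the cited work \cite{BindaMerici}), and one must make sure the hypotheses there apply to the particular objects $\gr^i\THH(-)[-2i]$ and to the map $\psi$ — i.e. that these are objects of the heart's ambient category to which the generic-detection criterion applies, and that the relevant restriction functor to the generic point with trivial log structure is exact and conservative enough for the argument. Everything else (the identifications of graded pieces with de Rham/prismatic data, compatibility of the equivalence $\THH^{tC_p}\simeq\TP/p$ with filtrations, the finiteness $\Omega^{>\dim A}=0$) is already available in the cited references and is routine to assemble.
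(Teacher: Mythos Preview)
Your proposal is correct and follows essentially the same approach as the paper's proof: both use the secondary filtration to place $\gr^i\THH(-)$ in $\logDAeff(k,\Z_p)$, invoke \cite[Corollary 8.18]{BMS2} at the generic point (where the log structure is trivial) to obtain generic $(-i)$-connectivity, apply the generic-to-global principle from \cite{BindaMerici} for the homotopy $t$-structure to get the factorization $\psi$, and then use \cite{BindaMerici} once more to check $\psi$ is an equivalence by reducing to the generic point. Your explicit unpacking of the ``in particular'' clause (vanishing of $\Omega^j$ for $j>\dim A$ forcing $\phi$ to be an equivalence on graded pieces for $i\geq\dim A$, then passing through the complete exhaustive filtration) is a welcome addition that the paper leaves implicit.
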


Let us now move to the case of algebras over the log point $(k,\N)$. Here, let $(\Sph,\N)$ be the pre-log ring spectrum given by the quotient map $\Sph[\N]\to \Sph$: then if $(A,M)$ is a pre-log $(k,\N)$-algebra we have an induced map of pre-log ring spectra $(\Sph,\N)\to (A,M)$. Recall that in light of \cite[Remark 8.5]{BLPO-BMS2}, the spherical log point $(\Sph,\N)$ is a log cyclotomic base in the sense of \cite[Definition 8.3]{BLPO-BMS2}.

\begin{prop}\label{prop:log-segal2}
For every saturated log smooth map $(k,\N)\to (A,M)$ of fs pre-log rings
we have a natural equivalence
\[
\mathrm{gr}^i\THH((A,M)/(\Sph,\N))\simeq (\tau^{\leq i}\Omega_{(A,M)/(k,\N)})[2i].
\] 
Moreover,
under this equivalence the map \[\phi\colon \mathrm{gr}^i\THH((A,M)/(\Sph,\N))\to \mathrm{gr}^i\THH((A,M)/(\Sph,\N))^{tC_p}\] is the natural map $\tau^{\leq i}\Omega_{(A,M)/(k,\N)}\to \Omega_{(A,M)/(k,\N)}$.
In particular,
the map
\[
\varphi\colon \THH((A,M)/(\Sph,\N))
\to
\THH((A,M)/(\Sph,\N))^{tC_p}
\]
is an equivalence in degrees $\gg 0$. 
\end{prop}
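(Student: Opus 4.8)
\textbf{Proof strategy for Proposition \ref{prop:log-segal2}.}
The plan is to run the same argument as in the proof of Proposition \ref{prop:log-segal}, but now over the log cyclotomic base $(\Sph,\N)$ and using the saturated descent machinery of Section 4 to reduce the logarithmic statement to the absolute (non-log) Segal conjecture of \cite[Corollary 8.18]{BMS2}. First, I would set up the analogue of the square \eqref{p-case.20.1} over the log point: since $(\Sph,\N)$ is a log cyclotomic base in the sense of \cite[Definition 8.3]{BLPO-BMS2} (by \cite[Remark 8.5]{BLPO-BMS2}), for every $\bT$-equivariant $\THH((A,M)/(\Sph,\N))$-module $\sF$ we get $\sF^{h\bT}\otimes_{\TC^-((k,\N)/(\Sph,\N))}\THH((k,\N)/(\Sph,\N))\simeq \sF$, and applying this to $\THH((A,M)/(\Sph,\N))$ and its $tC_p$ version produces a commutative square with vertical equivalences relating $\TC^-/v$ to $\THH$ and $\TP/p$ to $\THH^{tC_p}$, exactly as in \eqref{p-case.20.1}. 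Combined with the motivic filtration on relative log $\THH$ over $(\Sph,\N)$ from \cite[Proposition 7.10]{BLPO-BMS2} (and the identification of $\cPrism_{(A,M)/(\Sph,\N)}$ with the relative log prismatic cohomology), one obtains the graded identifications $\gr^i\THH((A,M)/(\Sph,\N))\simeq \gr_N^i\cPrism_{(A,M)/(\Sph,\N)}[2i]$ and $\gr^i\THH((A,M)/(\Sph,\N))^{tC_p}\simeq \widehat{L\Omega}_{(A,M)/(k,\N)}[2i]$, so that the Frobenius $\phi$ on graded pieces becomes a map $\gr^i\THH((A,M)/(\Sph,\N))\to \Omega_{(A,M)/(k,\N)}[2i]$ of objects in $\logDAeff(k,\Z_p)$.

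Next I would reduce to the non-log Segal conjecture via saturated descent. By Remark \ref{rmk:crys-global}, \'etale locally on $\ul{X}$ we may assume $(A,M)\in \lQSyn_{(k,\N)}$ with $M$ sharp and saturated, $\N\to M$ saturated, and $k\otimes_{k[\N]}k[M]\to A$ flat; then Example \ref{ex:condition-clubsuit}(3) applies, so $A\otimes^L_{k[M\oplus_\N\N_{\rm perf}]}k[M_{\rm perf}]$ is $p$-completely discrete and the hypotheses of Lemmas \ref{lem:sat-descent-dRW-general} and \ref{lem:sat-descent-prismatic-general} are met. Using Theorem \ref{thm:main-sat-descent-dRW2} and the saturated descent for $\THH$ (which follows from the descent for $\cPrism$ and $\widehat{L\WOmega}$ on graded pieces, exactly as \eqref{eq:sat-descent-drw-nolog} is deduced) we can express $\gr^i\THH((A,M)/(\Sph,\N))$ as the totalization of $\gr^i\THH$ of the non-log rings $A\otimes_{k[M\oplus_\N\N_{\rm perf}]}k[\check{C}^{\rm sat}(M_{\rm perf}^\bullet)]$ over $k$, with the Frobenius acting levelwise. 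Since \cite[Corollary 8.18]{BMS2} (equivalently, the trivial-log-structure case of Proposition \ref{prop:log-segal}) tells us that $\gr^i\THH$ of each of these non-log $k$-algebras is $(\tau^{\leq i}\Omega)[2i]$ with $\phi$ the canonical truncation map, and since $\gr^i\Omega_{(A,M)/(k,\N)}$ descends the same way by Proposition \ref{prop:semipf-invariance-cotangent}, the totalization gives the desired equivalence $\gr^i\THH((A,M)/(\Sph,\N))\simeq (\tau^{\leq i}\Omega_{(A,M)/(k,\N)})[2i]$ with $\phi$ the natural map $\tau^{\leq i}\Omega_{(A,M)/(k,\N)}\to \Omega_{(A,M)/(k,\N)}$.

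Alternatively, and perhaps more cleanly, I would argue directly with the homotopy $t$-structure on $\logDAeff(k,\Z_p)$ as in the proof of Proposition \ref{prop:log-segal}: the composite $\gr^i\THH((A,M)/(\Sph,\N))[-2i]\to \Omega_{(A,M)/(k,\N)}$ is generically $(-i)$-connective by \cite[Corollary 8.18]{BMS2} applied at the generic point, hence $(-i)$-connective by \cite{BindaMerici}, so $\phi[-2i]$ factors through $\tau_{\geq -i}(\gr^i\THH((A,M)/(\Sph,\N))^{tC_p})[-2i]$; one then checks the resulting map $\psi$ is an equivalence, again reducing by \cite{BindaMerici} to the generic point $(\eta_A,\triv)$ where it is \cite[Corollary 8.18]{BMS2}. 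The last statement, that $\phi\colon \THH((A,M)/(\Sph,\N))\to \THH((A,M)/(\Sph,\N))^{tC_p}$ is an equivalence in degrees $\gg 0$, follows because the graded pieces $\gr^i\phi$ are equivalences in degrees $\geq 2i - i = i$ (as $\tau^{\leq i}\Omega\to \Omega$ is an equivalence in those degrees, $\Omega$ being concentrated in the relevant range given $\dim A$), and the motivic filtration is complete. The main obstacle I anticipate is verifying that the motivic/Nygaard filtration on relative log $\THH$ over $(\Sph,\N)$ of \cite[Proposition 7.10]{BLPO-BMS2} is genuinely compatible with the saturated-descent resolution --- i.e.\ that the descent equivalences of Theorem \ref{thm:main-sat-descent-dRW2} and Lemma \ref{lem:sat-descent-prismatic-general} lift to filtered equivalences of the relative $\THH$-towers --- since this is exactly the point where the relative base $(\Sph,\N)$ (as opposed to $\Sph$) interacts nontrivially with the perfection of the log structure; once that compatibility is in hand, everything else is a formal consequence of the absolute Segal conjecture and the $t$-structure argument.
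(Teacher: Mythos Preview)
Your two approaches both circle around the right ideas, but each misses the key step that the paper uses, and your Approach~2 has a genuine gap.

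The paper's proof does not work with relative $\THH$ over $(\Sph,\N)$ directly. Instead, the crucial move is to establish an equivalence of \emph{cyclotomic spectra}
\[
\THH(A,M\oplus_{\N}\N_{\rm perf}) \xrightarrow{\ \simeq\ } \THH((A,M)/(\Sph,\N)),
\]
obtained from a cube of cocartesian squares together with the observations that $\THH(k)\simeq \THH(k,\N_{\rm perf})$ and $\THH(k)\simeq \THH((k,\N_{\rm perf})/(\Sph,\N_{\rm perf}))$. This converts the relative $\THH$ into an \emph{absolute} $\THH$ of a pre-log ring with semiperfect log structure. Now the already-established absolute identifications \eqref{eq:general-THH-graded-prismatic} and \eqref{eq:general-THHtCp-derived-de-Rham} apply verbatim, giving $\gr^i\THH((A,M)/(\Sph,\N))\simeq \gr^i_N\cPrism_{(A,M\oplus_\N\N_{\rm perf})}[2i]$ and $\gr^i\THH((A,M)/(\Sph,\N))^{tC_p}\simeq \cPrism_{(A,M\oplus_\N\N_{\rm perf})}/p[2i]$. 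Then Theorem~\ref{thm:main-sat-descent-dRW2} (the crystalline comparison over the log point, which already encodes the saturated descent) together with \eqref{eq:drw-yao-vs-hk} and \eqref{eq:drw-graded} identifies these with $\tau^{\leq i}\Omega_{(A,M)/(k,\N)}[2i]$ and $\Omega_{(A,M)/(k,\N)}[2i]$, and shows that $\phi$ is the natural inclusion. No new saturated-descent compatibility for $\THH$ is required: the obstacle you identified at the end of Approach~1 is precisely what the cyclotomic equivalence above circumvents.

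Your Approach~2 has a more serious problem. The $t$-structure argument in Proposition~\ref{prop:log-segal} uses that $\gr^i\THH(-)$ defines an object of $\logDAeff(k,\Z_p)$, which is built from $\SmlSm_k$ --- fs log schemes that are log smooth over $k$ with \emph{trivial} log structure and whose underlying scheme is smooth. But $(A,M)$ here is log smooth over the log point $(k,\N)$: its underlying scheme is typically not smooth over $k$ (think of $k[x,y]/(xy)$), so $(A,M)\notin \lSm_k$, the functor $(A,M)\mapsto \gr^i\THH((A,M)/(\Sph,\N))$ is not a presheaf on $\SmlSm_k$, and the generic-point reduction of \cite{BindaMerici} does not apply. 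Moreover, even at the generic point the relative $\THH$ over $(\Sph,\N)$ is not the absolute $\THH$ that \cite[Corollary~8.18]{BMS2} computes. So Approach~2, as stated, does not go through; the passage to absolute $\THH$ via the perfection of the base log structure is exactly what is needed to repair it, and that is the paper's argument.
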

\begin{proof}
The question is \'etale local on $A$.
Hence by \cite[Theorem 3.5]{katolog},
we may assume that
\begin{enumerate}
\item[(i)] $\N\to M$ is injective and saturated,
\item[(ii)] $k\otimes_{k[\N]}k[M]\to A$ is \'etale.
\end{enumerate}
Recall that \[
\THH((A,M)/(\Sph,\N))\simeq \THH((A,M))\otimes_{\THH(\Sph,\N)} \Sph
\]
is an equivalence of cyclotomic spectra, since $(\Sph, \N)$ is a log cyclotomic base.
Consider the pre-log ring $(A,M\oplus_\N \N_{\rm perf})=(A,M)\otimes^L_{(k,\N)}(k,\N_{\rm perf})$ as in the proof of Theorem \ref{thm:main-sat-descent-dRW2}. Then we have an equivalence \[
\THH(A,M\oplus_{\N}\N_{\rm perf})\simeq \THH(A,M)\otimes_{\THH(k,\N)}\THH(k,\N_{\rm perf}).
\] of cyclotomic spectra.
We shall now argue that there is an equivalence of cyclotomic spectra \begin{equation}\label{eq:wishingawayperfectiononceagain}
\THH(A,M\oplus_{\N}\N_{\rm perf}) \xrightarrow{\simeq} \THH((A,M)/(\Sph,\N)).
\end{equation}
For this, consider the commutative diagram \[\begin{tikzcd}[row sep = small]{\rm THH}({\Sph}, {\N}) \ar{r} \ar{d} & {\rm THH}({\Sph}, {\N}_{\rm perf}) \ar{d} \ar{r}{} & {\Sph} \ar{d} \\ {\rm THH}(k, {\N}) \ar{r} \ar{d} & {\rm THH}(k, {\N}_{\rm perf}) \ar{d} \ar{r} & {\rm THH}((k, {\N}_{\rm perf}) / ({\Sph}, {\N}_{\rm perf})) \ar{d} \\ {\rm THH}(A, M) \ar{r} & {\rm THH}(A, M \oplus_{\N} {\N}_{\rm perf}) \ar{r}{} & {\rm THH}((A, M) / ({\Sph}, {\N}))\end{tikzcd}\] in which every square is defined to be cocartesian. The identification of the lower right-hand corner is a consequence of the outer square being cocartesian.  Since both $k$ and ${\N}_{\rm perf}$ are perfect, the canonical map ${\rm THH}(k) \to {\rm THH}(k, {\N}_{\rm perf})$ is an equivalence (this follows from \cite[Corollary 4.13, Proposition 7.3]{BLPO-BMS2} by reduction to the corresponding statement for the cotangent complex), and the canonical map ${\rm THH}(k) \to {\rm THH}((k, {\N}_{\rm perf}) / ({\Sph}, {\N}_{\rm perf}))$ is also an equivalence since $({\Sph}, {\N}_{\rm perf}) \to (k, {\N}_{\rm perf})$ is strict. It follows that the map ${\rm THH}(k, {\N}_{\rm perf}) \to {\rm THH}((k, {\N}_{\rm perf}) / ({\Sph}, {\N}_{\rm perf}))$ is an equivalence,  and thus also the map ${\rm THH}(A, M \oplus_{{\N}} {\N}_{\rm perf}) \xrightarrow{} {\rm THH}((A, M) / ({\Sph}, {\N}))$ is an equivalence of cyclotomic spectra. The latter of these is \eqref{eq:wishingawayperfectiononceagain}.

By \eqref{eq:general-THHtCp-derived-de-Rham} and Theorem \ref{thm:main-sat-descent-dRW2},
we have an equivalence\[
\gr_N^i\THH((A,M)/(\Sph,\N))^{tC_p}\simeq \WOmega_{(A,M)/(k,\N)}/p[2i]\simeq \Omega_{(A,M)/(k,\N)}[2i]
\]
and the induced map $\gr^i\THH((A,M)/(\Sph,\N)\to \Omega_{(A,M)/(k,\N)}[2i]$ factors via the isomorphism $\gr^i\THH((A,M)/(\Sph,\N)\simeq \gr^i_N\cPrism_(A,M\oplus_\N \N_{\rm perf})\simeq \tau^{\leq i}\Omega_{(A,M)/(k,\N)}[2i]$ of \eqref{eq:drw-graded}. Finally, the map\[
\gr^i \TCmin(A,M\oplus_\N \N_{\rm perf}) \to \gr^i \TP(A,M\oplus_\N \N_{\rm perf})
\]
agrees with the inclusion $\Fil_\rN^{\geq i}\WOmega_{(A,M)/(k,\N)}[2i]\subseteq \WOmega_{(A,M)/(k,\N)}[2i]$: this implies that modulo $v$ and $p$ it is the natural map $\tau^{\leq i}\Omega_{(A,M)/(k,\N)}\to \Omega_{(A,M)/(k,\N)}$. The last statement follows from the fact that $\Omega^1_{(A,M)/(k,\N)}$ is locally free of finite type by \cite[Proposition IV.3.2.1]{ogu}.
\end{proof}

\section{The log Breuil--Kisin cohomology} \label{sec:BK_cohomology}
We fix the setup of \cite[\S 11]{BMS2}. 
Let $K$ be a discretely valued extension of $\Q_p$ with perfect residue field $k$, ring of integers $\cO_K$, and fix a uniformizer $\varpi \in \cO_K$. Let $K_\infty$ be the $p$-adic completion of $K(\varpi^{1/p^{\infty}})$ and let $C$ be the completion of an algebraic closure of $K_\infty$ and $A_{\rm inf} = A_{\rm inf}(\cO_C)$. 

Let $\mathfrak{S} = W(k)\llbracket z\rrbracket$. There is a surjective map $\tilde \theta: \mathfrak{S} \to \cO_K$ determined by the inclusion $W(k)\subseteq \cO_K$ and $z \mapsto \varpi$. The kernel of this map is generated by an Eisenstein polynomial $E=E(z) \in \mathfrak{S}$ for $\varpi$. Let $\phi$ be the endomorphism of $\mathfrak{S}$ determined by the Frobenius on $W(k)$ and $z\mapsto z^p$. We regard $\mathfrak{S}$ as a $\phi$-stable subring of $A_{\rm inf}(\cO_{K_\infty})$ or $A_{\rm inf}$ by the Frobenius on $W(k)$ and sending $z$ to $[\varpi^\flat]^p$ where $\varpi^\flat = (\varpi,\varpi^p,\varpi^{p^2},\ldots) \in \cO^\flat_{K_\infty}$ is
our chosen compatible system of $p$-power roots of $\varpi$; the resulting map $\mathfrak{S} \to A_{\rm inf}$ is faithfully flat and even topologically free (see \cite[Lemma 4.30 and its proof]{BMS2}). Write $\theta = \tilde \theta\circ \phi : \mathfrak{S} \to \cO_K$. The embedding $\mathfrak{S}\subseteq A_{\rm inf}$ is compatible with the $\theta$ and $\tilde \theta$ maps. 

This section aims to construct a cohomology theory for log smooth formal schemes that is valued in Breuil--Kisin $\mathfrak{S}$-modules, generalizing \cite[Theorem 11.2]{BMS2}. Namely, we will prove that:

\begin{thm}\label{thm:BK-horizontal}
To any log smooth affine formal log scheme $\Spf(A,M)/(\cO_K,\triv)$, one can functorially (for maps of formal log schemes) attach a $(p,z)$-complete $E_\infty$-algebra $\cPrism_{(A,M)/\mathfrak{S}}\in \cD(\mathfrak{S})$ together with a $\phi$-linear Frobenius endomorphism $\phi: \cPrism_{(A,M)/\mathfrak{S}} \to  \cPrism_{(A,M)/\mathfrak{S}}$ inducing an isomorphism $\cPrism_{(A,M)/\mathfrak{S}}\otimes_{\mathfrak{S},\phi}\mathfrak{S}[1/E]\simeq \cPrism_{(A,M)/\mathfrak{S}}[1/E]$, with the following features:
\begin{enumerate}
\item (de Rham comparison) After scalar extension along $\theta$, there is a functorial isomorphism
\[
\cPrism_{(A,M)/\mathfrak{S}}\otimes_{\mathfrak{S},\theta}^{L}\cO_K \simeq (\Omega_{(A,M)/\cO_K})_p^\wedge
\]
\item (crystalline comparison) After scalar extension along the map $\mathfrak{S}\to W(k)$ which is the Frobenius on $W(k)$ and sends $z$ to $0$, there is a functorial Frobenius equivariant isomorphism
\[
\cPrism_{(A,M)/\mathfrak{S}}\otimes_{\mathfrak{S}}^L W(k)\simeq R\Gamma_{\rm crys}((A_k,M)/W(k))
\]
\end{enumerate}
\end{thm}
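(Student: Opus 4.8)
The plan is to mimic the strategy of \cite[\S 11]{BMS2}, replacing topological Hochschild homology by its logarithmic refinement relative to the spherical log point $(\Sph,\N)$, and to bootstrap the construction from the relative log $\THH$ over the log cyclotomic base $(\Sph[\N])$. First I would fix a log smooth affine $\Spf(A,M)/(\cO_K,\triv)$ and consider $\THH((A,M)/\Sph[\N])$, where the map $\Sph[\N]\to (A,M)$ sends the generator of $\N$ to the uniformizer $\varpi\in A$; this is a cyclotomic spectrum whose homotopy fixed points and Tate construction I would equip with the motivic/Nygaard-type filtration coming from \cite[Proposition 7.10]{BLPO-BMS2}, extended from the log quasiregular semiperfectoid case by descent. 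The definition is then $\cPrism_{(A,M)/\mathfrak{S}}:=\pi_0\left(\TP((A,M)/\Sph[\N])\right)$ with its Frobenius, just as in the absolute case $\cPrism_{(A,M)}=\pi_0\TP(A,M)_p^\wedge$; the identification of the base ring $\pi_0\TP(\Sph[\N])_p^\wedge$ (after completing) with $\mathfrak{S}=W(k)\llbracket z\rrbracket$ via $z\mapsto\varpi$ is the log analogue of \cite[\S 11.2]{BMS2} and reduces to the computation of $\pi_*\TP(\cO_K)$ together with the fact that $(\Sph,\N)$ is a log cyclotomic base in the sense of \cite[Definition 8.3]{BLPO-BMS2}.

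The heart of the argument is the \emph{Frobenius descent}, i.e. showing that $\phi$ induces $\cPrism_{(A,M)/\mathfrak{S}}\otimes_{\mathfrak{S},\phi}\mathfrak{S}[1/E]\simeq \cPrism_{(A,M)/\mathfrak{S}}[1/E]$. The plan is to deduce this from the logarithmic Segal conjecture. Working modulo $(p,v)$ and passing to associated graded pieces of the Nygaard filtration, the map in question becomes the map $\gr^i\THH((A,M)/(\Sph,\N))\to \gr^i\THH((A,M)/(\Sph,\N))^{tC_p}$, which by Proposition \ref{prop:log-segal2} is the canonical map $\tau^{\leq i}\Omega_{(A,M)/(k,\N)}\to \Omega_{(A,M)/(k,\N)}$ — hence an equivalence in degrees $\gg 0$ — since $\Omega^1_{(A,M)/(k,\N)}$ is locally free of finite rank by \cite[Proposition IV.3.2.1]{ogu}. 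One then promotes this from the special fibre to a statement over $\mathfrak{S}$ after inverting $E$: this is exactly the pattern of \cite[\S 11.3--11.4]{BMS2}, using that $E$ becomes a unit after the relevant base change and that the fibres of $\phi-1$ are controlled by the graded pieces, which are bounded because $A$ has finite Krull dimension. I expect this to be the main obstacle, as one must carefully track the interaction of the Nygaard/motivic filtration with the $(p,z)$-adic topology and verify the requisite boundedness/completeness to legitimately reduce the Frobenius-descent equivalence to its graded incarnation.

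The de Rham comparison in (1) is obtained by base change along $\tilde\theta\circ\phi=\theta\colon\mathfrak{S}\to\cO_K$: one has $\cPrism_{(A,M)/\mathfrak{S}}\otimes_{\mathfrak{S},\theta}^L\cO_K\simeq \cPrism_{(A,M)/\cO_K}^{\rm nc}/\xi$ — the log analogue of $\cPrism^{\rm nc}/\xi\simeq L\Omega$ from \cite[\S 7.2]{BLPO-BMS2} and Construction \ref{constr:non-nygaard-complete} — which for $(A,M)$ log smooth over $\cO_K$ reduces to $(\Omega_{(A,M)/\cO_K})_p^\wedge$ by \cite[Proposition 4.6]{BLPO-HKR} (the log cotangent complex being concentrated in degree zero in the log smooth integral case). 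The crystalline comparison in (3) follows by base change along the map $\mathfrak{S}\to W(k)$, $z\mapsto 0$, which is Frobenius-semilinear: this lands one over the special fibre $(A_k,M)$ and, via Theorem \ref{thm:crys_comparison} together with the de Rham--Witt comparison $\widehat{L\WOmega}_{(A_k,M)/k}\simeq\WOmega_{(A_k,M)/k}$ of Lemma \ref{lem:LWOmega-vs-WOmega} and \cite[Theorem 3]{Yao-logDRW}, identifies the result with $R\Gamma_{\rm crys}((A_k,M)/W(k))$; the Frobenius-equivariance is automatic from the compatibility of $\phi$ on $\TP$ with the crystalline Frobenius, exactly as in \cite[Theorem 11.2]{BMS2}. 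Finally, functoriality in $(A,M)$ and the globalization to $\lSm_{\cO_K}$ by strict étale descent (using that $\TP$ is a strict étale sheaf) are formal, along the lines of \S\ref{sec:prism_synt}.
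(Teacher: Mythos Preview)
Your overall strategy---follow \cite[\S 11]{BMS2} with log $\THH$ over a spherical cyclotomic base, and use a log Segal conjecture to establish Frobenius descent---is exactly what the paper does. However, several details are misaligned with the setup of Theorem~\ref{thm:BK-horizontal}, and at least one is a genuine error.

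First, you are conflating the two cases. Theorem~\ref{thm:BK-horizontal} concerns $(A,M)$ log smooth over $(\cO_K,\triv)$, and the relevant cyclotomic base is $(\Sph[z],\triv)$, not the spherical log point $(\Sph,\N)$ nor $(\Sph[z],\langle z\rangle)$. Accordingly, after reducing modulo $z$ the special fibre $(A_k,M)$ is log smooth over $(k,\triv)$, and the Segal input you need is Proposition~\ref{prop:log-segal}, not Proposition~\ref{prop:log-segal2}. The latter applies to saturated log smooth maps $(k,\N)\to (A,M)$ and is the ingredient for Theorem~\ref{thm:BK-semistable}; it is not applicable here because $(k,\N)\to(k,\triv)\to(A,M)$ is not log smooth. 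Relatedly, the reduction that puts you in characteristic $p$ is modulo $z^{1/p}$ (see the proof of Proposition~\ref{prop:frobdescent}(3)), not modulo $(p,v)$ alone.

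Second, the definition is not $\pi_0\TP$. The paper first constructs the Frobenius-twisted object $\cPrism_{(A,M)/\mathfrak{S}^{(-1)}}:=\gr^0\TCmin((A,M)/\Sph[z];\Z_p)$ (Corollary~\ref{cor:frob-twisted}), establishes the de Rham and crystalline comparisons for \emph{that}, and only then defines $\cPrism_{(A,M)/\mathfrak{S}}:=\gr^0(\TCmin((A,M)/\Sph[z];\Z_p)[1/u])$. The role of the Frobenius-descent equivalence (Proposition~\ref{prop:frobdescent}(3)) is precisely to identify $\gr^0\TCmin[1/u]\otimes_{\Z[z]}\Z[z^{1/p^\infty}]$ with $\gr^0\TP$, so that the comparisons for $\cPrism_{/\mathfrak{S}^{(-1)}}$ transport to $\cPrism_{/\mathfrak{S}}$ with the correct $\mathfrak{S}$-linearity. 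Your $\pi_0\TP$ sits on the wrong side of this base change; as written it does not directly give the descended $\mathfrak{S}$-object, and the Frobenius endomorphism you want is obtained only after this descent step. (Also, $\pi_0\TP(\Sph[\N])_p^\wedge$ is not $\mathfrak{S}$; the identification $\pi_0\TP(\cO_K/\Sph[z];\Z_p)\simeq\mathfrak{S}$ requires $\cO_K$ in the argument.) Once you correct the base to $(\Sph[z],\triv)$, invoke Proposition~\ref{prop:log-segal}, and define the object via $\gr^0(\TCmin[1/u])$, the rest of your sketch (de Rham via $\theta$, crystalline via $z\mapsto 0$ and Theorem~\ref{thm:crys_comp_triv}) matches the paper's argument.
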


\begin{thm}\label{thm:BK-semistable}
To any log smooth affine formal log scheme $\Spf(A,M)/(\cO_K,\langle \varpi\rangle)$ of log Cartier type, one can functorially (for maps of formal log schemes)  attach a $(p,z)$-complete $E_\infty$-algebra $\cPrism_{(A,M)/(\mathfrak{S},\langle z\rangle)}\in \cD(\mathfrak{S})$ together with a $\phi$-linear Frobenius endomorphism $\phi: \cPrism_{(A,M)/(\mathfrak{S},\langle z\rangle)} \to  \cPrism_{(A,M)/(\mathfrak{S},\langle z\rangle)}$ inducing an isomorphism $\cPrism_{(A,M)/(\mathfrak{S},\langle z\rangle)}\otimes_{\mathfrak{S},\phi}\mathfrak{S}[1/E]\simeq \cPrism_{(A,M)/(\mathfrak{S},\langle z\rangle)}[1/E]$, and having the following features:
\begin{enumerate}
\item (de Rham comparison) After scalar extension along $\theta$, there is a functorial isomorphism\[
\cPrism_{(A,M)/(\mathfrak{S},\langle z\rangle)}\otimes_{\mathfrak{S},\theta}^{L}\cO_K \simeq (\Omega_{(A,M)/(\cO_K,\langle \varpi\rangle)})_p^\wedge
\]
\item (crystalline comparison) After scalar extension along the map $\mathfrak{S}\to W(k)$ which is the Frobenius on $W(k)$ and sends $z$ to $0$, there is a functorial Frobenius equivariant isomorphism\[
\cPrism_{(A,M)/(\mathfrak{S},\langle z\rangle)}\otimes_{\mathfrak{S}}^L W(k)\simeq R\Gamma_{\rm crys}((A_k,M)/(W(k,\N)))
\]
\end{enumerate}
\end{thm}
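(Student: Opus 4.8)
The plan is to run the argument of Theorem \ref{thm:BK-horizontal} --- itself modelled on \cite[\S 11]{BMS2} --- with the spherical log point $(\Sph[\N],\N)$ in place of the cyclotomic base $\Sph[\N]$ and Rognes' logarithmic $\THH$ in place of ordinary $\THH$; the only genuinely new inputs will be the log Segal conjecture in Cartier type (Proposition \ref{prop:log-segal2}) and the log crystalline comparison over the standard log point (Theorem \ref{thm:crys_comp_logpoint}), replacing Proposition \ref{prop:log-segal} and Theorem \ref{thm:crys_comp_triv} used in the horizontal case. Concretely: for $\Spf(B,N)$ log smooth of Cartier type over $(\cO_K,\langle\varpi\rangle)$, the structure map factors through $(\Sph[\N],\N)$ (the generator of $\N$ mapping to $\varpi$), and since $(\Sph[\N],\N)$ is a log cyclotomic base in the sense of \cite[Definition 8.3]{BLPO-BMS2}, $\THH((B,N)/(\Sph[\N],\N))$ is a cyclotomic $\THH((B,N))$-module. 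One then \emph{defines} $\cPrism_{(B,N)/(\mathfrak{S},\langle z\rangle)}$, together with its Nygaard filtration, from the motivic filtrations on the relative $\TP$ and $\TCmin$ of \cite[\S 7]{BLPO-BMS2}, and equips it with the $\phi$-linear Frobenius coming from the cyclotomic Frobenius. That the base ring is $\mathfrak{S}$ with the correct Frobenius is seen exactly as in \cite[\S 11]{BMS2}: for $(B,N)=(\cO_K,\langle\varpi\rangle)$ the map from $(\Sph[\N],\N)$ is \emph{strict}, so the relative log invariants coincide with their non-log counterparts over $\Sph[z]$ and the computation $\pi_0\TP((\cO_K,\langle\varpi\rangle)/(\Sph[\N],\N))\cong\mathfrak{S}$ is literally the one in loc.\ cit. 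The $(p,z)$-completeness, the $E_\infty$-structure, and functoriality in $(B,N)$ are then formal.

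For the Breuil--Kisin module property, i.e.\ that $\phi$ induces $\cPrism_{(B,N)/(\mathfrak{S},\langle z\rangle)}\otimes_{\mathfrak{S},\phi}\mathfrak{S}[1/E]\simeq\cPrism_{(B,N)/(\mathfrak{S},\langle z\rangle)}[1/E]$, I would use Proposition \ref{prop:log-segal2}: the cyclotomic Frobenius $\varphi\colon\THH((B,N)/(\Sph[\N],\N))\to\THH((B,N)/(\Sph[\N],\N))^{tC_p}$ is an equivalence in degrees $\gg 0$, and passing this through the motivic and Nygaard filtrations --- as in the proof of the analogous statement in \cite[\S 11]{BMS2} --- yields the inversion of $E$. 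It is cleanest to base change along the faithfully flat (indeed topologically free, \cite[Lemma 4.30]{BMS2}) map $\mathfrak{S}\to A_{\rm inf}=A_{\rm inf}(\cO_C)$, under which $\cPrism_{(B,N)/(\mathfrak{S},\langle z\rangle)}\otimes^L_{\mathfrak{S}}A_{\rm inf}$ becomes the relative log prismatic cohomology over the perfect log prism attached to $(\cO_C,\langle\varpi\rangle)$, where the statement reduces --- via the saturated descent of Lemma \ref{lem:sat-descent-prismatic-general}, reducing to log quasiregular semiperfectoid algebras --- to Proposition \ref{prop:log-segal2} and the non-log input of \cite[\S 11]{BMS2}.

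For the de Rham comparison (item (1)) I would argue on associated graded: the Nygaard filtration and its finite \emph{secondary} filtration reduce the claim to the statement that, after $-\otimes^L_{\mathfrak{S},\theta}\cO_K$, the graded pieces become shifts of $(\bigwedge^i\L_{(B,N)/(\cO_K,\langle\varpi\rangle)})_p^\wedge$; since $(\cO_K,\langle\varpi\rangle)\to(B,N)$ is log smooth and integral, $\L_{(B,N)/(\cO_K,\langle\varpi\rangle)}\simeq\Omega^1_{(B,N)/(\cO_K,\langle\varpi\rangle)}$ is locally free of finite rank by \cite[Proposition 4.6]{BLPO-HKR} and \cite[Proposition IV.3.2.1]{ogu}, so that reassembling with the de Rham differentials recovers the $p$-completed log de Rham complex $(\Omega_{(B,N)/(\cO_K,\langle\varpi\rangle)})^\wedge$, exactly as in \cite[\S 11]{BMS2}. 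For the crystalline comparison (item (2)), base change along $\mathfrak{S}\to W(k)$, $z\mapsto 0$, corresponds on the $\THH$ side to base change along the strict map $(\Sph[\N],\N)\to(\Sph,\N)$; by quasisyntomic descent (reducing to $\lQRSPerfd$) together with the saturated descent of Theorem \ref{thm:main-sat-descent-dRW2}, this identifies $\cPrism_{(B,N)/(\mathfrak{S},\langle z\rangle)}\otimes^L_{\mathfrak{S}}W(k)$ with the absolute log prismatic cohomology of the appropriate perfection-modification of the special fibre $(B_k,N)$. Since $(B,N)$ is of Cartier type, $(B_k,N)$ is log smooth over $(k,\N)$ with $\ul{B_k}$ generically reduced (Serre's $(R_0)$, cf.\ \cite[Theorem II.4.11]{tsuji}), so Theorem \ref{thm:crys_comp_logpoint} identifies this Frobenius-equivariantly with $R\Gamma_{\rm crys}((B_k,N)/(W(k,\N)))$; functoriality is inherited from the functoriality of the saturated resolutions used throughout.

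The main obstacle will be the Breuil--Kisin module property: one must translate the logarithmic Segal conjecture --- a statement about $\varphi$ in degrees $\gg 0$, relying on finiteness of $\dim B$ and local freeness of $\Omega^1_{(B,N)/(k,\N)}$ --- into the precise assertion that $\phi$ becomes an isomorphism after inverting $E$, keeping careful track of the Nygaard filtration under the base change $\mathfrak{S}\to A_{\rm inf}$ and under the saturated-descent presentation. A secondary technical point is to check that the relative log $\THH$, $\TCmin$, and $\TP$ over $(\Sph[\N],\N)$ carry the expected motivic filtrations computing log prismatic cohomology over $(\mathfrak{S},\langle z\rangle)$; this should follow from \cite[\S 7]{BLPO-BMS2} by verification on log quasiregular semiperfectoid algebras, as in the non-log case, but setting up the relative theory over a log cyclotomic base requires some care.
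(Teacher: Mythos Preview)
Your proposal is correct and follows essentially the same route as the paper: work over the log cyclotomic base $(\Sph[z],\langle z\rangle)$, use the motivic filtrations on relative log $\TCmin/\TP$, and plug in Proposition \ref{prop:log-segal2} and Theorem \ref{thm:crys_comp_logpoint} in place of their trivial-log-structure counterparts. The paper makes one structural point more explicit than your sketch does: it first constructs the \emph{Frobenius-twisted} theory $\cPrism_{(B,N)/(\mathfrak{S},\langle z\rangle)^{(-1)}}:=\gr^0\TCmin((B,N)/(\Sph[z],\langle z\rangle);\Z_p)$ with all the comparisons (Corollary \ref{cor:frob-twisted}), and only then performs the Frobenius descent by \emph{inverting $u$ before taking $\gr^0$}, defining $\cPrism_{(B,N)/(\mathfrak{S},\langle z\rangle)}:=\gr^0(\TCmin((B,N)/(\Sph[z],\langle z\rangle);\Z_p)[1/u])$ (Proposition \ref{prop:frobdescent}). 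Your phrase ``defines \ldots\ from the motivic filtrations on the relative $\TP$ and $\TCmin$'' elides this; without the $[1/u]$ step one lands over $\mathfrak{S}^{(-1)}$ rather than $\mathfrak{S}$. Relatedly, the log Segal conjecture enters the paper precisely at this Frobenius-descent step (Proposition \ref{prop:frobdescent}(3)), whereas the Breuil--Kisin isogeny property for the twisted theory (Corollary \ref{cor:frob-twisted}(4)) is handled by base change to $A_{\rm inf}(\cO_C)$ and saturated descent for $\cPrism^{\rm nc}$ (Corollary \ref{cor:sat-descent-de-Rham-prismnc}), reducing to the derived $A\Omega$-cohomology of \cite[Theorem 9.6]{BMS2}. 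Your sketch merges these two steps, which is fine at the level of a plan but worth separating when you write it out.
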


The proof of these theorems will follow the path of \cite[Theorem 11.2]{BMS2}, by first constructing a Frobenius-twisted version. This was indeed first introduced in \cite[Theorem 8.1]{BLPO-BMS2}, analogously to \cite[\S 11.2]{BMS2}. Let us briefly recall this construction.

Recall that a \emph{log cyclotomic base} is a pre-log ring spectrum $(R,P)$ together with a commutative diagram \[
\begin{tikzcd}
    \THH((R,P) ; \Z_p)\ar[r,"\phi_p"]\ar[d]&\THH((R,P) ; \Z_p)^{tC_p}\ar[d]\\
    R\ar[r]&R^{tC_p}
\end{tikzcd}
\] of $S^1$-equivariant $E_\infty$-rings.
By \cite[Lemma 8.4]{BLPO-BMS2} the pre-log ring spectrum $(\Sph[z],\langle z\rangle)$ is a log cyclotomic base.
This implies that, for any $(\cO_K, \langle \varpi\rangle)$-algebra $(A, M)$ we have cyclotomic structure maps
\[
\varphi \colon {\THH}((A, M)/\Sph[z], \langle z \rangle) ; \Z_p) \to \THH((A, M) / \Sph[z], \langle z \rangle) ; \Z_p)^{tC_p}
\]
on relative log topological Hochschild homology ${\THH}((A, M) / (\Sph[z], \langle z \rangle) ; \Z_p)$. 

Moreover, by   
\cite[Proposition 8.6]{BLPO-BMS2}, we have an equivalence
\begin{equation}\label{eq:THH-base}
    \THH(\Sph[z^{1/p^{\infty}}],\langle z^{1/p^\infty}\rangle) \xrightarrow{\simeq} \Sph[z^{1/p^{\infty}}]
\end{equation}
after $p$-completion. These imply that for $(A,M)$ a pre-log $(\cO_K,\varpi)$-algebra  we have equivalences  
\begin{align*}
\THH((A,M)/(\Sph[z],\langle z \rangle))\otimes_{\Sph[z]}\Sph[z^{1/p^{\infty}}]&\xrightarrow{\simeq} \THH((A[\varpi^{1/p^{\infty}}],M_{\rm perf})/(\Sph[z^{1/p^{\infty}}],\langle z^{1/p^{\infty}}\rangle)\\
\THH((A[\varpi^{1/p^{\infty}}],M_{\rm perf})&\simeq \THH((A\otimes_{\cO_K}\cO_{K_{\infty}},M_{\rm perf})
\end{align*}
after $p$-completion (see \cite[Corollary 8.7]{BLPO-BMS2}). This implies by \cite[Proposition 8.8]{BLPO-BMS2} that the presheaf
\begin{align*}
(S,Q)\in \lQRSPerfd_{(\cO_K,\langle \varpi\rangle)} &\mapsto \pi_0 \TP((S,Q)/(\Sph[z],\langle z \rangle);\Z_p)
\end{align*}
is a log quasisyntomic sheaf. We define the sheaves of $\E_{\infty}$-$\mathfrak{S}^{(-1)}$-algebras 
\begin{align*}
\gr^0\TCmin(-/(\Sph[z],\langle z\rangle);\Z_p)\colon \lQSyn_{(\cO_K,\langle\varpi\rangle)}&\to \cD(\mathfrak{S})
\end{align*}
as the unfolding of    $\pi_0\TCmin(-/(\Sph[z],\langle z\rangle);\Z_p)$. We are now ready to state the analogue of \cite[Corollary 11.12]{BMS2}:

\begin{cor}\label{cor:frob-twisted}
    Let $\kX=\Spf(A,M)$ be  an affine log smooth formal scheme over $(\cO_K,\langle\varpi\rangle)$ of log Cartier type. Then the complex 
     $\cPrism_{\kX/(\mathfrak{S},\langle z\rangle)^{(-1)}}:= \gr_0\TCmin((A,M)/(\Sph[z],\langle z\rangle)$ is a $(p,z)$-complete $\E_{\infty}$-algebra object of $\cD(\mathfrak{S}^{(-1)})$ that admits a natural Frobenius endomorphism $\phi$ and has the following properties:
    \begin{enumerate}
    \item  
    There is a natural $\phi$-equivariant equivalence
    \begin{align*}
        \cPrism_{\kX/(\mathfrak{S},\langle z\rangle)^{(-1)}}\otimes_{\mathfrak{S}^{(-1)}}A_{\rm inf}(\cO_{K_{\infty}}) _{(p,z)}^\wedge&\simeq \cPrism_{\kX_{\cO_{K_{\infty}}}}
    \end{align*}
    \item There is a natural equivalence
        \begin{align*}
        \cPrism_{\kX/(\mathfrak{S},\langle z\rangle)^{(-1)}}\otimes_{\mathfrak{S}^{(-1)},\theta^{(-1)}}\cO_K &\simeq (\Omega_{\kX/(\cO_K),\langle \varpi\rangle})_p^\wedge
        \end{align*}
        of $\E_\infty$-$\cO_K$-algebras.
        \item After scalar extensions along the map $\mathfrak{S}^{(-1)}\to W(k)$ which is the identity on $W(k)$ and sends $z$ to $0$,
        there is a functorial Frobenius equivalence
        \begin{align*}
        \cPrism_{\kX/(\mathfrak{S},\langle z\rangle)^{(-1)}}\otimes_{\mathfrak{S}^{(-1)}}W(k) &\simeq R\Gamma_{\rm crys}(\kX_k/W(k,\N))
        \end{align*}
        of $\E_\infty$-$W(k)$-algebras.
        \item The Frobenius $\phi$ induces an equivalence
        \begin{align*}
        \cPrism_{\kX/(\mathfrak{S},\langle z\rangle)^{(-1)}}\otimes_{\mathfrak{S}^{(-1)},\phi}\mathfrak{S}^{(-1)}[1/\phi(E)]  &\simeq \cPrism_{\kX/(\mathfrak{S},\langle z\rangle)^{(-1)}}[1/\phi(E)]
        \end{align*}
    \end{enumerate}
\end{cor}
\begin{proof}
The proof is now completely analogous to \cite[Corollary 11.12]{BMS2}, we report it for completeness. Notice that $\cO_{K}\to \cO_{K_\infty}$ is $p$-completely flat, so 
\begin{align*}
    (A,M)\cotimes_{(\cO_K,\langle \varpi\rangle)}(\cO_{K_{\infty}},\N_\mathrm{perf})&\simeq (A\cotimes_{\cO_K}\cO_{K_{\infty}},M\oplus_{\N} \N_{\rm perf}).
\end{align*}
Part $(1)$ then follows from the equivalence:
\begin{align*}
    \gr^0\TP((A,M)/(\Sph[z],\langle z\rangle);\Z_p)\cotimes_{\Sph[z]}\Sph[z^{1/p^{\infty}}]&\overset{(*)}{\simeq}\gr^0\TP((A\cotimes_{\cO_K}\cO_{K_{\infty}},M\oplus_{\N} \N_{\rm perf});\Z_p)
\end{align*}
where $(*)$   follows by taking graded pieces and Tate construction on the analogous statement for $\THH$, which follows from the equivalences in \eqref{eq:THH-base}.
Part $(2)$ follows from the equivalence:
\begin{align*}
    \gr^0\TP((A,M)/(\Sph[z],\langle z\rangle);\Z_p)/E&\simeq \gr^0\HP((A,M)/(\cO_K,\langle \varpi\rangle);\Z_p),
\end{align*}
obtained by unfolding (see also \cite[Proof of Theorem 8.1]{BLPO-BMS2}), and \cite[Theorem 1.1]{BLPO-BMS2}.

Since $\N$ is valuative and the map $\N\to M$ is injective, $\Z[\N]\to \Z[M]$ is flat, in particular\[
\cO_K\to \cO_K\otimes_{\Z[\N]}\Z[M]\to A
\] 
is flat, so $(A,M)\otimes_{(\cO_K,\langle\varpi\rangle)}^L (k,\N_{\rm perf}) = (A\otimes^L_{\cO_K} k,M)$ is discrete.
Part $(2)$ follows from 
\begin{align*}
    \gr^0\TP((A,M)/(\Sph[z],\langle z\rangle))\otimes_{\Sph[z]}^L\Sph &\overset{(*)}{\simeq} \gr^0\TP((A,M)\otimes_{(\cO_K,\langle\varpi\rangle)} (k,\N_{\rm perf}))\simeq \cPrism_{(A/\varpi,M\oplus_\N \N_{\rm perf})},
\end{align*}
where $(*)$ follows from the equivalence after $p$-completion $\THH(\Sph,\N_{\rm perf})\to \Sph$, proved in the same way as \cite[Proposition 8.6]{BLPO-BMS2} using Proposition \ref{prop:semipf-invariance-cotangent}. Then Part $(3)$ follows now from  \eqref{eq:global-crys-comp-log-base}. Finally, Part $(4)$ follows from base change via the topological direct summand $\mathfrak{S}^{(-1)}\to A_{\rm inf}(\cO_C)$ and $(1)$, so similarly to \cite[Corollary 11.12 (4)]{BMS2} it is enough to show that $\phi^*\cPrism_{\kX_{\cO_{C}}/\cO_C}\to \cPrism_{\kX_{\cO_{C}}/\cO_C}$ induces an equivalence after inverting $\tilde\xi$: since $\kX$ is assumed to be log smooth over $\cO_K$, we have that $\cPrism_{\kX_{\cO_{C}}/\cO_C}\simeq \cPrism^{\rm nc}_{\kX_{\cO_{C}}/\cO_C}$, and by Corollary \ref{cor:sat-descent-de-Rham-prismnc} we reduce to the same question for the non-logarithmic non-Nygaard completed prismatic cohomology of \cite[Construction 7.12]{BMS2}, which in turns is equivalent by left Kan extension and \cite[Theorem 9.6]{BMS2} to the derived $A\Omega$-cohomology, for which the result follows by left Kan extension on $p$-completely smooth $\cO_K$-algebras.
\end{proof}

We are now ready to prove our version of the Frobenius descent, following \cite[Proposition 11.15]{BMS2}:

\begin{prop}\label{prop:frobdescent}

Let $(A, M)$ be an  $(\cO_K, \langle \varpi \rangle)$-pre-log algebra. 
\begin{enumerate}
\item The cyclotomic Frobenius
\begin{align*}
    \TCmin((A, M)/(\Sph[z], \langle z \rangle) ; \Z_p) &\xrightarrow{\phi} \TP((A, M)/(\Sph[z], \langle z \rangle) ; \Z_p)
\end{align*} 
extends naturally to a map
\begin{align*}
\TCmin((A, M)/({\Sph}[z], \langle z \rangle) ; \Z_p)[1/u] \otimes_{\Sph[z]} \Sph[z^{1/p^{\infty}}] &\to \TP((A, M)/(\Sph[z], \langle z \rangle) ; \Z_p).
\end{align*}
\item On log quasiregular semiperfectoids, the source of the above map is concentrated in even degrees, and the presheaf
\begin{align*}
(A, M)\in \lQRSPerfd_{(\cO_K,\langle \varpi\rangle)} &\mapsto \pi_0\TCmin((A, M)/({\Sph}[z], \langle z \rangle) ; \Z_p)[1/u]
\end{align*} 
is a sheaf with vanishing cohomology.
\item The unfoldings ${\gr}^0$ on   $\lQSyn_{(\cO_K,\langle \varpi\rangle)}$ gives a natural map
\begin{align*}
\gr^0\TCmin((A,M)/({\Sph}[z], \langle z \rangle) ; \Z_p)[1/u] \otimes_{\Z[z]} \Z[z^{1/p^{\infty}}] &\to \gr^0\TP((A,M)/(\Sph[z], \langle z \rangle) ; \Z_p).
\end{align*}
resulting from functoriality of unfolding. The map above is an equivalence if $(A, M)$   is the $p$-adic completion of a log smooth $({\mathcal O}_K, \langle \bar{\omega} \rangle)$-algebra of log Cartier type. 
\end{enumerate}
\end{prop}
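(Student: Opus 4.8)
The plan is to follow the template of \cite[Proposition 11.15]{BMS2} closely, using the logarithmic inputs established earlier in the paper; the overall skeleton is identical, and the only genuinely new ingredient is the log Segal conjecture proved in Propositions \ref{prop:log-segal} and \ref{prop:log-segal2}. I will treat the two cases (trivial log structure over $(\cO_K,\triv)$ and the semistable case over $(\cO_K,\langle\varpi\rangle)$) in parallel, since the arguments are formally the same once the log cyclotomic base $(\Sph[z],\triv)$ (resp.\ $(\Sph[z],\langle z\rangle)$) is in place, which is \cite[Proposition 11.3]{BMS2} (resp.\ \cite[Lemma 8.5]{BLPO-BMS2}).

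For part (1), recall that $\pi_*\TCmin(\cO_K/\Sph[z];\Z_p)$ and $\pi_*\TP(\cO_K/\Sph[z];\Z_p)$ are the $2$-periodic rings computed in \cite[\S 11.2]{BMS2} via the identifications with $\mathfrak S$ and $A_{\rm inf}(\cO_{K_\infty})$; the element $u\in \pi_2\TCmin(\cO_K/\Sph[z];\Z_p)$ and its image are such that $\phi(u)$ differs from the periodicity generator by a unit after inverting $\phi(E)$ — this is exactly \cite[Lemma 11.11]{BMS2}, whose proof is insensitive to log structures. Tensoring the cyclotomic Frobenius with $\Sph[z^{1/p^\infty}]$ over $\Sph[z]$ and using the base-change equivalences \eqref{eq:THH-base} together with \cite[Corollary 8.9]{BLPO-BMS2}, the map $\phi$ becomes, after inverting $u$, an equivalence onto $\TP$; concretely one checks this on homotopy groups relative to $\cO_{K_\infty}$, where everything is $2$-periodic and $\phi$ inverts $u$ by the computation just cited. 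I do not expect any difficulty here beyond bookkeeping the perfection $N\oplus_\N\N_{\rm perf}$ in the semistable case, which is handled exactly as in the proof of Theorem \ref{thm:main-sat-descent-dRW2} and in \cite[Corollary 8.9]{BLPO-BMS2}.

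Part (2) is where the log Segal conjecture enters. On a log quasiregular semiperfectoid $(A,M)$ over $\cO_K$, the source $\TCmin((A,M)/\Sph[z];\Z_p)$ is concentrated in even degrees (this is the relative analogue of \cite[Proposition 7.10]{BLPO-BMS2}, checked via the motivic filtration whose graded pieces are the even shifts of the relative log prismatic cohomology), so inverting $u$ keeps it even. To see that the resulting presheaf is a sheaf with vanishing higher cohomology, one argues as in \cite[Proposition 11.15(2)]{BMS2}: the colimit defining $[1/u]$ is along the map $\phi$ composed with the periodicity operator, and by Proposition \ref{prop:log-segal} (resp.\ \ref{prop:log-segal2}) the map $\phi\colon \THH((A,M)/(\Sph,\N))\to \THH((A,M)/(\Sph,\N))^{tC_p}$ is an equivalence in large degrees, so on each graded piece the transition maps in the $[1/u]$-colimit are eventually isomorphisms, identifying the colimit with $\gr^0\TP$, which is a quasisyntomic sheaf with vanishing cohomology on $\lQRSPerfd$ by \cite[Proposition 8.11]{BLPO-BMS2} and the construction of $\cPrism^{\rm nc}$. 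The main obstacle I anticipate is making precise the compatibility of the motivic (double-speed Postnikov) filtration on $\TCmin[1/u]$ with the one on $\TP$ under $\phi$ — i.e.\ that $\phi$ is filtered after inverting $u$ — which requires checking on $\lQRSPerfd$ that the filtration is exhaustive and that $\phi$ shifts it compatibly with the Segal-conjecture range; this is the log analogue of the discussion around \cite[Lemma 11.11 and Proposition 11.15]{BMS2} and uses \eqref{eq:general-THH-graded-prismatic}.

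Finally, part (3) follows by unfolding: applying the right Kan extension (unfolding) functor $\gr^0(-)$ from $\lQRSPerfd$ to $\lQSyn$ to the map of part (1) and using that unfolding commutes with the relevant colimits and tensor products (here $\otimes_{\Z[z]}\Z[z^{1/p^\infty}]$, which is a finite-stage colimit on each graded piece), one obtains the displayed maps; that they are equivalences when $(A,M)$ (resp.\ $(B,N)$) is the $p$-completion of a log smooth algebra (resp.\ of log Cartier type) follows from part (2), since in that case $\gr^0\TCmin[1/u]\otimes_{\Z[z]}\Z[z^{1/p^\infty}]$ is computed by descent from $\lQRSPerfd$, where the map of part (1) is already an equivalence by the argument above. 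In the semistable case one additionally invokes that $\N\to N$ is injective and saturated so that $\Z[\N]\to \Z[N]$ is flat, exactly as in the proof of Proposition \ref{prop:log-segal2}, ensuring the relevant algebras are log quasisyntomic. I expect this last step to be entirely formal given parts (1) and (2).
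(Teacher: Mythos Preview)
Your overall strategy---follow \cite[Proposition 11.15]{BMS2} and replace the Segal conjecture input by Propositions \ref{prop:log-segal} and \ref{prop:log-segal2}---is correct, but you have misplaced where the Segal conjecture is actually used, and this leaves a real gap.

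For part (2), you invoke the log Segal conjecture to argue that the transition maps in the $[1/u]$-colimit are eventually isomorphisms on $\lQRSPerfd$. This is both unnecessary and misapplied: Propositions \ref{prop:log-segal} and \ref{prop:log-segal2} are stated for \emph{log smooth} algebras over $k$ (resp.\ $(k,\N)$), not for log quasiregular semiperfectoids, so you cannot appeal to them here. In fact part (2) is much simpler: even concentration and the sheaf property already hold for $\TCmin((-)/\Sph[z];\Z_p)$ before inverting $u$ (this is \cite[Proposition 8.11]{BLPO-BMS2} and the construction of the motivic filtration), and both properties are preserved under the filtered colimit defining $[1/u]$. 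No Segal-type input is required.

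The Segal conjecture is needed in part (3), and your argument there is circular. You claim the equivalence follows from parts (1) and (2) because ``the map of part (1) is already an equivalence'' on $\lQRSPerfd$; but part (1) only asserts that the map \emph{extends}, not that it is an equivalence---and indeed in part (1) you overclaimed this. The paper's argument for part (3) is the one you sketched in the wrong place: one must show that for $i\gg 0$ the maps $\gr^i\TCmin((A,M)/\Sph[z];\Z_p)\otimes_{\Z[z]}\Z[z^{1/p^\infty}]\to \gr^i\TP((A,M)/\Sph[z];\Z_p)$ are equivalences when $(A,M)$ is log smooth. By $(p,z)$-completeness one reduces modulo $z^{1/p}$, which (using $\Sph_p^\wedge\simeq \THH(\Sph;\Z_p)\simeq \THH((\Sph,\N_{\rm perf});\Z_p)$) lands on the special fibre over $k$ (resp.\ $(k,\N)$), and there the statement is exactly Proposition \ref{prop:log-segal} (resp.\ \ref{prop:log-segal2}). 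Move your Segal-conjecture paragraph from part (2) to part (3), replace the input algebras by their special fibres, and the argument goes through.
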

\begin{proof}
Observe first that the map $({\Sph}[z], \langle z \rangle) \to ({\mathcal O}_K, \langle \varpi \rangle)$ is strict, so\[
\THH(\cO_K, \langle \varpi \rangle)/(\Sph[z], \langle z \rangle)\simeq \THH(\cO_K/\Sph[z]).
\] 
Part $(1)$ now follows like in \cite[Proposition 11.15 (1)]{BMS2} from the fact that the Frobenius in this case maps $u$ to $\sigma$ and the map is linear over $\Sph[z]\to \Sph[z]$, $z\mapsto z^p$.

Part $(2)$ follows again from the analogous statement with $u$ not inverted and by passage to filtered colimits. To prove part $(3)$, as in \cite[Proof of Proposition 11.15]{BMS2} it suffices to show that for $i\gg 0$ we have an equivalence
\begin{align*}
\gr^i\TCmin((A,M)/({\Sph}[z], \langle z \rangle) ; \Z_p) \otimes_{\Z[z]} \Z[z^{1/p^{\infty}}] &\to \gr^i\TP((A,M)/(\Sph[z], \langle z \rangle) ; \Z_p).
\end{align*}
For this, we can reduce modulo $z^{1/p}$, and using the equivalence
\[
\Sph_p^\wedge\simeq
\THH((\Sph,\N_{\rm perf});\Z_p),
\]
we reduce to showing that for any $(\ol{A},M)$  log smooth over $(k,\N)$ with $\N\to N$  saturated, the map
\begin{align*}
\gr^i\TCmin(\ol{A},M\oplus_\N \N_{\rm perf};\Z_p)&\to \gr^i\TP(\ol{A},M\oplus_{\N}\N_{\rm perf};\Z_p),
\end{align*}
is an equivalence for $i\gg 0$.
This follows from Proposition \ref{prop:log-segal2}, in place of \cite[Corollary 8.18]{BMS2}. 
\end{proof}

\begin{proof}[Proof of Theorems \ref{thm:BK-horizontal} and \ref{thm:BK-semistable}] 
 
Theorem \ref{thm:BK-semistable} includes Theorem \ref{thm:BK-horizontal} via base change.
We define
 $\widehat{\Prism}_{(A, M)/(\mathfrak{S},\langle z\rangle)} := \gr^0(\TCmin((A, M) / ({\Sph}[z],\langle z\rangle); \Z_p)[1/u])$. A \emph{verbatim} translation of the argument of \cite[Proof of Theorem 11.2]{BMS2} applies to conclude: replace the applications of \cite[Proposition 11.15, Corollary 11.12]{BMS2} with Proposition \ref{prop:frobdescent} and Corollary \ref{cor:frob-twisted}, respectively. 
\end{proof}
We finish this section by showing that the cohomology $\cPrism_{-/\mathfrak{S}}$ gives rise to a motivic spectrum in $\logFDA(\cO_K,\mathfrak{S})$. Let $\mathfrak{S}\{-1\}:=H^2_{\Prism}(\P^1_{\cO_K}/\mathfrak{S})$ (see \cite[Variant 9.1.6]{BhattLurie} for the bounded prism $(\mathfrak{S},E(z))$): it is an invertible $\mathfrak{S}$-module and by the crystalline comparison we get a canonical trivialization $\mathfrak{S}\{-1\}\otimes_{\mathfrak{S}}^L W(k)\simeq W(k)$ as $W(k)$-modules. Let\[
R\Gamma_{\cPrism}(-/\mathfrak{S})\{i\}:=R\Gamma_{\cPrism}(-/\mathfrak{S})\cotimes_{\mathfrak{S}}\mathfrak{S}\{-1\}^{\cotimes -i}.
\]
As in Construction \ref{constr:log-chern}, consider the prismatic Chern class $c_1^{\Prism}(\cO(1))$ considered in \cite[Variant 9.1.6]{BhattLurie}. Then, similarly to \eqref{eq:chern-map-fil}, for all $n,i\in \Z$ we have functorially in $\kX\in \FlQSm_{\cO_K}$ a map in $\widehat{\cD\cF(\mathfrak{S})}$:\[
R\Gamma_\cPrism(\kX/\mathfrak{S})\{n-i\}[-2i]\to R\Gamma_\cPrism(\P(\cE)/\mathfrak{S})\{n\}.
\]
Since $R\Gamma_\cPrism(\kX/\mathfrak{S})\{n-i\}$ is $(p,z)$-complete, it is in particular $z$-complete, so since the projection maps
\begin{equation}\label{eq:bcube-inv-BK}
R\Gamma_\cPrism(\kX\times_{\Spf (\cO_K)} (\P^n, \P^{n-1})/\mathfrak{S})\{n-i\}\to R\Gamma_\cPrism(\kX/\mathfrak{S})\{n-i\}  
\end{equation}
coincide mod $z$ (i.e. by applying $-\otimes^L_{\mathfrak{S}}W(k)$) with the maps\[
R\Gamma_{\rm crys}(\kX_k\times_{k} (\P^n, \P^{n-1})/W(k))\to R\Gamma_{\rm crys}(\kX_k/W(k)),
\]
which are equivalences by \cite[Theorem 1.3]{mericicrys}, so \eqref{eq:bcube-inv-BK} is an equivalence, implying that for all $n\in \Z$ we have graded commutative monoids in $\logFDAeff(R,\mathfrak{S})$:\[
E_*^{\mathrm{BK}}:=\{\cPrism_{-/\mathfrak{S}}\cotimes^L_{\mathfrak{S}}\mathfrak{S}\{-1\}^{\cotimes^L_{\mathfrak{S}} -i}\}_{i\in \Z},\] 
where the completion is the $(p,z)$-completion. 

We are left to show that the map
\[
R\Gamma_\cPrism(\kX/\mathfrak{S})\{i\}\to \widetilde{R\Gamma}_\cPrism(\kX\times \P^1/\mathfrak{S})\{i+1\}[2].
\]
induced by the prismatic Chern class is an equivalence.  

We argue as follows. For $\ul{\kX}$ a smooth formal scheme with trivial log structure, the following diagram commutes:\[
\begin{tikzcd}
R\Gamma_{\Prism}(\ul{\kX}/\mathfrak{S})\ar[r]\ar[dd]&R\Gamma_{\cPrism}(\ul{\kX}/\mathfrak{S})\ar[d] \\
&R\Gamma_{\cPrism}(\ul{\kX}/\mathfrak{S})\otimes^L_{\mathfrak{S}}W(k)\ar[d,no head,"\simeq"]\\
R\Gamma_{\Prism}(\ul{\kX}_k/W(k))\ar[r]&R\Gamma_{\cPrism}(\ul{\kX}_k/W(k)),
\end{tikzcd}
\] 
where the left-hand side is the absolute prismatic cohomology of \cite{BhattLurie} and the vertical map is the specialization, so, in particular, the prismatic Chern class of $R\Gamma_{\cPrism}(\ul{\kX}/\mathfrak{S})$ agrees modulo $z$ with the crystalline Chern class of $R\Gamma_{\rm crys}(\ul{\kX}_k/W(k))$ (using \cite[Proposition 7.5.5]{BhattLurie} again).

We can then reduce again to the case of the crystalline cohomology by $z$-completeness of \cite[(4.5.1)]{mericicrys}. 
Finally, the equivalences of Theorem \ref{thm:BK-horizontal} induce equivalences of graded commutative monoids:
\begin{align*}
E_*^{\mathrm{BK}}\otimes_{\mathfrak{S},\theta}^L \cO_K&\simeq (E_*^{\mathrm{dR}})_p^\wedge &\mathrm{in}\ \Fun(\Z^{\rm ds},\logFDAeff(\cO_K,\cO_K))\\
E_*^{\mathrm{BK}}\otimes_{\mathfrak{S}}^L W(k) &\simeq (i_* E_*^{\mathrm{crys}}) &\mathrm{in}\ \Fun(\Z^{\rm ds},\logFDAeff(\cO_K,W(k))),
\end{align*}
where $i_*E_n^{\rm crys} (\kX):= E_n^{\rm crys} (\kX_k) = R\Gamma_{\rm crys}(\kX_k)$, with Chern classes that agree with de Rham and crystalline Chern classes respectively again by \cite{BhattLurie}. 
By \ref{prop:build-spectra} and \ref{prop:build-spectra-maps}, we conclude that
    \begin{thm}\label{thm:BK-motivic}
    There is an oriented ring spectrum $\mathbf{E}^{\rm BK}$ in $\CAlg(\logFDA(\cO_K,\mathfrak{S}))$ such that for all $\kX\in \FlQSm(\cO_K)$ we have\[
\Map_{\logFDA(\cO_K,\mathfrak{S})}(\Sigma^{\infty}(\kX),\Sigma^{r,s}\mathbf{E}^{\rm BK}) \simeq R\Gamma_{\cPrism}(\kX/\mathfrak{S})\{s\}[r].
\]
Moreover, there are equivalences of oriented ring spectra:
\begin{align*}
\E^{\mathrm{BK}}\otimes_{\mathfrak{S},\theta}^L \cO_K&\simeq (\E^{\mathrm{dR}})_p^\wedge &\mathrm{in}\ \logFDA(\cO_K,\cO_K)\\
\E^{\mathrm{BK}}\otimes_{\mathfrak{S}}^L W(k) &\simeq (i_* \E^{\mathrm{crys}}) &\mathrm{in}\ \logFDA(\cO_K,W(k)).
\end{align*}
\end{thm}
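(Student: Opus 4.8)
The plan is to reduce everything to the machinery already set up in Propositions \ref{prop:build-spectra} and \ref{prop:build-spectra-maps}. The work has already been essentially done in the paragraphs leading up to the statement: one has produced, for each $n\in\Z$, a graded commutative monoid $E_*^{\mathrm{BK}}=\{\cPrism_{-/\mathfrak{S}}\cotimes^L_{\mathfrak{S}}\mathfrak{S}\{-1\}^{\cotimes -i}\}_{i\in\Z}$ in $\logFDAeff(\cO_K,\mathfrak{S})$, using the $z$-completeness of $R\Gamma_{\cPrism}(-/\mathfrak{S})\{i\}$ together with the crystalline comparison of Theorem \ref{thm:BK-horizontal}(2) and the $(\P^n,\P^{n-1})$-invariance of crystalline cohomology from \cite[Theorem 1.3]{mericicrys} to deduce the $(\P^n,\P^{n-1})$-invariance of $E_i^{\mathrm{BK}}$ (this is the equivalence \eqref{eq:bcube-inv-BK}). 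The first task is therefore to verify the remaining hypothesis of Proposition \ref{prop:build-spectra}, namely that the section $c\colon \mathfrak{S}(\P^1)\to E_1^{\mathrm{BK}}[2]$ coming from the prismatic first Chern class $c_1^{\Prism}(\cO(1))$ of \cite[Variant 9.1.6]{BhattLurie} induces, for every $\kX\in\FlSm_{\cO_K}$ and every $i$, an equivalence $R\Gamma_{\cPrism}(\kX/\mathfrak{S})\{i\}\xrightarrow{\sim}\widetilde{R\Gamma}_{\cPrism}(\kX\times\P^1/\mathfrak{S})\{i+1\}[2]$. Here I would again use $z$-completeness: both sides are $(p,z)$-complete, so it suffices to check the statement after $-\otimes^L_{\mathfrak{S}}W(k)$, where by Theorem \ref{thm:BK-horizontal}(2) it becomes the corresponding statement for crystalline cohomology, and where the prismatic Chern class specializes to the crystalline Chern class by \cite[Proposition 7.5.5]{BhattLurie}. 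That assertion is exactly \cite[(4.5.1)]{mericicrys}. (For the compatibility of Chern classes under specialization one uses the commuting square relating $R\Gamma_{\Prism}(\ul\kX/\mathfrak{S})$, its Nygaard completion, and $R\Gamma_{\Prism}(\ul\kX_k/W(k))$, as recorded in the paragraph above, reduced from the log to the strict case by pullback of log structure.)

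With that hypothesis in hand, Proposition \ref{prop:build-spectra} produces the oriented ring spectrum $\mathbf{E}^{\mathrm{BK}}\in\CAlg(\logFDA(\cO_K,\mathfrak{S}))$ together with the claimed identification of mapping spectra, $\Map(\Sigma^{\infty}(\kX),\Sigma^{r,s}\mathbf{E}^{\mathrm{BK}})\simeq R\Gamma_{\cPrism}(\kX/\mathfrak{S})\{s\}[r]$, and its orientation is witnessed by the class $c$ just as in Theorem \ref{thm:spectra-prism}. The second task is to obtain the two base-change equivalences. For this I would apply the base-change functors $-\otimes^L_{\mathfrak{S},\theta}\cO_K$ and $-\otimes^L_{\mathfrak{S}}W(k)$ at the level of graded commutative monoids: Theorem \ref{thm:BK-horizontal}(1)--(2) gives, functorially and compatibly with the multiplicative structure and the Breuil--Kisin twist, equivalences $E_*^{\mathrm{BK}}\otimes^L_{\mathfrak{S},\theta}\cO_K\simeq (E_*^{\mathrm{dR}})_p^\wedge$ in $\Fun(\Z^{\mathrm{ds}},\logFDAeff(\cO_K,\cO_K))$ and $E_*^{\mathrm{BK}}\otimes^L_{\mathfrak{S}}W(k)\simeq i_*E_*^{\mathrm{crys}}$ in $\Fun(\Z^{\mathrm{ds}},\logFDAeff(\cO_K,W(k)))$, where $i_*E_n^{\mathrm{crys}}(\kX):=R\Gamma_{\mathrm{crys}}(\kX_k/W(k))$ — and these equivalences carry the prismatic Chern class to the de Rham, resp. crystalline, Chern class, again by \cite{BhattLurie} and \cite[Proposition 7.5.5]{BhattLurie}. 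Then Proposition \ref{prop:build-spectra-maps} upgrades these to equivalences of oriented ring spectra $\mathbf{E}^{\mathrm{BK}}\otimes^L_{\mathfrak{S},\theta}\cO_K\simeq(\mathbf{E}^{\mathrm{dR}})_p^\wedge$ in $\logFDA(\cO_K,\cO_K)$ and $\mathbf{E}^{\mathrm{BK}}\otimes^L_{\mathfrak{S}}W(k)\simeq i_*\mathbf{E}^{\mathrm{crys}}$ in $\logFDA(\cO_K,W(k))$, which is exactly the content of the theorem.

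The one genuinely delicate point — and the main obstacle — is the compatibility of the prismatic first Chern class over the non-perfect base prism $(\mathfrak{S},E(z))$ with its crystalline specialization over $W(k)$, since the argument for the key equivalence $R\Gamma_{\cPrism}(\kX/\mathfrak{S})\{i\}\simeq\widetilde{R\Gamma}_{\cPrism}(\kX\times\P^1/\mathfrak{S})\{i+1\}[2]$ rests on reducing modulo $z$ and identifying the two Chern classes there. One must take care that the cohomology theory $\cPrism_{-/\mathfrak{S}}$ constructed in Theorem \ref{thm:BK-horizontal} (via relative log $\THH$ over $(\Sph[z],\triv)$ and Frobenius descent) is genuinely the Nygaard completion of the absolute prismatic cohomology of \cite{BhattLurie} relative to the bounded prism $(\mathfrak{S},E(z))$ when the log structure is trivial, so that \cite[Variant 9.1.6]{BhattLurie} and \cite[Proposition 7.5.5]{BhattLurie} can be invoked; in the logarithmic case one then reduces to the strict case by pulling back the log structure, exactly as in Construction \ref{constr:log-chern}. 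Once the Chern-class bookkeeping is pinned down, everything else is a formal application of Propositions \ref{prop:build-spectra} and \ref{prop:build-spectra-maps} together with $(p,z)$-completeness and the comparison theorems already proved.
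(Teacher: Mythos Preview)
Your proposal is correct and follows essentially the same route as the paper: the argument is exactly the one laid out in the paragraphs preceding the theorem statement, reducing both $(\P^n,\P^{n-1})$-invariance and the Chern-class stability condition to the crystalline case via $z$-completeness and \cite[Proposition 7.5.5]{BhattLurie}, then invoking Propositions \ref{prop:build-spectra} and \ref{prop:build-spectra-maps}. You have also correctly identified the one subtle point, the compatibility of the prismatic Chern class over $(\mathfrak{S},E(z))$ with its crystalline specialization, which the paper handles via the commutative square relating $R\Gamma_{\Prism}(\ul\kX/\mathfrak{S})$ to $R\Gamma_{\Prism}(\ul\kX_k/W(k))$.
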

Again we deduce from motivic properties similar results to Theorem \ref{thm:gysin-prism}, Theorem \ref{thm:coho_Grassmannian} and Theorem \ref{thm:blow-up_main}, which after base change agree with the usual properties of de Rham and crystalline cohomology. This has the following interesting application:
\begin{cor}\label{cor:gysin-Ainf}
Let $\kZ\to \kX$ be a morphism of $p$-adic proper smooth formal schemes over $\cO_K$ such that it is locally the $p$-completion of a pure codimension $d$ closed immersion of smooth schemes over $\cO_K$.
Let $\Bl_\kZ(\kX)$ denote the blow-up of $\kX$ in $\kZ$ and $E$ be the exceptional divisor, so that $(\Bl_\kZ(\kX),E)$ is log smooth over $\cO_K$.
Then for all $j$ there is a Gysin map in $\cD(A_{\inf})$ 
\[
R\Gamma_{A_{\inf}}(\kZ_{\cO_C})\{j-d\}[-2d]\to R\Gamma_{A_{\inf}}(\kX_{\cO_C})\{j\}\]
whose homotopy cofiber is\[
R\Gamma_{\cPrism}((\Bl_\kZ(\kX),E)/\mathfrak{S})\otimes_{\mathfrak{S}}A_{\inf}.
\]
There is an equivalence, functorial in $(\kX,\kZ)$,
    \[
 R\Gamma_{A_{\inf}}(\kX)\{j\} \oplus \bigoplus_{0<i< d} R\Gamma_{A_{\inf}}(\kZ_p)\{j-i\} [-2i] \xrightarrow{\sim} R\Gamma_{A_{\inf}}(\Bl_\kZ(\kX))\{j\}
 \]
For every $p$-adic log smooth formal scheme $\kY$ over $\cO_K$,
we have an equivalence of bigraded rings:\[
\phi^{\rm BK}_{n,r}\colon H^*_{A_{\inf}}(\kY_{\cO_C})\{\bullet\}\otimes_\Z Z_{n,r}\xrightarrow{\simeq} H^*_{A_{\inf}}((\mathrm{Gr}(n,r)\times \kY)_{\cO_C})\{\bullet\}
\]

\begin{proof}
    Apply \eqref{eq:htp-purity-oriented-spectra-formal}, \eqref{eq:sm-blow-up-oriented-spectra-formal} and \cite[Theorem 7.4.6]{BPO-SH} to $\E^{\rm BK}$ and then $-\otimes_{\mathfrak{S}}A_{\inf}$: the result follows from \cite[Theorem 1.2 (1)]{BMS2}.
\end{proof}
\end{cor}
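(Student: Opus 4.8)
The plan is to deduce all three assertions formally from the representability statement of Theorem~\ref{thm:BK-motivic} by transporting the standard oriented-spectrum formulas along the base change $\mathfrak{S}\to A_{\inf}$.

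First I would use that $\E^{\rm BK}\in\CAlg(\logFDA(\cO_K,\mathfrak{S}))$ is an oriented ring spectrum representing $R\Gamma_{\cPrism}(-/\mathfrak{S})\{\bullet\}[\bullet]$, so that the general results for oriented motivic spectra apply. Plugging $\E^{\rm BK}$ into the homotopy-purity fiber sequence \eqref{eq:htp-purity-oriented-spectra-formal} produces, functorially in $(X,Z)$, a fiber sequence in $\cD(\mathfrak{S})$
\[
R\Gamma_{\cPrism}(Z_p^\wedge/\mathfrak{S})\{j-d\}[-2d]\to R\Gamma_{\cPrism}(X_p^\wedge/\mathfrak{S})\{j\}\to R\Gamma_{\cPrism}((\Bl_Z(X),E)_p^\wedge/\mathfrak{S})\{j\};
\]
likewise \eqref{eq:sm-blow-up-oriented-spectra-formal} gives the direct-sum decomposition of $R\Gamma_{\cPrism}((\Bl_Z(X))_p^\wedge/\mathfrak{S})\{j\}$ (with $\Bl_Z(X)$ given the trivial log structure) in terms of $X$ and $Z$, and \cite[Theorem 7.4.6]{BPO-SH} yields the projective-bundle-type computation of $R\Gamma_{\cPrism}((\mathrm{Gr}(n,r)\times X)_p^\wedge/\mathfrak{S})$ as a bigraded module over $R_{n,r}$.

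Next I would apply the functor $-\otimes^L_{\mathfrak{S}}A_{\inf}$ with $A_{\inf}=A_{\inf}(\cO_C)$. Since $\mathfrak{S}\to A_{\inf}$ is faithfully flat (indeed topologically free, as recalled in the setup), this functor is exact and commutes with fibers and direct sums, so it carries the three statements above to the corresponding $A_{\inf}$-linear ones; moreover $\E^{\rm BK}\otimes^L_{\mathfrak{S}}A_{\inf}$ remains an oriented ring spectrum. The remaining point is to identify, for a smooth $\cO_K$-scheme $\kX$ with trivial log structure, $R\Gamma_{\cPrism}(\kX/\mathfrak{S})\{i\}\otimes^L_{\mathfrak{S}}A_{\inf}$ with the Breuil--Kisin twist $R\Gamma_{A_{\inf}}(\kX_{\cO_C})\{i\}$ of the $A_{\inf}$-cohomology of \cite{BMS1}. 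For trivial log structures the construction of $R\Gamma_{\cPrism}(-/\mathfrak{S})$ by unfolding $\pi_0\TCmin(-/(\Sph[z],\triv))$ coincides with that of the Breuil--Kisin cohomology of \cite[Theorem 1.2]{BMS2} (log $\THH$ over $(\Sph[z],\triv)$ with trivial log structure being ordinary $\THH$ over $\Sph[z]$), and \cite[Theorem 1.2(1)]{BMS2} gives $R\Gamma_{\cPrism}(\kX/\mathfrak{S})\otimes^L_{\mathfrak{S}}A_{\inf}\simeq A\Omega_{\kX_{\cO_C}}=R\Gamma_{A_{\inf}}(\kX_{\cO_C})$; the twist $\mathfrak{S}\{-1\}=H^2_{\Prism}(\P^1_{\cO_K}/\mathfrak{S})$ base changes to the $A_{\inf}$-twist since prismatic cohomology of $\P^1$ commutes with flat base change along the map of prisms $(\mathfrak{S},E)\to(A_{\inf},\ker\theta)$. (Alternatively one may argue through the non-Nygaard-complete variant exactly as in the last part of the proof of Corollary~\ref{cor:frob-twisted}, using Corollary~\ref{cor:sat-descent-de-Rham-prismnc}, \cite[Theorem 9.6]{BMS2} and left Kan extension.)

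Finally I would read off the conclusions. The base-changed homotopy-purity sequence becomes
\[
R\Gamma_{A_{\inf}}((Z_p^\wedge)_{\cO_C})\{j-d\}[-2d]\to R\Gamma_{A_{\inf}}((X_p^\wedge)_{\cO_C})\{j\}\to R\Gamma_{\cPrism}((\Bl_Z(X),E)_p^\wedge/\mathfrak{S})\{j\}\otimes^L_{\mathfrak{S}}A_{\inf},
\]
the last term being kept as the base change of a log prismatic complex because $(\Bl_Z(X),E)$ carries a nontrivial log structure; this provides the Gysin map and identifies its cofiber. The base-changed \eqref{eq:sm-blow-up-oriented-spectra-formal} is the stated direct-sum equivalence, and passing to cohomology groups $H^*$ in the base-changed formula of \cite[Theorem 7.4.6]{BPO-SH}, together with the multiplicativity of $\E^{\rm BK}\otimes^L_{\mathfrak{S}}A_{\inf}$, produces the ring isomorphism $\phi^{\rm BK}_{n,r}$. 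The only genuinely non-formal ingredient is the comparison with $A_{\inf}$-cohomology above, i.e.\ \cite[Theorem 1.2(1)]{BMS2}; everything else is a direct consequence of the motivic machinery already in place, so I expect that step to be the only real obstacle.
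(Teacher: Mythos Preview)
Your proposal is correct and follows exactly the same approach as the paper's proof: apply \eqref{eq:htp-purity-oriented-spectra-formal}, \eqref{eq:sm-blow-up-oriented-spectra-formal} and \cite[Theorem 7.4.6]{BPO-SH} to the oriented spectrum $\E^{\rm BK}$, then base change along $\mathfrak{S}\to A_{\inf}$ and invoke \cite[Theorem 1.2(1)]{BMS2} for the identification with $A_{\inf}$-cohomology on schemes with trivial log structure. You have simply unpacked in more detail what the paper records as a one-line proof.
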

\begin{rmk}
    An $A_{\inf}$-cohomology theory for general logarithmic $p$-adic formal schemes has been constructed by Diao and Yao in \cite{DiaoYao}, building on the construction of \v Cesnavi\v cius and Koshikawa in \cite{Cesnavicius-Koshikawa} for the semistable case. We expect that an analogue of \cite[Theorem 1.2 (1)]{BMS2} can be applied to compare our Breuil--Kisin cohomologies with these $A_{\inf}$-cohomologies.
\end{rmk}

\bibliographystyle{alpha}
\bibliography{bibMerici}

\end{document}